\title{Nilpotent Group-Counterexamples to Zil'ber's Conjecture}
\author{Andreas Baudisch}
\address{
Institut f\"ur Mathematik,Humboldt-Universit\"at zu Berlin,
  D-10099 Berlin, Germany}
\email{baudisch@mathematik.hu-berlin.de}
\date{\today}
\keywords{Model Theory, uncountably categorical  Lie algebras and  groups }
\subjclass{03C60}
\newcommand{\Frai}{ Fra\"{\i}ss\'e  }
\theoremstyle{plain}
\newtheorem{theorem}{Theorem}[section]
\newtheorem{cor}[theorem]{Corollary}
\newtheorem{lemma}[theorem]{Lemma}
\theoremstyle{definition}
\newtheorem{definition}[theorem]{Definition}
\newcommand{\nc}{\newcommand}
\nc{\M}{\mathbb{M}}
\nc{\C}{\mathfrak{C}}
\nc{\ga}[2]{\gamma_{#1^*}^{#2^*}}
\nc{\gam}[2]{\gamma_{#1}^{#2}}
\nc{\N}{\mathbb{N}}
\nc{\cb}{\operatorname{Cb}}
\nc{\Kom}{\mathbb{K}} 
\nc{\h}{\mathcal{H}}
\nc{\K}{\mathcal{K}}
\nc{\J}{\mathcal{J}}
\nc{\tp}{\operatorname{tp}}
\nc{\stp}{\operatorname{stp}}
\nc{\RM}{\operatorname{RM}}
\nc{\x}{{\bf c}}
\def\G{\mathbb G}
\def\F{\mathbb F}
\def\M{\mathfrak{M}}
\nc{\Mr}{\operatorname{MR}}
\nc{\U}{\operatorname{U}}
\nc{\p}{\operatorname{p}}
\nc{\A}{\mathcal{A}} 
\nc{\Cent}{\operatorname{C}} 
\nc{\X}{\mathcal{X}}
\nc{\cc}{\mathcal{C}}
\nc{\lmto}[1]{\xrightarrow[{#1}]{}} 
\nc{\ggen}[1]{\langle #1\rangle} 
\nc{\rest}[2]{#1\!\!\upharpoonright_{#2}}
\renewcommand{\iff}{if and only if\xspace}
\def\Ind#1#2{#1\setbox0=\hbox{$#1x$}\kern\wd0\hbox to 0pt{\hss$#1\mid$\hss}
\lower.9\ht0\hbox to 0pt{\hss$#1\smile$\hss}\kern\wd0}
\def\Notind#1#2{#1\setbox0=\hbox{$#1x$}\kern\wd0\hbox to
0pt{\mathchardef\nn="0236\hss$#1\nn$\kern1.4\wd0\hss}\hbox
to 0pt{\hss$#1\mid$\hss}\lower.9\ht0
\hbox to 0pt{\hss$#1\smile$\hss}\kern\wd0}
\def\ind{\mathop{\mathpalette\Ind{}}}
\def\nind{\mathop{\mathpalette\Notind{}}}
\nc{\unab}{\ind^{\otimes}}
\begin{document}
\begin{abstract}
We construct uncountably categorical 3-nilpotent groups of exponent $p>3$. 
They are not one-based and do not allow the interpretation of an infinite field. Therefore they are counterexamples to Zil'ber's Conjecture.  First 2-nilpotent new uncountably categorical groups were contructed in \cite{Bau96}.  Here we use the method of the additive Collapse developed in \cite{Bau09}. Essentially we work with 3-nilpotent graded Lie algebras over the field with p elements.
 
\end{abstract}
\maketitle

\section{Introduction}
\noindent Zil'ber's Conjecture is the following statement:\\  
Let $T$ be an uncountably categorical theory  in a countable language. If it  is  not one-based, then it is possible to interpret an infinite field in $T$.\\
The first counterexample was given by E.Hrushovski in 1988 (see \cite{Hr1}). It is a relational strongly minimal theory that does not even allow to interpret an infinite group.\\
In \cite{Bau96} a first group-counterexample is given . Using the classical results on groups of finite Morley rank it is easy  to see that such a group-counterexample is essentially a simple group or a nilpotent group of finite exponent. The groups constructed in \cite{Bau96} are nilpotent of class 2 and of exponent $p> 2$.  In fact we worked with alternating bilinear maps. In the terminology of this paper they are 2-nilpotent graded Lie algebras over the field $\F(p)$ with p elements ($p > 2$). If we try a similar construction in higher nilpotency classes 
additional  difficulties arise.  Here we will give 3-nilpotent counterexamples.\\
In his paper \cite{Hr2} on the fusion of two strongly minimal sets E. Hrushovski developed new ideas for such constructions as above. Together with A.Martin-Pizarro and M.Ziegler we used his ideas to obtain  fields of prime characteristic
of Morley rank 2 equipped with a definable additive subgroup of rank 1\cite{BMPZ3}. Furthermore we realised the fusion of two strongly minimal sets with DMP over a common vector space to obtain again a strongly minimal set\cite{BMPZ4}. Finally  bad fields were construced by M.Hils, A.Martin-Pizarro, F.Wagner and me \cite{BHMPW}. \\
In \cite{Bau09} a common frame is built  for the constuctions of the new uncountably categorical groups, the red fields, and the fusion over a vector space.
There we  have a starting theory $T$ that fulfills certain conditions. Notions like codes and difference sequences as in \cite{BMPZ3} and \cite{BMPZ4} are introduced and finally a collaps gives the desired theory of finite Morley rank.\\
In this paper we will follow this strategy of \cite{Bau09}. The main part (section 2 - 11) is devoted to the construction of an elementary theory of a 3-nilpotent graded Lie algebra over $\F(p)$, as a starting theory for the collaps.
It has infinite Morley rank.\\
In section 2 we consider c-nilpotent graded Lie algebras $M = M_1 \oplus \ldots \oplus M_c$ over a finite field $K$
in a suitable elementary language $L$.
The $M_i$ are $K$-vector spaces. For $x \in M_i$ and $y \in M_j$ we have $[x,y] \in M_{i+j}$.  A function $\delta$ over the set of  all finite substructures $A$ of $M$
into the natural numbers is defined. For $c = 2$ the definition is a minor deviation of the definition in \cite{Bau96}. 
Submodularity for the case $c = 2$ is shown. Furthermore we define $A$ is  strong in $M$, if  $\delta(A) \le \delta(B)$ for
all finite $A \subseteq B \subseteq M$.  For later amalgamation a class $\K_c$ is defined. In Lie algebras $M$ in $\K_c$ we  have $0 < \delta(A)$ for all substrucures $A \not= \langle 0 \rangle$ of $M$. \\
In section 3 amalgamation is introduced. The existence of the free amalgam of c-nilpotent graded Lie algebras over a fixed field is shown in \cite{Bau16}. We study the structure of of the free amalgam in the case $c = 3$.\\
In section 4 we describe the results for the case $c = 2$ from \cite{Bau96} and \cite{Bau09}. \\
From now on we work with  3-nilpotent Lie algebras.
In section 5 a functor $\F$ from $\K_{c-1}$ into $\K_c$ is defined. For $A \in \K_c$ let $A^* = A/A_c$. If $B = \langle B_1 \oplus \ldots \oplus B_{c-1} \rangle \in \K_c$ and $A^* \cong B^*$, then there is a homomorphism of $\F(A^*)$ onto $B$. In the case of 3-nilpotent Lie algebras we  show that a strong embedding of $B^*$  into $A^*$   in the sense of $\K_2$ 
implies  that the natural homomorphism of $\F(B^*)$ into $\F(A^*)$ is an embedding. 
This is part of  the main result 
(Theorem \ref{key3}) in this section. It was proved in A.Amantini's dissertation \cite{Ama}.\\
In section 6 we work again in $\K_3$. Submodularity of the $\delta$-function for  substructures 
$A \in M$, where $A^* \le M^*$,  is shown. Then  we prove the amalgamation property for $\K_3$ with strong embeddings.\\
Using this amalgamation we obtain in section 7 a countable strong  Fra\"{\i}ss\'e-Hrushovski limit $\M$ - the desired Lie algebra.
We study the theory $T_3$ of $\M$. $\M$ is uniquely determined by richness: If $B $ is strong in $A$ and both are in $\K_3$, then a strong embedding $f$ of $B $ into $\M$ can be extended to an strong embedding of $A$ into $\M$. We give an axiomatization of $T_3$.
In section 8 non-forking and canonical bases are investigated. 
$T_3$ is $\omega$-stable and CM-trivial. \\
Let $\C$ be a monster model of $T_3$. In section 9 we define a pregeometry $cl$, using the $\delta$-function. Its domain is
the union $R(\C) = \C_1 \cup C_2$ of the first two vector spaces of the graduation. 
The pregeometry is defined for all finite subspaces $A = \langle A_1 A_2 \rangle$. The smallest L-subspaces of this form 
are generated by a single  element in $R(\C) = \C_1 \cup \C_2$.\\
Our aim is to find a strong substructure $P^{\mu}(\C)$
of $\C$, that has an uncountably categorical theory with the desired properties. For this we have to ensure that 
in the structure  $P^\mu(\C)$ the geometrical closure is the algebraic closure.
Therefore 
we study  so-called minmal prealgebraic extension over substructures in section 10. We build formulas $\phi \in \X^{home}$ that describe these extensions. They are strongly minimal. To realise our aim above we define an expansion $\C^{\mu}$ of $\C$ 
by adding  a new predicate $P^{\mu}$, such that in the substructure $P^{\mu}(\C^{\mu})$ the number of solutions of the $\phi \in \X^{home}$ with  parameters in $P^{\mu}(\C^{\mu})$ is finite. The bound is given by a function $\mu$
on codes, that are modifications of the formulas in $\X^{home}$. 
There are uncountably many possible $\mu$-functions.\\
In \cite{Bau09} conditions are formulated that provide the existence of $\C^{\mu}$. There we work with a pregeometry over a vector space, but here $R(\C)$ is the union of two vector spaces. Therefore we have to modify the conditions from \cite{Bau09}. In section 11 we show that $T_3$ satisfies these  new conditions C(1)- C(7) for the collaps. \\
In the following sections we use our full knowledge of $T_3$ and not only these conditions.
In section 12 we indroduce codes $\alpha$, that are modifications of the formulas in $\X^{home}$. Difference sequences are realizations of a spezial formulas 
$\psi_\alpha(\bar{x}_0, \ldots, \bar{x}_\lambda)$ that describes some properties of a
sequence $\bar{a}_0 - \bar{f},  \ldots, \bar{a}_\lambda - \bar{f}$
,where $\bar{a}_0, \ldots, \bar{a}_\lambda, \bar{f}$ is a Morley sequence of 
realizations of a code formula $\varphi_\alpha(\bar{x},b)$.\\
Then we introduce bounds for difference sequences in section 13. For this  Lemma \ref{add5.1} 
provides  an 
important combinatorial property of these sequences.
It needs new ideas for the proof.
We consider a class $\K^\mu$ of strong subalgebras of $\C$, where a suitable function $\mu$ on
the set of codes gives the desired bounds.\\
In section 14 we use the condition C(6) to  amalgamate strong subalgebras from $\K^\mu$ inside $\C$.
We get a countable $\K^\mu$-rich strong subalgebra $P^\mu(\C)$ of $\C$:\\
If $B \le A$ are in $\K^\mu$ and if there is an 
strong embedding of $B$ into $P^\mu(\C)$, then we can extend the 
embedding to a strong  embedding of $A$ into $P^\mu(\C)$. \\
In section 15 we extend our language by a predicate $P^\mu$ for this subalgebra. The new language is denoted by $L^\mu$. A $L^\mu$-structure $(M,P^\mu(M))$ is a $2\times$rich $L^\mu$-structure,
if $M$ is a rich model of $T_3$,
$P^\mu(M) \le M$ is $\K^\mu$-rich, and the geometrical dimension of $M$ over $P^\mu(M)$ is infinite.
These structures have a complete theory $T^\mu_3$, with a monster model $\C^\mu$.\\
In section 16 $T^\mu_3$ is axiomatized. We get $\omega$-stability.
Let $\Gamma(\C^{\mu})$ be  the $L$-substructure with domain  $P^{\mu}$. 
It is the desired new uncountablly categorical graded 3-nilpotent Lie algebra  over a finite field. It will be 
considered in section 17.
In this substructure $cl$ is part of the algebraic clousure. $\Gamma(\C^{\mu})$ is stably embedded in $\C^{\mu}$. In $\Gamma(\C^{\mu})$ the predicate $R$ has Morley rank 1 and Morley degree 2.
The geometry of the algebraic closure is not locally modular, but the theory is CM-trival. It is not possible
to interpret an infinite field. 
The Morley rank of this structure is 3. \\
In section 18 we get  group counterexamples by interpretation. 
We use the Baker-Campbell-Hausdorff-formula. Here the sum is finite. These groups are bi-interpretable with the Lie algebras of $T_3^\mu$ without graduation. Their theories are uncountably categorical. They are not one-based, CM-trivial, and do not allow the interpretation of a field.

%%%%%%
%%%%%%
%%%%%%

\section{Definition of  $\delta$}
\noindent We consider c-nilpotent graded Lie algebras $M$ over a finite field $K$ in a language $L$. The non-logical
symbols of $L$ are the following: There are $+$, $-$,$0$ and unary functions for the scalar multiplication with the elements of $K$ to describe the underlying vector space. Furthermore we have unary predicates 
$U_i$ ($1 \le i \le c$) for the graduation. That means $M = \oplus_{1 \le i \le c} M_i$, where $M_i = U_i(M)$.  
Projections $pr_i$ are needed to ensure that substructures are again graded.  $[x,y]$ is the symbol for the Lie - multiplication. In $M$ $U_i(a)$ and $U_j(b)$ implies $U_{i+j}([a,b])$.  $\langle X \rangle$ denotes the $L$-substructure of $M$ generated by $X \subset M$.  $\langle X \rangle ^{lin}$
denotes the linear hull of $X$. If $a \in U_i(M)$, then we say that $a$ is homogeneous of degree $i$. A subset $X$ is homogeneous, if all its elements are homogenous. If $a \in M$ then  $a = \sum_{1 \le i \le c} r_i a_i$, where $a_i \in U_i(M) = M_i$. The degree of $a$ is the smallest $i$, such that $r_i \not= 0$. Finally $\langle \langle X \rangle \rangle$ is the ideal generated by $X$ in 
$M$. $ldim$ is used to denote the linear dimension.\\
$A,B,C,D$ denote  finite and $M,N$  arbitrary c-nilpotent graded Lie algebras over $K$. For every 
finite $A$
we define an integer $\delta (A)$ that is uniquely determined  by the isomorphism type of $A$. Note that we do not assume in general that $A = \langle A_1 \rangle$. \\

\noindent We use the Theorem of Sirsov-Witt: Every subalgebra of a free Lie algebra over a field is free. In \cite{Bau82}
a key lemma for the proof of this Theorem is formulated, that is useful from a model-theoretic point of view. Here we develop the results in a context with graduation.\\
If $F(X)$ is the the free Lie algebra freely generated by $X$, then $F(X)$ becomes a free graded Lie algebra, if we define $U_i(F(X))$ to be the vector space generated by all momomials over $X$ of degree $i$. $U_1(F(X))$ is the linear hull of $X$. Projections are defined in the usual way. Then we have
\[ Z_{c+1-i}(F(X)) = \Gamma_i(F(X)) = \oplus_{i \le j \le c} U_j(F(X)) ,\] 
where $Z_1(M) \subseteq \ldots \subseteq Z_c(M)$ is the upper central series and 
$\Gamma_1(M)  \supseteq \ldots \supseteq \Gamma_c(M)$ the lower central series. \\

\begin{definition} Let $Y$ be a homogeneous subset of $M$.
\begin{enumerate}
\item $Y$ is an (o)-system in $M$, if for all $i$ ($1 \le i \le c$) $U_i(Y) $ is linearly independent over
$\langle U_1(Y), \ldots, U_{i-1}(Y) \rangle_i$.
\item $Y$ generates $M$ freely, if  $Y$ is an o-system 
in $M$, that generates $M$ and for every  c-nilpotent graded Lie algebra $N$ over $K$
every map $f$ of $Y$ into $N$ with $U_i(y)$ implies $U_i(f(y))$ can be extended to an $L$-homomorphism of $M$ into $N$.
We call such a Lie algebra $M$ the  free algebra $F(Y)$. 
\end{enumerate}
\end{definition}

\noindent The fact that $Y$ generates $M$ freely can be expressed by linear independence of basic commutators.

\noindent In \cite{Bau82} the following Theorem is taken from the proof of the Sirsov-Witt-Theorem and applied for the following corollaries. Here we formulate it with graduation.

\begin{theorem}\label{sirsovwitt}
In a free graded c-nilpotent Lie Algebra $F(X)$ every (o)-system $Y$ freely generates 
$\langle Y \rangle$.
\end{theorem}

\begin{cor} Every element of  $F(Y)$ has a unique presentation as a linear combination of basic commutators over $Y$.
\end{cor}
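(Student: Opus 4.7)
The plan is to deduce this corollary directly from Lemma \ref{sirsovwitt} applied to $F(Y)$ itself, by verifying that $Y$ is an (o)-system inside the free algebra it generates.

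First I would dispose of existence. Working inside $F(Y)$ (or, equivalently, inside $F(X)$ modulo terms of degree $> c$), I repeatedly apply anticommutativity and the Jacobi identity to rewrite any Lie monomial over $Y$ as a $K$-linear combination of basic commutators of degree $\le c$; the bookkeeping is the standard Hall/Lyndon rewriting procedure relative to the fixed ordering used to define ``basic monomial''. Since $F(Y)$ is $c$-nilpotent, monomials of degree greater than $c$ vanish, so existence of a basic-commutator presentation holds for every element.

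For uniqueness the key observation is that $Y$ is itself an (o)-system in $F(Y)$. Indeed, a linear dependence of elements of $U_i(Y)$ modulo $\langle U_1(Y),\ldots,U_{i-1}(Y)\rangle$ would yield a nontrivial relation among free generators of $F(Y)$, contradicting Corollary \ref{free}. With $Y$ an (o)-system in the free graded $c$-nilpotent Lie algebra $F(Y)$, Lemma \ref{sirsovwitt} delivers that for every $i$ with $1 \le i \le c$ the basic commutators over $Y$ of degree $i$ are linearly independent. Since basic commutators of different degrees live in distinct homogeneous components $U_i(F(Y))$ and hence are automatically $K$-linearly independent of each other, the full set of basic commutators over $Y$ of degree $\le c$ is linearly independent in $F(Y)$. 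Combined with existence, this yields the uniqueness half of the corollary.

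The only mildly delicate point is the verification that $Y$ is an (o)-system in $F(Y)$; everything else is a bookkeeping translation of Lemma \ref{sirsovwitt} from ``(*)-system'' into the language of unique presentations. This verification amounts to recalling that the free property established in Corollary \ref{free} forbids any homogeneous relation between $U_i(Y)$ and the subalgebra generated by the lower-degree parts, so I expect no serious obstacle beyond clearly recording this observation.
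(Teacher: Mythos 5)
Your argument is correct and is essentially the intended one: the paper states this corollary without proof as an immediate consequence of Lemma \ref{sirsovwitt} (which, applied to the given (o)-system $Y$, yields linear independence of the basic monomials in each degree, hence in all degrees by homogeneity) together with the standard Hall-type rewriting for spanning. Your detour of re-verifying that $Y$ is an (o)-system inside $F(Y)$ is harmless but unnecessary, since $Y$ is an (o)-system in $F(X)$ by hypothesis and the defining independence condition passes to the subalgebra $F(Y)$.
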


\begin{definition} If $ X = \cup_{1 \le i \le c} X_i$ where $X_i \subseteq U_i(M)$ is a generating o-system for M, then we define $o-dim_i(M) = \mid X_i \mid$. $o-dim(M) = \sum_{1 \le i \le c} o-dim_i(M)$.
\end{definition}

\noindent It is easily seen, that $o-dim_i(M)$ is independent from the choice of $X$.

\begin{cor}\label{iso} If $M$ and $N$ are free and $o-dim_i(M) = o-dim_i(N)$ for $1 \le i \le c$, then $ M \simeq N$.
\end{cor}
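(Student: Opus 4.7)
The plan is to deduce the isomorphism directly from the universal property provided by Corollary \ref{free}. Pick generating (o)-systems $Y = \cup_{1\le i\le c} Y_i$ for $M$ and $Z = \cup_{1\le i\le c} Z_i$ for $N$, where $Y_i \subseteq U_i(M)$ and $Z_i \subseteq U_i(N)$. Such systems exist because, starting from any basis of $M_1$, then a basis of $M_2$ modulo $\langle M_1\rangle$, etc., we obtain a homogeneous set that generates $M$ and is an (o)-system by construction; likewise for $N$. By hypothesis $o\text{-}dim_i(M) = o\text{-}dim_i(N)$, so $|Y_i| = |Z_i|$ for each $i$, and we may fix a bijection $\phi: Y \to Z$ with $\phi(Y_i) = Z_i$.

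Next, apply Corollary \ref{free} to each side. Since $Y$ is an (o)-system generating $M$, the algebra $M$ is a free graded $c$-nilpotent Lie algebra freely generated by $Y$; symmetrically, $N$ is freely generated by $Z$. The degree-preserving bijection $\phi$ therefore extends uniquely to an $L$-homomorphism $\Phi: M \to N$ respecting the graduation, and $\phi^{-1}$ extends uniquely to $\Psi: N \to M$. The composition $\Psi \circ \Phi : M \to M$ agrees with the identity on $Y$, and by the freeness of $M$ on $Y$, the identity is the unique extension, so $\Psi \circ \Phi = \mathrm{id}_M$. The same argument gives $\Phi \circ \Psi = \mathrm{id}_N$, so $\Phi$ is an isomorphism.

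There is essentially no obstacle beyond checking that generating (o)-systems of the prescribed cardinalities exist; this is immediate from the stepwise construction described above, and it is exactly the content of the remark preceding the corollary that $o\text{-}dim_i(M)$ is independent of the choice of $X$. Everything else is just the universal property packaged by Corollary \ref{free}.
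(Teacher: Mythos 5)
Your proof is correct and follows exactly the route the paper intends: the corollary is stated without proof as an immediate consequence of Corollary \ref{free}, and your argument (choose generating (o)-systems of matching homogeneous cardinalities, extend a degree-preserving bijection via the universal property, and check the two compositions are identities by uniqueness) is the standard way to fill that in.
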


\noindent We choose  a generating o-system $W$ for some c-nilpotent graded Lie algebra $M$. Let $F(X)$ be the free graded c-nilpotent Lie algebra over $K$ freely generated by $X$, where $X$ be an o-system with $\mid X_i \mid = \mid W_i\mid$   
for $1 \le i \le c$.
Then for every map of $X_i$ onto $W_i$ for all $i$ we get a homorphism  $f$ of $F(X)$ onto $M$. Hence 
$M \simeq F(X)/ker(f)$.

\begin{definition}\label{2.7} We call $F(X), ker(f)$ as above a canonical pair for $M$. As we 
will see below it is unique up to automorphisms of $F(X)$.
\end{definition}

\noindent If $g$ is any homorphism of $F(X)$ onto $M$, then there is a generating o-system $X^0$ in $F(X)$ such that
$g(X^0_i)  = W_i$. By Corollary \ref{iso} there is an automorphism $\alpha$ with $\alpha(X_i) = X^0_i$. Hence $f=g\alpha$ and $ker(g)$ is an automorphic image of $ker(f)$. It follows that the following definition is independent of $f$.

\begin{definition}\label{2.8} An ideal basis for $M \simeq F(X)/ker(f)$ is a homogeneous subset $Y$ of $ker(f)$, such that 
$Y_i$ generates $ker(f) \cap F(X)_i$ modulo $\langle \langle Y_2, \ldots, Y_{i-1}\rangle \rangle \cap F(X)_i$ for $2 \le i \le c$. 
For finite $M$ we define $ideal-dim_i(M) = \mid Y_i \mid$ and  
\[ \delta_i(M) = \sum_{1 \le j \le i} o-dim_j(M) - (\sum_{2 \le j \le i} ideal-dim_j(M))\]
and $\delta(M) = \delta_c(M)$.
\end{definition}

\noindent $o-dim_i(M)$, $ideal-dim_i$, and $\delta_i(M)$ are invariants of $M$. They depend only on the isomorphism-typ of $M$. In the approach in \cite{Bau96} and in \cite{Bau09} for $c = 2$ we consider only substructures $A$ with $A = \langle A_1 \rangle$. In that context  the 
proof of the next lemma is easier.

\begin{lemma}\label{subdelta2} Submodularity holds for $\delta_2$.
Let $M$ be a 2-nilpotent graded Lie algebra. Let $A$ and $C$ be subalgebras of $M$. Then
\[ \delta_2(\langle A C \rangle) \le \delta_2(A) + \delta_2(C) - \delta_2(A \cap C) .\]
\end{lemma}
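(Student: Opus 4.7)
The plan is to collapse $\delta_2$ to a closed form and then verify the inequality by direct dimension counting.

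First I would show that for every finite 2-nilpotent graded Lie subalgebra $M$
\[ \delta_2(M) = \dim M_1 + \dim M_2 - \binom{\dim M_1}{2}. \]
Choose a generating o-system $X = X_1 \cup X_2$, so that $|X_1| = \dim M_1$ and, since $X_2$ is linearly independent modulo $[M_1,M_1]$ and $X_1 \cup X_2$ generates $M$, $|X_2| = \dim M_2 - \dim[M_1, M_1]$. In a canonical pair $(F(X), \ker f)$, Corollary~\ref{free} gives the decomposition $F(X)_2 = [F(X)_1, F(X)_1] \oplus \langle X_2 \rangle^{lin}$, with bracket summand of dimension $\binom{|X_1|}{2}$. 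Since $f$ is an isomorphism in degree $1$ and $f(X_2)$ is linearly independent modulo $[M_1,M_1]$, the intersection $\ker f \cap F(X)_2$ coincides with the kernel of the induced map $[F(X)_1, F(X)_1] \to [M_1, M_1]$, whose dimension is $\binom{\dim M_1}{2} - \dim[M_1, M_1]$. Substituting this for the ideal-dim term in Definition~\ref{2.8} and cancelling $\dim[M_1, M_1]$ yields the closed formula.

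Next, set $a_i = \dim A_i$, $c_i = \dim C_i$, $e_i = \dim(A_i \cap C_i)$; these are well-defined because the projections in $L$ force $A$, $C$, and $A \cap C$ to be graded. Since every bracket has degree at least $2$, $\langle AC\rangle_1 = A_1 + C_1$ has dimension $a_1 + c_1 - e_1$, while $\langle AC\rangle_2 = A_2 + C_2 + [A_1, C_1]$. Picking complements $A_1 = (A_1 \cap C_1) \oplus A_1'$ and $C_1 = (A_1 \cap C_1) \oplus C_1'$, the bilinearity of the bracket splits $[A_1, C_1]$ into four summands, of which $[A_1 \cap C_1, A_1 \cap C_1]$, $[A_1 \cap C_1, C_1']$, and $[A_1', A_1 \cap C_1]$ all lie in $A_2 + C_2$ (because $A$ and $C$ are closed under brackets). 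Therefore
\[ \dim\langle AC\rangle_2 \le (a_2 + c_2 - e_2) + \dim[A_1', C_1'] \le (a_2 + c_2 - e_2) + (a_1 - e_1)(c_1 - e_1). \]

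Finally, the binomial identity
\[ \binom{a_1 + c_1 - e_1}{2} - \binom{a_1}{2} - \binom{c_1}{2} + \binom{e_1}{2} = (a_1 - e_1)(c_1 - e_1) \]
is a direct expansion. Inserting the closed form of $\delta_2$ into $\delta_2(A) + \delta_2(C) - \delta_2(A \cap C) - \delta_2(\langle AC\rangle)$, the $\dim M_1$ contributions cancel, the $\dim M_2$ contributions leave $(a_2 + c_2 - e_2) - \dim\langle AC\rangle_2$, and the binomial contributions assemble into $+(a_1 - e_1)(c_1 - e_1)$; by the bound above, the total is non-negative. The main obstacle is really the first step — extracting the closed form for $\delta_2$ from its abstract presentation via canonical pairs. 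After that, the argument is elementary linear algebra and a one-line identity, with the graded nature of $A$, $C$, $A \cap C$, and $\langle AC\rangle$ guaranteeing the degree-wise decomposition throughout.
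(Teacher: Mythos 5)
Your proof is correct, and it takes a genuinely different and more elementary route than the paper's. The key move is the closed form $\delta_2(M)=\dim M_1+\dim M_2-\binom{\dim M_1}{2}$, which exposes the fact that $\delta_2$ depends only on the dimensions of the two graded pieces and not on the bracket at all: the $\dim[M_1,M_1]$ contributions to $o$-$\dim_2$ and $ideal$-$\dim_2$ cancel, exactly as you compute via the decomposition $F(X)_2=[F(X)_1,F(X)_1]\oplus\langle X_2\rangle^{lin}$. Submodularity then reduces to the estimate $\dim\langle AC\rangle_2\le\dim(A_2+C_2)+(a_1-e_1)(c_1-e_1)$ (your splitting of $[A_1,C_1]$ via complements of $A_1\cap C_1$ is the right way to see that only $[A_1',C_1']$ can fall outside $A_2+C_2$) together with the binomial identity, which checks out. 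The paper instead builds a compatible family of bases $X_2^0(B),X_2^{A}(B),X_2^{C}(B),X_2^1(\cdot),X_2^2(\cdot)$ for the degree-2 parts, presents $\langle AC\rangle$ as $F(X_1(AC)X_2(AC))/I$, and compares the four values of $\delta_2$ by tracking $I$ and its intersections with the subalgebras on the partial bases, ultimately exhibiting $\lvert X_2^0(B)\rvert$-many mixed relations $\psi_i(A)+\psi_i(C)$ independent over $I^0$. What that heavier bookkeeping buys is the precise equality characterization of Corollary \ref{corsubdelta2} in the form needed later for the free-amalgam criterion (Corollary \ref{c=2,y}), and an architecture that transfers to $\delta_3$ (Lemma \ref{subdelta3}), where no closed form is available; your approach also yields the equality case (equality holds iff $(A_2+C_2)\cap[A_1',C_1']=0$ and $\dim[A_1',C_1']=(a_1-e_1)(c_1-e_1)$), but translating that into the paper's formulation would take a further short argument.
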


\begin{proof} Define $A_i  \cap C_i = B_i$. Let $X_1 = X_1(AC) = X_1(C) X_1(B) X_1(A)$ be a vector basis for $\langle CA \rangle_1$ such that:
\begin{description}
\item $X_1(B)$ is a vector basis of $B_1$.
\item $X_1(B) X_1(C)$ is a vector basis of $C_1$.
\item $X_1(B) X_1(A)$ is a vector basis of $A_1$.
\end{description}
We define:
\begin{description}
\item $X_2^0(B) $ is a vector basis for $\langle C_1 \rangle_2 \cap \langle A_1 \rangle_2$ over $\langle B_1 \rangle_2$.
\item $X_2^{A}(B)$ is a vector basis for $\langle A_1 \rangle_2 \cap C_2$ over $\langle B_1\rangle_2 + \langle X_2^0(B) \rangle$.
\item $X_2^{C}(B)$ is a vector basis for $\langle C_1 \rangle_2 \cap A_2$ over $\langle B_1 \rangle_2 +
\langle X_2^0(B) \rangle$.
\item $X_2^1(B)$ is a vector basis for $B_2 \cap \langle X_1 \rangle_2$ over 
$\langle B_1 \rangle_2 + \langle X_2^0(B) X_2^{A}(B) X_2^{C}(B) \rangle$.
\item $X_2^2(B)$ is a vector basis of $B_2$ over $\langle X_1 \rangle_2$. 
\end{description}
 Note that $X_2^{A}(B) \cup X_2^{C}(B)$ is linearly independent over $\langle X_1(B) \rangle_2 + \langle X_2^0(B) \rangle_2$. 
Now $B_2$ has the following vector basis $X_2(B)$ over $\langle B_1  \rangle$:
\[ X_2^0(B) X_2^{A}(B) X_2^{C}(B) X_2^1(B) X_2^2(B) .\] 

Let $X_2^1(A)$ and $X_2^1(C)$ be choosen such that

\begin{description}
\item $ X_2^{C}(B) X_2^1(B)  X_2^1(A)$ is a vector basis of $A_2 \cap \langle A_1 C_1 \rangle_2$ over $\langle A_1 \rangle_2$.
\item $ X_2^{A}(B) X_2^1(B)  X_2^1(C)$ is a vector basis of $C_2 \cap \langle A_1 C_1 \rangle_2$
over $\langle C_1 \rangle_2$.
\end{description} 

Choose $X_2^2(A)$ and $X_2^2(C)$ such that

\begin{description}
\item $X_2(A) = X_2^{C}(B) X_2^1(B) X_2^2(B) X_2^1(A) X_2^2(A)$ is a vector basis of $A_2$ over $\langle A_1 \rangle_2$.
\item $X_2(C) = X_2^{A}(B) X_2^1(B) X_2^2(B)  X_2^1(C) X_2^2(C)$ is a vector basis of $C_2$ over $\langle C_1 \rangle_2$.
\end{description} 

Then $X_2(AC) = X_2^2(B) X_2^2(A) X_2^2(C)$ is a vector basis  of $\langle AC \rangle_2$ over $\langle A_1C_1 \rangle_2$. Hence $X_1(AC) X_2(AC)$ is a generating o-system of $\langle AC \rangle$. Then
\[ \langle AC \rangle \simeq F(X_1(AC) X_2(AC)) / I,\]
where $I$ is a subspace of $F(X_1(AC)_2$.
We get
\begin{enumerate}
\item $\delta_2(\langle AC \rangle = \mid X_1(AC) \mid + \mid X_2(AC) \mid - ldim(I)$

\item $\delta_2(A) = \mid X_1(B) \mid + \mid X_1(A)\mid + \mid X_2(A) \mid
- ldim(I \cap  F(X_1(B) X_1(A))$.

\item $\delta_2(C) = \mid X_1(B) \mid + \mid X_1(C) \mid + \mid X_2(C) \mid 
- ldim(I \cap F(X_1(B) X_1(C))$.

\item $\delta_2(B) = \mid X_1(B) \mid  + \mid X_2(B) \mid - ldim(I \cap F(X_1(B))$.

\item $o-dim_1(A) + o-dim_1(C) - o-dim_1(B) = o-dim_1(AC)$.

\item $o-dim_2(A) + o-dim_2(C) - o-dim_2(B) =$
\[ \mid X_2^{C}(B) \mid + \mid X_2^1(B) \mid + \mid X_2^2(B)\mid + 
\mid X_2^1(A) \mid  + \mid X_2^2(A) \mid + \] 
\[ \mid X_2^{A}(B) \mid + \mid X_2^1(B) \mid + \mid X_2^2(B)\mid + 
\mid X_2^1(C) \mid + \mid X_2^2(C) \mid - \] 
\[\left( \mid X_2^{C}(B) \mid +\mid X_2^{A}(B) \mid + \mid X_2^1(B) \mid + \mid X_2^2(B)\mid + \mid X_2^0(B) \mid \right) = \] 
\[ \mid X_2^1(B) \mid + \mid X_2^2(B)\mid + 
\mid X_2^1(A) +\mid X_2^2(A) \mid + 
\mid X_2^1(C) \mid +\mid X_2^2(C)  \mid - \mid X_2^0(B) \mid = \] 
\[\mid X_2(AC) \mid + \mid X_2^1(B)  X_2^1(A) X_2^1(C) \mid  - \mid X_2^0(B) \mid =\]
\[o-dim_2(AC) + \mid X_2^1(B)   X_2^1(A) X_2^1(C)\mid - \mid X_2^0(B) \mid. Hence \]

\item $\delta_2(A) + \delta_2(C) - \delta_2(B) = \delta_2(AC) + \mid X_2^1(B)  X_2^1(A) X_2^1(C)\mid - \mid X_2^0(B) \mid +$
\[\mid I \mid - \mid I \cap F(X_1(B) X_1(A))\mid - \mid I\cap F(X_1(B) X_1(C))\mid + 
\mid I \cap F(X_1(B)) \mid .\]
\end{enumerate}

Let $I^0$ be $ (I\cap F(X_1(B) X_1(A))) \oplus_{I \cap F(X_1(B))}  (I\cap F(X_1(B) X_1(C))$. Then $I^0$ is a subspace of $I$ and $I$ contains $\mid X_2^0(B) \mid$ - many elements $\psi_i = \psi_i(A) + \psi_i(C)$ with 
$\psi_i(A) \in F(X_1(B) X_1(A))$ and $\psi_i(C) \in F(X_1(B) X_1(C))$, that are linearly independent over $I^0$.
Hence

\[ \delta_2(AC) \le \delta_2(A) + \delta_2(C) - \delta_2(A \cap C) .\]

\end{proof} 

\noindent We summeraize the results of the computation above.

\begin{cor}\label{corsubdelta2}
As above $B = A \cap C$. We define:\\
$r_B = ldim((\langle A_1 C_1 \rangle_2 \cap B_2)/\langle A_1 \rangle_2 + \langle C_1 \rangle_2$,\\
$r_A = ldim((\langle A_1 C_1 \rangle_2 \cap A_2)/\langle A_1 \rangle_2 + \langle C_1 \rangle_2$,
\\
$r_C = ldim((\langle A_1 C_1 \rangle_2 \cap C_2)/\langle A_1 \rangle_2 + \langle C_1 \rangle_2$,
and\\
if $\langle A_1 C_1 \rangle = F(A_1 C_1)/I$, then $s = ldim(I/(I \cap  (F(A_1)) +  F(C_1))$. Then
\begin{enumerate}
\item $\delta_2\langle A C \rangle) - \delta_2(C) = \delta_2(A) - \delta_2(B) - (r_A + r_C - r_B + s).$
\item $\delta_2(\langle AC\rangle) = \delta_2(A) + \delta_2(C) - \delta_2(A \cap C)$ if and only if \\
$r_A = r_C = r_B = s = 0$ if and only if\\
$A_2$ and $C_2$ do not contain elements in $\langle A_1 C_1 \rangle \setminus
\langle A_1 \rangle_2 + \langle C_1 \rangle_2 $,\\
and if $\langle A_1 C_1 \rangle = F(A_1 C_1)/I$, then $I =(I \cap  (F(A_1)) +  F(C_1))$.
\end{enumerate}
\end{cor}

\begin{definition}\label{strong}
Let $M$ be a graded c-nilpotent Lie algebra over a finite field $K$.
Let $A$ be a substructure of $M$.  
\begin{enumerate} 
\item For $2 \le i \le c$ $A$ is i-strong in $M$ (short $A \le_i M$), if for all $2\le j \le i$ and all 
$A \subseteq C \subseteq M$ we have $\delta_j(A) \le \delta_j(C)$. 
\item We write $A \le M$, if $A \le_c M$. In this case we say $A$ is strong in $M$.
We use $A \le_1 M$ for $A \subseteq M$.

\end{enumerate}
\end{definition}

\noindent For the graded Lie algebras $M \in \K_3$, that we will consider later, we will have: \\
if $\delta_3(A) \le \delta_3(C)$ for all finite $C$ with $A \subseteq C \subseteq M$, then \\
$\delta_2(A) \le \delta_2(C)$ for all finite $C$ with $A \subseteq C \subseteq M$.

\begin{lemma}\label{rulesstrong}
Let $M$ be a graded c-nilpotent Lie algebra over a finite field $K$.
We assume for all (i-1)-strong substructures  of $M$ submodularity for $\delta_i$ is true. Furthermore for all  substructures
$A, C,E$ of $M$,where $A,C$ are finite, we have:
\begin{enumerate}
\item[(0)] If $A\subseteq E$, then there is some $A \subseteq A' \le_{i-1} E$ and $\delta_i(A') \le \delta_i(A)$.
\item[(2,i-1)] If $A \le_{i-1} C \le_{i-1} M$, then $A \le_{i-1} M$.
\item[(3,i-1)] If $A , C \le_{i-1} M$, then $A \cap C \le_{i-1} M$.
\end{enumerate}
Then for all  substructures
$A, C, E$ of $M$, where $A,C$ finite the following is true:
\begin{enumerate}
\item If $C \le_i M$ and $E \le_{i-1} M$, then $E \cap C \le_i E$.
\item If $A \le_i C \le_i M$, then $A \le_i M$.
\item If $A , C \le_i M$, then $A \cap C \le_i M$.

\end{enumerate}
\end{lemma}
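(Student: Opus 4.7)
The plan is to verify (1), (2), (3) in this order, reducing each to the submodularity hypothesis at level $i$ and using (0) to promote arbitrary witnesses to $(i-1)$-strong ones. For the levels $j\le i-1$ every inequality will drop out of the given hypotheses (2, i-1) and (3, i-1) once one observes the tautology that $X \le_i Y$ implies $X \le_{i-1} Y$, so throughout I focus on the case $j = i$.

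For (1), I would fix $E \cap C \subseteq D \subseteq E$, apply (0) inside $E$ to obtain $D \subseteq D' \le_{i-1} E$ with $\delta_i(D') \le \delta_i(D)$, and then upgrade this to $D' \le_{i-1} M$ via (2, i-1). Since $C \le_i M$ forces $C \le_{i-1} M$, the submodularity hypothesis applies to the pair $(D', C)$ and gives
\[ \delta_i(\langle D' C \rangle) \le \delta_i(D') + \delta_i(C) - \delta_i(D' \cap C). \]
The set-theoretic identity $D' \cap C = E \cap C$ follows because $D' \subseteq E$ and $D' \supseteq D \supseteq E \cap C$. Combining it with $\delta_i(C) \le \delta_i(\langle D' C \rangle)$, which is exactly what $C \le_i M$ tells us about the extension $C \subseteq \langle D' C \rangle \subseteq M$, one reads off $\delta_i(E \cap C) \le \delta_i(D') \le \delta_i(D)$, as desired.

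For (2), given $A \subseteq D \subseteq M$, I would apply (0) inside $M$ to pick $D \subseteq D' \le_{i-1} M$ with $\delta_i(D') \le \delta_i(D)$, then invoke the already-established (1) on $D'$ and $C$ to obtain $D' \cap C \le_i D'$. Because $A \subseteq D' \cap C \subseteq C$ and $A \le_i C$, this yields $\delta_i(A) \le \delta_i(D' \cap C) \le \delta_i(D') \le \delta_i(D)$. Statement (3) then drops out: applying (1) with $E := C$ (legal because $C \le_i M$ implies $C \le_{i-1} M$) gives $A \cap C \le_i C$, and then (2) applied to the tower $A \cap C \le_i C \le_i M$ yields $A \cap C \le_i M$.

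The main obstacle, and indeed the only genuinely delicate point, is the set-theoretic identification $D' \cap C = E \cap C$ in step (1); without it, submodularity would bound $\delta_i$ on a strictly smaller intersection and the chain of inequalities would break. Everything else is organizational, ensuring that each structure to which we apply the submodularity hypothesis has previously been shown to be $(i-1)$-strong in $M$.
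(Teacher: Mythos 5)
Your proof is correct and follows essentially the same route as the paper's: part (1) is the paper's argument almost verbatim (promote $D$ to an $(i-1)$-strong $D' \le_{i-1} E$ via (0), upgrade to $D' \le_{i-1} M$ via (2,i-1), apply submodularity to the pair $D', C$, and exploit $D' \cap C = E \cap C$ together with $C \le_i M$). The only deviation is in part (2), where you reuse the already-established part (1) applied to $D'$ and $C$ instead of invoking submodularity a second time as the paper does; this is a harmless, slightly cleaner reorganization of the same mechanism.
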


\begin{proof}

ad 1) By assumption $C,E \le_{i-1} M$. Choose any $D$ such that $E \cap C \subseteq D \subseteq E$. By (3,i-1)  we have $E \cap C \le_{i-1} M$. By (0) we have some $D'$, such that $D \subseteq D' \le_{i-1} E$ and $\delta_i (D') \le \delta_i (D)$. By (2,i-1)
we get $D' \le_{i-1} M$. By submodularity for (i-1)-strong substructures of $M$ we have
\[ \delta_i(\langle D' C \rangle) \le \delta_i(D') + \delta_i(C) - \delta_i(D' \cap C) \]

Note $D' \cap C = D \cap C = E \cap C$. Hence
\[      0 \le \delta_i(\langle D'C \rangle) -\delta_i(C) \le \delta_i(D') - \delta_i(E \cap C) ,\]
since $C\le_i M$. It follows $\delta_i(E \cap C) \le \delta_i(D') \le \delta_i(D)$ for all $D$ between $E \cap C$ and $E$.\\

ad 2) By (2,i-1) we have $A \le_{i-1} M$. Also $C \le_{i-1} M$.  Consider $A \subseteq E \subseteq M$. If $E \subseteq C$ or $C \subseteq E$, then $\delta_i(A) \le \delta_i(E)$ or $\delta_i(A) \le \delta_i(C) \le \delta_i(E)$ respectively. Otherwise we apply (0) and obtain $E \subseteq E' \le_{i-1} M$ with  $\delta_i(E') \le  \delta_i(E)$.  By submodularity
of $\delta_i$ for (i-1)-strong substructures
\[ \delta_i(\langle E'C \rangle) \le \delta_i(E') + \delta_i(C) - \delta_i(E' \cap C) .\]
By $C \le_i M$ and since $\delta_i(A) \le \delta_i(E' \cap C)$
\[ 0 \le \delta_i(E'C) - \delta_i(C) \le \delta_i(E') - \delta_i(E' \cap C) \le \delta_i(E') - \delta_i(A).\] 
Hence $\delta_i(A) \le \delta_i(E') \le \delta_i(E)$.\\

ad 3) By 1) we have $A\cap C \le_i A \le_i M$ and by 2) $A \cap C \le_i M$.

\end{proof}

\noindent Note 1-strong substructures are substructures. Submodularity for $\delta_2$ for substructures is shown in
Lemma \ref{subdelta2}. Hence

\begin{cor}\label{2.13}
 For all  substructures
$A, C, E$ of $M$, where $A,C$ are finte the following is true:
\begin{enumerate}
\item If $C \le_2 M$ and $E \subseteq M$, then $E \cap C \le_2 E$.
\item If $A \le_2 C \le_2 M$, then $A \le_2 M$.
\item If $A , C \le_2 M$, then $A \cap C \le_2 M$.

\end{enumerate}
\end{cor}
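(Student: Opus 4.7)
The plan is to derive Corollary \ref{2.13} as a direct instance of Lemma \ref{rulesstrong} at the index $i=2$. So the task reduces to checking that every hypothesis of Lemma \ref{rulesstrong} is available in the present setting, and then reading off the three conclusions.

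First I would unwind the meaning of $(i-1)$-strong for $i=2$. By Definition \ref{strong}, $A\le_1 B$ simply means $A$ is an $L$-substructure of $B$. With this identification in place, the three assumptions (0), (2,i-1), (3,i-1) listed in Lemma \ref{rulesstrong} become completely trivial: for (0) take $A'=A$, since $A\subseteq E$ already gives $A\le_1 E$ and $\delta_2(A')=\delta_2(A)$; (2,1) is the transitivity of ``being a substructure''; and (3,1) is the obvious fact that the intersection of two $L$-substructures is again an $L$-substructure (the operations $+,-,0$, the scalar multiplications, the projections $pr_i$ and the Lie bracket all restrict to $A\cap C$).

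Next I would verify the submodularity hypothesis, which for $i=2$ requires submodularity of $\delta_2$ on $(i-1)$-strong, i.e.\ arbitrary, substructures of $M$. This is precisely the content of Lemma \ref{subdelta2}. With all hypotheses of Lemma \ref{rulesstrong} verified at $i=2$, its three conclusions give exactly the three clauses of Corollary \ref{2.13}: conclusion (1) of the lemma, applied with $E\le_1 M$ any substructure, yields $E\cap C\le_2 E$ whenever $C\le_2 M$; conclusion (2) gives the transitivity $A\le_2 C\le_2 M\Rightarrow A\le_2 M$; conclusion (3) gives the closure of $\le_2$ under intersections.

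There is no real obstacle here; the only thing to be careful about is the identification $\le_1=\subseteq$, so that the bootstrapping hypotheses of Lemma \ref{rulesstrong} reduce to tautologies and the submodularity input that must be invoked is the general (non-strong) submodularity proved in Lemma \ref{subdelta2}. In other words, Corollary \ref{2.13} is the base case of an induction that Lemma \ref{rulesstrong} is designed to run, and writing the proof amounts to little more than citing the two previous results.
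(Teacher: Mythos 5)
Your proposal is correct and coincides with the paper's own derivation: the text immediately preceding the corollary notes that $1$-strong substructures are just substructures and that submodularity of $\delta_2$ is Lemma \ref{subdelta2}, and then reads off the three clauses from Lemma \ref{rulesstrong} at $i=2$. Your additional check that hypotheses (0), (2,1), (3,1) become tautologies under the identification $\le_1=\subseteq$ is exactly the intended (implicit) content.
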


Let $M$ be $F(X)/\langle \langle Y \rangle \rangle$, where $X$ is a generating o-system with 
$o-dim_i(M) = \\
\mid X_i\mid$ and $Y$ is an ideal basis of $ker(f)$, where $f$ is as above $f: F(X) \rightarrow M$, $f$ is onto, and $f(X)$ is a generating o-system of $M$.\\
Let $M'$ be $M/M_{i+1} \oplus \ldots \oplus M_c$ and $\tau$ the canonical homomorphism of $M$ onto $M'$. Then $\tau$ is one-to-one on $M_1 \oplus \ldots \oplus M_i$ and $ker(\tau) = M_{i+1}\oplus \ldots \oplus M_c$.

\begin{lemma}\label{canpair}
Assume $M \subseteq N$ and $F(X),ker(f)$ is a canonical pair for $N$ as above. Then there is a subalgebra
$H$ of $F(X)$, such that $H,H\cap ker(f)$ is a canonical pair for $M$.
\end{lemma}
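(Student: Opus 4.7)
The plan is to lift a generating o-system of $M$ into $F(X)$ through $f$ and take $H$ to be the subalgebra it generates. Start by fixing a generating o-system $V = V_1 \cup \ldots \cup V_c$ of $M$ with $V_i \subseteq M_i$. Because $W$ generates $N$ and $f$ preserves grading, the restriction $f: F(X)_i \to N_i$ is onto for each $i$. Hence for every $v \in V_i \subseteq N_i$ we may pick a preimage $z_v \in F(X)_i$, and we set $Z_i = \{z_v : v \in V_i\}$, $Z = \bigcup_i Z_i$, and $H = \langle Z \rangle$.

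The crucial step is to check that $Z$ is an o-system in $F(X)$, which I do by induction on $i$. Suppose $Z_1 \cup \ldots \cup Z_{i-1}$ is already an o-system and consider a relation $\sum_j r_j z_{v_j} = h$ in $F(X)_i$ with $z_{v_j} \in Z_i$ and $h \in \langle Z_1, \ldots, Z_{i-1} \rangle_i$. Applying $f$ and using that $f$ sends $z_v \mapsto v$, I obtain
\[ \sum_j r_j v_j = f(h) \in \langle V_1, \ldots, V_{i-1} \rangle . \]
Since $V$ is an o-system in $M$, and the subalgebra generated by $V_1 \cup \ldots \cup V_{i-1}$ in $M$ coincides with the one generated in $N$ (as $M$ is a subalgebra), all the $r_j$ must vanish. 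This is exactly the linear independence of $Z_i$ over $\langle Z_1, \ldots, Z_{i-1} \rangle$. By Corollary \ref{free}, $H$ is then free on $Z$.

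It remains to observe that $f|_H : H \to M$ is surjective (because $f(Z) = V$ generates $M$), with kernel $H \cap \ker(f)$, and that by construction $|Z_i| = |V_i|$ for each $i$. Thus $(H, H \cap \ker(f))$ fits the data of Definition \ref{2.7} relative to the generating o-system $V$ of $M$: it is a canonical pair for $M$. The main obstacle I anticipate is the verification that the lift $Z$ is an o-system in $F(X)$; the argument depends on carefully transferring the o-system property of $V$ through $f$, which in turn uses that $M$ is a substructure of $N$, so the subalgebras generated by the lower-degree pieces of $V$ in $M$ and in $N$ agree.
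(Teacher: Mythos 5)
Your proof is correct and follows exactly the paper's route: the paper's own proof simply chooses a generating o-system $Z$ for $M$, takes an $f$-preimage $Z^0$ in $F(X)$, and sets $H = \langle Z^0 \rangle$. The only difference is that you spell out the (correct, and in the paper implicit) verification that the lifted set is an o-system of $F(X)$, so that $H$ is free by Corollary \ref{free}.
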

\noindent Note that $H$ is free by Theorem\ref{sirsovwitt}.
\begin{proof}
Choose an generating o-system $Z$ for $M$ and let $Z^0$ be a $f$-preimage of $Z$ in $F(X)$. Let $H$ be the substructure of $F(X)$ generated by $Z^0$.
\end{proof}   

\begin{definition}
If $M \subseteq N$, then $X = \cup_{1 \le i \le c} X_i$ is an o-system for $N$ over $M$,if 
$U_i(X_i)$,
$X_1$ is minimal with $\langle M_1 X_1 \rangle_1 = N_1$ and $X_i$ is minimal with $\langle M X_1 \ldots X_i \rangle_i = N_i$.
Then $o-dim(N/M) = \sum_{1 \le i\le c} \mid X_i \mid$ and $o-dim_i(N/M) = \mid X_i \mid$. \\
In the context of Lemma \ref{canpair} $Y = \cup_{2 \le i \le c} Y_i$ is an ideal basis of $N$ over $M$, if $Y_i$ is a 
vector space basis of  $F(X)_i \cap ker(f)$ over $F(X)_i \cap (\langle \langle(ker(f) \cap H), Y_2, \ldots , Y_{i-1} \rangle \rangle)$.
Then $ideal-dim_i(N/M) = \mid Y_i \mid$ and $ideal-dim(N/M) = \sum_{2 \le i \le c} ideal-dim_i(N/M)$.

\end{definition}

\noindent We extend Definition \ref{strong}

\begin{definition}\label{strong2}
Let $M$ be a graded c-nilpotent Lie algebra over a finite field. 
Let $A$ be a finite substructure.
\begin{enumerate}
\item[(3)] $A$ is strong in $M$ restricted by $k$ (short $A \le^k M$), if for all 
$A \subseteq C \subseteq M$ 
with  $\sum_{1 \le i \le c-1} o-dim_i(C/A) \le k$, we have $\delta(A) \le \delta(C)$. 
\end{enumerate}
\end{definition}

\noindent Note that $\delta(C) = \delta(\langle C_1, \ldots , C_{c-1} \rangle) + o-dim_c(C)$.

\begin{definition}
Let $\K_c$ be the class of all graded c-nilpotent Lie algebras $M$ over a  fixed finite field $K$ 
considered as $L$ - structures.  such that the following is true:\\
For $A \subseteq M$ we have $min\{o-dim(A),2\} \le \delta(A)$

\end{definition}

\begin{lemma}\label{nozerodivisors} 
Assume $M \in \K_c$.
Given $i + j \le c$  in $M$ holds
\[ \forall x y \left( U_i(x) \wedge U_j(y) \wedge x \neq 0 \wedge y \neq 0 \rightarrow 
([x,y] \neq 0 \vee \bigvee_{k \in K, k \neq 0} x = ky)\right) .\] 
\end{lemma}
\noindent We say that there are no homogeneous zero-divisors.
\begin{proof}
Assume $U_i(a)$, $U_j(b)$, $i + j \le c$ and $i < j $ or $i = j $ and $a$ and $b$ are linearly 
independent . By assumption $o-dim(\langle a, b \rangle) =\delta(\langle a, b \rangle) = 2$. Hence $[a,b] \not= 0$.
\end{proof}

\begin{definition} Let $\h_c$ be the subclass of all $M \in \K_c$ with $M = \langle M_1, \ldots,  M_{c-1}\rangle$.
$\K_c^{fin} $ and $\h_c^{fin}$ are the subclasses of finite sutructures.
\end{definition}

$\K_c$ and $\K_c^{fin}$ are closed under substructures.

\begin{cor}
\begin{enumerate}
\item Let $M$ be a graded 2-nilpotent Lie algebra ovr a finite field. $M \in \K_2$ if and only if 
for every $A = \langle A_1 \rangle \subseteq M$ with $lin-dim(A_1) \ge 2$ we have $\delta_2(A) \ge 2$.
\item Let $M$ be a graded 3-nilpotent Lie algebra over a finite field. $M \in \K_3$ if and only if 
for every $A = \langle A_1 A_2 \rangle \subseteq M$ with $o-dim(A) \ge 2$ we have  $\delta_3(A) \ge 2$.
\end{enumerate}
\end{cor}

%%%%%%%%%
%%%%%%
%%%%%%

%%%%%%%
%%%%%%%
%%%%%%%

%%%%%%
%%%%%%
%%%%%%

\section{Free amalgamation}

\noindent Let $\K$ be a class of structures as e.g. the class of all c-nilpotent graded Lie algebras over a fixed field, $\K_c$, or $\K_c^{fin}$.

\begin{definition} We define the amalgamation and the free amalgamation  for $\K$.

\begin{description}

\item[AP] {\em Amalgamation Property} Assume  $g_0: B \to A$ and $g_1: B \to C$ are embeddings for
$A, B, C \in \K$. Then there are some $D$ in $\K$ and embeddings $f_0: A \to D$ and $f_1: C \to D$ such that $f_0 \circ g_0 = f_1 \circ g_1$ for $B$.

\item[ APS] We have the strong amalgamation property  for $\K$ if in {\bf AP} 
$f_0(A) \cap f_1(C) = f_0 \circ g_0 (B) = f_1 \circ g_1(B)$ holds. 

\item[Free Amalgam] Let  $A, B, C, D \in \K$ and assume that $B$ is a common substructure of
$A$ and $C$. If $D$ is generated by $A$ and $C$ with $A \cap C = B$, then 
$D$ is the  free amalgam of $A$ and $C$ over $B$ (short $D = A \otimes_B C$) in $\K$, if for  all  homomorphisms
$f: A \to E$ and $g:C \to E$  into some $E \in \K$ with 
$f(b) = g(b)$ for $b \in B$ there is a homomorphism $h: D \to E$ that extends $f$ and $g$. 

\item[Closed] $\K$ is closed under free amalgamation, if for  $A,B,C \in \K$ and embeddings $g_0 : B \to A$ and $g_1 : B \to C$, there exists a  free amalgam $A' \otimes_{B'} C'$  in $\K$ and isomorphisms $f_0 : A \to A'$ and $f_1 : C \to C'$ , such that $f_0 \circ g_0(b) = f_1 \circ g_1(b)$ for $b \in B$ maps $B$ onto $B'$ . 
\end{description}

\end{definition}

\noindent The free amalgam is a strong amalgam by definition. The homomorphism $h: D \to E$ in the definition is unique, since $D$ is generated by $A$ and $C$.
Note that $A \otimes_B C$ is uniquely determined up to isomorphisms, if it exists. \\
We define as in \cite{Bau16}:
\begin{definition} For subsets $A, B, C$ in a structure $M$ we define
\[ A \unab_B C\] if and only if  
\[\langle A B C \rangle = \langle A B \rangle \otimes_{\langle B \rangle} \langle B C \rangle .\]
\end{definition}.

\noindent Let $L$ be countable.
K.Tent and M.Ziegler  defined a stationary independence relation for the investigation of automorphism groups in \cite{TZ12}.  We consider  finite subsets $A, B, C ,D$ of a $L$-structure $M$.

\begin{definition}\label{statind}  
A relation $A \ind_B C$ for finite subsets of $M$ is called  a stationary independence relation  in $M$ if it fulfils the following properties. 
\begin{description}
\item [Inv] {\em Invariance} $A\ind_B C$ depends only on the elementary type of $A,B,C$.

\item[Mon]{\em Monotonicity} $A \ind_B CD$ implies $A \ind_B C$ and $A\ind_{BC} D$.

\item[Trans] {\em Transitivity} $A \ind_B C$ and $A \ind_{BC} D$ imply $A \ind_B CD$.

\item[Sym] {\em Symmetry} $A \ind_B C$ if and only if $C\ind_B A$.

\item[Ex] {\em Existence} For $A, B, C$ there is some $A'$ in $M$ such that $\tp(A/B) = \tp(A'/B)$ and 
$A' \ind_B C$.

\item[Stat] {\em Stationarity}  If  $\tp(A/B) = \tp(A'/B)$, $ A \ind_B C$, and $A' \ind_B C$, then 
$\tp(A/BC) = \tp(A'/BC)$.
\end{description}
\end{definition}

\noindent In \cite{Bau16} the following is shown. Note that the graduation is essential for the amalgamation.

\begin{theorem}\label{Lieam1}
The class of  c-nilpotent graded Lie algebras over a field $K$ is closed under  free amalgamation. 
\end{theorem}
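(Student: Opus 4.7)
The strategy is to realize $A \otimes_B C$ as an explicit quotient of a free graded $c$-nilpotent Lie algebra. Pick a generating $(o)$-system $X^B = \bigsqcup_i X^B_i$ for $B$ and extend it, within $A$ and within $C$, to generating $(o)$-systems $X^A = X^B \sqcup Y^A$ of $A$ and $X^C = X^B \sqcup Y^C$ of $C$, with $Y^A$ and $Y^C$ disjoint. Definition~\ref{2.7} furnishes canonical presentations $A \cong F(X^A)/I^A$ and $C \cong F(X^C)/I^C$, and both ideals restrict to the same ideal $I^B$ on $F(X^B)$. Form the free graded $c$-nilpotent Lie algebra $F := F(X^B \sqcup Y^A \sqcup Y^C)$; by Corollary~\ref{free}, both $F(X^A)$ and $F(X^C)$ sit inside $F$ as free subalgebras. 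Set $J := \langle\langle I^A \cup I^C \rangle\rangle^F$ and $D := F/J$, equipped with the induced grading; then $D$ is a graded $c$-nilpotent Lie algebra over $K$.

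The universal property is immediate from the freeness of $F$: any pair of homomorphisms $A \to E$ and $C \to E$ agreeing on $B$ combines into a map defined on $X^B \sqcup Y^A \sqcup Y^C$, extends uniquely to $F \to E$, kills $I^A \cup I^C$ and hence $J$, and descends to a unique $D \to E$ extending both. The generation condition $D = \langle A, C\rangle$ is built into the construction.

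The main obstacle is to show that the natural images of $A$ and $C$ in $D$ are embeddings and intersect precisely in the image of $B$, which is equivalent to the three identities $J \cap F(X^A) = I^A$, $J \cap F(X^C) = I^C$, and $J \cap F(X^B) = I^B$. I plan to equip $F$ with the bigrading by $Y^A$- and $Y^C$-letter counts on its basic commutator basis, well-defined by Lemma~\ref{sirsovwitt} and Corollary~\ref{free}; since Lie brackets add bigrades, $J$ is bihomogeneous. The generators of $I^C$ split into a pure-$B$ part, which coincides with the corresponding part of $I^A$ and sits in bidegree $(0,0)$, and a part of strictly positive $Y^C$-degree. Consequently, any bihomogeneous component of $J$ of $Y^C$-degree $0$ can be produced only from generators of $I^A$ bracketed iteratively with elements of $F$ that themselves lie in $Y^C$-degree $0$, i.e. in $F(X^A)$; this yields $J \cap F(X^A) = \langle\langle I^A \rangle\rangle^{F(X^A)} = I^A$, and symmetrically for $C$. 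Restricting further to bidegree $(0,0)$ recovers $I^B$. The technical heart is this bidegree bookkeeping — verifying carefully that iterated brackets with elements of $F$ from mixed bidegrees cannot conspire to produce new elements inside $F(X^A)$ beyond those of $I^A$. An equivalent route would first form Shirshov's amalgamated free product $A *_B C$ in the category of all Lie algebras over $K$, where the embedding statement is classical, and then pass to $D := (A *_B C)/\Gamma_{c+1}$; the remaining verification that $\Gamma_{c+1}(A *_B C) \cap A = 0$ (and likewise for $C$) reduces to a graded degree count, using that $A$ and $C$ are themselves graded $c$-nilpotent.
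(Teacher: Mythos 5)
Your main argument has a genuine gap, and it sits exactly at what you call the ``technical heart''. The bidegree bookkeeping needs the generators of $I^C$ to split into relators of bidegree $(0,0)$ (hence in $I^B$) and relators all of whose monomials have strictly positive $Y^C$-degree. That is false in general: whenever a bracket involving a new generator of $C$ takes a value in $B$ (or in the part of $C$ generated in lower degrees), the corresponding relator mixes $Y^C$-degrees. Concretely, take $c=2$, let $B$ have basis $b\in B_1$, $b'\in B_2$ with $[b,b]=0$, and let $C=\langle B,y\rangle$ with $y\in C_1$ and $[y,b]=b'$. Then no generating o-system of $B$ even extends to one of $C$ (the element $b'$ becomes redundant over $\{b,y\}$), so your very first step already fails; and in any homogeneous presentation the relator $[y,b]-b'$ has nonzero $Y^C$-degree-$0$ component $-b'\notin I^B$. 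Consequently $J$ is not spanned by bihomogeneous components, and the natural substitute --- the retraction $Y^C\mapsto 0$, which is a Lie homomorphism $F\to F(X^A)$ --- sends $I^C$, and hence $J$, to an ideal of $F(X^A)$ that need not be contained in $I^A$. The identity $J\cap F(X^A)=I^A$ therefore does not follow from the stated bookkeeping. These ``action'' relators are precisely where the content of the theorem lies; the proof the paper relies on (in \cite{Bau16}, of which only the Major Case is reproduced here) handles them by adjoining one homogeneous generator $a$ at a time with $[a,b]\in B$ for all $b\in B$, building the amalgam explicitly as $B$ plus a free part with the bracket defined inductively via the Jacobi identity, and iterating; no purely multidegree argument replaces that step.

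Your parenthetical second route --- form the amalgamated free product $A*_BC$ in the category of all Lie algebras over $K$ and then truncate --- is much closer to a correct proof, since the embedding theorem for Lie-algebra free products with amalgamation over a field is indeed classical (via PBW and Cohn's theorem on free products of rings that are free modules over the common subring). But as written it also needs repair: the ideal to kill is $\bigoplus_{i>c}(A*_BC)_i$, not $\Gamma_{c+1}(A*_BC)$ (these differ as soon as generators sit in degrees $\ge 2$), and you must check that $A*_BC$, presented by homogeneous generators and relators, is graded with $A$ and $C$ as graded subalgebras, so that the truncation meets $A$ and $C$ trivially and the quotient retains the universal property within the class of graded $c$-nilpotent Lie algebras. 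With those corrections this would be a legitimate and shorter alternative to the iterative construction of \cite{Bau16}, at the price of importing the ungraded amalgamation theorem as a black box; in your write-up it is only a one-sentence aside and cannot carry the proof.
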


\begin{theorem}\label{Lieam2}
If $K$ is a finite field, then the \Frai limit $M_0$ of all finitely generated c-nilpotent graded Lie algebras 
exists. 
If $\C^{uni}$ is a monster model of $Th(M_0)$, then the free amalgam defines a stationary independence relation  $\unab$in $\C^{uni}$. 
\end{theorem}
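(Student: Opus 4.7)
The plan is to first establish existence of the Fraïssé limit, then verify the six clauses of Definition \ref{statind} one by one, using the universal property of the free amalgam together with the ultrahomogeneity of the limit.

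For existence, consider the class of all finitely generated $c$-nilpotent graded Lie algebras over the finite field $K$. Since $K$ is finite and any finitely generated $c$-nilpotent Lie algebra over $K$ is in fact finite, this class contains only countably many isomorphism types of finite structures and is closed under substructures. Joint embedding follows from Theorem \ref{Lieam1} by amalgamating two algebras over the trivial algebra $\langle 0 \rangle$, and the amalgamation property is Theorem \ref{Lieam1} itself. Standard Fraïssé theory then supplies the countable ultrahomogeneous limit $M_0$. Because the substructure generated by any finite set is finite, ultrahomogeneity translates, in a monster model $\C^{uni}$ of $\mathrm{Th}(M_0)$, into the statement that the type $\tp(A/B)$ of a finite tuple over a finite parameter set is determined by the isomorphism type of $\langle AB \rangle$ over $\langle B \rangle$; this dictionary is what I would use throughout the verification.

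With this in place, \textbf{Inv} and \textbf{Sym} follow immediately from the universal-property definition of $\otimes_{\langle B \rangle}$, which is intrinsically symmetric and depends only on the isomorphism type of its inputs. For \textbf{Mon} and \textbf{Trans} I would invoke the abstract fact that free amalgams compose: if $\langle ABCD \rangle = \langle AB \rangle \otimes_{\langle B \rangle} \langle BCD \rangle$, then a direct check of the universal property yields both $\langle ABC \rangle = \langle AB \rangle \otimes_{\langle B \rangle} \langle BC \rangle$ and $\langle ABCD \rangle = \langle ABC \rangle \otimes_{\langle BC \rangle} \langle BCD \rangle$, which is monotonicity; transitivity is the converse composition. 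For \textbf{Ex}, given finite tuples $A, B, C$ in $\C^{uni}$, form inside the class the free amalgam $\langle AB \rangle \otimes_{\langle B \rangle} \langle BC \rangle$ granted by Theorem \ref{Lieam1}; by universality of $M_0$ this amalgam embeds into $M_0$ over $\langle BC \rangle$, and realizing the image of $A$ inside $\C^{uni}$ yields $A'$ with $\tp(A'/B) = \tp(A/B)$ and $A' \unab_B C$. For \textbf{Stat}, suppose $\tp(A/B) = \tp(A'/B)$ together with $A \unab_B C$ and $A' \unab_B C$. The type equality supplies an $L$-isomorphism $\langle AB \rangle \to \langle A'B \rangle$ fixing $\langle B \rangle$ which, combined with the identity on $\langle BC \rangle$, extends by the universal property and uniqueness up to isomorphism of the free amalgam to an $L$-isomorphism $\langle ABC \rangle \to \langle A'BC \rangle$ fixing $BC$ pointwise. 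Ultrahomogeneity lifts this to an automorphism of $\C^{uni}$ fixing $BC$, whence $\tp(A/BC) = \tp(A'/BC)$.

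The main obstacle is the bookkeeping required to pass between elementary types in $\C^{uni}$ and isomorphism data on generated $L$-substructures; once this correspondence is pinned down via ultrahomogeneity and the finiteness of finitely generated members of the class, each axiom of Definition \ref{statind} reduces to a formal manipulation of the universal property of $\otimes_{\langle B \rangle}$, so the substantive mathematical content is entirely pushed into Theorem \ref{Lieam1}.
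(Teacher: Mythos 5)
The paper does not actually prove this theorem: both Theorem \ref{Lieam1} and Theorem \ref{Lieam2} are imported from \cite{Bau16}, so there is no in-text argument to compare yours against. Your reconstruction follows the standard route and is essentially sound: the class is uniformly locally finite over a finite field (a $c$-nilpotent graded Lie algebra on $n$ homogeneous generators is a quotient of the finite free one), so HP, JEP (amalgamation over $\langle 0\rangle$) and AP via Theorem \ref{Lieam1} give the limit, quantifier elimination identifies types in $\C^{uni}$ with isomorphism types of generated substructures, and {\bf Inv}, {\bf Sym}, {\bf Ex}, {\bf Stat} then reduce to the universal property plus the extension axioms, exactly as you say.

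The one place where you are too quick is {\bf Mon} (and the strong-intersection conditions such as $\langle ABC\rangle\cap\langle BCD\rangle=\langle BC\rangle$). These do \emph{not} follow from ``a direct check of the universal property'': to show $\langle ABC\rangle=\langle AB\rangle\otimes_{\langle B\rangle}\langle BC\rangle$ from $\langle ABCD\rangle=\langle AB\rangle\otimes_{\langle B\rangle}\langle BCD\rangle$, you are handed homomorphisms $f:\langle AB\rangle\to E$ and $g:\langle BC\rangle\to E$, and $g$ need not extend to $\langle BCD\rangle$, so the universal property of the large amalgam cannot be invoked directly. You must either use the explicit construction of the free amalgam (the iterated Major Case and the basis descriptions as in Corollary \ref{c=3,a}), or run a composite argument: form $P=\langle AB\rangle\otimes_{\langle B\rangle}\langle BC\rangle$ abstractly (closure), amalgamate $P$ with $\langle BCD\rangle$ over $\langle BC\rangle$, apply {\bf Trans} (which, unlike {\bf Mon}, is formal) and uniqueness of the free amalgam to identify the result with $\langle ABCD\rangle$, and conclude that the canonical surjection $P\to\langle ABC\rangle$ is injective. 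This is precisely the content of \cite{Bau16} that goes beyond bare existence of free amalgams, and the paper relies on it silently (e.g.\ in Lemma \ref{4.15}); your sketch should flag it rather than fold it into Theorem \ref{Lieam1}.
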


\noindent Then every graded Lie algebra we consider is a substructure in $\C^{uni}$, and 
if $B\subseteq A$,  then every embedding of $B$ in $\C^{uni}$ can be extended to $A$. 
Furthermore $tp(A/B)$ is completely determined by it's quantifier-free part.
That means we can use the properties of a stationary independence relation for the free amalgam in the class of c-nilpotent graded Lie algebras over $K$.
\begin{definition}
Let $M$ be a graded c-nilpotent Lie algebra. By a vector space basis of $M$ we mean 
the union of the  vector space bases $X_i$ of all
subspaces $M_i$ of the graduation.
\end{definition}

\noindent In the proof of the Theorem \ref{Lieam1} we prove the following Major Case. The final construction of the free amalgam is an iteration of it.

\begin{description}
\item[Major  Case] Assume $A = \langle B a \rangle$ and $C = \langle B e \rangle$ with $U_i(a)$ , $U_j(e)$, and $i,j < c$. 
Furthermore we have $[a,b] \in B$ and $[e,b] \in B$ for $b \in B$. 
Then the free amalgam $D$ of $A$ and $C$ over $B$ exists. Let $X^B$ be a homogeneous vector space basis of $B$, Let $Y$ be a vector space basis of the free graded 
c-nilpotent Lie algebra freely generated by $a$ and
$e$.  We can assume that $Y$ is a set of basic monomials over $a,e$. Then $X^B Y$ is a vector space basis of $D$ and the Lie multiplication is inductively defined by the Jacobi identity, using $[a,b] \in B$ and $[e,b] \in B$ for $b \in B$. 
\end{description}

\begin{cor}\label{c=2,z}
Let $C, B , A$ be finite graded 2-nilpotent Lie algebras over a field. Let $X^B = X_1^B X_2^B $ be a vector space basis of $B$, $X^{A} =  X_1^{A} X_2^{A} $ a vector space basis of $A$ over $X^B$, and $X^C =  X_1^C X_2^C $ a vector space basis of $C$ over $X^B$. Then 
\[X^B X^C X^{A}  \{ [x,y] : x \in X_1^C , y \in X_1^{A} \} \]
is a vector space basis of $A \otimes_B C$.
\end{cor}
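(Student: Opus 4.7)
The plan is to iterate the Major Case from the proof of Theorem \ref{Lieam1}, taking advantage of the fact that for $c=2$ the component $M_2$ is central. As a preliminary reduction, observe that if $z$ is a central element of $A$ not in $B$, then in the free amalgam $A \otimes_B C$ the element $z$ remains central and linearly independent of $C$, so it simply contributes itself to the basis and creates no new brackets; this is a direct check against the universal property of the free amalgam. The same holds symmetrically for central elements of $C$, and by iteration for finite sets of such elements. Applying this to $X_2^A \cup X_2^C$ (all central since $c=2$), we reduce without loss of generality to the case $X_2^A = X_2^C = \emptyset$; the degree-$2$ generators are appended to the basis at the end.

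With this reduction, I induct on $n := |X_1^C|$. The case $n = 0$ forces $C = B$, where the claim is trivial. For the inductive step, pick $e \in X_1^C$ and set $C' = \langle B, X_1^C \setminus \{e\} \rangle$, so that $C = \langle C', e \rangle$ with $e$ a single degree-$1$ generator. Associativity of free amalgamation, which follows directly from the universal property together with strongness of the amalgam (so that $A \cap C' = B$ inside $A \otimes_B C'$), yields
\[
A \otimes_B C \;=\; (A \otimes_B C') \otimes_{C'} C.
\]
By induction, $D' := A \otimes_B C'$ has basis
\[
X^B \cup X^A \cup (X^C \setminus \{e\}) \cup \{[x, y] : x \in X_1^C \setminus \{e\},\ y \in X_1^A\},
\]
from which one reads off that the relative degree-$1$ basis of $D'$ over $C'$ is exactly $X_1^A$.

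Now apply the Major Case to the amalgam $D' \otimes_{C'} C$ with the single new generator $e$ of degree $1 < c$. The technical hypothesis ``$[e, b] \in C'$ for all $b \in C'$'' is automatic here, since $[e, C'_1] \subseteq C'_2$ and $[e, C'_2] = 0$ as $c=2$. The Major Case then tells us that the basis of $D' \otimes_{C'} C$ is the basis of $D'$ augmented by $e$ together with the brackets $[e, y]$ for $y$ ranging over the relative degree-$1$ basis of $D'$ over $C'$, which we just identified as $X_1^A$. Combining with the inductive description of $D'$ produces exactly the set claimed by the corollary. The only nontrivial bookkeeping step is verifying that the relative degree-$1$ basis of $D'$ over $C'$ is indeed $X_1^A$, which is precisely what the inductive hypothesis records; I expect no conceptual obstacle beyond this.
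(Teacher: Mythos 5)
Your strategy --- absorb the central degree-$2$ generators into the base, then build the amalgam by induction on the generators using transitivity of the free amalgam and the Major Case --- is exactly the argument the paper intends here: the corollary is stated without proof as an iteration of the Major Case, and the paper carries out precisely this kind of double iteration explicitly for the $c=3$ analogue, Corollary \ref{c=3,a}. Your preliminary reduction to $X_2^A=X_2^C=\emptyset$ is sound, and after it your claim that $[e,C_1']\subseteq C_2'$ does hold, since the reduction forces $C_2=B_2\subseteq C_2'$, so $C=C'\oplus Ke$ really is a one-dimensional extension of $C'$ in the sense required by the Major Case. The decomposition $A\otimes_B C=(A\otimes_B C')\otimes_{C'}C$ is likewise justified by the stationary independence relation of Theorem \ref{Lieam2} (cf.\ Lemma \ref{4.15}).

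The step that does not go through as written is the last one. The Major Case requires \emph{both} factors to be one-dimensional extensions of the common base with all brackets falling back into the base, whereas $D'=A\otimes_B C'$ is a multi-generator extension of $C'$ whose degree-$2$ part in general strictly contains $C_2'$ (by your own inductive hypothesis it contains the mixed brackets $[x,y]$ with $x\in X_1^C\setminus\{e\}$, $y\in X_1^A$, independently over $C_2'=B_2$). So ``apply the Major Case to $D'\otimes_{C'}C$'' invokes a strictly stronger statement than the one quoted, and the point you flag as the only nontrivial bookkeeping (identifying the relative degree-$1$ basis of $D'$ over $C'$) is not where the real work lies. The repair is mechanical but must be made: iterate further over $X_1^A$, amalgamating $\langle C'\,y_1\rangle$ with $\langle C'\,e\rangle$, absorbing the single newly created bracket $[y_1,e]$ --- central, since $c=2$ --- into the base, then adjoining $y_2$, and so on, so that each elementary step genuinely satisfies the hypotheses of the Major Case. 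This is the same bookkeeping the paper performs in the proof of Corollary \ref{c=3,a}; the gap is thus one of justification rather than of idea, and the claimed basis is unaffected.
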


\noindent Using  Corollary \ref{corsubdelta2} we obtain:

\begin{cor}\label{c=2,y}
Let $M$ be a 2-nilpotent Lie algebra and
let $B \subseteq A, B \subseteq  C$ be subalgebras of $M$ with $A \cap C = B$. Then 
the following are equivalent:
\begin{enumerate}
\item $\langle AC \rangle = A \otimes_B C$. 
\item $\delta(\langle A C\rangle) - \delta(C) = \delta(A) - \delta(B)$. 
\item There are no elements of 
$\langle A_1 C_1 \rangle \setminus \langle  A_1 \rangle + \langle C_1 \rangle$ in $A_2$ or in $C_2$. 
Furthermore if $\langle A_1 C_1 \rangle = F(A_1 C_1)/I$, then $I \subseteq \langle A_1 \rangle^{F(A_1 C_1)} + \langle C_1 \rangle^{F(A_1 C_1)} $.
\end{enumerate}
\end{cor}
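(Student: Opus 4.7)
Plan. The equivalence (2) $\Leftrightarrow$ (3) follows directly from Corollary \ref{corsubdelta2}: for a 2-nilpotent Lie algebra $\delta = \delta_2$, and the two clauses in (3) are precisely the equality conditions stated there (in degree 2 we have $\langle A_1 \rangle_2 + \langle C_1 \rangle_2 = (\langle A_1 \rangle + \langle C_1 \rangle) \cap M_2$, so the ``no new elements'' clause and the ideal-containment clause match up verbatim). Thus the substantive content is (1) $\Leftrightarrow$ (2).

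For (1) $\Leftrightarrow$ (2), I would first show that the free amalgam $A \otimes_B C$ itself always satisfies condition (3), using the explicit vector-space basis given by Corollary \ref{c=2,z}. From that basis one reads off
\[
(A \otimes_B C)_2 \;=\; B_2 \;\oplus\; \langle X_2^{A}\rangle \;\oplus\; \langle X_2^{C}\rangle \;\oplus\; \langle [x,y] : x \in X_1^C,\, y \in X_1^{A}\rangle
\]
as a direct sum with no collapse, and the defining ideal of $A \otimes_B C$ in the free 2-nilpotent Lie algebra $F(A_1 + C_1)$ is exactly $I_A + I_C$, where $I_A$ and $I_C$ are the defining ideals of $A$ and $C$ respectively. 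Applying Corollary \ref{corsubdelta2} inside $A \otimes_B C$ then yields $\delta(A \otimes_B C) = \delta(A) + \delta(C) - \delta(B)$.

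Next, the universal property of the free amalgam, applied to the two inclusions $A \hookrightarrow M$ and $C \hookrightarrow M$ (which agree on $B$), produces a surjective homomorphism $\phi : A \otimes_B C \twoheadrightarrow \langle AC \rangle$, and (1) is exactly the statement that $\phi$ is an isomorphism. Since the generating $o$-systems coincide, both $A \otimes_B C$ and $\langle AC \rangle$ have canonical pairs over the same free graded 2-nilpotent Lie algebra, with nested defining ideals, and the gap between the two $\delta$-values equals the dimension of the kernel of $\phi$. Under (2) this gap is zero, forcing $\phi$ to be injective, hence (1). The converse (1) $\Rightarrow$ (2) is immediate from the value of $\delta(A \otimes_B C)$ computed in the previous step.

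The main obstacle is the degree-2 bookkeeping in the middle step: verifying that the basis from Corollary \ref{c=2,z} really witnesses the asserted direct-sum decomposition (no element of $X_2^{A}$ or $X_2^{C}$ is hiding in $\langle A_1 C_1\rangle_2$ beyond $\langle A_1\rangle_2 + \langle C_1\rangle_2$) and that the canonical pair for the free amalgam has defining ideal precisely $I_A + I_C$, not something larger. Once that bookkeeping is in place, the chain of equivalences closes mechanically via Corollary \ref{corsubdelta2}.
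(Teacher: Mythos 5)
Your proposal is correct, and it overlaps with the paper's proof in two of the three steps: the equivalence (2) $\Leftrightarrow$ (3) is taken verbatim from Corollary \ref{corsubdelta2}, and the implication out of (1) is read off from the explicit basis of the free amalgam in Corollary \ref{c=2,z}, exactly as the paper does. Where you diverge is in closing the cycle. The paper proves (3) $\Rightarrow$ (1) directly: it takes homomorphisms $f,g$ into an arbitrary $E$ agreeing on $B$, uses the two clauses of (3) to see that the induced map on $F(X_1^B X_1^A X_1^C)$ descends to $\langle A_1 C_1\rangle$, and extends it to $\langle AC\rangle$ --- a hands-on verification of the universal property. You instead prove (2) $\Rightarrow$ (1): you invoke the abstract existence of $D = A\otimes_B C$ (Theorem \ref{Lieam1}), obtain the canonical surjection $\phi: D \twoheadrightarrow \langle AC\rangle$ fixing $A$ and $C$, note that $\phi$ is injective in degree $1$ (since $A\cap C = B$ in both structures), so $\ker\phi\subseteq D_2$ and $\delta(D)-\delta(\langle AC\rangle)=\dim\ker\phi$ (clearest via $\delta_2(M)=\dim M_1+\dim M_2-\binom{\dim M_1}{2}$), and conclude that equality of $\delta$'s forces $\phi$ to be an isomorphism. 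Both routes are sound. Yours buys a shorter, more conceptual finish at the cost of leaning on the existence theorem for the free amalgam and on the degree-2 bookkeeping you flag (that the kernel of $F(A_1C_1)\to D$ is exactly $I_A+I_C$ and that the mixed commutators are independent --- both of which do follow from Corollary \ref{c=2,z}); the paper's version is more self-contained and makes condition (3) do the work explicitly, which is the form in which the criterion is reused later (e.g.\ in Corollary \ref{char}).
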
 

\begin{proof} (2) and (3) are equivalent by Corollary \ref{corsubdelta2}.
If $\langle AC \rangle = A \otimes_B C$, then by Corollary \ref{c=2,z} we obtain (3).
Finally we show that (3) implies (1). Let $f$ be a homomorphism of $A$ into $E$ and $g$ of $C$ into $E$ such that $f$ and $g$ coincide over $B = A \cap C$. As above let $X^B = X_1^B X_2^B $ be a vector space basis of $B$, $X^{A} =  X_1^{A} X_2^{A} $ a vector space basis of $A$ over $X^B$, and $X^C =  X_1^C X_2^C $ a vector space basis of $C$ over $X^B$. Then 
by the  conditions of (3) $\{[x,y]: x \in X_1^{A}, y \in X_1^C \}$ is linearly independent over $\langle A_1 \rangle_2 + \langle C_1 \rangle_2$. Therefore and by the second condition
the homomorphism $h$ of $F(X_1(B) X_1(A) X_!(C))$ into $E$, given by $f$ and $g$, induces a homomorphism of $\langle A_1 C_1 \rangle$ into $E$. Then it is no problem to extend this to 
$\langle A C \rangle$.
\end{proof}

\begin{cor}\label{c=3,a}
Let $C, B , A$ be finite graded 3-nilpotent Lie algebras over a field with $A \cap C= B$. Then the following are equivalent:
\begin{enumerate}
\item $\langle A C \rangle = A \otimes_B C$.
\item  Let $X^B = X_1^B X_2^B X_3^B$ be a vector space basis of $B$, $X^{A} =  X_1^{A} X_2^{A} X_3^{A}$ a vector space basis of $A$ over $X^B$, and $X^C =  X_1^C X_2^C X_3^C$ a vector space basis of $C$ over $X^B$. Assume $X_1^{A}$ and $X_1^C$ are ordered. Then the following is a vector space basis of $\langle  A C \rangle$:
\[X^B X^C X^{A}  \{ [x,y] : x \in X_1^C , y \in X_1^{A} \}  \{ [w,z] : w \in X_2^C , z \in X_1^{A}  \:or \:
  w \in X_2^{A} , z \in X_1^C \}\]
\[\{ [[x,y],z] : x = z \in X_1^C, y \in X_1^{A} \: or \] \[ x = z \in X_1^{A}, y \in X_1^C  \: or \:
x < z \in X_1^C , y \in X_1^{A} \: or  \: x < z \in X_1^{A} , y \in X_1^C \}\]
\end{enumerate}
\end{cor}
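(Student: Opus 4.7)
The plan is to prove the corollary by paralleling the two-step strategy of Corollaries \ref{c=2,z} and \ref{c=2,y}, extending it to the 3-nilpotent setting. The main engine will be the Major Case of Theorem \ref{Lieam1}, applied iteratively.

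For (1) $\Rightarrow$ (2), I would construct $A \otimes_B C$ inductively from $B$. The elements of $X_1^A, X_1^C$ (degree $1 < 3$) are adjoined in pairs using the Major Case, then the elements of $X_2^A, X_2^C$ (degree $2 < 3$), and finally the elements of $X_3^A, X_3^C$, which lie in the centre of a 3-nilpotent graded Lie algebra and therefore extend the basis freely without creating new brackets. At each Major-Case step, the basis of the new amalgam is the old basis together with the newly adjoined generators and the monomials they form freely with previously adjoined generators from the opposite side. Collecting all these contributions produces exactly the displayed list: the original generators give $X^B X^A X^C$, crossing degree-1 generators yields $[X_1^C, X_1^A]$, crossing a degree-2 generator with a degree-1 generator yields $[X_2^C, X_1^A] \cup [X_2^A, X_1^C]$, and the triple commutators $[[x,y],z]$ arise as the free degree-3 monomials in three degree-1 generators of mixed origin.

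For (2) $\Rightarrow$ (1), I would verify the universal property directly. Given compatible homomorphisms $f : A \to E$ and $g : C \to E$ into a 3-nilpotent graded $E$ with $f|_B = g|_B$, define $h$ linearly on the displayed basis by $f$ on $X^A$, $g$ on $X^C$, and on each mixed bracket by the corresponding bracket of images in $E$. Since all Lie-algebraic identities that hold among the listed basis elements in $\langle AC\rangle$ are satisfied automatically in the Lie algebra $E$, the map $h$ respects the Lie operation; it is well-defined and unique because its values have been prescribed on a vector space basis.

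The main obstacle will be the careful bookkeeping of Jacobi redundancies among the triple commutators. The ordering constraints $x = z$ or $x < z$ in the displayed list are designed to pick canonical representatives modulo Jacobi and antisymmetry. For example, a bracket $[[z,x],y]$ with $z, x \in X_1^C$ and $y \in X_1^A$ does not appear directly in the list; one expands $[z,x] \in C_2$ in the basis $X_2^B \cup X_2^C$ (modulo a summand already in $B_2$), and the resulting brackets with $y$ lie either in $A_3 \subseteq \langle X^A \rangle$ (for the $X_2^B$ part) or appear among the listed $[X_2^C, X_1^A]$ (for the $X_2^C$ part). An analogous reduction handles the case $z, x \in X_1^A$, $y \in X_1^C$. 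Once this case analysis is in place, the displayed set spans $\langle AC\rangle$, its linear independence is delivered by the iterated Major Case, and both directions of the equivalence follow.
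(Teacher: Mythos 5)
Your proposal follows essentially the same route as the paper: reduce to the case $X_3^{A} = X_3^C = \emptyset$ by absorbing the central degree-3 generators into $B$, iterate the Major Case of Theorem \ref{Lieam1} to build the amalgam one generator at a time while reading off the basis, and verify (2)$\Rightarrow$(1) via the universal property (where the paper itself only says ``apply the definition of the free amalgam''). The one detail to correct is the order of adjunction: the paper adjoins $X_2^{A}$ \emph{before} $X_1^{A}$, because the Major Case hypothesis $[a,b]\in B$ holds for a degree-2 generator over the (suitably enlarged) base but fails for a degree-1 generator over the original $B$ (since $[x,b]\in A_2\setminus B_2$ is possible); adjoining the degree-1 generators first, as you propose, would also mean the degree-2 generators in $X_2^{A}$ are no longer independent of the brackets already created, so the bookkeeping would go wrong unless you reorder as the paper does.
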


\begin{proof}
It is easily seen, that there exists a graded Lie-algebra $M$ with the vector space basis (2). We have only to define 
$[x,y]$ in a canonical way and then we have to show that the Jacobi identity is true. Using the definiton we get that
$M \cong A \otimes_B C$.
\end{proof}

\noindent The following property of the free amalgam of 3-nilpotent graded Lie-algebras over a field
will be later useful:

\begin{theorem}\label{cap}
In a 3-nilpotent graded Lie algebra over a field we consider $U \subseteq V $, 
$\bar{a}$ a sequence of  elements, $\langle U \bar{a} \rangle
\cap V = U $, $B \subseteq U$, and $C \subseteq V$ such that
\begin{enumerate}
\item $\langle V \bar{a} \rangle = V \otimes_C \langle C \bar{a} \rangle$,
\item $\langle U \bar{a} \rangle = U \otimes_B \langle B \bar{a} \rangle$.
\end{enumerate} 
Then $\langle U \bar{a} \rangle = U \otimes_D \langle D \bar{a} \rangle$, where $D = C \cap B$. \\
The same is true for $c = 2$.
\end{theorem}

\begin{proof} W.l.o.g. $\bar{a}$ is a sequence of homogenous elements, since there are projections in the language.
By the assumptions a set of monomials  of homegeneous elements from 
$\langle U \bar{a} \rangle$    that is linearly independent over $U$
is linearly indepentent over $V$. Then $\langle \bar{a} \rangle^{lin} \cap U = \langle \bar{a} \rangle^{lin} \cap V $. Hence we can assume w.l.o.g., that $\bar{a}$ is linearly independent over
$U$ and $V$. Furthermore we suppose w.l.o.g., that $\bar{a}_2$ is linearly independent over
$V_2 + \langle \bar{a}_1 \rangle_2$ and $\bar{a}_3$ is linearly independent over
$\langle \bar{a}_1 \bar{a}_2 \rangle_3 + V_3$.\\
Assume w.l.o.g. $\bar{a} = \bar{e} \bar{e'} \bar{e''}$ with the following properties:
\begin{enumerate}
\item [(3)] $\bar{e}$ is a generating o-system for $\langle V \bar{a}\rangle$ over $V$.\\
Then $\bar{e_1} = \bar{a_1}$ and $\bar{e}$ is an o-system 
over $U$.
\item [(4)]$\bar{e} \bar{e'}$ is a generating o-system of $\langle U \bar{a} \rangle $ over $U$.
\end{enumerate}
Note that $\bar{e'}_1$ and $\bar{e''}_1$ are empty.\\
Assume $e \in \bar{e'}_2 \bar{e''}_2$. Then w.l.o.g. 
\[ e = \sum_j [a_1^j, v_1^j] + v_2 , \]
where $v_2 \in V_2$ and $v_1^j \in V_1$.\\
By (1) and Corollary \ref{c=3,a} we get $v_1^j \in C_1$ and 
$v_2 = e - \sum_j [a_1^j, v_1^j] \in \langle C \bar{a} \rangle \cap V = C$.\\
If $e \in \bar{e''}_2$, then $e \in \langle U \bar{a}_1 \rangle$. Then by (2) and Corollary \ref{c=3,a}
$v_1^j \in B_1$ and $v_2 \in \langle B \bar{a}  \rangle \cap U = B$. By the considerations above 
$v_1^j \in D_1$ and $v_2 \in D_2$.\\
We can rewrite (1) as $\langle V \bar{a} \rangle = V \otimes_C \langle C \bar{a}_1 \bar{e}_2 \bar{a}_3
\rangle$ \\
and (2) as $\langle U \bar{a} \rangle = U \otimes_B \langle B \bar{a}_1 \bar{e}_2 \bar{e'}_2 \bar{a}_3
\rangle$.\\
Let $X_i^D$ be a vector space basis of $D_i$. We can extend $X_i^D$ by $X_i^B$, such that 
$X_i^D X_i^B$ is a vector space basis of $B_i$. Analogously we get $X_i^{C \cap U}$, such that 
$X_i^D X_i^{C \cap U}$ is a vector space basis of $ C_i \cap U_i$. 
Let $X_i^D X_i^{C \cap U} X_i^C$ be a vector space basis of $C_i$.
Then 
$ X_i^C X_i^{C \cap U} X_i^D X_i^B$ is a vectorspace basis of $\langle C B \rangle$. \\
Let $X_i^D X_i^{C \cap U} X_i^B X_i^U$ be a vector space basis of $U_i$.
Let $X_i^D X_i^{C \cap U} X_i^B X_i^U X_i^C X_i^V$ be a vector space basis of $V_i$.\\
To get a vector space basis 
of $\langle C \bar{a}_1 \rangle_2 $ over $C_2$, we choose $Y_2^D Y_2^{C \cap U} Y_2^C$
linearly independent, such that $Y_2^D$ is a vector space basis of $\langle D \bar{a}_1 \rangle_2$
over $C_2$, $Y_2^{C \cap U}$ is a vectorspace basis of $\langle (C \cap U) \bar{a}_1 \rangle_2$ over 
$C_2 Y_2^D$, and $Y_2^C$ is a vector space basis of $\langle C \bar{a}_1 \rangle_2$ over 
$C_2 Y_2^D Y_2^{C \cap U}$. By (1) $Y_2^D Y_2^{C \cap U} Y_2^C$ is a vector space basis over $V_2$.\\
By Corollary \ref{c=3,a} and (1) we get that the union of the following sets is a 
vector space basis for $\langle V \bar{a} \rangle_2 $:\\
$X_2^D X_2^{C \cap U} X_2^C X_2^B X_2^U X_2^V$  for $V_2$, \\
$X_2^D X_2^{C \cap U} X_2^C Y_2^D Y_2^{C \cap U} Y_2^C \bar{e}_2$ for 
$\langle C \bar{a} \rangle_2$, and \\
$\{ [x,y] : x \in X_1^B X_1^U  X_1^V , y \in \bar{a}_1\}$.\\
If we use (2) and Corollary \ref{c=3,a},\\
then we get the following vector space basis for 
$\langle U \bar{a} \rangle_2$, as the union of:\\
$X_2^D X_2^B X_2^{C \cap U}  X_2^U $  for $U_2$, \\
$X_2^D X_2^{B}  Y_2^D  \{ [x,y] : x \in X_1^B, y \in \bar{a}_1 \} \bar{e}_2 \bar{e'}_2$ for 
$\langle B \bar{a} \rangle_2$, and \\
$\{ [x,y] : x \in  X_1^U  X_1^{U \cap C} , y \in \bar{a}_1\}$. \\
We can rewrite the vector space basis as the union of:\\
$X_2^D X_2^{C \cap U} X_2^B X_2^U $  for $U_2$, \\
$X_2^D   Y_2^D   \bar{e}_2 \bar{e'}_2$ for 
$\langle D \bar{a} \rangle_2$, and \\
$\{ [x,y] : x \in  X_1^B X_1^U  X_1^{U \cap C} , y \in \bar{a}_1\}$. \\
By Corollary \ref{c=3,a} we have $\langle U^* \bar{a}^* \rangle = U^* \otimes_{D^*} 
\langle D^* \bar{a}^* \rangle$.\\
\noindent Note, that $Y_2^{C \cap U} = \{ [x,y] : x \in X_1^{C \cap U}, y \in \bar{a}_1 \}$.\\
Furthermore we have shown, that $Y_2^B = \{[x,y] : x \in X_1^B, y \in \bar{a}_1  \}$ is a 
vector space basis of $\langle B \bar{a}_1 \rangle_2 $ over $\langle B_2 Y_2^D \rangle_2$.\\

\noindent We consider $e \in \bar{e'}_3 \bar{e''}_3$. Then w.l.o.g.
\[ e = \Delta + v_3, \]
where $v_3 \in V_3$ and $\Delta$ is a linear combination of monomials of the form 
$[w,z]$ with $w \in V_2 , z \in \bar{a}_1$  or 
$z  \in  \bar{a}_2, w\in V_1 $ or $[[v,y],u]$ where $v,u \in V_1, y \in \bar{a}_1$  or $[[x,v],z]$ where
$x,z \in \bar{a}_1,  v \in V_1$.\\
By Corollary \ref{c=3,a} we get that $v,u,w$ and $v_3$ are in $C$ similarly as above. \\
If $e \in \bar{e''}_3$,
then (1) and (2) imply, that all these $u,v,w$ and $v_3$ are in $D$.\\ 
Hence we can replace (1) by $\langle V \bar{a} \rangle = V \otimes_C \langle C \bar{e} \rangle$
and (2) by $\langle U \bar{a} \rangle = U \otimes_B \langle B \bar{e} \bar{e'}\rangle$.\\
Now we define analogously as above, that $Y_3^D$ is a vector space basis of 
$\langle D \bar{a}_1 \bar{e}_2 \rangle_3$ over $C_3$ ,  $Y_3^{C \cap U}$ is a vector space basis 
of $(\langle C \cap U) \bar{a}_1 \bar{e}_2 \rangle_3$ over $C_3 Y_3^D$, and $Y_3^C$ is a
vectorspace basis of $\langle C \bar{a}_1 \bar{e}_2 \rangle$ over $C_3 Y_3^D Y_3^{C \cap U}$. 
Hence $Y_3^D Y_3^{C \cap U} Y_3^C$ is a vector space basis of $\langle C \bar{a}_1 \bar{e}_2 
\rangle_3$ over $C_3$ and by (1) over $V_3$. 
We assume, that $\bar{a}_1$ is ordered by $a_1^j < a_1^k$ for $j < k$.
Furthermore we order $X_1^B X_1^U X_1^V$.\\
By (1) and Corollary \ref{c=3,a} the union of the following sets is a vector space basis  (5) of 
$\langle V \bar{a} \rangle_3$:\\
$X_3^D X_3^{C \cap U} X_3^C X_3^B X_3^U X_3^V$  for $V_3$, \\
$X_3^D X_3^{C \cap U} X_3^C Y_3^D Y_3^{C \cap U} Y_3^C \bar{e}_3$ for 
$\langle C \bar{a} \rangle_3$,  \\
$\{ [x,y] : x \in X_2^B X_2^U X_2^V, y \in \bar{a}_1 \; or \; x \in Y_2^D Y_2^{C \cap U}  
Y_2^C \bar{e}_2, 
y \in X_1^B X_1^{U} X_1^V \}$,\\
$\{ [[a_1^j,z] , a_1^k] : j \le k , z \in X_1^B X_1^{U} X_1^V\},$\\
$\{[[x,a_1^j],y]: x \le y \in X_1^B X_1^{U} X_1^V  \}$.\\
Let $Y_3^B$ be a vector space basis  of $\langle B \bar{a} \rangle_3$ over $X_3^D X_3^B Y_3^D$.
By (2) und  (5) we have \\
\[ Y_3^B =  \{[x,y] : x \in Y_2^D  \bar{e}_2 \bar{e'}_2, y \in X_1^B \; or \; x \in X_2^B, y \in \bar{a}_1 \} \cup \]
\[ \{ [[a_1^j,z] , a_1^k] : j \le k , z \in X_1^B \} \{[[x,a_1^j],y]: x \le y \in X_1^B\} \bar{e}_3 \bar{e'}_3 \]
We apply   Corollary \ref{c=3,a} to (2) to get 
the union of the  following sets as a  
vector space basis (6) of $\langle U \bar{a} \rangle_3$: \\
$X_3^D X_3^B X_3^{C \cap U} X_3^{U}$ for $U_3$,\\
$X_3^D X_3^B Y_3^D Y_3^B$
 for 
$\langle B \bar{a} \rangle_3$, and \\
$\{ [x,y] : x \in X_2^{C \cap U} X_2^U , y \in \bar{a}_1 \; or \; x \in   Y_2^D
Y_2^B \bar{e}_2 \bar{e'}_2, 
y \in X_1^{C \cap U} X_1^{U}  \}$ \\
$\{ [[a_1^j,z] , a_1^k] : j \le k , z \in X_1^{C \cap U} X_1^{U} \},$\\
$\{[[x,a_1^j],y]: x \le y \in X_1^{C \cap U} X_1^{U}   \}$. \\
We can rewrite this vector space basis using the structure of $Y_2^B$ and $Y_3^B$, as 
described above:\\
$X_3^D X_3^B X_3^{C \cap U} X_3^{U}$ for $U_3$,\\
$X_3^D  Y_3^D \bar{e}_3 \bar{e'}_3$ for $\langle D \bar{a} \rangle_3$, and \\
$\{[x,y] : x \in Y_2^D  \bar{e}_2 \bar{e'}_2, y \in X_1^B X_1^{C \cap U} X_1^{U}
\; or \; x \in X_2^B X_2^{C \cap U} X_2^{U}, y \in \bar{a}_1 \} \\
\{ [[a_1^j,z] , a_1^k] : j \le k , z \in X_1^B X_1^{C \cap U} X_1^{U}\} \\
\{[[x,a_1^j],z]: x \le z \in X_1^B X_1^{C \cap U} X_1^{U}\} $. \\
By Corollary \ref{c=3,a} we get
\[  \langle U \bar{a} \rangle = U \otimes_D \langle D \bar{a} \rangle. \]
\end{proof}

\noindent We consider further consequences of Corollary \ref{c=3,a}.

\begin{cor}\label{botimesx1}
Let $B$ and $\langle x \rangle $ with $U_i(x)$ be finite graded 3-nilpotent Lie algebras. Let $Z = Z_1 Z_2 Z_3$ be an ordered vectorspace basis of $B$.\\
If $H$ is the ideal generated by $x$ in $D = B \otimes \langle x \rangle $, then 
it is freely generated by the following set $X$:
\begin{enumerate}
\item If $U_3(x)$,then $X = X_3 = \{x\}$.
\item If $U_2(x)$, then $X = \{x\} \cup \{[z,x] : z \in Z_1 \}$.
\item If $U_1(x)$, then $X = \{x\} \cup \{[z,x] : z \in Z_2 \} \cup \{[[z_1,x],z_2] : z_1 \le z_2\}$.

\end{enumerate}
The underlying vectorspace of $D = B \otimes \langle x \rangle $ is $B \oplus H$. In case (1) and (2) $X$ is also a vectorspace basis  of $H$ and in case (3)
$X \cup \{[[x,z],x]: z \in Z_1 \}$ is a vector space basis of $H$.
\end{cor}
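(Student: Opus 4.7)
The plan is to derive this as a direct application of Corollary \ref{c=3,a} with trivial common substructure, $A := B$, and $C := \langle x\rangle$. First note that $\langle x\rangle = Kx$ in each of the three cases, because $[x,x]=0$ and any iterated bracket produces an element of degree $\ge 4$, which vanishes by 3-nilpotence. Thus in the notation of Corollary \ref{c=3,a} the basis of $C$ consists of the single element $x$ placed in degree $i$, and $X^A = Z$.

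Plugging this into Corollary \ref{c=3,a} splits the exhibited vector space basis of $D$ into the elements of $Z$ (a basis of $B$) and the elements visibly involving $x$ (all of which lie in the ideal generated by $x$). This yields both the direct-sum decomposition $D = B \oplus H$ and an explicit vector space basis of $H$. In case (1) every bracket $[d,x]$ has degree $\ge 4$ and vanishes, leaving $H = Kx$. In case (2) only brackets with $Z_1$ survive, yielding $\{x\}\cup\{[z,x]:z\in Z_1\}$. In case (3) the surviving basis elements are $x$, the degree-$2$ brackets $[z,x]$ for $z\in Z_1$, and in degree $3$ the brackets $[z,x]$ for $z\in Z_2$, the iterated basic monomials $[[z_1,x],z_2]$ with $z_1\le z_2 \in Z_1$, and the ``same-letter'' monomials $[[x,z],x]$ with $z\in Z_1$ produced by the $\xi = \zeta \in X^C_1$ case of Corollary \ref{c=3,a}.

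For the ``freely generated'' assertion I would view $H$ as a graded sub-Lie-algebra of $D$ and identify $X$ as an o-system such that the canonical homomorphism from the free graded $3$-nilpotent Lie algebra $F(X)$ onto $H$ is an isomorphism. The basic monomials of $F(X)$ of degree at most $3$ match the vector-space basis of $H$ just obtained; in case (3) the only basic monomial of $F(X)$ beyond the generators themselves is $[x,[z,x]]$ for $z\in Z_1$, which by antisymmetry equals $[[x,z],x]$ and accounts for the extra set listed in the stated vector-space basis. Linear independence comes for free from Corollary \ref{c=3,a}, and spanning follows by writing any element of $H$ as a linear combination of $x$, of brackets $[d,x]$, and of iterated brackets $[d',[d,x]]$, and reducing via the Jacobi identity and antisymmetry.

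The main obstacle is the combinatorial bookkeeping in case (3), where one must carefully line up the ordering convention of Corollary \ref{c=3,a} with the ordering $z_1 \le z_2$ used here so that no basic commutator is double counted or missed; aside from this, every step is either a direct quotation of Corollary \ref{c=3,a} or a routine application of anticommutativity and the Jacobi identity.
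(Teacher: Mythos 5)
Your proposal is correct and takes exactly the route the paper intends: the corollary is stated without proof as an immediate specialization of Corollary \ref{c=3,a} to the amalgam over the trivial subalgebra, with $X^{A}=Z$ and $X^C=\{x\}$, which is precisely what you do. One point worth noting: in case (3) your bookkeeping (rightly) treats the degree-2 brackets $[z,x]$, $z\in Z_1$, as part of the generating set $X$ — they must be, since $F(X)_2$ would otherwise vanish while $H_2\neq 0$ — even though the set $X$ as printed in the statement omits them; your reading is the correct one and the argument goes through.
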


\noindent The algebraic background is well known. See e.g. \cite{Bah}.

\begin{lemma}\label{botimesx2} 
Assume $B \in \K_3$, $\langle x \rangle \in \K_3$ with $U_i(x)$, and    $D = B \otimes \langle x \rangle$. Then 
\begin{enumerate}
\item $B \le D$,  $\delta(D) = \delta(B) + 1$.
\item $D \in \K_3$.
\end{enumerate}
\end{lemma}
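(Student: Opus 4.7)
The strategy is to analyze $D$ using the explicit description from Corollary \ref{botimesx1}, splitting into cases based on the degree $i$ of $x$. First I would compute $\delta(D)$ directly from the given basis, then establish strongness of $B$ in $D$ using the free structure of the ideal generated by $x$, and finally verify the two defining conditions of $\K_3$.

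For the computation $\delta(D) = \delta(B) + 1$, fix a graded vector space basis $Z = Z_1 \cup Z_2 \cup Z_3$ of $B$. Corollary \ref{botimesx1} provides an explicit vector space basis of $D$ obtained by adjoining $x$ together with certain iterated brackets involving $x$ and elements of $Z$. In each of the three cases for the degree of $x$, every added basis element other than $x$ itself lies in the subalgebra generated by $B \cup \{x\}$ together with lower-degree brackets, so an $o$-system of $B$ extended by the single element $x$ in degree $i$ is a generating $o$-system of $D$. Since $D$ is a free amalgam, no new ideal relations appear beyond those inherited from $B$, so $\text{ideal-dim}_j(D) = \text{ideal-dim}_j(B)$ for each $j$. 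Hence $\delta(D) = \delta(B) + 1$.

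For $B \le D$, let $H = \langle\langle x \rangle\rangle^D$ denote the ideal generated by $x$ in $D$; by Corollary \ref{botimesx1}, $D = B \oplus H$ as vector spaces and $H$ is a free graded Lie algebra. Given any $B \subseteq C \subseteq D$, set $H_C = C \cap H$. Then $H_C$ is an ideal in $C$ (since $H$ is an ideal in $D$), the quotient $C/H_C$ is canonically identified with $B$, and $H_C$ is free as a subalgebra of the free algebra $H$ by Shirshov--Witt (Corollary \ref{free}). I would then show that $C$ can be recovered from $B$ by an iterated application of the Major Case of free amalgamation, each time adjoining one homogeneous generator of $H_C$; the freeness of $H$ guarantees that at each step there are no additional ideal relations, so $\delta$ does not drop. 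This gives $\delta(C) \ge \delta(B)$, with equality precisely when $H_C = 0$, i.e.\ when $C = B$. The main obstacle here is the careful bookkeeping of how $H_C$ sits inside the free ideal $H$, and verifying that adjoining its generators to $B$ preserves the free amalgam structure so that the $o$-dim increments exactly match the added generators while ideal-dim stays constant.

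Finally, to verify $D \in \K_3$: the no-homogeneous-zero-divisors condition is checked by case analysis on two nonzero homogeneous $u, v \in D$ with $\deg u + \deg v \le 3$. Decomposing $u = b_u + h_u$ and $v = b_v + h_v$ along $D = B \oplus H$, the bracket $[u,v]$ splits into four pieces; the $[b_u, b_v]$-part is controlled by $B \in \K_3$, and the pieces involving $h_u$ or $h_v$ are controlled by the explicit free basis of $H$ in Corollary \ref{botimesx1}. For the strongness condition, given $A \subseteq D$ with $0 < o\text{-dim}(A) \le 2$, if $A \subseteq B$ then $A \le B$ (by $B \in \K_3$) combines with $B \le D$ (from the previous paragraph) via the transitivity of strongness in Lemma \ref{rulesstrong}(2) to yield $A \le D$; otherwise $A$ meets $H$ nontrivially, and one applies the same $B \oplus H$-decomposition together with the free structure of $H$ to handle $A$ directly.
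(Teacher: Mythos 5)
Your proposal is correct and takes essentially the same route as the paper: for (1) the paper writes $B \simeq F(Y)/I$, $D \simeq F(Yx)/\langle\langle I \rangle\rangle^{F(Yx)}$ and observes that any intermediate $B \subseteq E \subseteq D$ has a generating o-system $YW$ with $E \simeq F(YW)/\langle\langle I \rangle\rangle^{F(YW)}$ (so no new ideal relations and $\delta(E)=\delta(B)+\mid W\mid$), which is exactly your ``no new relations because $H$ is free'' argument in cleaner form; for (2) the paper does precisely your four-term expansion of $[b_1+h_1,b_2+h_2]$ along $D=B\oplus H$ using the free basis of $H$ from Corollary \ref{botimesx1}. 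The one imprecision is your appeal to the Major Case for adjoining generators of $C\cap H$ to $B$ --- its hypothesis ($[a,b]\in B$ for all $b\in B$) fails for those generators --- but this is cosmetic, since the direct presentation-theoretic bookkeeping you flag as the obstacle is what the paper actually carries out.
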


\begin{proof}
As above let $H$ be the ideal generated by $x$ in $D = B \otimes \langle x \rangle $.\\
ad1)
Let $B \subseteq E \subseteq D$. Let $Y$ be a generating o-system for $B$. Then $B \simeq F(Y)/I$ and 
$D \cong F(Yx)/\langle \langle I \rangle \rangle^{F(Yx)}$. For $E$ we choose   
$W$ in $H$ such that $YW$ is a generating o-system for $E$. Then $E \cong F(YW)/\langle \langle I \rangle \rangle^{F(YW)}$.
Hence $\delta(B) \le \delta(E)$, $\delta_2(B) \le \delta_2(E)$, and $\delta(D) = \delta(B) + 1$.\\
\\
\noindent ad2)
Now we consider any $E = \langle E_1 E_2 \rangle \subseteq D$. There is an o-system $W$ over $B \cap E$, that generates 
$E$ over $E \cap B$. Then $W$ extends every o-system for $B \cap E$ to a generating o-system for $E$. We have $\delta(E) = \delta(B \cap E) + \mid W \mid$ .

\end{proof}

\begin{definition} Assume $b,e \in B$ with $U_i(b)$, $U_j(e)$, and $i < j \le c$ and there is no solution of
$[b,x] = e$ in B. Such a pair $b,e$ is called a divisor problem for $B$.
\end{definition}

\begin{definition}
$B(e:b)$ is $B \otimes \langle x \rangle$ factorized by $[b,x] = e$.
\end{definition}

\noindent We describe another way to define $B(e:b)$ for $B \in \K_c$.  
Let $\langle b e x\rangle$ be the graded Lie algebra defined by $U_{j-i}(x)$, and $[b,x] = e$. Then $\langle b e x\rangle \in \K_c $, $\langle b e \rangle  \le \langle b e x\rangle$, and $\delta(\langle b e \rangle) = \delta(\langle b e x\rangle)$. By Theorem \ref{Lieam1}
$B \otimes_{ \langle b e \rangle} \langle b e x\rangle$ exists. It is $B(e:b)$.

\noindent Now we consider again $\K_3$. As above we assume that $Z$ is an ordered vector space basis of $B$. Let  $b$ be
the first element of $Z$ and $e$ the second. We use the description of $B \otimes \langle x \rangle $ above.
Let $X^-$ be the subset of all elements of $X$, where $b$ does not occure. Let $H^-$ be the graded Lie algebra freely generated by $X^-$.

\noindent Note that the underlying vectorspace of $B(e:b)$ is $B \oplus H^-$ and the Lie multiplication is given the Lie multiplication in $B$ and in $H^-$ and by the action of $B$ on $H^-$ in $B \otimes \langle x \rangle$ factorized by   $[b,x] = e$.

\begin{lemma}\label{e:b} 
Assume $B \in \K_3$, $b \in U_i(B)$, $e \in U_j(B)$, $1 \le i < j \le 3$, and there is no solution of 
$[b,?] = e$ in $B$.
\begin{enumerate}
\item $B(e:b) \cong B \otimes_{\langle b, e \rangle} \langle  b, e, x \rangle$, definded as above.
\item The underlying vector space of $B(e:b)$ is $B \oplus H^-$. If $j - i = 2$, then $X^-$ is also a vector space basis of $H^-$. If $j - i = 1$, then $X^- \cup \{ [[x,z],x] : z \in Z_1 \setminus \{b\}\}$ is a vectorspace basis of $H^-$.
\item If $E \subseteq B(e:b)$, then $\delta_i(E) \ge \delta_i(E \cap B)$ and $\delta(E) \ge min\{2, o-dim(E)\}$.
\item $B \le B(e:b)$,  $\delta(B(e:b) = \delta(B) $.
\item $B(e:b) $  is in $\K_3$. 
\end{enumerate}
\end{lemma}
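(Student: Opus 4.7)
The plan is to recognize $B(e:b)$ as the free amalgam $B \otimes_{\ggen{b,e}} \ggen{b,e,x}$ and then read off the structural information from the explicit basis for $B \otimes \ggen{x}$ provided by Corollary \ref{botimesx1}.

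For (1), I first check that the small algebra $\ggen{b,e,x}$ with $U_{j-i}(x)$ and defining relation $[b,x] = e$ lies in $\K_3$ and satisfies $\ggen{b,e} \le \ggen{b,e,x}$ with $\delta(\ggen{b,e,x}) = \delta(\ggen{b,e}) = 2$: adjoining $x$ adds one to $o\text{-}dim$ while the relation $[b,x]-e$ adds one to the ideal dimension. Theorem \ref{Lieam1} then guarantees that the free amalgam $B \otimes_{\ggen{b,e}} \ggen{b,e,x}$ exists, and its universal property yields a canonical map to $(B \otimes \ggen{x})/([b,x]-e)$; a degree-by-degree $o\text{-}dim$ and ideal-dim count using Lemma \ref{botimesx2} shows this map is an isomorphism. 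For (2), I then begin from the basis of $B \otimes \ggen{x} = B \oplus H$ in Corollary \ref{botimesx1} and trace what $[b,x] = e$ does: if $j-i = 2$, the relation identifies exactly the single element $[b,x] \in X$ with $e \in B$, so $X^-$ is a basis of $H^-$; if $j-i = 1$, the identification propagates through Jacobi to the degree-$3$ generators involving $b$, leaving the augmented set described in the statement. The hypothesis that $[b,?] = e$ has no solution in $B$ guarantees $[b,x]-e \ne 0$ in $B \otimes \ggen{x}$, so the imposed relation is nontrivial and no extra collapse occurs.

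Parts (3) and (4) amount to $\delta$-bookkeeping. For a finite $E \subseteq B(e:b)$, I lift to $\tilde E \subseteq B \otimes \ggen{x}$. Since $B \le B \otimes \ggen{x}$ by Lemma \ref{botimesx2} and $E \cap B = \tilde E \cap B$, and since passing to the quotient subtracts one unit from $\delta$ while the generator $x$ added one, the two cancel to give $\delta(E \cap B) \le \delta(E)$. The bound $\delta(E) \ge \min\{2, o\text{-}dim(E)\}$ then follows from the corresponding property of $B \in \K_3$ applied to $E \cap B$, together with the observation that extra generators from $H^-$ can only raise $\delta$. Applying these with $E = B(e:b)$ and with $E \supseteq B$ respectively gives $\delta(B(e:b)) = \delta(B)$ and $B \le B(e:b)$, which is (4).

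For (5), the $\K_3$-closure on $o\text{-}dim \le 2$ substructures is immediate from (3). The main obstacle is verifying that $B(e:b)$ has no homogeneous zero-divisors, since the new relation could in principle create one. I would proceed as in the proof of Lemma \ref{botimesx2}: expand $[b_1+h_1, b_2+h_2]$ using the decomposition $B \oplus H^-$ and the explicit multiplication rules of (2). The $B$-parts lie in $B$, the mixed and pure $H^-$-parts lie in $H^-$, and by the basis description in (2) a vanishing sum forces each piece to vanish separately; the no-zero-divisor property of $B$ handles the $B$-part, and the freeness of $H^-$ combined with the fact that $[b,x] = e$ is the only identification between $H$ and $B$ handles the rest. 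The no-solution hypothesis is essential here: without it, the would-be relation $[b,x] = e$ would already correspond to a nontrivial cancellation in $B$ itself, which is precisely what would generate a new zero-divisor.
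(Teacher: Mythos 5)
Your overall strategy coincides with the paper's: identify $B(e:b)$ with the free amalgam, read the basis off Corollary \ref{botimesx1}, and reduce the zero-divisor check in (5) to products $[d,h]$ with $d\in Z$, $h\in H^-$. Parts (1), (2) and the zero-divisor half of (5) are fine in outline. The genuine gap is in part (3), which is the engine for (4) and for the $\K_3$-membership claim in (5).

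The uniform ``lift and cancel'' argument does not work for arbitrary subalgebras $E\subseteq B(e:b)$. First, the asserted equality $E\cap B=\tilde E\cap B$ is false: for $i=1$, $j=3$ take $E=\langle b,x\rangle$; then $e=[b,x]\in E$, so $E\cap B\supseteq\langle b,e\rangle$, whereas the natural lift $\tilde E=\langle b,x\rangle^{B\otimes\langle x\rangle}$ meets $B$ only in $\langle b\rangle$. Second, neither ``the quotient subtracts one from $\delta$'' nor ``the generator $x$ adds one'' holds subalgebra by subalgebra: the relation $[b,x]=e$ contributes to $ideal-dim(E)$ only when $E$ contains both $b$ and an element of $x+U_{j-i}(B)$ \emph{and} the corresponding end lies in the subalgebra generated by lower data, and in the example above the quotient subtracts nothing. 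This is exactly why the paper's proof of (3) is a case analysis: $b\notin E$; $b\in E$ with some $d\in U_{j-i}(E)\setminus B$ (subdivided by $o-dim(E\cap B)=0$, $=1$, $\ge 2$); and $b\in E$ with $U_{j-i}(E)=U_{j-i}(E\cap B)$. In the critical subcase $E\cap B=\langle b\rangle$ your appeal to the $\K_3$-property of $B$ only yields $\delta(E\cap B)\ge 1$, and to get $\delta(E)\ge 2$ one must show that adjoining $d$ creates no ideal relation --- which is where the hypothesis that $[b,?]=e$ has no solution in $B$ enters part (3), not only parts (2) and (5). Without this case analysis the claims $\delta(E)\ge\delta(E\cap B)$ and $\delta(E)\ge min\{2, o-dim(E)\}$ are asserted rather than proved.
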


\begin{proof}
ad (1) For every suitable $A = \langle B a \rangle$ with $[b,a] = e$ there is a homomorphism $h$ of $B \otimes_{ \langle b e \rangle} \langle b e x\rangle$ onto $A$ with $h(B) = B$ pointwise and $h(x) = a$.\\

ad (2) is  clear.\\

ad (3) $D = B \otimes \langle  x  \rangle$. $Z$, $H$ , $X$ , $X^-$,and $H^-$ are defined as above. By definition $B(e:b) = D/\langle \langle [x,b] - e \rangle \rangle$.
Let $E$ be a subspace of $B(e:b)$.\\
If $b \notin E$, then there is a subset $W$, such that for  every o-sytem $V$ of $B \cap E$ 
the set $V \cup W$ is a
generating o-system for $E$ and $\delta(E) = \delta(E \cap B) + \mid W \mid$. For $\delta_2$ use a similar argument.
The assertion is proved for this case.\\
Now we assume that $b \in E$. First we assume that there is an element $d \in U_{j-i} (E) \setminus B$. Then w.l.o.g.
$d =  x + c$ with $c \in U_{j-i}(B)$. Then $E = \langle (E \cap B) d W \rangle$, where $VW$ is an o-system for $E$,
if $V$ is an o-system for $\langle (E \cap B) d \rangle$. We get 
$\delta(B \cap E) + \mid W \mid) \le \delta(E)$. \\
If $o-dim(B \cap E) = 1$, then $B \cap E = \langle b \rangle$. Then $\delta(E) = 2 + \mid W \mid$. \\
Finally $b \in E$ and $U_{j-i}(E) = U_{j-i}(E \cap B)$. By similar considerations as above $\delta(E) = \delta(E \cap B) 
+ \mid W \mid$. Similar considerations work for $\delta_2$.\\

(4) and (5) follow from (3).\\

\end{proof}

\begin{lemma}\label{c=2,a}
If $B \in \K_2$, then  $B(e:b)$ is in $\K_2$.
\end{lemma}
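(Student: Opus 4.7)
The plan is to verify the two defining conditions of $\K_2$ for $B(e:b)$: absence of homogeneous zero-divisors and strongness of every substructure of $o\text{-}dim$ at most $2$. For $c=2$ the hypothesis forces $i=1$, $j=2$, so $b\in B_1$, $e\in B_2$, and $U_1(x)$. In the $2$-nilpotent setting $B\otimes\langle x\rangle$ has underlying vector space $B\oplus Kx\oplus\bigoplus_{z\in Z_1}K[z,x]$ (all higher brackets vanish), and quotienting by $[b,x]-e$ identifies $[b,x]$ with $e$, leaving
\[
B(e:b) \;=\; B\oplus H^-, \qquad H^-=Kx\oplus\bigoplus_{z\in Z_1\setminus\{b\}}K[z,x];
\]
the Lie bracket is determined by the bracket on $B$, the rule $[b,x]=e$, and the formal brackets $[z,x]$ for $z\in Z_1\setminus\{b\}$.

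For the first condition of $\K_2$ only degree $(1,1)$ matters. Take $u,v\in B(e:b)_1$ nonzero and non-proportional, write $u=\lambda b+u_0+\alpha x$ and $v=\mu b+v_0+\beta x$ with $u_0,v_0$ in the linear span of $Z_1\setminus\{b\}$, and expand:
\[
[u,v] \;=\; [u_B,v_B] \;+\; (\beta\lambda-\alpha\mu)\,e \;+\; [\beta u_0-\alpha v_0,\,x],
\]
the first two summands lying in $B$ and the last in $H^-$. If $\alpha=\beta=0$ then $[u,v]=[u_B,v_B]\neq 0$ by $B\in\K_2$. Otherwise assume without loss of generality $\beta\neq 0$; vanishing of the $H^-$-part forces $u_0=(\alpha/\beta)v_0$, after which the $B$-part collapses to $\tfrac{\beta\lambda-\alpha\mu}{\beta}\bigl([b,v_0]+\beta e\bigr)$. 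If $\beta\lambda-\alpha\mu=0$ this gives $u=(\alpha/\beta)v$, contradicting non-proportionality; otherwise $[b,-v_0/\beta]=e$ holds in $B$, contradicting the divisor-problem hypothesis.

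For the strongness condition let $A\subseteq C\subseteq B(e:b)$ with $0<o\text{-}dim(A)\leq 2$. The key auxiliary fact is the $\K_2$-analog of Lemma~\ref{e:b}(3): for every $E\subseteq B(e:b)$, $\delta(E)\geq\delta(E\cap B)$ and $\delta(E)\geq\min\{2,o\text{-}dim(E)\}$. Its proof copies the case analysis of Lemma~\ref{e:b}(3) (on whether $b\in E$ and on whether $E$ contains a degree-$1$ element outside $B$), simplified by the absence of degree-$3$ book-keeping. Granting this, $\delta(A)\leq o\text{-}dim(A)\leq 2$ while $\delta(C)\geq\min\{2,o\text{-}dim(C)\}$: if $o\text{-}dim(C)\geq 2$ then $\delta(A)\leq 2\leq\delta(C)$; if $o\text{-}dim(C)\leq 1$ then $A\subseteq C$ forces $o\text{-}dim(A)\leq 1$ and both $\delta$-values equal $1$.

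The delicate point is the degree-$(1,1)$ computation: the divisor-problem hypothesis enters exactly in the subcase $\beta\lambda-\alpha\mu\neq 0$ once the $H^-$-summand has been forced to vanish and the $B$-summand has collapsed to a single scalar multiple of $[b,v_0]+\beta e$. The strongness step is a routine adaptation of the Lemma~\ref{e:b}(3) case-split to the simpler $c=2$ setting, and poses no new difficulty.
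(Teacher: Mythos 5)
The paper states this lemma without proof, treating it as the $c=2$ specialization of Lemma~\ref{e:b}; your proposal carries out exactly that specialization, and both the explicit degree-$(1,1)$ zero-divisor computation (where the divisor-problem hypothesis is invoked in precisely the right place) and the reduction of the strongness condition to the $c=2$ analogue of Lemma~\ref{e:b}(3) are correct. Nothing further is needed.
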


\begin{lemma}\label{4.15}
Assume $A \cap C = B$ are substructures in a 3-nilpotent graded Lie algebra $M$ over a field. Furthermore we define for 
$c \in C$   that $A^+ = \langle A c \rangle = A \otimes_B  \langle B c \rangle$ and 
$B^+ = \langle B c \rangle^C$. Then the following are equivalent:
\begin{enumerate}
\item $\langle AC \rangle = A \otimes_B C$.
\item $\langle AC \rangle = \langle A^+ C \rangle = A^+ \otimes_{B^+} C$.
\end{enumerate}
\end{lemma}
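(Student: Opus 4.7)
The plan is to derive the equivalence purely from the universal property of the free amalgam, viewing the statement as the standard transitivity of free amalgamation factored through the intermediate amalgam $A^+ = A \otimes_B B^+$. No new geometric input is needed beyond Theorem \ref{Lieam1} and the explicit form of $A^+$ provided by Corollary \ref{c=3,a}.

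A preliminary point is the identity $A^+ \cap C = B^+$, which is required for the amalgam $A^+ \otimes_{B^+} C$ in (2) to be well-defined in the sense of the definition given. Since $c \in C$ we already have $B^+ = \langle B c \rangle \subseteq C$. For the reverse inclusion, $A^+ = A \otimes_B B^+$ is a strong amalgam, so $A$ and $B^+$ meet inside $A^+$ only in $B$; combined with a canonical decomposition of elements of $A^+$ relative to vector space bases of $A$ and $B^+$ over $B$ together with the cross commutators listed in Corollary \ref{c=3,a}, the hypothesis $A \cap C = B$ forces any element of $A^+ \cap C$ to lie inside $B^+$. Hence $A^+ \cap C = B^+$ holds unconditionally under the assumptions and can be used freely in both directions.

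For (1) $\Rightarrow$ (2), suppose homomorphisms $f: A^+ \to E$ and $g: C \to E$ agree on $B^+$. Restrict $f$ to $A$; then $f|_A$ and $g$ agree on $B \subseteq B^+$, so (1) yields a common extension $h: \langle AC \rangle \to E$. To verify $h|_{A^+} = f$ it suffices to compare on the generators $A$ and $c$: on $A$ equality is by construction, and on $c \in B^+$ it follows from $f|_{B^+} = g|_{B^+}$. For (2) $\Rightarrow$ (1), suppose $f: A \to E$ and $g: C \to E$ agree on $B$. The universal property of $A^+ = A \otimes_B B^+$, applied to $f$ and $g|_{B^+}$ (which agree on $B$), produces a homomorphism $f^+: A^+ \to E$ extending both. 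By construction $f^+$ and $g$ agree on $B^+$, so (2) supplies a common extension $h: \langle AC \rangle \to E$, and $h$ then automatically extends $f$ and $g$.

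The only step that is not formal bookkeeping with universal properties is the preliminary intersection equality $A^+ \cap C = B^+$; this is where the strong-amalgam structure of $A^+$ and the hypothesis $A \cap C = B$ genuinely enter. Once it is secured, both implications are immediate consequences of iterating the universal property of the free amalgams involved.
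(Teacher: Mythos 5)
Your reduction of both implications to universal-property bookkeeping is essentially correct and amounts to re-proving, in this special case, the Monotonicity and Transitivity of the relation $\unab$; the paper instead just invokes these properties directly from Theorem \ref{Lieam2} (\textbf{Mon} for $(1)\Rightarrow(2)$, \textbf{Trans} applied to $A \unab_B \langle c\rangle$ and $A \unab_{B^+} C$ for $(2)\Rightarrow(1)$). However, your ``preliminary point'' contains a genuine error: the equality $A^+ \cap C = B^+$ does \emph{not} hold unconditionally under the hypotheses of the lemma. Take $B = \langle 0\rangle$, $A = \langle a\rangle$ with $U_1(a)$, and $c$ such that $\langle a, c\rangle$ is free, so $A^+ = \langle A c\rangle = A \otimes_B \langle B c\rangle$ holds; now let $C = \langle c, [a,c]\rangle$. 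Then $A \cap C = \langle 0\rangle = B$, yet $[a,c] \in A^+ \cap C \setminus B^+$. The flaw in your argument is that the hypothesis $A \cap C = B$ only constrains how $C$ meets the $A$-component of the decomposition of $A^+$; it says nothing about how $C$ meets the cross-commutator part of $A \otimes_B B^+$, which is exactly where the counterexample lives.

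This gap is harmless in the direction $(2)\Rightarrow(1)$, because by the paper's definition of the free amalgam the assertion $\langle AC\rangle = A^+ \otimes_{B^+} C$ already \emph{includes} the statement $A^+ \cap C = B^+$, so you may take it as part of hypothesis (2). But in the direction $(1)\Rightarrow(2)$ the equality $A^+ \cap C = B^+$ is part of what must be \emph{proved}, and it has to be deduced from (1), not from the bare hypotheses: either quote \textbf{Mon} for $\unab$ from Theorem \ref{Lieam2}, or argue directly from the explicit vector space basis of $A \otimes_B C$ given in Corollary \ref{c=3,a} that $\langle A c\rangle \cap C = \langle B c\rangle$ there. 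With that repair the remainder of your argument (restriction of $f$ to $A$, lifting $f$ and $g\!\upharpoonright_{B^+}$ through $A \otimes_B B^+$, and checking agreement on generators) goes through and is a valid, if longer, substitute for the paper's two-line appeal to \textbf{Mon} and \textbf{Trans}.
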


\begin{proof}
Assume (1): $A \unab_B C$. By  {\bf Mon} we have $A \unab_{\langle B c \rangle } C$. This is $\langle AC \rangle = \langle A^+ C \rangle = A^+ \otimes_{B^+} C$.\\
For the other direction we have $A \unab_{\langle B c \rangle } C$ and $A \unab_B \langle c \rangle$. 
By {\bf Trans } we get $A \unab_B C$, as desired.
\end{proof}

We will apply the Lemma in $\K_3$, where  $[b,c] = e$ for homogeneous $b, e$ in $B$.

%%%%%%
%%%%%%
%%%%%%

\section{2-nilpotent graded Lie algebras}

We work in $\K_2$. $\delta$ is $\delta_2$. In Lemma \ref{subdelta2} submodularity for $\delta$ is shown.
Then Lemma \ref{rulesstrong} implies:

\begin{lemma}\label{c=2,b} Let $M$ be in $\K_2$.
Then for all  substructures
$A, C$ finite and $ E$ of $M$ the following is true:
\begin{enumerate}
\item If $C \le M$ and $E \subseteq M$, then $E \cap C \le E$.
\item If $A \le C \le M$, then $A \le M$.
\item If $A , C \le  M$, then $A \cap C \le M$.
\item If $A \le^k C \le M$, then $A\le^k M$.
\end{enumerate}
\end{lemma}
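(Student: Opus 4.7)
The plan is to invoke Lemma \ref{rulesstrong} with $c=2$ and $i=2$. Once the hypotheses at level $i-1=1$ are checked, the three conclusions of Lemma \ref{rulesstrong} specialize verbatim to items (1)--(3) of the statement, using that $\le_2 = \le_c = \le$ when $c=2$ (Definition \ref{strong}).

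Recall from Definition \ref{strong} that $A \le_1 M$ means nothing more than $A \subseteq M$. Hence the auxiliary hypotheses of Lemma \ref{rulesstrong} are essentially automatic: (2,1) (transitivity) holds because the composition of two inclusions is an inclusion, and (3,1) (closure under intersection) holds because the intersection of two substructures is again a substructure. For (0), given $A \subseteq E$ we may simply take $A' = A$, since $A \le_1 E$ is the same as $A \subseteq E$, and then $\delta_2(A') \le \delta_2(A)$ trivially.

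The remaining ingredient is submodularity of $\delta_2$ on $1$-strong substructures, which is submodularity of $\delta_2$ on arbitrary finite substructures. This is exactly Lemma \ref{subdelta2}, proved there for any $2$-nilpotent graded Lie algebra over $K$, hence in particular for $M \in \K_2$.

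With (0), (2,1), (3,1) and submodularity verified, Lemma \ref{rulesstrong} yields: if $C \le_2 M$ and $E \le_1 M$ then $E \cap C \le_2 E$; if $A \le_2 C \le_2 M$ then $A \le_2 M$; and if $A, C \le_2 M$ then $A \cap C \le_2 M$. Since $E \le_1 M$ is automatic and $\le_2 = \le$ in $\K_2$, these are precisely items (1), (2), (3). There is no genuine obstacle here -- the lemma is pure bookkeeping, extracting the $c=2$ case from the general formalism of Lemma \ref{rulesstrong} together with the base-case submodularity Lemma \ref{subdelta2}.
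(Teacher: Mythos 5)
Your proposal is correct and is exactly the route the paper takes: the paper states Lemma \ref{c=2,b} as an immediate consequence of Lemma \ref{rulesstrong} together with the submodularity of $\delta_2$ from Lemma \ref{subdelta2}, noting (as you do) that $1$-strong substructures are just substructures, so the auxiliary hypotheses are automatic. Your write-up merely makes explicit the trivial verifications of (0), (2,1) and (3,1) that the paper leaves implicit.
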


\begin{proof}
For (4) we need some proof. Assume $A \subseteq E \subseteq M $ and $o-dim_1(E/A) \le k$
or equivalently $ldim(E_1/A_1) \le k$. By (1) $\delta(E \cap C ) \le \delta(E)$. Since 
$o-dim_1(E \cap C)/A) \le k$, we have $\delta(A) \le  \delta(E \cap C)$.

\end{proof}

\noindent The lemma implies the existence of $CSS_2(A)$ in $M$ - the smallest strong subspace that contains $A$.\\
Lemma \ref{c=2,a} implies that for every $A \in \K_2$ there is some $A \subseteq C \in \h_2$, such that
$A \le C$, $\delta(A) = \delta(C)$, and $C$ is obtained by free adjunction of divisors. \\

\noindent All results for $c = 2$ in this paper are proved in \cite{Bau96}, exept Lemmas \ref{5.7} and
\ref{5.8}. In \cite{Bau09} different proofs are given, using a general method developed in that paper, to obtain 
the additive collaps. In both papers we work in $\h_2$.
Here we use $\K_2$. For $c = 2$ there is no big difference between the two approches. But for greater $c$ we have to start with $\K_c$.\\

\noindent In this section we summarize some results from \cite{Bau96}. By Definition \ref{strong2} $B \le^{ldim(A/B) + n} C$ means that $\delta(B) \le \delta(E)$ for all $B \subseteq E \subseteq C$ with $lin-dim(E_1/B_1 \le ldim(A/B) + n$.

\begin{theorem}\label{c=2,c}
\begin{enumerate}
\item $\K_2$ has the amalgamation property for strong embeddings.
\item If $A, B, C \in \K_2$, $B\le A$,and $B \le^{ldim(A/B) + n} C$, then there is an amalgam $D$ of $A$ and $C$ over $B$ in $\K_2$, such that $C \le D$ and $A \le^n D$. If no divisor problem in $B$ has solutions in both $A$ and $C$, then the free amalgam $D = A \otimes_B C$ fulfils the assertion. In this case
$\delta_2(D) = \delta_2(A) + \delta_2(C) - \delta_2(B).$
\end{enumerate}
\end{theorem}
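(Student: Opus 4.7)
The plan is to derive (1) from (2) and to prove (2) by showing the free amalgam $D = A \otimes_B C$ of Theorem \ref{Lieam1} fulfils the stated properties, after first absorbing any shared divisor problems into $B$. Statement (1) follows at once: if $B \le C$ unconditionally, then $B \le^{ldim(A/B) + n} C$ holds for all $n$, so (2) yields $D$ with $A \le_n D$ for every $n$, hence $A \le D$.

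For (2), I first reduce to the case where no divisor problem $[b, ?] = e$ of $B$ has solutions in both $A$ and $C$. If $a_0 \in A$ and $c_0 \in C$ both solve such a problem, I replace $B$ by $B(e:b)$ from Lemma \ref{c=2,a}, which embeds canonically into each of $A$ and $C$. Since $B \le B(e:b)$ with $\delta(B) = \delta(B(e:b))$, the property $B(e:b) \le A$ follows from $B \le A$, and the hypothesis $B(e:b) \le^{ldim(A/B(e:b)) + n} C$ follows from $B \le^{ldim(A/B) + n} C$ because adjoining the new generator drops $ldim(A/B)$ by one while raising $o-dim(E/B(e:b))$ by at most one. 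The reduction terminates in at most $ldim(A/B)$ steps.

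Next, I take $D = A \otimes_B C$. The formula $\delta(D) = \delta(A) + \delta(C) - \delta(B)$ is Corollary \ref{c=2,y}; a zero-divisor analysis on the basis of Corollary \ref{c=2,z} --- parallel to but simpler than the argument of Lemma \ref{nohzd} --- combined with submodularity (Lemma \ref{subdelta2}) and $A, C \in \K_2$ will put $D$ in $\K_2$. For $C \le D$: given $C \subseteq E \subseteq D$, I observe that $\langle A E \rangle \supseteq \langle A C \rangle = D$ forces $\langle A E \rangle = D$, so submodularity gives $\delta(D) \le \delta(A) + \delta(E) - \delta(A \cap E)$; combined with $\delta(B) \le \delta(A \cap E)$ (from $B \le A$) and the $\delta(D)$-formula, this yields $\delta(C) \le \delta(E)$.

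For $A \le_n D$: let $A \subseteq E \subseteq D$ with $o-dim(E/A) \le n$ and put $C' = E \cap C$. The symmetric submodular inequality $\delta(D) \le \delta(C) + \delta(E) - \delta(C')$ reduces the problem to showing $\delta(B) \le \delta(C')$, which by hypothesis on $C$ holds provided $o-dim(C'/B) \le ldim(A/B) + n$. The crucial observation is that any $e \in E_1$ decomposes as $e = a + c$ with $a \in A_1$, $c \in C_1$, whence $c = e - a \in E \cap C = C'$; so $E_1/A_1 = C'_1/B_1$, and a degree-one o-system $X_1^{E/A}$ of $E$ over $A$ can be chosen inside $C'_1$ and made to coincide with a degree-one o-system of $C'$ over $B$. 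Setting $G = \langle A C' \rangle$, which equals the sub-amalgam $A \otimes_B C'$ sitting inside $D$, the denominators defining $o-dim_2(E/A)$ and $o-dim_2(G/A)$ then coincide, so $G \subseteq E$ gives $o-dim(C'/B) = o-dim(G/A) \le o-dim(E/A) \le n$. The main obstacle will be this alignment of $o$-systems inside the free amalgam --- reducing the $o$-dimension comparison to a straightforward $ldim$-inequality --- together with the bookkeeping in the reduction step ensuring that the $\le^{ldim(A/B)+n}$ hypothesis is preserved through each absorption of a shared divisor problem.
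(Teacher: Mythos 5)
The paper itself gives no proof of Theorem \ref{c=2,c}; it is quoted from \cite{Bau96} and \cite{Bau09}, and the closest in-paper argument is the proof of its $3$-nilpotent analogue, Theorem \ref{amalk3}. Your proposal is correct and follows exactly that architecture: absorb every divisor problem of $B$ with solutions on both sides into $B$ via $B(e:b)$, checking that the $\le^{ldim(A/B)+n}$ budget survives each absorption, and then verify that the free amalgam has the stated properties; deriving (1) from (2) is fine once you note that the $D$ produced in (2) does not depend on $n$. Where you diverge from the $c=3$ template --- profitably --- is in the verification of $C \le D$ and $A \le^n D$: since Lemma \ref{subdelta2} gives submodularity of $\delta_2$ for arbitrary subalgebras, you can run $\delta(D) \le \delta(A) + \delta(E) - \delta(A \cap E)$ directly against the identity $\delta(D) = \delta(A) + \delta(C) - \delta(B)$, whereas the proof of Theorem \ref{amalk3} must route through Lemma \ref{6.3} and Corollary \ref{c=3,a} because submodularity of $\delta_3$ holds only for $2$-strong substructures. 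Your explicit bound $o-dim(E \cap C/B) = o-dim(\langle A (E\cap C)\rangle/A) \le o-dim(E/A) \le n$, obtained by aligning o-systems of $E$ over $A$ with o-systems of $E \cap C$ over $B$ inside the free amalgam (using $E_1 = A_1 + (E\cap C)_1$), makes precise a step that the paper leaves implicit. The one place you are thin is membership of $D$ in $\K_2$: besides the zero-divisor analysis (where, as in Lemma \ref{nohzd}, the no-shared-divisor-problem hypothesis is exactly what excludes a cancellation of the form $[b,x_a] = e = -[b,x_c]$ with $b,e \in B$), you still owe the check that every $E \subseteq D$ with $0 < o-dim(E) \le 2$ is strong; this follows from the already-established $C \le D$ together with Lemma \ref{c=2,b}(1) and inspection of the basis of Corollary \ref{c=2,z}, as in the closing part of the proof of Theorem \ref{amalk3}, so it is a sketched step rather than a gap.
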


\noindent Using this amalgamation we obtain the \Frai Hrushovski Limit $\M$ in the next theorem.

\begin{theorem}\label{c=2,d}
There is a countable structure $\M$ in $\K_2$, that satisfies the following condition:
\begin{description}
\item[rich] If $B \le A$ in $\K_2$ and there is a strong embedding $f$ of $B$ in $\M$, then there is a strong extension of $f$ that maps $A$ into $\M$.
\end{description}
$\M$ is uniquely determined up to isomorphisms.
\end{theorem}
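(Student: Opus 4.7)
The plan is a standard Fra\"{\i}ss\'e-type construction, powered by the strong amalgamation of Theorem~\ref{c=2,c}(1). Since $K$ is finite and the language $L$ has only finitely many symbols, $\K_2^{fin}$ contains only countably many isomorphism types, and the same holds for strong extensions $B \le A$ with $B,A$ finite. I would construct $\M$ as the union of an increasing chain $\langle 0\rangle = \M_0 \le \M_1 \le \cdots$ of finite members of $\K_2$. A bookkeeping scheme enumerates triples consisting of a stage $n$, an isomorphism type of a strong extension $B \le A$ in $\K_2^{fin}$, and a strong embedding $f\colon B \to \M_n$; at the appropriate stage one such pending triple is processed by using Theorem~\ref{c=2,c}(1) to form a strong amalgam $\M_{n+1}$ of $\M_n$ and $A$ over $f(B)$. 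Transitivity of $\le$ (Lemma~\ref{c=2,b}(2)) then gives $\M_n \le \M := \bigcup_n \M_n$ for every $n$, and since every finite subalgebra of $\M$ is contained in some $\M_n$, the two defining properties of $\K_2$ (no homogeneous zero-divisors, and $0 < \delta$ on nontrivial substructures of $o$-dimension at most $2$) are inherited from the $\M_n$.

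To verify richness, let $B \le A$ in $\K_2$ and $f\colon B \to \M$ be a strong embedding. First reduce to the case where $B$ and $A$ are finite, by exhausting $A$ over $B$ by a chain of one-element strong extensions (using Corollary~\ref{3.2} to pass from finite sets to their strong closures) and iterating. Then $f(B)$ is finite and therefore contained in some $\M_n$; Lemma~\ref{c=2,b}(1) applied to $f(B) \le \M$ and $\M_n \le \M$ yields $f(B) = f(B) \cap \M_n \le \M_n$. Hence the triple $(n, B\le A, f)$ eventually comes up in the bookkeeping and is handled, producing the required strong extension of $f$ inside $\M$.

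For uniqueness, a standard back-and-forth gives an isomorphism between any two countable rich $\M, \M' \in \K_2$. At a generic stage one has a strong isomorphism $\varphi\colon A \to A'$ between finite strong substructures of $\M$ and $\M'$. To adjoin a prescribed element $m \in \M$, form the (finite) strong substructure $A^+ = CSS_2^{\M}(\langle A, m\rangle)$, regard $\varphi$ as a strong embedding $A \to \M'$, and apply richness of $\M'$ to extend it to a strong embedding of $A^+$ into $\M'$. Swap the roles of $\M$ and $\M'$ and iterate, interleaving enumerations of $\M$ and $\M'$. The only non-routine ingredient is the strong amalgamation of Theorem~\ref{c=2,c}(1), which I expect to be the main technical hinge; everything else is bookkeeping together with careful use of Lemma~\ref{c=2,b} to ensure that strongness is preserved at every amalgamation step and under the resulting unions and intersections.
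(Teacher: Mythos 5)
Your proposal is correct and is essentially the argument the paper relies on: the paper does not reprove this theorem but cites \cite{Bau96}, and the analogous $c=3$ result (Theorem 7.1 and Corollary 7.2) is obtained by exactly this standard strong Fra\"{\i}ss\'e construction from the strong amalgamation property, with uniqueness by the same back-and-forth via $CSS_2$ and richness. The only nitpick is that by the paper's notational convention $B\le A$ already means $B,A$ are finite, so your preliminary reduction is unnecessary (and condition 2 of the definition of $\K_2$ is ``$A\le M$ for $o$-$\dim(A)\le 2$'' rather than ``$\delta>0$'', though it is equally local and inherited from the $\M_n$ as you argue).
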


\noindent If $b \in \M_1$ and $e \in \M_2$, then there exists $B \le \M$ with $b, e \in B$. Hence there is a strong embedding of $B(e:b) $ over $B$ into $\M$. That means $\M$ is closed under homogeneous divisors and 
$\M \in \h_2$.
We speak about rich structures.

\begin{theorem}\label{c=2,e}
If $M$ and $N$ are rich $\K_2$-structures, $\langle \bar{a} \rangle \le M$, $\langle \bar{b} \rangle \le N$,
and $\langle \bar{a} \rangle \cong \langle \bar{b} \rangle $, then 
\[(M,\langle \bar{a} \rangle)\equiv_{L_{\infty, \omega}}  (N,\langle \bar{b} \rangle ). \]
\end{theorem}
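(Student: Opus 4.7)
I will establish the theorem via a standard back-and-forth argument using the richness property, invoking Karp's theorem for $L_{\infty,\omega}$-equivalence. Let $I$ be the collection of all $L$-isomorphisms $f : A \to B$ where $A$ is a finite substructure of $M$ with $A \le M$, $B$ is a finite substructure of $N$ with $B \le N$, and $f$ extends the given isomorphism $\langle \bar a \rangle \cong \langle \bar b \rangle$. The initial isomorphism is in $I$ by hypothesis, so $I$ is non-empty. It suffices to show $I$ has the back-and-forth property.

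\textbf{Forth.} Let $f : A \to B$ be in $I$ and let $m \in M$. Form
\[
A' \;=\; CSS_2^M\bigl(\langle A \cup \{m\}\rangle\bigr),
\]
the smallest strong substructure of $M$ containing $A \cup \{m\}$, whose existence is guaranteed by Lemma \ref{c=2,b}. I claim $A'$ is finite. Since $\delta_2$ takes values in $\mathbb{N}$, and since by submodularity any proper descent $\delta_2(E) < \delta_2(\langle A\cup\{m\}\rangle)$ already occurs inside a finitely generated $E$, only finitely many strict descents are possible; iterating the construction of minimal $\delta_2$-witnesses yields a finite strong closure. By Lemma \ref{c=2,b}(2), $A \le A'$. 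Transferring the inclusion $A \le A'$ along $f^{-1}$ gives a finite extension $B \le A''$ in $\K_2$ isomorphic to $A \le A'$, with $B \le N$. By richness of $N$ (Theorem \ref{c=2,d}), the strong embedding $B \hookrightarrow N$ extends to a strong embedding $g : A'' \to N$. Composing with the isomorphism $A' \to A''$ produces $f' : A' \to N$ with image $B' := f'(A') \le N$, extending $f$. Then $f' \in I$ and $m \in \mathrm{dom}(f')$.

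\textbf{Back.} Symmetric: given $n \in N$, take $B' = CSS_2^N(\langle B \cup \{n\}\rangle)$, use richness of $M$ to extend $f^{-1}$ to a strong embedding $B' \to M$, and invert.

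\textbf{Conclusion.} Hence $I$ is a back-and-forth system between $(M,\langle\bar a\rangle)$ and $(N,\langle\bar b\rangle)$, and by Karp's theorem the two structures are $L_{\infty,\omega}$-equivalent over the chosen tuples. The only delicate point is finiteness of $CSS_2^M(\langle A\cup\{m\}\rangle)$; everything else is formal from the richness property and the closure rules for strong substructures in Lemma \ref{c=2,b}.
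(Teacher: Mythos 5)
Your proof is correct and follows essentially the same route the paper takes: the paper defers the $c=2$ case to \cite{Bau96}, but its proof of the exact $\K_3$-analogue (Theorem \ref{7.3}) is precisely this Ehrenfeucht--Fra\"{\i}ss\'e back-and-forth, passing to the finite self-sufficient closure $\langle A\, m\rangle \subseteq A' \le M$ and invoking richness to extend the strong embedding. Your flagged point (finiteness of $CSS_2$) is handled correctly, since one may take a finite extension of $\langle A\, m\rangle$ of minimal $\delta_2$-value, which is automatically strong; the only cosmetic slip is citing Lemma \ref{c=2,b}(2) for $A \le A'$, which in fact follows immediately from the definition of strong substructure.
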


\noindent By the Theorem above there is a complete elementary theory $T_2$ of all rich $\K_2$-structures.
$T_2 = Th(\M)$.  Let $T_2(1)$ be an elementary description of $\K_2$ and
\begin{description}
\item[$T_2(2)$]  For all n  and all $B \le A$ in $\K_2$ there is an elementary sentences saying that every 
restricted by
$(n + ldim(A/B))$ strong embedding $f$ of $B$ in $M$ can be extended to an restricted by n  strong embedding of $A$ in $M$.
\end{description} 

\noindent The next theorem imlies that $T_2(1) \cup T_2(2)$ is an elementary axiomatization of $T_2$.

\begin{theorem}\label{c=2,f}
\begin{enumerate}
\item A rich $\K_2$-structure satisfies $T_2(1) \cup T_2(2)$.
\item Let $M$ be a model of $T_2(1)$. $M$ is rich if and only if M is an $\omega$ - saturated model of 
$T_2(2)$.
\end{enumerate}
\end{theorem}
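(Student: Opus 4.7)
The plan is to deduce (1) directly from the definition of richness using the amalgamation tool of Theorem \ref{c=2,c}(2), and to deduce (2) from (1) by standard saturation arguments in both directions, invoking Theorem \ref{c=2,e} for the converse.

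For part (1), a rich $\M$ lies in $\K_2$ and hence satisfies $T_2(1)$. For $T_2(2)$, fix $B \le A$ in $\K_2^{fin}$, an integer $n$, and an $\le^{n+ldim(A/B)}$-strong embedding $f \colon B \to \M$. Because $\delta$ is integer-valued and strictly positive on nontrivial substructures of the $\K_2$-structure $\M$, every finite subset of $\M$ is contained in some finite $\delta$-minimizer, which is automatically strong in $\M$; choose such a $C$ with $f(B) \subseteq C \le \M$. Then $f(B) \le^{n+ldim(A/B)} C$ by restriction, and Theorem \ref{c=2,c}(2) produces an amalgam $D \in \K_2$ of $A$ and $C$ over $B$ with $C \le D$ and $A \le_n D$. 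Applying richness to the strong pair $C \le D$ and the inclusion $C \hookrightarrow \M$ yields a strong embedding $D \to \M$ extending that inclusion, whose restriction to $A$ is the desired $\le_n$-strong extension of $f$.

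For the easier direction of part (2), suppose $M$ is $\omega$-saturated with $M \models T_2(2)$. Given $B \le A$ in $\K_2^{fin}$ and a strong embedding $f \colon B \to M$, for each $n$ the instance of $T_2(2)$ corresponding to $B \le A$ furnishes a $\le^n$-strong extension of $f$ into $M$; this is expressible as a partial type $p_n(\bar x)$ over the finite set $f(B)$, and the $p_n$ form a decreasing chain. Hence $\bigcup_n p_n$ is finitely satisfiable in $M$, and $\omega$-saturation yields a realization whose image gives a fully strong extension of $f$ to $A$.

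For the converse direction of (2), assume $M$ is rich; by part (1), $M \models T_2(2)$. To prove $\omega$-saturation, fix a complete type $p(x)$ over a finite tuple $\bar b \subseteq M$, pick an $\omega$-saturated elementary extension $M \preceq M^*$ realizing $p$ via some $a$, and observe that $M^* \models T_2(2)$, so by the direction just established $M^*$ is itself rich. Choose a finite strong substructure $B$ of $M$ containing $\bar b$ (again a $\delta$-minimizer); since $B \le M \preceq M^*$ and the inequalities defining $\le^k$ are first-order, we have $B \le M^*$, so $A = CSS_2(B \cup \{a\})$ computed in $M^*$ is finite and strong there, with $B \le A$. By richness of $M$, extend the inclusion $B \hookrightarrow M$ to a strong embedding $g \colon A \to M$, and set $a' = g(a)$. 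Then $g(A) \le M$ and $A \le M^*$ are isomorphic strong substructures of two rich models, so Theorem \ref{c=2,e} gives $(M, g(A)) \equiv_{L_{\infty,\omega}} (M^*, A)$; since $g$ fixes $\bar b$ and maps $a$ to $a'$, this transports $p = \tp^{M^*}(a/\bar b)$ to $\tp^M(a'/\bar b)$, and $p$ is realized in $M$. The principal obstacle is exactly this last step: richness is a back-and-forth property only over strong finite substructures, so realizing a type over an arbitrary finite $\bar b$ requires first computing the $CSS_2$-closure of a candidate realization in a sufficiently large rich extension, and then using the $L_{\infty,\omega}$-rigidity of Theorem \ref{c=2,e} to pull the realization back into $M$.
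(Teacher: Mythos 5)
The paper itself states Theorem \ref{c=2,f} without proof, deferring to \cite{Bau96}; the in-paper template is the $c=3$ analogue, namely Lemma \ref{7.5} together with Theorem \ref{7.6}, and your argument follows exactly that template. Your part (1) is the argument of Lemma \ref{7.5} transposed to $c=2$: pass to the self-sufficient closure $C$ of $f(B)$ in $\M$, amalgamate via Theorem \ref{c=2,c}(2), and apply richness to $C \le D$. Your easy direction of (2) is the standard decreasing-chain-of-partial-types saturation argument, and your converse correctly unpacks the paper's one-line appeal to $L_{\infty,\omega}$-equivalence (Theorem \ref{7.3}, here Theorem \ref{c=2,e}) into an explicit transfer of a type from an $\omega$-saturated elementary extension $M^*$ back into $M$ via the finite strong substructure $CSS_2(B\cup\{a\})$; this is exactly what the back-and-forth equivalence amounts to. The one step you elide is at the end of part (1): richness hands you the image of $A$ sitting $\le^n$-strongly inside the \emph{strong image of $D$} in $\M$, and you still need that $A \le^n D \le \M$ yields $A \le^n \M$ (possibly after adjusting the bound by a function of $ldim(A/B)$, which is what the flexible constants in $T_2(2)$ are for). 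For $c=3$ this transfer is precisely the content of Lemma \ref{6.10}; for $c=2$ its analogue follows from the unrestricted submodularity of Lemma \ref{subdelta2}, but it should be stated rather than absorbed into the word ``desired''.
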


\noindent Later we need the following:\\
\begin{definition}\label{5.6}
We work in some $M \in \K_2$. Let $A$, $C$, and $N$ be substrucures, where $A$ and $C$ are finite.
\begin{enumerate}
\item $\delta_2(A/C) = \delta_2(\langle A C \rangle) - \delta_2(C)$.
\item $\delta_2(A/N) = min\{\delta_2(A/C) : C\subseteq N, C finite, \langle A C \rangle \cap N = C \}.$
\end{enumerate}
\end{definition}

\noindent Note that in (1) $\delta_2(A/C)$ is not  equal to\\ 
$o-dim(A/C) - ideal-dim(A/C)$ in general.\\

\noindent
In general we have:\\
$o-dim(A/N) \le o-dim(A)$. \\
For $B \subseteq A$ it holds $o-dim(A) - o-dim(B) \le o-dim(A/B)$.

\begin{lemma}\label{5.7}
Let $M$ be a 2-nilpotent graded Lie algebra in $\K_2$. There exists a function $h(n)$ such that:\\
We consider substructures  $C$, $A$, and $B = C \cap A$ with $o-dim(A/B) \le n$ and
$\delta_2(A/C) \ge 0$.  Then there exist $D$, $K$, $D \cap K =H$, and $X_C \subseteq 
C_2 \cap \langle C_1 A_1 \rangle$
linearly independent over $\langle C_1 \rangle_2 + \langle A_1 \rangle_2 + B_2$, such that 
\begin{enumerate}
\item $B \subseteq H  \subseteq D \subseteq C$, $D_1 = C_1$, $C = \langle D X_C \rangle$, $K = \langle A H \rangle$, and 
$o-dim(H/B) \le h(n)$.
\item $\langle C A \rangle  = \langle D A \rangle = D \otimes_H \langle H A \rangle$. 
\item We can define $h(n) = (n + 1)n$.
\end{enumerate}
\end{lemma}

\begin{proof}
We define:\\
$r_A = ldim((\langle A_1 C_1 \rangle_2 \cap A_2)/\langle A_1 \rangle_2 + \langle C_1 \rangle_2$,\\
$r_C = ldim((\langle A_1 C_1 \rangle_2 \cap C_2)/\langle A_1 \rangle_2 + \langle C_1 \rangle_2$,\\
$r_B = ldim((\langle A_1 C_1 \rangle_2 \cap B_2)/\langle A_1 \rangle_2 + \langle C_1 \rangle_2$, and\\
if $\langle A_1 C_1 \rangle = F(A_1 C_1)/I$, then $s = ldim(I/(I \cap  (F(A_1) +  F(C_1))$. \\
By Corollary \ref{corsubdelta2} we get \\
$\delta_2(\langle A C \rangle) - \delta_2(C) = \delta_2(A) - \delta_2(B) - (r_A + r_C - r_B + s).$\\

\noindent Since $\delta_2(A/C) \ge 0$, we have $\delta_2(A/B) \ge 0$ and it is a bound for the size of
$r_A + r_C - r_B + s$.  \\
We consider an ideal basis $\{\psi_1,\ldots, \psi_s \}$ of $I$ over $I \cap (F(A_1) + F(C_1))$.
Let 
$\{ a_i^1 \in A_1:  1 \le i \le k \}$ be a vector basis  for $\langle C A \rangle_1$ over $C_1$. \\
$\psi_i$ has the following form: \\
\[ \psi_i = \sum_{l < j}  r_{l,j} [a_l^1,a_j^1] + \sum_l [a_l^1, b_l]  + c_i,\]
where $b_l\in C_1$ and $c_i \in F(C_1)_2$. We add all the $b_l$ and $c_i$ to $B$ and obtain $H^0$.\\
Let $d_1, \ldots, d_r$ with $r = r_A + r_C - r_B$ be elements in $\langle A_1 C_1 \rangle_2$ linearly 
independent over $\langle A_1 \rangle_2 + \langle C_1 \rangle_2$, such that \\
$d_1;\ldots,d_{r_B}$ are in $B_2$ , $d_{r_B + 1}, \ldots, d_{r_A}$  are  in $ A_2$ linearly independent over $B_2$ and $d_{r_A + 1}, \ldots, d_r$ are in $C_2$ linearly independnet over $B_2$ . For $1 \le i \le r_A$
\[ d_i = \sum_l [a^1_l,c^1_l] + a^2_i + c^2_i ,\]
where $c^1_l \in C_1$, $a^1_l \in A_1$, $a^2_i \in  \langle A_1 \rangle $, and $c^2_i \in \langle C_1 \rangle $. We add all these $c^1_l$ 
and $c^2_i$ to $H^0$ and obtain $H$. Define $\{d_{r_A + 1}, \ldots , d_r\} = X_C$. We choose $Y_C \subseteq C_2$  maximal linearly independent over $ \langle C_1 A_1 \rangle_2$ and define $D = \langle C_1 H Y_C \rangle$.  By Corollary \ref{c=2,y}
it follows the assertion:
\[ \langle C A \rangle  = \langle D A \rangle  = D \otimes_H \langle H A \rangle.\] 
We have added $(k + 1) \times (s + r_A)$ many elements to $B$. $k + 1 \le n+1$ and $s + r_A \le \delta_2(A) - 
\delta_2(B) \le o-dim(A) - o-dim(B) \le o-dim(A/B) \le n$.

\end{proof}

\begin{lemma}\label{5.8}
Let $M$ be a 2-nilpotent graded Lie algebra in $\K_2$.\\
We consider finite substructures  $C$, $A$, and  $B = C \cap A$ with $o-dim_1(A/B) = n$ and
$C \le M$. Then there exists $D$, $H$,  $K$, 
and $X_C X_B X_A$, such that 
\begin{enumerate}
\item $ H \subseteq D  \subseteq C$, $B_1 \subseteq H $, $D_1 = C_1$, 
$D \cap K = H$ and \\ 
$o-dim(H/B) \le h(n) = (n + 1)\times n$,
\item $H \subseteq K \subseteq \langle H A \rangle$, $K_1 = \langle H A \rangle_1$,
\item $\langle C A \rangle  = \langle D K \rangle = D \otimes_H \langle K \rangle$,
\item $X_C X_B X_A \subseteq \langle C_1 A_1 \rangle_2$ is linearly independent over 
$\langle D_1 \rangle_2 + \langle K_1 \rangle_2$, 
$X_B \subseteq B_2$, $\langle H X_B \rangle = \langle H B \rangle$,
\item $X_C \subseteq C_2$, $ \langle D X_B X_C \rangle = C$, and 
\item $X_A \subseteq A_2$, $\langle K X_B X_A \rangle = \langle H A \rangle$. 
\end{enumerate}
\end{lemma}

\begin{proof}
We use the following subalgebras of $\langle C A \rangle$:\\
$B^0 = \langle B_1, (B_2 \cap (\langle C_1 \rangle_2 + \langle A_1 \rangle_2)), Y_B \rangle$,where 
$Y_B$ is a maximal subset of $B_2$ linearly indepentent over $\langle C_1 A_1 \rangle_2$. \\
$C^0 = \langle C_1, B^0, Y_B, Y_C \rangle$, where $Y_C$ is a maximal subset of $C_2$ linearly independent
over $\langle C_1 A_1 Y_B\rangle_2$.\\
$A^0 = \langle A_1, B^0, Y_B, Y_A \rangle$, where $Y_A$ is a maximal subset of $A_2$ linearly independent
over $\langle C_1 A_1 Y_B\rangle_2$. Furthermore\\
$X_B$ is a maximal subset of $B_2 \cap \langle C_1 A_1 \rangle_2$ linearly independent over 
$B_2 \cap (\langle C_1 \rangle_2 + \langle A_1 \rangle_2)$.\\
$X_C$ is a maximal subset of $C_2 \cap \langle C_1 A_1 \rangle_2$ linearly independent over 
$C_2 \cap (\langle C_1 \rangle_2 + \langle A_1 \rangle_2 + X_B)$.\\
$X_A$ is a maximal subset of $C_A \cap \langle C_1 A_1 \rangle_2$ linearly independent over 
$A_2 \cap (\langle C_1 \rangle_2 + \langle A_1 \rangle_2 + X_B)$. Then\\
$B = \langle B^0 X_B \rangle$, $C = \langle C^0 X_B X_C \rangle$, and $A = \langle A^0 X_B X_A \rangle$.\\
Since $\langle C A \rangle = \langle C^0 A^0 \rangle$ and $\delta_2(C) = \delta_2(C^0) + \mid X_B X_C \mid$, we have $0 \le \delta_2(A/C) \le \delta_2(A^0/C^0)$. Furthermore $C^0 \cap A^0= B^0$.
Then we get as in the proof of 
Lemma \ref{5.7}\\
\[0 \le \delta_2\langle A^0 C^0 \rangle) - \delta_2(C^0) = \delta_2(A^0) - \delta_2(B^0) - s,\]
where $s$ is obtain as above:\\
If $\langle A_1 C_1 \rangle = F(A_1 C_1)/I$, then $s = ldim(I/(I \cap  (F(A_1) +  F(C_1))$. 
Since $C \le M$, we have $\langle C_1 \rangle  \le M$. Then 
\[ 0 \le \delta_2(\langle C_1 A_1 \rangle) - \delta_2(\langle C_1 \rangle ) = \delta_2(\langle A_1 \rangle)
- \delta_2(\langle C_1 \rangle \cap \langle A_1 \rangle) - s . \]
Hence $s \le lin-dim(A_1/B_1)$.\\
As above we add at most $(lin-dim(A_1/B_1) + 1) \times s$ many elements from $C^0$ to $B^0$ to obtain 
$H^0$ such that 
\[ \langle C A \rangle = \langle C^0 A^0 \rangle = \langle C^0 H^0 \rangle \otimes_{H^0}
\langle H^0 A^0 \rangle . \]
We define $H = H^0$, $D = \langle C^0 H \rangle$, and $K = \langle H A^0 \rangle $ and obtain
$\langle D K \rangle = D \otimes_H K$.\\ 
By construction $o-dim(H/B) \le (n+1)n$.

\end{proof}

%%%%%%
%%%%%%
%%%%%%

\section{The functor $\F$}
\noindent If $A$ is in $\K_c$, then $A^* = A/A_c$ is in $\K_{c-1}$ and the canonical homomorphism 
$*$ of $A$ onto $A^*$ is injectiv on $A_1 \oplus \ldots \oplus A_{c-1}$ Let $\tau$ be this injectiv 
map of $A_1 \oplus \ldots \oplus A_{c-1}$ onto $A^*$. Let $F(X)$ be the free graded c-nilpotent Lie algebra over $K$  freely generated by an o-system $X$ , where $X_c = \emptyset$. Then $F(X)^*$ is the free 
graded (c-1)-nilpotent Lie algebra over $K$ freely generated by $\tau(X)$. \\

\noindent Now we assume, that $A \in \K_{c-1}$, $F(X) \in \K_c$ where $\tau(X)$ 
corresponds to  a 
generating o-system of $A$ 
and  $X_c = \emptyset$. Then  $A = F(X)^*/I$ for some 
$I$ in $F(X)^*$. That means
$F(X)^*,I$ is the canonical pair for $A$ in $\K_{c-1}$. 
Let $J$ be the ideal in $F(X)$ generated by $\tau^{-1}(I)$. We define:

\begin{definition} $\F(A) = F(X)/J$.
\end{definition} 

\noindent Let $B \in \K_c$ and $B = \langle B_1, \ldots, B_{c-1} \rangle$. If $A \in \K_{c-1}$ and $A = B^*$, then there is a homomorphism of $\F(A)$ onto $B$. In the case $A \in \K_2$ $A^*$ is a vector space and $\F(A^*)$ is the free 2-nilpotent Lie algebra over $A^*$.\\

\noindent $\F$ is a functor from $\K_{c-1}$ with embeddings into $\K_c$ with homomorphisms. Let $B$ and $A$ be in $\K_{c-1}$ and 
$f: B \rightarrow A$ be an embedding. W.l.o.g. $B$ is a substructure of $A$ :\\ 
 We consider the canonical pair $F(X)^*, I$ for $A$ as above. By Lemma\ref{canpair} there exits a subalgebra $H$ of $F(X)^*$ such that $H, I \cap H$ is the canonical pair for $B$. Let $K$ be the subalgebra of $F(X)$ generated by $\tau^{-1}(H)$. Then 
$\F(B) = K/J_{B}$ and $\F(A) = F(X)/J$ where $J_{B} = \langle \langle \tau^{-1}(I \cap H) \rangle \rangle^K$ and
$J$ as above.  Then $(J_{B})_i = J_i \cap K_i$ for $i < c$ and $(J_{B})_c \subseteq (J_c \cap K_c)$.
This gives the desired homomorhism $\F(f)$ of $\F(B) $ into $\F(A)$. 
For $c = 2$ this is an embedding. \\
Already for $c = 3$ there are examples where $\F(f)$ is not an embedding. See \cite{Ama}. This causes a lot of efforts.
We use the notation $\gamma_{B}^{A}$  for $\F(f)$.\\

\noindent Consicer $A \in \K_c$.
Let $A^-$ be $\langle A_1 \oplus \ldots \oplus A_{c-1} \rangle^{A}$. Then there is some $N(A) \subseteq \F(A^*)_c$, such that $A^- = \F(A^*)/N(A)$. Hence 
\[ \delta(A) = \delta_{c-1}(A) + ldim(A_c/A^-_c) - ldim(N(A)) .\]  \\
Furthermore $M \in \K_c$ implies $\F(M^*) \in \K_c$: This is a consequence of the next inequality. 
We consider $A_i$ as subspace in $M_i$ and in $\F(M^*)_i$. Then
\[ \delta(\langle A_1 A_2 \ldots A_{c-1}\rangle^M)  \le \delta(\langle A_1 A_2 \ldots A_{c-1} \rangle^{\F(M^*)}).\]

\noindent For $\delta_2$ submodularity is true
(Lemma \ref{subdelta2}). For $A \in \K_c^{fin}$ and $A \neq \langle 0 \rangle$ we have $\delta_2(A) > 0$. Assume $A \subseteq M \in \K_c$. By Corollary\ref{2.13}  $\bigcap\{C : A \subseteq C \le_2 M\}$ 
exists. It is the smallest 2-strong substructure of $M$, that contains $A$. We call it $CSS_2^M(A)$ or 
short $CSS_2(A)$. 

\begin{lemma}\label{3.2.a} Assume $D \le_2 M \in \K_c$ and $X Y$ is a vector space basis of $D_2$
over $\langle D_1 \rangle $. Then $\langle D_1  X \rangle \le_2 M$. Especially 
$\langle D_1 \rangle \le_2 M$.
\end{lemma}

\begin{proof}
Assume 
$\langle D_1 X \rangle \subseteq E$. There is $Y_0 \subseteq Y$ linearly independent over $E_2$,
such that $D_2 = \langle (D_2 \cap E_2) Y_0 \rangle^{lin}$. Then
\[ \delta_2(D) \le \delta_2(\langle E  Y_0 \rangle) \le \delta_2(E) + \mid Y_0 \mid,\] 
and 
\[ \delta_2(D) = \delta_2(\langle D_1 X \rangle) + \mid Y \mid.\]
Hence 
\[\delta_2(\langle D_1 X \rangle) \le \delta_2(E).   \]
\end{proof}

\noindent In the next lemma we describe the structure of $CSS_2(A)$ for $A \in \K_2$:

\begin{lemma}\label{3.2}
Let $B$  be a substructure of $M \in \K_2$.  
Then $CSS_2(B) = D = \langle D_1 B_2 \rangle$  and the following is true:
\begin{enumerate}
\item $B_1 \subseteq  D_1$, $B_2 \subseteq D_2 $ and $\langle D_1 \rangle \le_2 M$.
\item $CSS_2( \langle B_1 \rangle) = C = \langle C_1 \rangle \le_2 M$ and $C_1 \subseteq D_1.$
\item If $C_1 \not= D_1$, then 
\[ o-dim_2(\langle C_1B_2 \rangle) > o-dim_2(\langle D_1 B_2 \rangle) + \delta_2(\langle D_1 \rangle)
- \delta_2(\langle C_1 \rangle). \]
\end{enumerate}
\end{lemma}

\begin{proof}
(1) is true by Lemma \ref{3.2.a} and the definitions.\\
The first statement of (2) follows again from Lemma \ref{3.2.a}. If $C_1 \not\subseteq D_1$, then
\[ \delta_2(\langle D_1 C_1 \rangle) \le \delta_2(\langle D_1 \rangle) + \delta_2(\langle C_1 \rangle) 
- \delta_2(\langle D_1 \rangle \cap \langle C_1 \rangle)\]
by Lemma \ref{subdelta2}. Then 
\[ \delta_2(\langle D_1 C_1 \rangle) - \delta_2(\langle D_1 \rangle) \le \delta_2(\langle C_1 \rangle) 
- \delta_2(\langle D_1 \rangle \cap \langle C_1 \rangle) < 0,\]
since $B_1 \subseteq C_1 \cap D_1$, a contradiction.\\
If the inequality in (3) is not true, then $D = \langle C_1 B_2 \rangle$.

\end{proof}

\begin{lemma}\label{3.2.b}
Let $A$ be a 2-nilpotent graded Lie algebra with $A \in \K_2$. 
Assume that $X \subseteq A_1$ is a vector space basis of $A_1$ and $Y \subseteq A_2$ is a
vector space basis of $A_2$ over
$\langle X \rangle_2$. We consider $X$ and $Y$ also as subsets in $\F(A)$.
Then $\langle X Y \rangle^{\F(A)}_3$ is freely generated over $\langle X \rangle^{\F(A)}_3$ by 
$\{ [x,y] : x \in X , y \in Y \}$. In other words
$\F(A) = \langle  X   Y\rangle^{\F(A)} = \langle X \rangle^{\F(A)} \otimes \langle Y \rangle^{\F(A)}$.

\end{lemma}

\noindent In this paper we mainly consider the case $c = 3$. \\
The next Theorem gives an upper bound  for the size of the kernel of $\gamma_B^A$ for $A,B$ in $\K_2^{fin}$. It is essentially in \cite{Ama}.

\begin{theorem}\label{key3}
Let $B \subseteq A$ be 2-nilpotent graded Lie algebras in $\K_2^{fin}$, where $\F(A) \in \K_3$. Then

\begin{description}
\item [a)] If $B \le_2 A$, then $\gamma_B^{A}$ is an embedding of $\F(B)$ into $\F(A)$.
\item [b)] If $A = CSS_2^{A}(B)$ and $B \not= A$, then $ldim(ker(\gamma_B^{A})) < \delta_2(B) - \delta_2(A)$.
\end{description}

\end{theorem}

\noindent  We need the Theorem for 
$B = C^*$ and $A = D^*$ where $C \subseteq D$ in $\K_3$. Then  we have $\F(A) \in \K_3$. First we prove:

\begin{lemma}\label{3.4}
Let $\langle E_1, a \rangle  = A$ be in $\K_2^{fin}$, where $a \in A_1 \setminus E_1$.  Let $F(A_1)$ be the free
3-nilpotent graded Lie algebra over the vectorspace $A_1$.  
Furthermore $A = F(A_1)^*/I $.
Assume that
$C_1 \subseteq E_1 \subseteq A_1$, 
$c_1, \ldots, c_n$ is a vector space basis of $C_1$, such that $[c_i,a] + \psi_i$ with 
$\psi_i \in F(E_1)_2$ build a  vector space basis of
$I$ over $F(E_1)^*_2$.  Let $N_2(C)$ be $I \cap \langle C_1
\rangle_2$.\\ 
If $n = 1$, then $ldim(ker(\gamma_E^{A})) = 0$. Otherwise  
\[ ldim(ker(\gamma_E^{A})) \le ldim(N_2(C)) < n-1 .\]
\end{lemma}

\begin{proof}  By assumption $A = \langle A_1 \rangle $ and $E = \langle E_1 \rangle$ in $\K_2^{fin}$.
Mainly we work in $F(A_1)$ the free graded 3-nilpotent Lie algebra over $K$ freely generated by $A_1$.
In $F(A_1)$ we use the same notation for the $\tau^{-1}$-images of subsets and elements in $F(A_1)^*$, as
e.g. $I$  and $[c_i,a] + \psi_i$. \\
Then $\F(A) = F(A_1)/J$, where
$J = \langle \langle \ I \rangle \rangle^{F(A_1)}$. The non-zero elements of $ker(\gam E A) \subseteq \F(E_1)_3$ have preimages   $\mu \in J_3 \cap  \langle E_1 \rangle^{F(A_1)}_3$ that are not in 
$\langle \langle I \cap F(E_1) \rangle \rangle^{F(E_1)}$, where $F(E_1)$ is
considered as a subalgebra of $F(A_1)$. W.l.o.g.

\[ \mu = \sum_{1 \le i \le n} [e_i,([c_i, a] + \psi_i)] + [a, \theta ], \] 

\noindent where $e_i \in A_1$ and $\theta \in I_2 \cap E_2$. Since cancellation of $[a,[c_i,a]] $ is impossible, we have that $e_i \in E_1$. Furthermore $e_i \in C_1$. Otherwise $[e_i, [c_i, a]]$ cannot be killed by monomials 
$[a,[c_i,e_i]]$. Hence

\[ \mu = (\sum_{1 \le i \le n}( \sum_{1 \le j \le n, i \neq j} r_{i,j}^{\mu} [c_j, [c_i,a] + \psi_i]))  + [a, \theta^{\mu}] .\]

We have $i \neq j$, since we cannot cancel $[c_i,[c_i,a]]$. Since all monomials with $a$ have to vanish, we get

\[ \sum_{1 \le i \le n}( \sum_{1 \le j \le n, i \neq j} r_{i,j}^{\mu} [c_j, [c_i,a] ]) = - [a, \theta^{\mu}] \]

Then it follows that the $\theta^{\mu}$ are in $I_2 \cap \langle C_1 \rangle$.
Let s be the linear dimension of $ker(\gam E A)$.
If $n =1$, then there is no $\mu$ and $s = 0$. If we have s  many linearly independent elements in $ker(\gam E A)$, then we have s such $\mu$ as above linearly independent over $\langle \langle I_2 \cap E_2 \rangle \rangle^{F(E_1)}$. Hence the

\[ \sum_{1 \le i \le n}( \sum_{1 \le j \le n, i \neq j} r_{i,j}^{\mu} [c_j, [c_i,a] ) \]

\noindent are linearly independent and therefore also the $\theta^{\mu}$. By the definition of $\K_2$ 
and since $2 \le n$, we have
$\delta_2(\langle C_1 \rangle) \ge 2$. Therefore $s < n-1$ and $ldim(ker(\gam E A) = s < n-1 = \delta_2(B) - \delta_2(A)$.
\end{proof}

\noindent Now we show Theorem \ref{key3}

\begin{proof}
First we show, that it is sufficient to consider $\langle B_1 \rangle \subseteq \langle A_1 \rangle$.
We assume that the assertion is true in this case and $A \neq B$. \\

Case a) By Lemma \ref{3.2.a} we have $\langle B_1 \rangle \le_2 A$. Hence by assumption the
homomorphism of 
$\F(\langle B_1 \rangle)$ into $\F(\langle A_1 \rangle )$ is an embedding. 
By Lemma \ref{3.2.b} we can consider $\F(\langle A_1 \rangle)$ as a substructure of $\F(A)$ .\\
This is the first step of an induction on $o-dim_2(B)$ to show Case a). We can assume that $B_1 \neq
\langle 0 \rangle$, since n linearly independent elements in any $\F(A)_2$ generate a substructure 
$D$ with $D_1 = \langle 0 \rangle$, $D_2$ is a vector space with a basis of n elements, and 
$D_3 = \langle 0 \rangle$.\\
For the induction we consider $\langle B b \rangle \le A$ with $b \in A_2 \setminus B$ and the canonical
homomorphism $\gamma$ of $\F(B)$ into $\F(A)$ is an embedding. By Lemma \ref{3.2.b} 
$\langle B b \rangle = B \otimes \langle b \rangle$. We distingush the following:
\begin{enumerate}

\item[i)] There are $c \in B_1$ and $a \in A_1$ such that $[c,a] = b$.\\
Then $\langle B a \rangle \le A$. By induction there is an embedding $\gamma^+$ of $\F(\langle B a \rangle)$ into $\F(A)$ that extends $\gamma$. Furthermore we get 
$\langle B a \rangle = \langle B b \rangle \otimes_{\langle c , b \rangle} \langle c, a \rangle$. 
%Since we work in $\K_2$
%$\langle c b \rangle$ is strong in $\langle B b \rangle$ and $\langle c a \rangle$. 
Then $\F(\langle b,c\rangle)$ can be considered as a substructure of $\F(\langle c,a \rangle)$ and of
$\F(\langle B b \rangle)$  since $\langle B b \rangle = B \otimes \langle b \rangle$. 
We see easily 
$\F(\langle B a \rangle) = \F(\langle B b \rangle ) \otimes_{\F(\langle b c \rangle)} \F(\langle c,a \rangle)$. Hence we obtain an embedding of $\F(\langle B b \rangle) $ into 
$\F(\langle B a \rangle)$ and then into $\F(A)$.

\item[ii)] There are no $c \in B_1$ and $a \in A_1$ with $[c,a] = b$.\\
By assumption there is some $c \in B_1$. We fix some new element $a$ with $U_1(a)$,
$[c,a] = b$, and  define 
$B^+ = \langle B b \rangle  \otimes_{\langle c,b \rangle } \langle a,c,b \rangle$ and 
$A^+ = A \otimes_{\langle c,b \rangle } \langle a,c,b \rangle$. \\
Then $B^+ \le A^+$.
By induction there is an embedding of $\F(B^+)$ into $\F(A^+)$. As above we can show that 
$\F(\langle B b \rangle)$ and 
$\F(A)$ can be considered as substructures of  $\F(B^+)$ and  $\F(A^+)$ respectively.

\end{enumerate}

\noindent Case b) 
Since $A = CSS_2(B)$ we have $A = \langle A_1 B \rangle$. We assume $B = \langle B_1 Y \rangle$,
where $Y = \{y_1, \ldots, y_n \}$ is a vector space basis of $B_2$ over $\langle B_1 \rangle$.
By induction we define the structure $\langle z y_i x_i \rangle$ by $U_1(x_i)$, 
$U_1(z)$, and $[z,x_i] = y_i$.
Furthermore let  $C$ be $\otimes^{1 \le i \le n}_{\langle z \rangle} \langle z x_i y_i \rangle$. We define
$B^+ =  C \otimes_{\langle y_1,\ldots,y_n \rangle} B$ and 
$A^+ =  C \otimes_{\langle y_1, \ldots,y_n \rangle} A$. Then 
$B^+ \subseteq A^+$, $\delta(B^+) = \delta(B) + 1$, and $\delta(A^+) = \delta(A) + 1$. \\
$B \le \langle B z \rangle \le \ldots \le \langle B z x_1 \ldots x_i \rangle$ and 
$A \le \langle A z \rangle \le \ldots \le \langle A z x_1 \ldots x_i \rangle$. Hence $B \le B^+$ and $A\le A^+$. By a) we can assume that $\F(B) \subseteq \F(B^+)$ and $\F(A) \subseteq \F(A^+)$. \\
We can apply the assumption since $B^+ = \langle B^+_1 \rangle$, $A^+ = \langle A^+_1 \rangle$, 
and $CSS^{A^+}(B^+) = A^+$. Hence
\[ lin-dim(ker(\gamma_B^{A}) \le lin-dim(ker(\gamma_{B^+}^{A^+}) < \delta(A) - \delta(B) .\]

\noindent Now we assume $B = \langle B_1 \rangle$ and $A = \langle A_1 \rangle$.  We use induction on $ldim(A_1/B_1)$:\\
{\bf Case 1)} $ldim(A_1/B_1) = 1$\\
We use Lemma \ref{3.4}. Let $B$ be $E$.

ad a) Since $B \le_2 A$, we have $n \le 1$. By Lemma \ref{3.4} $ldim(ker(\gamma_B^{A})) \le 0$, as desired.

ad b) Again by Lemma \ref{3.4} $ldim(ker(\gamma_B^{A})) < n-1 = \delta_2(B) - \delta_2(A)$. \\
Now we distinguish two cases for the induction step.

{\bf Case 2)} There is some $D = \langle D_1 \rangle $ such that $B \subseteq D \subseteq A$, $B \neq D \neq A$, and $\delta_2(A) \le \delta_2(D) \le \delta_2(B)$.

ad a) Since $B \le_2 A$ we have $\delta_2(B) = \delta_2(D) = \delta_2(A)$. Then $B \le_2 D$ and $D \le_2 A$.
By induction $\gam B D$ and $\gam D A$ are embeddings. Hence $\gam B A$ is an embedding.

ad b) We choose $D_1$ minimal with the properties of Case 2). We have $A = CSS_2^{A}(D)$ and $\delta_2(A) < \delta_2(D) $. By the minimality of $D$ we get either $\delta_2(D) < \delta_2(B)$ and  $CSS_2^{D}(B) = D$ or $\delta_2(D) = \delta_2(B)$ and $B \le_2 D$. By induction for a) and b) we have 
\[ ldim(ker(\gam B D)) \le \delta_2(B) - \delta_2(D) \]
\[ ldim(ker(\gam D A)) < \delta_2(D) - \delta_2(A). \]
Then \[ ldim(ker(\gam B A)) < \delta_2(B) - \delta_2(A). \]

{\bf Case 3)} For all $D$ with $D = \langle D_1 \rangle $ such that $B \subseteq D \subseteq A$, and $B \neq D \neq A$,  $\delta_2(D) < \delta_2(A)$ or $ \delta_2(B) < \delta_2(D)$.

{\bf Case 3.1} For some $D$ as above $ \delta_2(D) < \delta_2(A)$. This is only possible in a). We choose such a $D_1$ maximal. Then $B \le_2 D$ and
$D \le_2 A$. The assertion follows by induction.

{\bf Case 3.2)} For all $D$ with $D = \langle D_1 \rangle $ such that $B \subseteq D \subseteq A$, and $B \neq D \neq A$, we have $\delta_2(D) \ge \delta_2(A)$ and  $ \delta_2(B) < \delta_2(D)$.\\
If for some $D^0$ with  $D^0 = \langle D_1^0 \rangle $ such that $B \subseteq D^0 \subseteq A$, and $B \neq D^0 \neq A$ we have $\delta_2(D^0) = \delta_2(A)$, then we are in a) and $D^0 \le_2 A$ . We apply
induction. \\
Finally we can assume that for all $D$ with $D = \langle D_1 \rangle $ such that $B \subseteq D \subseteq A$, and $B \neq D \neq A$ we have  $\delta_2(A) < \delta_2(D)$ and  $ \delta_2(B) < \delta_2(D)$. 
Then $\delta_2(A) \le \delta_2(B)$, since otherwise $\delta_2(Bd) \le \delta_2(A)$ for every $d \in A_1 \setminus B_1$.
Note that $\delta_2(Bd) \le \delta_2(B) + 1$.\\

\noindent Now we choose $B_1 \subseteq E_1$ and $a \in A_1 \setminus E_1$ such that $\langle E_1 a \rangle = A$. Hence by assumption $\delta_2(A) < \delta_2(E)$. We will show that
$ldim(ker(\gam B A)) \le  max\{0,\delta_2(B) - \delta_2(A)-1\}$. Then a) and b) follow. Note that in a) 
$\delta_2(B) = \delta_2(A)$. \\
As in Lemma \ref{3.4}
we assume that
$C_1 \subseteq E_1 $,  $c_1, \ldots, c_n$ is a vector space  basis of $C_1$, such that $[c_i,a] + \psi_i$ with $\psi_i \in F(E_1)_2$ is a basis of
$I$ over $F(E_1)_2^*$, where $A = F(A_1)^*/I $. Now we work in $F(A_1)$. 
As above we use the same notations for subsets and elements of $F(X)$ as for their $\tau$-images in $F(X)^*$.
Define  $J = \langle \langle I \rangle \rangle^{F(A_1)}$ as above.
For every $D = \langle D_1 \rangle$ with $D_1 \subseteq A_1$ we use 
$N_2(D)$ for  $I_2 \cap \langle D_1
\rangle_2$. W.l.o.g. we assume, that there is some $m$, such that $c_1, \ldots , c_m \in B_1$ and 
$c_{m+1}, \ldots , c_n$ are linearly independent over $B_1$. Furthermore let $c_{m+1}, \ldots , c_n, e_1,
\ldots, e_r$be a basis of $E_1$ over $B_1$. For $B_1$ we choose a basis that extends 
$c_1, \ldots, c_m$.\\ 
First we assume that $0 < r$. We order our basis of $E_1$, as given above, in such a way that $x < e_r$ for 
all $x \not= e_r$. The set of all $[x,y]$ with $y < x$ is a vector space basis of $\langle E_1 \rangle_2$. Hence 
all monomials $[x,e_r]$, where $x \not= e_r$, are  in this basis. 
We consider the occurence of monomials $[x,e_r]$  in the $\psi_i$. 
We show that they do not occure and apply induction. We consider the $[c_i,a] + \psi_i$.
After linear transformations we can assume w.l.o.g. that there are 
$1 \le l_b \le m+1$, $m +1 \le l_a \le n + 1$, and  $\chi_i = [x_i,e_r]$ 
with $x_i$ from the vector basis of $E_1$ for $l_b \le i \le m$ and $l_a \le i \le n $ that are linearly independent such that :
\begin{description}
\item In $\psi_i $ for $1 \le i < l_b $  no $\chi_j$  occures.
\item  For $l_b \le i \le m$ $\chi_i $  occures  in $\psi_i$ and not in $\psi_j$ for $j \not= i$..
\item In $\psi_i$ for $m+1 \le i < l_a$  no $\chi_j$ occures.
\item $\chi_j$ for $l_a \le j \le n$ occures  in $\psi_j$ and does not occure in $\psi_i$ for $m+1 \le i \le n$ with 
$i \neq j$.
\end{description} 

\noindent As in Lemma \ref{3.4} we consider the preimage $\mu$ in $F(A_1)$ of an element in $ker(\gam E A)$.
As in the proof of that lemma we get
\[ \mu = (\sum_{1 \le i \le n}( \sum_{1 \le j \le n, i \neq j} r_{i,j}^{\mu} [c_j,  \psi_i]))  .\]
If $r_{i,j}^{\mu} \neq 0$ for some $l_b \le i \le m$, then $\chi_i$ cannot be canceled. Otherwise $l_b = m+1$.
If in this case $r_{i,j} \neq 0$ for some $l_a \le i \le n$, then $\chi_i$ cannot be canceled. Hence if $\mu $ represents an element of $ker(\gam B A)$, then it is impossible that $e_r$ occures in some $\psi_i$ with
$r_{i,j}^{\mu} \neq 0$ for some $j$. Let $E(-)$ be generated by $B_1$ and $c_{m+1}, \ldots , c_n, e_1, \ldots ,e_{r-1}$ and $A(-) = \langle E(-)_1 a \rangle$. We have shown, that $ker(\gam B A) = ker(\gam B {A(-)})$. By the
final assumption of {\bf Case 3.2)}
$B \le_2 A(-)$ and by induction $ker(\gam B {A(-)}) = 0$. The assertion is shown.

\noindent It remains the case where $E_1 = B_1, c_{m+1}, \ldots, c_n$. By Lemma \ref{3.4} 
$ldim(ker(\gam E A)) \le k < n-1$, where $k = ldim(N_2(C))$.\\ 
Then $0 < m$, since otherwise no $\mu$
as above can represent an element of $ker(\gam B A)$. First we assume that $2 \le m$.\\ 
Let $k_b$ be $ldim(N_2(c_1, \ldots, c_m))$ and 
$k_a = k - k_b$. Since $B \le_2 E$ $\gam B E$ is an embedding by induction. Hence in $\F(E)$ we have 
$ker(\gam B A) \subseteq (ker\gam E A)$. 
Since $E_1$ is generated by $c_{m+1}, \ldots , c_n$ over $B_1$, we have 
\[ ldim(N_2(E)) - ldim(N_2(B)) \ge \] 
\[ldim(N_2(\langle c_1, \ldots , c_n \rangle)) - ldim(N_2(\langle c_1, \ldots ,c_m \rangle))
= k - k_b = k_a .\]
We will use:\\

\[ldim(ker(\gam B A)) \le ldim(ker(\gam E A)) \le k < n-1,\]
\[ldim(N_2(E/B)) - k_a \ge 0 \] 
and since  $2 \le m$,  
\[ k_b - m +1 < 0.\]
It follows \\

\begin{tabular}{lcl}
$\delta_2(A)$ & = & $\delta_2(B) + ldim(A_1/B_1) -ldim(N_2(A/B))$ \\
& = & $\delta_2(B) + (n-m) + 1 - ldim(N_2(E/B)) - n$\\
& = & $\delta_2(B) + 1 - ldim(N_2(E/B)) -m$ \\
& = & $\delta_2(B) - k - (ldim(N_2(E/B)) - k_a) +k_b -m + 1$\\
& $<$ & $ \delta_2(B) - k $ \\
& $\le$& $ \delta_2(B) - ldim(ker(\gam B A))$
\end{tabular}\\

\noindent It remains the case $m = 1$. We use the proof of Lemma \ref{3.4}
and show that \\ 
$ker(\gamma_B^{A} ) = \langle 0 \rangle $. \\
The preimage of a 
nontrivial element in $ker(\gamma_B^{A}) \subseteq \F(B) \subseteq \F(E)$ in $F(\langle A_1 \rangle)$ has the form

\[ \mu =  [ c_1, \sum_{2 \le i \le n} r_{i,1}^{\mu} \psi_i] + ( \sum_{2 \le j \le n}  [c_j, \sum_{1 \le i \le n,i \neq j} r_{i,j}^{\mu} \psi_i])  .\]

\noindent Then the image of $\mu $ in $\F(B)$ has the form $[c_1, d]$, where $d \in \F(B)_2$.
This is impossible since $\F(A)$ is $\K_3$.
Therefore it has no homogeneous zero-divisors.
\end{proof}

\begin{cor}\label{3.5}
Let $D$ be a 3-nilpotent graded Lie algebra  in $ \K_3$.
\begin{enumerate}
\item If $B \subseteq D $ ,  $A = CSS_2^{D}(B)$ , and $B \not= A$, then $\delta(A) < \delta(B)$.
\item For graded Lie algebras $M$ in $\K_3$ we can define $B \le M$, if $\delta(B) \le \delta(A)$ for all $B \subseteq A \subseteq M$.

\end{enumerate}
\end{cor}

\begin{proof} 
ad(1)
We can assume $D = A$.
Note that $o-dim_3(D) \le o-dim(B)$. 
In $\F(D^*)$ we have $\ga B D (N_3(B)) \subseteq N_3(D)$ and 
\[ ldim(N_3(B)) \le ldim(ker(\ga B D)) + ldim(\ga B D (N_3(B))) \le 
 ldim(ker(\ga B D)) + ldim(N_3(D)). \]
By Theorem \ref{key3} $ldim(ker(\ga B D)) < \delta_2(B) - \delta_2(D)$. Hence
\[ \delta(D) = \delta_2(D) - ldim(N_3(D) + o-dim_3(D) <  \]
\[\delta_2(B) - ldim(ker(\ga B D)) - ldim(N_3(D) + 
o-dim_3(D) \le \]
\[ \delta_2(B) - ldim(N_3(B)) + o-dim_3(B) = \delta(B) .\] 
(2) follows from (1).
\end{proof}

\begin{lemma}\label{3.2.c}
Assume $A \le^n M \in \K_3$ and $A \subseteq C \subseteq D$ with $o-dim(D^*/A^*) \le n$. Then
$\delta(A) \le \delta(C)$.
\end{lemma}

\begin{proof} 
We can assume w.l.o.g.
that $D = \langle D_1 D_2 A_3\rangle$. \\
By Corollary \ref{3.5} $\delta(CSS_2^D(C)) \le \delta(C)$. Hence w.l.o.g. $C  \le_2 D$. \\ Then 
$\F(C^*)$ can be considered as a subspace of $\F(D^*)$. We have $C^- = \F(C^*)/N_C$
and $D^- = \F(D^*)/N_D$. Then $ldim(N_C) \le ldim(N_D)$.\\ 
If $o-dim(C^*/A^*) \le n$, then we are done. Otherwise we show $\delta(A) \le \delta(C)$.\\
Let 
$X = X_2^C(A) X_2^C(C)$ be a vector space basis of $C_2$ over $\langle C_1 \rangle_2$, where 
$X_2^C(A)$ is a vector space basis of $ A_2$ over $\langle C_1 \rangle_2$
and $X_2^C(C)$ is a vector space basis of $C_2$ over $\langle C_1 A_2 \rangle_2$. Let $X_3^C$ be a vector space basis of $C_3$ over $C^-_3$. Now
\[ \delta(C) = \delta_2(\langle C_1 \rangle) + \mid X_2^C(A) \mid + \mid X_2^C(C) \mid +
\mid X_3^C \mid - ldim(N_C) .\]
Furthermore
\[  \delta(D) \le \delta_2(\langle C_1 \rangle) + ldim(D_1/C_1) + \mid X_2^D(A) \mid  
+ \mid X_2^D(D) \mid
+ \mid X_3^D \mid - ldim(N_D),\]
where $X_2^D(A)$ is a vector space basis of $A_2$ over $\langle D_1 \rangle_2$,
$X_2^D(D)$ is a vector space basis of $D_2$ over $\langle D_1 A_2 \rangle_2$ and $X_3^D$
is a vector space basis of $D_3$ over $D^-_3$. \\
Then $\mid X_3^C \mid \ge \mid X^D_3 \mid$
and $\mid X_2^D(A) \mid \le \mid X_2^C(A)\mid$. By assumption
\[  ldim(C_1/A_1) +ldim(D_1/C_1)  + \mid X_2^D(D) \mid \le n < ldim(C_1/A_1) 
+ \mid X_2^C(C) \mid .\]
Hence $\delta(D) \le  \delta(C)$.

\end{proof}

\begin{cor}\label{3.2.d}
Assume $M \in \K_3$. $A \le^n M$ if and only if $\delta(A) \le \delta(C)$ for all $A \subseteq C 
\subseteq M$, that are contained in some $D \subseteq M$ with $o-dim(D^*/A^*) \le n$.
\end{cor}

%%%%%%
%%%%%%666
%%%%%%

\section{submodularity and amalgamation for $\K_3$}
\noindent From now on we concentrat on 3-nilpotent  graded Lie algebras.

\begin{lemma}\label{6.1} 
We work in a 3-nilpotent graded Lie algebra $M \in \K_3$.
Assume $C \cap A = B$ and $\langle C A \rangle^* = C^* \otimes_{B^*} A^*$, $\alpha$   is the the canonical homomorphism of $\F(B^*)$ into $\F(A^*)$, and $\gamma$ the canonical homomorphism of $\F(B^*)$ into $\F(C^*)$. We identify $\F(B^*)/ \langle ker(\alpha), ker(\gamma) \rangle$ with the isomorphic substructures in $\F(A^*)/\alpha(ker(\gamma))$ and in $\F(C^*)/ \gamma(ker(\alpha))$. Then
\[\F(\langle C A \rangle^*) \cong \F(C^*)/ \gamma(ker(\alpha)) \otimes_{\F(B^*)/ \langle ker(\alpha), ker(\gamma) \rangle} \F(A^*)/ \alpha(ker(\gamma)) .\]
The isomorphism is given by the fact, that we can identify the $^*$-images of both sides.
\end{lemma}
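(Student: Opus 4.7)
The plan is to present both sides as the same quotient $F/J$ of a common free 3-nilpotent graded Lie algebra $F$, and invoke a pushout-of-presentations argument.

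First, choose compatible generating $o$-systems $X^B \subseteq X^A$, $X^B \subseteq X^C$ for $B^*,A^*,C^*$ in $\K_2$, and form the canonical pairs $F_2(X^B)/I^*_B$, $F_2(X^B\cup X^A)/I^*_A$, $F_2(X^B\cup X^C)/I^*_C$, with $I^*_B = I^*_A\cap F_2(X^B) = I^*_C\cap F_2(X^B)$. Using the hypothesis $\langle CA\rangle^* = C^*\otimes_{B^*}A^*$ together with Corollaries \ref{c=2,z} and \ref{c=2,y}, $X^B\cup X^A\cup X^C$ is a generating $o$-system for $\langle CA\rangle^*$, and the canonical ideal of $\langle CA\rangle^*$ inside $F_2(X^B\cup X^A\cup X^C)$ is exactly $I^*_A+I^*_C$ (no cross-relations).

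Set $F = F(X^B\cup X^A\cup X^C)$, and let $F_A, F_B, F_C$ be the respective free 3-nilpotent subalgebras, with ideals $J_\bullet = \langle\langle I^*_\bullet\rangle\rangle^{F_\bullet}$. By the definition of $\F$ we have $\F(A^*)=F_A/J_A$, $\F(B^*)=F_B/J_B$, $\F(C^*)=F_C/J_C$, and $\F(\langle CA\rangle^*) = F/\langle\langle I^*_A+I^*_C\rangle\rangle^F$. Since $\alpha$ is induced by the inclusion $F_B\hookrightarrow F_A$, direct computation gives $\ker\alpha = ((F_B\cap J_A) + J_B)/J_B$, and transporting $\ker\gamma$ through $\alpha$ yields $\F(A^*)/\alpha(\ker\gamma) = F_A/J'_A$ with $J'_A := J_A + (F_B\cap J_C)$. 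Symmetrically $\F(C^*)/\gamma(\ker\alpha) = F_C/J'_C$ and $\F(B^*)/\langle\ker\alpha,\ker\gamma\rangle = F_B/J'_B$ with $J'_B = (F_B\cap J_A)+(F_B\cap J_C)$; the inclusion $J'_B \subseteq J'_A\cap J'_C$ is immediate, so the induced maps $F_B/J'_B\to F_A/J'_A$ and $F_B/J'_B\to F_C/J'_C$ are well-defined.

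Next, observe that $F$ is the pushout $F_A\otimes_{F_B}F_C$ in the category of 3-nilpotent graded Lie algebras, since it is the free algebra on the pushout of the generating sets. By Theorem \ref{Lieam1} this category admits free amalgams, and a standard pushout-of-presentations argument computes the free amalgam of $F_A/J'_A$ and $F_C/J'_C$ over $F_B/J'_B$ as $F/\tilde K$, where $\tilde K$ is the ideal in $F$ generated by $J'_A\cup J'_C$.

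It remains to verify $\tilde K = \langle\langle I^*_A+I^*_C\rangle\rangle^F$. The containment $\tilde K \subseteq \langle\langle I^*_A+I^*_C\rangle\rangle^F$ holds because each summand of $J'_A = J_A + F_B\cap J_C$ already lies inside $\langle\langle I^*_A+I^*_C\rangle\rangle^F$ (and symmetrically for $J'_C$); the reverse containment follows from $I^*_A\subseteq J_A\subseteq J'_A$ and $I^*_C\subseteq J_C\subseteq J'_C$. The main obstacle is the clean identification of $\ker\alpha$ and $\ker\gamma$ as ideal-intersection objects arising from the functorial definition of $\F$; once that is done, the rest of the argument is formal bookkeeping with ideals and the universal property of pushouts.
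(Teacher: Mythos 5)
Your argument is correct, but it is a genuinely different route from the paper's. The paper's proof is a soft, two\mbox{-}sided universal\mbox{-}property argument: applying $^*$ to the right\mbox{-}hand side recovers $\langle CA\rangle^*$, which gives a surjection $h$ from $\F(\langle CA\rangle^*)$ onto the amalgam; the universal property of the free amalgam, together with the compatible maps of $\F(A^*)/\alpha(\ker\gamma)$ and $\F(C^*)/\gamma(\ker\alpha)$ into $\F(\langle CA\rangle^*)$, gives a map $k$ back; and since $h^*=(k^*)^{-1}$ and all algebras involved are generated by their degree $1$ and $2$ parts, $h$ and $k$ are mutually inverse. You instead realize all six algebras as quotients of the single free algebra $F(X^B\cup X^A\cup X^C)$ and match ideals. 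Your version buys an explicit description of the kernels and of the ideal of the amalgam (information of the kind the paper actually needs later, e.g.\ in the $f(N)\supseteq N(C)\oplus\cdots$ computations of Lemma \ref{6.3}), at the cost of the delicate opening step --- the identification of the canonical ideal of $\langle CA\rangle^*$ as exactly $I^*_A+I^*_C$, which needs the small extra argument that in a relation $\psi_A+\psi_C$ the two halves represent a common element of $B^*_2$ (via $\langle A_1\rangle_2\cap\langle C_1\rangle_2=\langle B_1\rangle_2$ in the amalgam) and can be corrected separately. Two points you should make explicit: (i) $\ker\alpha$ and $\ker\gamma$ live in the degree\mbox{-}$3$ (hence central) part, which is what makes sums such as $J_A+(F_B\cap J_C)$ ideals and the quotients well defined; (ii) your pushout\mbox{-}of\mbox{-}presentations computes the categorical pushout, and you should invoke Theorem \ref{Lieam1} to know the free amalgam on the right\mbox{-}hand side exists and therefore coincides with that pushout.
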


\begin{proof}
If we apply $^*$ to the right side, then the result is isomorphic to $\langle C A \rangle^*$. Hence there is a homomorphism $h$ from the left side onto the right side.\\
\noindent Conversely there are homomorphisms $f$ of $\F(C^*)/\gamma(ker(\alpha))$ and $g$ of 
$\F(A^*)/ \alpha(ker(\gamma))$ into  $\F(\langle A C\rangle^*)$. $f$ and $g$ coincide on the intersection isomorphic to \\
\noindent$\F(B^*)/ \langle ker(\alpha), ker(\gamma) \rangle$. Hence there is a homomorphism $k$ of the right side onto the left side. Since  the $^*$-images $h^* = (k^*)^{-1}$ are isomorphisms we get the assertion. Note that 
we have $\F(E^*) = \langle \F(E^*)_1 \F(E^*)_2 \rangle^{\F(E^*)}$.
\end{proof}

\begin{cor}\label{6.1.a}
We are in the setting of Lemma \ref{6.1}. \\
Let $X_1^{a}$ be an ordered vector space basis of $A_1 $ over $B_1$ , \\
$X_1^{c}$ be an ordered  vector space basis of $C_1 $ over $B_1$, \\ 
$X_2^{a}$ be a vector space basis of $A_2 $ over $B_2$, and \\
$X_2^{c}$ be a vector space basis of $C_2 $ over $B_2$. Then
\begin{enumerate}
\item $\{ [x,y] : x \in X_1^c , y \in X_1^{a} \}$ is a vector space basis of $\F(\langle C A \rangle^*)_2$ over
$C_2 + A_2$,
\item \[\{ [w,z] : w \in X_2^c , z \in X_1^{a}  \:or \:
  w \in X_2^{a} , z \in X_1^c \}\]
\[\{ [[x,y],z] : x = z \in X_1^c, y \in X_1^{a} \: or \] \[ x = z \in X_1^{a}, y \in X_1^c  \: or \:
x < z \in X_1^c , y \in X_1^{a} \: or  \: x < z \in X_1^{a} , y \in X_1^c \}\] 
is a vectorspace basis of $\F(\langle  C  A \rangle^*)_3$ over $\langle C_1 C_2 \rangle_3^{\F(\langle C A \rangle^*)} + \langle A_1 A_2 \rangle_3^{\F(\langle C A \rangle^*)}$.
\end{enumerate}
\end{cor}

\noindent The Corollary is a consequence of Lemma \ref{6.1}, Corollary \ref{c=2,z}, and 
Corollary \ref{c=3,a}.

\begin{lemma}\label{subdelta3}
Let $M$ be a 3-nilpotent graded Lie algebra in $ \K_3$ and $A$ and $C$ are  finite substructures of $M$ with 
$A \le_2 M$ and $C \le_2 M$.
Then 
\[ \delta(\langle A C\rangle) \le \delta(A) + \delta(C) - \delta(A \cap C) .\]
\end{lemma}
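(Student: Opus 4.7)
The plan is to adapt the proof of Lemma \ref{subdelta2} to degree~$3$, leveraging Theorem \ref{key3}. Set $B = A \cap C$. By Corollary \ref{2.13}, $B \le_2 M$; since $M_3$ is a homogeneous complement of $M^-$ in $M$, one checks $A^* \cap C^* = B^*$ inside $M^*$, and $A^*, B^*, C^*, \langle AC \rangle^*$ are all $2$-strong in $M^*$ (using that passing to $M^*$ preserves $\delta_2$ and sends $2$-strong substructures to $2$-strong ones). Applying Lemma \ref{subdelta2} in $M^*$ then yields a $\delta_2$-deficit
\[\epsilon := \delta_2(A) + \delta_2(C) - \delta_2(B) - \delta_2(\langle AC\rangle) \ge 0.\]

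By Theorem \ref{key3}(a), each of the $2$-strong inclusions $B^* \le_2 A^*, C^* \le_2 \langle AC\rangle^*$ induces an $\F$-embedding, so I identify $\F(B^*) \subseteq \F(A^*), \F(C^*) \subseteq \F(\langle AC\rangle^*)$ as subalgebras. Writing $N(E) \subseteq \F(E^*)_3$ for the degree-$3$ kernel with $E^- \cong \F(E^*)/N(E)$, the commuting square of quotients onto the various $E^-$ gives $N(B) \subseteq N(A) \cap N(C)$ and $N(A) + N(C) \subseteq N(\langle AC\rangle)$ inside $\F(\langle AC\rangle^*)_3$. Substituting these into the identity
\[\delta(E) = \delta_2(E) + o\text{-dim}_3(E) - \mathrm{ldim}(N(E))\]
and using the $\delta_2$-deficit $\epsilon$ reduces the desired inequality to the single combined estimate
\[\mathrm{ldim}(N(A) \cap N(C)) - \mathrm{ldim}(N(B)) + o\text{-dim}_3(\langle AC\rangle) + o\text{-dim}_3(B) - o\text{-dim}_3(A) - o\text{-dim}_3(C) \le \epsilon.\]

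I would prove this combined estimate by a basis-selection argument in the style of Lemma \ref{subdelta2}, picking bases of the degree-$1$, $2$, and $3$ parts of $A, B, C$ compatibly with the subspaces $A_3^-, B_3^-, C_3^-$ and with $A_i \cap C_i = B_i$. In the ``clean'' case where the $\K_2$-amalgamation is free ($\epsilon = 0$) and $A_3^- \cap C_3^- = B_3^-$, Lemma \ref{6.1} specializes to $\F(\langle AC\rangle^*) \cong \F(A^*) \otimes_{\F(B^*)} \F(C^*)$; the defining property of a free amalgam forces $\F(A^*) \cap \F(C^*) = \F(B^*)$ and hence $N(A) \cap N(C) = N(B)$, while the compatible bases make both $o\text{-dim}_3$-bracketed terms vanish. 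The general case follows by tracking, along the lines of the case analysis in the proof of Theorem \ref{key3}(b), how each of the $\epsilon$ independent degree-$2$ identifications in $\langle AC\rangle^*$ and each additional coincidence among degree-$3$ generators contributes at most an offsetting defect on the left. The principal obstacle is precisely this combinatorial bookkeeping: tightly bounding, in terms of the single parameter $\epsilon$, both the excess of $N(A) \cap N(C)$ over $N(B)$ and the potential failure of $o\text{-dim}_3$-submodularity across the amalgam.
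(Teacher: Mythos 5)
Your overall framework is the paper's: split $\delta$ as $\delta_2 + o\text{-}dim_3 - ldim(N(\cdot))$, extract the $\delta_2$-deficit $\epsilon\ge 0$ from Lemma \ref{subdelta2}, and use Theorem \ref{key3}(a) together with $2$-strongness to realize $\F(B^*)\subseteq\F(A^*),\F(C^*)\subseteq\F(\langle AC\rangle^*)$. One of the two quantities you leave to "bookkeeping" is actually not an issue: writing $\langle AC\rangle^-=\F(\langle AC\rangle^*)/N$, the $2$-strongness hypotheses give $N(E)=N\cap\F(E^*)_3$ for $E=A,C,B$ and $\F(A^*)\cap\F(C^*)=\F(B^*)$, whence $N(A)\cap N(C)=N(B)$ exactly; no estimate is needed there.

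The genuine gap is in the other half. You lower-bound $ldim(N(\langle AC\rangle))$ by $ldim(N(A)+N(C))$, which forces your combined estimate to demand that the failure of $o\text{-}dim_3$-submodularity be at most $\epsilon$. That is false: the failure is governed by $r:=ldim\bigl(\langle A_1A_2\rangle_3\cap\langle C_1C_2\rangle_3\,/\,\langle B_1B_2\rangle_3\bigr)$ (the set $X_3^0(B)$ in the paper's proof, which sits inside $B_3=A_3\cap C_3$ and inflates $o\text{-}dim_3(B)$), and $r$ is independent of $\epsilon$. For instance, amalgamate $A=\langle A_1\rangle$ and $C=\langle C_1\rangle$ freely in degrees $1,2$ (so $\epsilon=0$) but impose one degree-$3$ identification $\psi_a\equiv-\psi_c$ with $\psi_a\in\F(A^*)_3$, $\psi_c\in\F(C^*)_3$; then $B=A\cap C$ is spanned by the identified element, $o\text{-}dim_3(B)=1$, all other terms of your left-hand side vanish, and your estimate reads $1\le 0$. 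What rescues the lemma is exactly the contribution you discarded: each such coincidence yields an element $\psi_a+\psi_c$ of $N(\langle AC\rangle)$ linearly independent over $N(A)+N(C)$, so in fact $ldim(N(\langle AC\rangle))\ge ldim(N(A))+ldim(N(C))-ldim(N(B))+r$, and this $+r$ cancels the $-r$ defect in $o\text{-}dim_3$-submodularity one-for-one. The correct bookkeeping is this exact matching between degree-$3$ coincidences and mixed relations, not a bound in terms of $\epsilon$; with that repair your argument becomes the paper's proof.
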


\begin{proof}
Let $B$ be $A \cap C$.
By Lemma \ref{subdelta2} we have

\begin{description}
\item[1)]$ \delta_2(\langle A C \rangle \le \delta_2(A) + \delta_2(C) - \delta_2(B) .$
\end{description}

\noindent We can assume w.l.o.g. that $M = \langle A C \rangle$. Let $M^- = \langle M_1 M_2 \rangle$ be isomorphic to $\F(M^*)/N$, where $N \subseteq \F(M^*)_3 $. 
Furthermore we define \\

\begin{tabular}{ll}
$A^- = \F(A^*)/N(A)$ & \\
$C^- = \F(C^*)/N(C)$ & \\
$B^- = \F(B^*)/N( B)$ & \\
\end{tabular}\\

\noindent Since $A \le_2 M$ and $C \le_2 M$,
we have $A \cap C \le_2 M$ by Lemma \ref{c=2,b}. Hence we can consider 
$\F(A^*), \F(C^*), \F(A^* \cap C^*) = \F(B^*)$ as substrucures of $\F(M^*)$  \\
by Theorem \ref{key3}.\\
Since $ \F((B^*)_i =\F(A^*)_i \cap \F(C^*)_i$ for $i = 1,2$, there is an homomorphism of $\F(B^*)$ onto $\F(A^*) \cap \F(C^*)$. Hence $\F(B^*) =\F(A^*) \cap \F(C^*)$.\\
Under these assumptions we have $N(A) = N \cap \F(A^*)_3$, $N(C) = N \cap \F(C^*)_3$, and
$ N( B) = N \cap \F( B^*)_3$.\\ Then $N(C) \cap N(A) = N(B)$. We get:

\begin{description}
\item[2)] $ldim(N(\langle A C\rangle)) \ge ldim(N(A)) + ldim(N(C)) - ldim(N( B)) + r.$
\end{description}

\noindent where $r$ is the linear dimension of all $\psi_a + \psi_b$ in  $N(\langle A C \rangle)$ over $N(A) + N(C)$ with 
$\psi_a \in \F(A^*)_3$ and $\psi_b \in \F(C^*)_3$ . \\

Now we have to compute the $o-dimensions_3$. In $M_3$ we define:

\begin{description}
\item $X_3^0(B) $ is a vector basis for $\langle C_1 C_2\rangle_3 \cap \langle A_1A_2 \rangle_3$ over $\langle B_1B_2 \rangle_3$.
\item $X_3^{A}(B)$ is a vector basis for $\langle A_1 A_2\rangle_3 \cap C_3$ over $\langle B_1 B_2\rangle_3 + \langle X_3^0(B) \rangle$.
\item $X_3^{C}(B)$ is a vector basis for $\langle C_1 C_2\rangle_3 \cap A_3$ over $\langle B_1 B_2\rangle_3 +\langle X_3^0(B) \rangle$.
\item $X_3^1(B)$ is a vector basis for $B_3 \cap \langle A_1 A_2 C_1 C_2 \rangle_3$ over 
$\langle B_1 B_2\rangle_3 + \langle X_3^0(B) X_3^{A}(B) X_3^{C}(B) \rangle$.
\item $X_3^2(B)$ is a vector basis of $B_3$ over $\langle A_1 A_2 C_1 C_2 \rangle_3$. 
\end{description}
 Note that $X_3^{A}(B) \cup X_3^{C}(B)$ is linearly independent over $\langle B_1 B_2 \rangle_2 + \langle X_3^0(B) \rangle_3$. 
Now $B_3$ has the following vector space  basis $X_3(B)$ over $\langle B_1 B_2 \rangle_3$:
\[ X_3^0(B) X_3^{A}(B) X_3^{C}(B) X_3^1(B) X_3^2(B) .\] 

Let $X_3^1(A)$ and $X_3^1(C)$ be choosen such that

\begin{description}
\item $ X_3^{C}(B) X_3^1(B)  X_3^1(A)$ is a vector basis of $A_3 \cap \langle A_1A_2 C_1 C_2 \rangle_3$ over $\langle A_1 A_2\rangle_2$.
\item $ X_3^{A}(B) X_3^1(B)  X_3^1(C)$ is a vector basis of $C_3 \cap \langle A_1 A_2C_1 C_2\rangle_2$
over $\langle C_1 C_2\rangle_2$.
\end{description} 

Choose $X_3^2(A)$ and $X_3^2(C)$ such that

\begin{description}
\item $X_3(A) = X_3^{C}(B) X_3^1(B) X_3^2(B) X_3^1(A) X_3^2(A)$ is a vector basis of $A_3$ over $\langle A_1 A_2\rangle_3$.
\item $X_3(C) = X_3^{A}(B) X_3^1(B) X_3^2(B)  X_3^1(C) X_3^2(C)$ is a vector basis of $C_3$ over $\langle C_1 C_2\rangle_3$.
\end{description} 

Then $X_3(AC) = X_3^2(B) X_3^2(A) X_3^2(C)$ is a vector basis  of $\langle AC \rangle_3$ over $\langle A_1 A_2 C_1 C_2 \rangle_3$.

It follows:

\begin{description}
\item[3)] $o-dim_3(A) + o-dim_3(C) - o-dim_3( B) =$
\[ \mid X_3^{C}(B) \mid + \mid X_3^1(B) \mid + \mid X_3^2(B)\mid + 
\mid X_3^1(A) \mid  + \mid X_3^2(A) \mid + \] 
\[ \mid X_3^{A}(B) \mid + \mid X_3^1(B) \mid + \mid X_3^2(B)\mid + 
\mid X_3^1(C) \mid + \mid X_3^2(C) \mid - \] 
\[\left( \mid X_3^{C}(B) \mid +\mid X_3^{A}(B) \mid + \mid X_3^1(B) \mid + \mid X_3^2(B)\mid + \mid X_3^0(B) \mid \right) = \] 
\[ \mid X_3^1(B) \mid + \mid X_3^2(B)\mid + 
\mid X_3^1(A) +\mid X_3^2(A) \mid + 
\mid X_3^1(C) \mid +\mid X_3^2(C) - \mid X_3^0(B) \mid = \] 
\[\mid X_3(AC) \mid + \mid X_3^1(B)  X_3^1(A) X_3^1(C) \mid  - \mid X_3^0(B) \mid =\]
\[o-dim_3(AC) + \mid X_3^1(B)   X_3^1(A) X_3^1(C)\mid - \mid X_3^0(B) \mid.  \]

\end{description}

Since $\mid X_3^0(B) \mid = r $, 1), 2), and 3) imply

\[ \delta(\langle A C\rangle) \le \delta(A) + \delta(C) - \delta( B) .\]
\end{proof}

If we repeat the proof above without the assumptions $C \le_2 M$ and $A \le_2 M$, then we obtain that
$\delta(A) + \delta(C) - \delta(B) - \delta(\langle A C \rangle)$ is the sum of the following summands:
\begin{enumerate}
\item[$\Sigma1$] $\delta_2(A) + \delta_2(C) - \delta_2(B) - \delta_2(\langle A C \rangle) $
\item [$\Sigma2$] $\mid X_3^1(B)   X_3^1(A) X_3^1(C)\mid  $
\item [$\Sigma 3$] $ ldim(N) + ldim(N(B)) - ldim(N(C)) - ldim(N(A))  - \mid X_3^0(B) \mid $.
\end{enumerate}
In this general case we have $0 \le \Sigma1$ by Lemma \ref{subdelta2}. If $\Sigma3 \ge 0$, then we have subadditivity.

We use this remark  to get the following:

\begin{lemma}\label{6.3}
Let $M$ be a 3-nilpotent graded Lie algebra  in $\K_3$,  $M = \langle C A \rangle$,
$C \cap A = B$,  and $B \le_2 A$. 
We choose $X_B$ as a maximal subset of $ B^*_2 \cap \langle C_1 A_1 \rangle_2$  linearly independent over 
$\langle C_1 \rangle_2 + \langle A_1 \rangle_2$. Then let 
$X_C $  be maximal subset of $ C^*_2 \cap \langle C_1 A_1 \rangle_2$  that is linearly independent over 
$\langle C_1 \rangle_2 + \langle A_1 \rangle_2 + \langle X_B \rangle_2$, and
$X_A $  be maximal subset of $ A^*_2 \cap \langle C_1 A_1 \rangle_2$  that is linearly independent over 
$\langle C_1 \rangle_2 + \langle A_1 \rangle_2 + \langle X_B \rangle_2$. \\
Furthermore there are $D \subseteq C^*$ 
$H \subseteq B^*$, $K \subseteq A^*$  
such that 
\begin{enumerate}
\item $B^* = \langle H X_B \rangle$, $C^* = \langle D X_B X_C \rangle$, 
$A^* = \langle K X_B X_A \rangle$, 
\item $X_B$ is linearly independent over $H_2$,\\
$X_B X_C$ is linearly independent over $D_2$,\\
$X_B X_A$ is linearly independent over $K_2$,
and
\item $\langle C^* A^* \rangle = \langle D K \rangle = D \otimes_H  K.$
\end{enumerate}
\noindent Then
\begin{enumerate}
\item $\delta( \langle C A \rangle) \le \delta(C) + \delta(A) - \delta(B)$. 
\item If $\mid X_C X_B X_A \mid > 0$,
then $\delta( \langle C A \rangle) < \delta(C) + \delta(A) - \delta(B)$. 
\item If $C \le \langle C A \rangle$, then $B \le   A $.
\item If $C^* = D$, $B^* = H$, $A^* = K$, $B \le A$, and $\langle C A \rangle = C \otimes_B A$, 
then $\delta( \langle C A \rangle) =  \delta(C) + \delta(A) - \delta(B)$ and $C \le \langle C A \rangle$.
\item If $C^* = D$ , $B^* = H$, $A^* = K$, $B \le A$, and  $\delta( \langle C A \rangle) =  \delta(C) + \delta(A) - \delta(B)$, then $\langle C A \rangle = C \otimes_B A$.
\end{enumerate}
\end{lemma}

\begin{proof}
Note that $X_B X_C X_A$ is linearly independent over $D_2 + K_2$.\\
ad (1) We use the remark below Lemma \ref{subdelta3}.
We have only to show that $0 \le \Sigma3$. 
By Lemma \ref{6.1} we get
\[(\otimes) f: \F(\langle C^* A^* \rangle) = \F(\langle D K \rangle) \cong \F(D) \otimes_{\F(H)/ker(\gamma)} \F(K)/ker(\gamma), \]
where $\gamma$ is the homomorphism of $\F(H)$ into $\F(D)$. Note that $B \le_2 A$ implies 
$\langle H X_B \rangle  \le_2 \langle K X_B \rangle$. By Lemma \ref{3.2.a} we get
$H \le_2 K$. Therefore we can consider $\F(H)$ as a substructure of $\F(K)$
and $\F(B^*)$  as a substructure of $\F(A^*)$ by Theorem \ref{key3}. 
By Theorem \ref{c=2,c} $D \le_2 \langle D  K\rangle$.  Hence we consider $\F(D)$ as a 
substructure of $\F(\langle D K \rangle)$. Furthermore 
we consider substructures of $M^*$ as subspaces  of $\F(M^*)$:\\
We can rewrite $(\otimes)$ as 
\[ \F(\langle D K \rangle) = \F( D ) \otimes_{\langle H \rangle^{ \F(\langle D K \rangle)}} 
\langle K \rangle^{ \F(\langle D K \rangle)}, \]
where $\langle H \rangle^{ \F(\langle D K \rangle} \cong \F(H)/ker(\gamma)$ and $\langle K \rangle^{ \F(\langle D K \rangle)} \cong \F(K)/ker(\gamma)$.\\
By assumption  $X_B X_C X_A$  is also linearly independent over  $D_2 + K_2$. Hence by 
Lemma \ref{3.2.b} and ($\otimes$) we get that \\
($\otimes \otimes$) 
\[ \F(\langle C^* A^* \rangle)_3 = \F(\langle D K \rangle)_3  \supseteq \langle D X_B X_C \rangle^{ \F(\langle D K \rangle)}_3 \oplus_{ \langle H X_B \rangle_3^{ \F(\langle D K \rangle)} } \langle K X_B X_A \rangle^{ \F(\langle D K \rangle)}_3, \]
where $\langle H X_B \rangle^{ \F(\langle D K \rangle)} = 
\langle H  \rangle^{ \F(\langle D K \rangle)} \otimes \langle  X_B \rangle^{ \F(\langle D K \rangle)} =
\langle B^* \rangle^{ \F(\langle D K \rangle)} 
\cong \F(B^*)/ker(\gamma)$, \\
$\langle D X_B X_C \rangle^{ \F(\langle D K \rangle)} = 
\langle D  \rangle^{ \F(\langle D K \rangle)} \otimes \langle  X_B  X_C \rangle^{ \F(\langle D K \rangle)}  =
\langle C^* \rangle^{ \F(\langle D K \rangle)} 
\cong \F(C^*)$, \\
$\langle K X_B X_A\rangle^{ \F(\langle D K \rangle)} = 
\langle K  \rangle^{ \F(\langle D K \rangle)} \otimes \langle  X_B  X_A \rangle^{ \F(\langle D K \rangle)} =
\langle A^* \rangle^{ \F(\langle D K \rangle)} 
\cong \F(A^*)/ker(\gamma)$.\\
As in the proof of Lemma \ref{subdelta3} we define $\langle C A \rangle^- = \F(\langle C^* A^* \rangle) / N$, 
$C^- = \F(C^*)/N(C)$, $B^- = \F(B^*)/N(B)$, and  $A^- = \F(A^*)/N(A)$.       By $(\otimes \otimes)$ 
\[ N \supseteq  M(C) \oplus_{M(B)} M(A)    \oplus U,\]
where $M(B)$ is the image of $N(B)/ker(\gamma)$ in 
$\langle H X_B \rangle^{ \F(\langle D K \rangle)}_3 $ ,\\
$M(C)$ is the image of $N(C)$ in  $\langle D X_B X_C \rangle^{ \F(\langle D K \rangle)}_3$, \\
$M(A)$ is the image of $N(A)/ker(\gamma)$ in $\langle K X_B X_A\rangle^{ \F(\langle D K \rangle)}_3 $,
and \\
$U$ is generated by  a maximal number $r = \mid X_3^0(B) \mid$ of   $\psi_C + \psi_A \in N$, where 
the $\psi_C \in \langle C_1 C_2 X_B X_C\rangle_3$ are linearly independent over $M(C)_3$,  and the  
$\psi_A \in \langle K_1 K_2 X_B X_A\rangle_3$ are linearly independent over $M(A)_3$. \\
Note that $ldim(N(B) \cap ker(\gamma)) = ldim(N(A) \cap ker(\gamma)) = s$. 
Since s will be canceled we get 2) as above:
\[ ldim(N) \ge ldim(N(C)) + ldim(N(A)) - ldim(N(B)) + \mid X_3^0(B) \mid .\] Hence $0 \le \Sigma3$. \\
ad (2) If $\mid X_C X_B X_A \mid > 0$, then $\Sigma 1 > 0$\\

\noindent ad (3) 
We have $B \le_2 A$  by assumption. Then (1) 
and $C \le \langle C A \rangle$ imply $0 \le \delta(A/C) \le \delta(A/B)$.\\
To prove the  assertion choose $B \subseteq E \subseteq A$. 
We have to show that $\langle C E \rangle$ satisfies the assumptions of the Lemma. We have 
$B \le_2 E$. We choose $X_E$ as a  subset of $\langle C_1 E_1 \rangle \cap E_2$ maximal linearly independent over $\langle C_1 \rangle_2 + \langle E_1 \rangle_2 + \langle X_B \rangle$.
Note that we work in $M^*$. Let
$K(E)$ be $K \cap E^*$. Then we have $E_1 = K(E)_1$ and $\langle D K(E) \rangle = D \otimes_H K(E)$ by { \bf Mon}. \\
Choose $Y \subseteq E_2$ maximal linearly independent over $\langle K(E)_2 X_B X_E \rangle$. 
and define $K_E = \langle K(E) Y \rangle$. Then $\langle K_E X_B X_E \rangle = E^*$ and also
\[ \langle C^* E^* \rangle = \langle D K_E \rangle = D \otimes_H K_E. \]
Hence the assumptions of the Lemma are fulfilled and we get by (1) $\delta(\langle C E \rangle) \le 
\delta(C) + \delta(E) - \delta(B)$. Since $C \le \langle C E \rangle$ we get the assertion for $E$.\\

\noindent ad (4) 
As in (1) we use the  remark below the proof of Lemma \ref{subdelta3}. We have $\langle C^* A^* \rangle = C^* \otimes_{B^*} A^*$. By Corollary \ref{c=2,y} this implies $\delta_2(\langle C A \rangle) = \delta_2(C) + \delta_2(A) - \delta_2(B)$. This means $\Sigma1 = 0$. By Corollary \ref{c=3,a} we get $\mid X_3^1(B) X_3^1(C) X_3^1(A) \mid = 0$. 
Hence $\Sigma2 = 0$. \\
As in the proof of (1) we have 
\[ (\otimes) \F(\langle C^* A^* \rangle)  =
\langle C^* \rangle^{\F(\langle C^* A^* \rangle) } \otimes_{\langle B^* \rangle^{\F(\langle C^* A^* \rangle) }} \langle A^* \rangle^{\F(\langle C^* A^* \rangle) },\]
where $\langle C^* \rangle^{\F(\langle C^* A^* \rangle) } \cong \F(C^*)$, since $C^* \le_2
C^* \otimes_{B^*} A^*$,\\
$\langle B^* \rangle^{\F(\langle C^* A^* \rangle) } \cong \F(B^*)/ker(\gamma)$, and \\
$\langle A^* \rangle^{\F(\langle C^* A^* \rangle) } \cong \F(A^*)/ker(\gamma)$, 
where $\gamma$ is the homomorphism of $\F(B^*)$ into $\F(C^*)$ and w.l.o.g. 
$\F(B^*) \subseteq \F(A^*)$ by $B^* \le_2 A^*$. \\
%\[(\otimes) f: \F(\langle C^* A^* \rangle)   \cong \F(C^*) \otimes_{\F(B^*)/ker(\gamma)} \F(A^*)/%ker(\gamma), \]
Since $\langle C A \rangle = C \otimes_B A$ we get similarly as in (1) 
\[ N = M(C) \oplus_{M(B)} M(A) \oplus U, \]
%\[ f(N) =  N(C) \oplus_{N(B) / ker(\gamma)} (N(A) / ker(\gamma))  \oplus U,\]
where 
$M(C)$, $M(B)$, and $M(A)$ are the preimages of $N(C)$, $N(B)$, and $N(A)$ respectively in 
$\F(\langle C^* A^* \rangle) $.
$U$ is generated by  $r = \mid X_3^0(B) \mid$ many  $\psi_C + \psi_A \in N$, where the 
$\psi_C \in \langle C_1 C_2\rangle_3$ are linearly independent over $M(C)$, and 
the $\psi_A \in \langle A_1 A_2\rangle_3$ are linearly independent over  $M(A)$.
Then $ldim(N(C)) = ldim(M(C))$, $ldim(N(B)) = ldim(M(B)) + s$, and $ldim(N(A)) = ldim(M(A)) + s$, 
with $s = ldim(ker(\gamma) \cap N(B)) = ldim(ker(\gamma) \cap N(A))$.\\
Hence \[ ldim(N) = ldim(N(C)) + ldim(N(A)) - ldim(N(B)) + \mid X_3^0(B) \mid .\]
With other words $\Sigma3 = 0$. We obtain 
$\delta(\langle C A\rangle) = \delta(C) + \delta(A) - \delta(B)$.\\
We use  this to show $C \le \langle C A \rangle$.
For $C \subseteq E \subseteq C \otimes_B A$ we have to show $\delta(C) \le \delta(E)$. 
By {\bf Mon} $\langle C (E \cap A) \rangle = C \otimes_B (E \cap A)$.
Then $\delta(C \otimes_B (A\cap E)) \le \delta(E)$ by Corollary \ref{c=3,a}. Hence
it is suficient to consider the case  
$E = C \otimes_B (E \cap A)$. Furthermore $B \le A \cap E$. Hence we can assume that w.l.o.g. 
$E = \langle C A \rangle$. \\
It follows $\delta(C) \le \delta(\langle C A \rangle)$,
since $B \le A$ by assumption. \\

\noindent ad (5) We use again the remark below Lemma \ref{subdelta3}. By$ \langle C^* A^* \rangle = C^* \otimes_{B^*} A^*$ 
we get $\Sigma 1 = 0$.  From the proof of (1) we know, that $\Sigma 3 \ge 0$. Hence  by assumption $\Sigma 2 = 0$ and $\Sigma 3 = 0$.
As above by Lemma \ref{6.1} we get an isomorphism
\[(\otimes) f: \F(\langle C^* A^* \rangle)  \cong \F(C^*) \otimes_{\F(B^*)/ker(\gamma)} \F(A^*)/ker(\gamma), \]
where $\gamma$ is the homomorphism of $\F(B^*)$ into $\F(C^*)$. Note that $B \le_2 A$ implies $\F(B^*) \subseteq \F(A^*)$. By Theorem \ref{c=2,c} $C \le_2 \langle A C \rangle$.  Hence $\F(C^*) \subseteq
\F(\langle A C \rangle^*)$.\\
As in the proof of Lemma \ref{subdelta3} we define $\langle C A \rangle^- = \F(\langle C^* A^* \rangle) / N$, 
$C^- = \F(C^*)/N(C)$, $B^- = \F(B^*)/N(B)$, and  $A^- = \F(A^*)/N(A)$.       By $(\otimes)$ 
\[ f(N) \supseteq  N(C) \oplus_{N(B) / ker(\gamma)} (N(A) / ker(\gamma))  \oplus U,\]
where $U$ is generated by  $r = \mid X_3^0(B) \mid$ many  $\psi_C + \psi_A$, where 
$\psi_C \in \langle C_1 C_2\rangle_3$ and $\psi_A \in \langle A_1 A_2\rangle_3$, that are linearly independent over $N(C) + N(A)$.
Note that $ldim(N(B) \cap ker(\gamma)) = ldim(N(A) \cap ker(\gamma)) = s$. 
Since s will be canceled we get :
\[ ldim(N) = ldim(N(C) + ldim(N(A) - ldim(N(B) + \mid X_3^0(B) \mid ,\] 
since $0 = \Sigma3$. \\ Then
\begin{itemize} 
\item[(=)]  $f(N) = N(C) \oplus_{N(B) / ker(\gamma)} (N(A) / ker(\gamma))  \oplus U.$
\end{itemize}
Assume we have homomorphisms $j$ of $C$ into $E$ and $g$ of $A$ into $E$, that coinside on $B$.\\
There are homomorphisms $j^+$ of $\F(C^*)$ into $E$ and $g^+$ of $\F(A^*)/ker(\gamma)$ into $E$, that coinside on
$\F(B^*)/ker(\gamma)$. \\ 
By $(\otimes)$ there is an homomorphism $h^+$ of $\F(\langle C^* A^* \rangle)$ into $E$. \\
By $(=)$ and $\Sigma 2 = 0$ we get an homomorphism $h^-$ of $\langle C^- A^- \rangle$ into $E$.\\
$h^-$ can be extended to  a homomorphism $h$ of $\langle C A \rangle$ into $E$ . Hence $\langle C A \rangle = C \otimes_B A$.
\end{proof}

\begin{lemma}\label{6.5}
Assume $D = C \otimes_B A$ and $A = \langle B \bar{a} \rangle $, where $\bar{a}$ is a 
generating o-system of $A$ over $B$. Then we get:
\begin{enumerate}
\item $D^* = C^* \otimes_{B^*} A^*$
\item If $E \subseteq B$, $D = C \otimes_E \langle E \bar{a} \rangle$, and $B \le A$, then $C \le  D$ and 
$E \le \langle E \bar{a} \rangle$.
\end{enumerate}
\end{lemma}

\begin{proof}
ad(1) This is a consequence of the Lemmas \ref{c=3,a} and \ref{c=2,y}.\\
ad(2) By Lemma \ref{3.5} we have $B^* \le A^*$. By (1)  
$D^* = C^* \otimes_{E^*} \langle  E \bar{a} \rangle^*$.
By Lemma \ref{6.3}(4) we have $C \le D$.
By Lemma \ref{c=2,b} $B^* \le A^*$ implies $E \le_2 \langle E \bar{a} \rangle$. Then by Lemma \ref{6.3} (3)
we get $E \le \langle E \bar{a} \rangle$.
\end{proof}

\noindent As in Definition \ref{5.6} we define

\begin{definition} For $A, C, N \subseteq M \in \K_3$:
\begin{enumerate}
\item $\delta(A/C) = \delta\langle A C \rangle - \delta(C)$.
\item $\delta(A/N) = min\{\delta(\langle A C \rangle) - \delta(C) : C \subseteq N \}$.
\end{enumerate}
\end{definition}

\noindent From the end of the proof of Lemma \ref{subdelta3}  and Corollary \ref{corsubdelta2} we get

\begin{cor}\label{corsubdelta3}
Let $M$ be a 3-nilpotent  graded Lie algebra in $\K_3$ and $A$ and $C$ are 2-strong substructures. Then
$\delta(A/C) = \delta(A/A\cap C)$ if and only if $\delta_2(A/C) = \delta_2(A/A\cap C)$, $A_3$ and $C_3$
do not contain any elements from $\langle A_1 A_2 C_1 C_2 \rangle _3 \setminus \langle A_1 A_2 \rangle_3 + \langle C_1 C_2 \rangle_3$ and if $N$ is choosen as in the proof of \ref{subdelta3}, then  $N \subseteq \F(A^*)_3 + \F(C^*)_3$. 
\end{cor}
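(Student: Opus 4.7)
The plan is to revisit the three–term decomposition
\[
\delta(A) + \delta(C) - \delta(A \cap C) - \delta(\langle A C\rangle) \;=\; \Sigma_1 + \Sigma_2 + \Sigma_3
\]
recorded in the remark immediately after Lemma \ref{subdelta3}, and to argue that under the present $2$–strong hypothesis each of the three summands is individually non-negative. Once this is in place, the corollary reduces to matching $\Sigma_i = 0$ against the three clauses of the characterization.

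For $\Sigma_1 = \delta_2(A) + \delta_2(C) - \delta_2(B) - \delta_2(\langle AC\rangle)$ non-negativity is Lemma \ref{subdelta2}, and $\Sigma_1 = 0$ is by definition the first clause $\delta_2(A/C) = \delta_2(A/A\cap C)$ (with the more concrete description coming via Corollary \ref{corsubdelta2}). For $\Sigma_2 = |X_3^1(B) \cup X_3^1(A) \cup X_3^1(C)|$ non-negativity is immediate, and I would unfold the construction of these auxiliary bases to see that $\Sigma_2 = 0$ says precisely that $A_3$, $B_3$, $C_3$ contain no element of $\langle A_1 A_2 C_1 C_2\rangle_3$ outside $\langle A_1 A_2\rangle_3 + \langle C_1 C_2\rangle_3$; the $B_3$ case is automatic from $B \subseteq A$, so the condition collapses to the stated clause on $A_3$ and $C_3$.

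The main work will be in $\Sigma_3$. From $A, C \le_2 M$ and Lemma \ref{c=2,b} I get $B \le_2 M$, so Theorem \ref{key3} realizes $\F(B^*), \F(A^*), \F(C^*)$ as subalgebras of $\F(M^*)$, and a $^*$–image comparison forces $\F(B^*) = \F(A^*) \cap \F(C^*)$, whence $N(A) = N \cap \F(A^*)_3$, $N(C) = N \cap \F(C^*)_3$, $N(B) = N \cap \F(B^*)_3$. This is exactly the input the proof of Lemma \ref{subdelta3} uses to exhibit $r = |X_3^0(B)|$ transversal elements $\psi_A + \psi_C \in N$ linearly independent over $N(A) + N(C)$; that already gives $\Sigma_3 \ge 0$. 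The hard part, and the only place where I expect a real obstacle, is the converse direction: one must check that $\Sigma_3 = 0$ forces $N$ to be spanned by $N(A) + N(C)$ together with the transversal part, i.e.\ that every dimension in the defect actually lies in $\F(A^*)_3 + \F(C^*)_3$. With that verified, simultaneous vanishing of $\Sigma_1, \Sigma_2, \Sigma_3$ is equivalent to the three stated conditions, which is the desired equivalence.
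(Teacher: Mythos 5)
Your proposal is correct and is essentially the paper's own argument: the paper derives the corollary precisely from the $\Sigma_1+\Sigma_2+\Sigma_3$ decomposition in the remark after Lemma \ref{subdelta3} together with Corollary \ref{corsubdelta2}, with each $\Sigma_i\ge 0$ under the $2$-strong hypothesis and the three clauses matching $\Sigma_1=\Sigma_2=\Sigma_3=0$. The step you flag as the ``real obstacle'' for $\Sigma_3$ is in fact nearly definitional, since $r=|X_3^0(B)|$ is identified in the proof of Lemma \ref{subdelta3} with the dimension of $N\cap(\F(A^*)_3+\F(C^*)_3)$ over $N(A)+N(C)$, so $\Sigma_3=0$ is literally $N\subseteq \F(A^*)_3+\F(C^*)_3$.
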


\begin{cor}\label{char}
Let $M$ be a 3-nilpotent graded Lie algebra in $\K_3$ and $A$ and $C$ are 2-strong substructures. Then
$\delta(A/C) = \delta(A/A\cap C)$ if and only if $\langle AC \rangle = A \otimes_{A \cap C} C$. 
\end{cor}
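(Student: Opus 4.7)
The plan is to combine Corollary \ref{corsubdelta3}, which characterizes the equality $\delta(A/C) = \delta(A/B)$ by three technical conditions, with Lemma \ref{6.3}(4), which lifts a 2-nilpotent free amalgam to the full 3-nilpotent setting. Write $B = A \cap C$. Since $A, C \le_2 M$, Lemma \ref{c=2,b} gives $B \le_2 M$, and in particular $B \le_2 A$. The identity $\delta(A/C) = \delta(A/B)$ is precisely the assertion of equality in the submodular inequality from Lemma \ref{subdelta3}.

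For the forward direction, I assume $\delta(A/C) = \delta(A/B)$. Corollary \ref{corsubdelta3} then yields as its first condition $\delta_2(A/C) = \delta_2(A/B)$, which by Corollary \ref{c=2,y} applied in $\K_2$ translates to $\langle A^* C^* \rangle = A^* \otimes_{B^*} C^*$ at the 2-nilpotent quotient level. I then invoke Lemma \ref{6.3}(4) with $D = C$: the hypotheses $C = D$, $B \le_2 A$, the $^*$-level free amalgam, and $\delta(\langle CA\rangle) = \delta(C) + \delta(A) - \delta(B)$ are all in place, and the conclusion is exactly $\langle AC\rangle = A \otimes_B C$.

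For the converse, I assume $\langle AC\rangle = A \otimes_B C$. Since every homomorphism into a 2-nilpotent Lie algebra factors through the $^*$-quotient (the third graded part being sent to $0$), the free amalgam descends to $\langle A^* C^* \rangle = A^* \otimes_{B^*} C^*$, and Corollary \ref{c=2,y} gives $\delta_2(A/C) = \delta_2(A/B)$. This is the first condition of Corollary \ref{corsubdelta3}; the remaining two --- that $A_3$ and $C_3$ pick up no elements of $\langle A_1 A_2 C_1 C_2 \rangle_3 \setminus (\langle A_1 A_2 \rangle_3 + \langle C_1 C_2 \rangle_3)$, and that the relation set $N$ satisfies $N \subseteq \F(A^*)_3 + \F(C^*)_3$ --- I read off directly from the explicit vector space basis of $A \otimes_B C$ provided by Corollary \ref{c=3,a}: that basis contains no third-graded elements living in either $A_3$ or $C_3$ beyond those from $\langle A_1 A_2\rangle_3 + \langle C_1 C_2\rangle_3$, and the only third-level relations in the amalgam are those coming from $A$ and from $C$ separately. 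Corollary \ref{corsubdelta3} then delivers $\delta(A/C) = \delta(A/B)$.

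The main obstacle is the forward direction, where promoting the $^*$-level free amalgam back up to a genuine 3-nilpotent free amalgam demands that the top-level contribution $\Sigma_3$ in the proof of Lemma \ref{subdelta3} vanish. This is exactly what Lemma \ref{6.3}(4) guarantees, via the kernel control for the canonical maps $\ga{B}{A}$ coming from Theorem \ref{key3}. By contrast, the converse direction is a matter of inspecting the basis produced by Corollary \ref{c=3,a} together with the standard fact that free amalgams behave well under quotients.
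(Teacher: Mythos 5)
Your argument is correct, and one of the two directions departs from the paper's own proof in an interesting way. The direction ``free amalgam $\Rightarrow$ $\delta(A/C)=\delta(A/B)$'' is essentially identical to the paper's: pass to the $^*$-level, apply Corollary \ref{c=2,y}, and read the remaining two conditions of Corollary \ref{corsubdelta3} off the explicit basis of Corollary \ref{c=3,a} (the paper asserts $N \subseteq \F(A^*)_3 + \F(C^*)_3$ with the same brevity you do). For the converse, however, the paper does not cite Lemma \ref{6.3}(4); instead it re-derives the needed fact inline, first identifying $\F(A^*)$, $\F(B^*)$, $\F(C^*)$ inside $\F(\langle A^*C^*\rangle)$ via Theorem \ref{key3} and the 2-strongness hypotheses, establishing $\F(\langle A^*C^*\rangle) = \F(A^*)\otimes_{\F(B^*)}\F(C^*)$, and then verifying the universal property of $A\otimes_B C$ by lifting a pair of compatible homomorphisms through $\F$ and using the conditions of Corollary \ref{corsubdelta3} to descend modulo $N$ and extend to $A_3$, $C_3$. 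Your route through Lemma \ref{6.3}(4) packages exactly this work: its hypotheses ($C=D$, $B\le_2 A$ via Corollary \ref{2.13}, the $^*$-level amalgam from Corollary \ref{c=2,y}, and $\delta(\langle CA\rangle)=\delta(C)+\delta(A)-\delta(B)$) are all available, its proof is independent of Corollary \ref{char}, and it precedes the corollary in the text, so there is no circularity. What your version buys is economy --- the homomorphism construction is done once, in Lemma \ref{6.3}, rather than twice; what the paper's inline version buys is that the reader sees explicitly where each of the three conditions of Corollary \ref{corsubdelta3} enters the construction of the extending homomorphism.
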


\begin{proof}
Let $B$ be $A \cap C$.
Assume $\langle AC \rangle = A \otimes_{A \cap C} C$. Then $\langle A^* C^* \rangle = A^* \otimes_{A^* \cap C^*} C^*$. By Corollary \ref{c=2,y} we have $\delta_2(A/C) = \delta_2(A/B)$. By Corollary  \ref{c=3,a} 
$A_3$ and $C_3$
do not contain any elements from $\langle A_1 A_2 C_1 C_2 \rangle _3 \setminus \langle A_1 A_2 \rangle_3 + \langle C_1 C_2 \rangle_3$. Let $N$ be chosen as in the proof of \ref{subdelta3} .Then $N \subseteq \F(A^*)_3 + \F(C^*)_3$. Hence
$\delta(A/C) = \delta(A/A\cap C)$ by Corollary \ref{corsubdelta3}. \\

\noindent For the other direction we use the other direction of Corollary \ref{corsubdelta3}. Then $\delta_2(A/C) = \delta_2(A/B)$ 
implies  $\langle A^* C^* \rangle = A^* \otimes_{A^* \cap C^*} C^*$ by Corollary \ref{c=2,y}. 
By assumption $A,  C$ are 2-strong in $\langle A C \rangle$. 
Then $B$ is also 2-strong in $A$, $C$, and $\langle A C \rangle$. 
Hence we can consider $\F(A^*), \F(B^*), \F(C^*)$ as subspaces of $\F(\langle A^* C^* \rangle)$. $\F(A^* \cap C^*) \subseteq \F(A^*) \cap \F(C^*)$.
Since $(\F(A^*) \cap \F(C^*))^* = A^* \cap C^*$ , there is a homomorphism of $\F(A^* \cap C^*)$ onto
$\F(A^*) \cap \F(C^*)$. Hence it is an isomorhism. We get  equality and 
\[ \F(A^* C^*) = \F(A^* \otimes_{B^*} C^*) = \F(A^*) \otimes_{\F(B^*)} \F(C^*) \]
using the definiton of $\F$ and of the free amalgam. We choose $N \subseteq \F(\langle A^* C^* \rangle)_3 $ such that $\langle A_1 A_2 C_1 C_2 \rangle =
\F(\langle A^* C^* \rangle)/N$. 

\noindent Let $f$ and $g$ be homomorphisms of $A$ and $C$ respectively into $E$, such that $f$ and $g$ coincide on $B$. 
Then there are homomrphisms $f^+$ of $\F(A^*)$ into $E$ and $g^+$ of $\F(C^*)$ into $E$ that coinside on $\F(B^*)$. Since we have a free amalgam we get 
$h^+$ of $\F(A^*) \otimes_{\F(B^*)} \F(C^*) $ into $E$. 
By Corollary \ref{corsubdelta3} $N \subseteq \F(A^*)_3 + \F(C^*)_3$ and $A_3$ and $C_3$
do not contain any elements from $\langle A_1 A_2 C_1 C_2 \rangle _3 \setminus \langle A_1 A_2 \rangle_3 + \langle C_1 C_2 \rangle_3$. 
Hence
we get a homomorphism $h$ from $\langle A_1 A_2  C_1 C_2 \rangle$ into $E$ that extends $f$ and $g$.
Then  $h$ can be extended to $\langle A C \rangle$. 
Hence  $\langle AC \rangle = A \otimes_{A \cap C} C$.
\end{proof}

\noindent By Lemma \ref{rulesstrong}, Lemma \ref{c=2,b}, Corollary \ref{3.5} and Lemma \ref{subdelta3} above we get (1), (2) and (3) in the next Corollary.

\begin{cor}\label{rulesstrong3} 
Let $M$ be  a 3-nilpotent graded Lie algebra in $\K_3$.
Then for all  substructures
$A, C, E$ of $M$, where $A$ and $C$ are finite the following is true:
\begin{enumerate}
\item If $C \le M$ and $E \le_2 M$, then $E \cap C \le E$.
\item If $A \le C \le M$, then $A \le M$.
\item If $A , C \le M$, then $A \cap C \le M$.
\item If $A \le C \le^k M$, then $A \le^k M$.
\item If $B \le^k M$, $C \le^k M$, $o-dim(B^*)<k$, and $o-dim(C^*) <k$, then $B \cap C \le^k M$.
\end{enumerate}
\end{cor}

\begin{proof}
To prove (4), consider $ A \subseteq E \subseteq M$ with $o-dim(E^*/A^*) \le k$. Then
$o-dim(\langle C E \rangle^*/C^*) \le k$. By Corollary \ref{3.2.d} we get $C \le \langle C E \rangle$.
By (2) we have $A \le \langle C E \rangle$. Hence $\delta(A) \le \delta(E)$.\\
ad(5): By Corollary \ref{3.2.d} and the assumptions we get, that $B  \le \langle B C \rangle$
and $C  \le \langle B C \rangle$. Hence by (3) $B \cap C  \le \langle B C \rangle$ and $B \cap C \le B$.
Now assume $B \cap C \subseteq E \subseteq M$ and $o-dim(E^*/B^* \cap C^*) < k$. Then
$o-dim(\langle E^* B^* \rangle/B^*) < k$. By Corollary \ref{3.2.d} $B \le \langle E B \rangle$. By 
$B \cap C \le B$ we get $B \cap C \le \langle EB \rangle$. Hence  $\delta(B \cap C) \le \delta(E)$,
as desired.
\end{proof}

\noindent The Corollary allows the following definition.

\begin{definition}
If $A$ is a finite substructure of $M$ in $\K_3$, then there exists a unique minimal $C$ with
$A \subseteq C \le M$. We define $C = CSS(A)$ - the selfsufficient closure. \\
Let  $A^- $ be $\langle A_1 A_2 \rangle$
\end{definition}

\begin{lemma}\label{css}
Let $A$ be a finite substructure $M \in \K_3$.
\begin{enumerate}
\item $CSS(A^-) = CSS(A^-)^-$ and $CSS(A^-) \subseteq CSS(A)$.
\item Either $CSS(A) = \langle CSS(A^-), A \rangle$ or there is some $C$ such that 
$CSS(A^-) \le C \le M$, $C = C^-$, $CSS(A) = \langle C A \rangle$, and 
\[ 0 \le \delta(C) - \delta(CSS(A^-)) < o-dim_3(A/CSS(A^-)) - o-dim_3(A/C). \]
\item $CSS_2(A) \subseteq CSS(A)$.
\item If $B \subseteq A$, then $CSS(B) \subseteq CSS(A)$.
\end{enumerate}
\end{lemma}

\begin{proof}
ad (1) For every substructure $C \subseteq M$ we have $\delta(C) = \delta(C^-) + o-dim_3(C)$. \\
By Lemma \ref{subdelta3}
\[  0 \le \delta(\langle CSS(A) CSS(A^-) \rangle) - \delta(CSS(A)) \le \delta(CSS(A^-)) - 
\delta(CSS(A^-)  \cap CSS(A)).   \]
If $CSS(A^-) \not\subseteq CSS(A)$, then $\delta(CSS(A^-)) - 
\delta(CSS(A^-)  \cap CSS(A)) < 0$, 
and therefore   $ \delta(\langle CSS(A) CSS(A^-) \rangle) - \delta(CSS(A)) < 0$,a contradiction.\\  
ad(2) If $CSS(A) \not= \langle CSS(A^-), A \rangle$, then there is some proper strong extension $C$
of  $CSS(A^-)$ with $C = C^-$, $CSS(A) = \langle C, A \rangle$, and the inequation in 
(2) is true.\\
ad (3) By Lemma \ref{subdelta3} we have 
\[ 0  \le \delta(\langle CSS(A) CSS_2(A) \rangle) - \delta(CSS(A)) \le \delta(CSS_2(A)) - 
\delta(CSS(A) \cap CSS_2(A)). \]
If $CSS_2(A) \nsubseteq CSS(A)$, then $CSS_2(A) \not= A$ and 
$\delta_2(CSS_2(A)) - 
\delta_2(CSS(A) \cap CSS_2(A)) < 0$. 
Hence $\delta(\langle CSS(A) CSS_2(A) \rangle) - \delta(CSS(A)) < 0$,
a contradiction.\\
ad (4) This is clear by definition.
\end{proof}

\noindent In the following Lemma we summarize facts we have already proved and used:

\begin{lemma}\label{useful}
Assume $A, B, C \subseteq M \in \K_3$ , $A, C \le_2 M$, and $A \cap C = B$.
W.l.o.g. we use $\subseteq$ instead of the corresponding embeddings.
\begin{enumerate}
\item $\F(B^*) \subseteq \F(A^*) \subseteq \F(M^*)$.
\item $\F(B^*) \subseteq \F(C^*) \subseteq \F(M^*)$.
\item $\F(A^*) \cap \F(C^*) = \F(B^*)$.
\item $ If \delta_2(A^*/C^*) = \delta_2(A^*/B^*)$, then $\langle A^* C^* \rangle = A^* \otimes_{B^*} C^*$
and $\F(\langle A^* C^* \rangle) = \F(A^*) \otimes_{\F(B^*)} \F(C^*) $
\item $\delta(A/C) = \delta(A/B)$ implies $\delta_2(A/C) = \delta_2(A/B)$ and $\langle AC \rangle =
A \otimes_B C$.
\item In general we have $\delta_2(A/C) \le \delta_2(A/B)$ and $\delta(A/C) \le \delta(A/B)$. Furthermore
$\delta_2(A/C) < \delta_2(A/B)$ implies $\delta(A/C) < \delta(A/B)$.
\end{enumerate}
\end{lemma}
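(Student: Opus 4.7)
The plan is to prove the lemma entirely by appeal to material already assembled; since each clause is a summary, the argument is organisational rather than computational. The first observation I need is that $B = A \cap C$ is itself 2-strong in $M$: because $A, C \le_2 M$, Corollary \ref{2.13}(3) gives $B \le_2 M$, and consequently $B \le_2 A$ and $B \le_2 C$ as well (and of course $A \le_2 M$, $C \le_2 M$ by hypothesis).

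With this in hand, clauses (1) and (2) are immediate from Theorem \ref{key3}(a): that theorem turns each 2-strong inclusion of $\K_2$-structures into an embedding under $\F$, so $\ga{B}{A}$, $\ga{A}{M}$, $\ga{B}{C}$, $\ga{C}{M}$ all become embeddings and I identify $\F(B^*), \F(A^*), \F(C^*)$ with their images in $\F(M^*)$. For clause (3), the argument inside the proof of Lemma \ref{subdelta3} already yields $\F(A^*)_i \cap \F(C^*)_i = \F(B^*)_i$ for $i = 1, 2$; since $\F(E^*)$ is generated by $\F(E^*)_1 \cup \F(E^*)_2$ for any $E^* \in \K_2$, the obvious surjection $\F(B^*) \twoheadrightarrow \F(A^*) \cap \F(C^*)$ (induced because the $^*$-reduction of the intersection equals $A^* \cap C^* = B^*$) is forced to be an isomorphism.

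For clause (4), I first convert the hypothesis $\delta_2(A^*/C^*) = \delta_2(A^*/B^*)$ into $\langle A^* C^* \rangle = A^* \otimes_{B^*} C^*$ via Corollary \ref{c=2,y}. Plugging this into Lemma \ref{6.1} and using $\ker(\alpha) = \ker(\gamma) = 0$ (which is exactly Theorem \ref{key3}(a) for $B \le_2 A$ and $B \le_2 C$) collapses the formula in that lemma to $\F(\langle A^* C^* \rangle) = \F(A^*) \otimes_{\F(B^*)} \F(C^*)$. For clause (5), Corollary \ref{char} produces $\langle A C \rangle = A \otimes_B C$ directly from $\delta(A/C) = \delta(A/B)$ once the $\delta_2$-equality is known; to obtain the latter I invoke the decomposition recorded just after the proof of Lemma \ref{subdelta3}, namely
\[
\delta(A) + \delta(C) - \delta(B) - \delta(\langle A C \rangle) \;=\; \Sigma_1 + \Sigma_2 + \Sigma_3,
\]
where $\Sigma_1, \Sigma_2 \ge 0$ unconditionally and $\Sigma_3 \ge 0$ under the 2-strong hypothesis (via the kernel comparison used in the proof of Lemma \ref{6.3}(1)). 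The assumption $\delta(A/C) = \delta(A/B)$ forces each $\Sigma_i$ to vanish, and $\Sigma_1 = 0$ is precisely $\delta_2(A/C) = \delta_2(A/B)$.

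Clause (6) reads the same decomposition as inequalities: Lemma \ref{subdelta2} supplies $\delta_2(A/C) \le \delta_2(A/B)$, Lemma \ref{subdelta3} supplies $\delta(A/C) \le \delta(A/B)$, and the strict implication follows because $\Sigma_1 > 0$ combined with $\Sigma_2, \Sigma_3 \ge 0$ forces $\Sigma_1 + \Sigma_2 + \Sigma_3 > 0$. None of the steps is technically hard; the main points of care are verifying in (3) that degrees $\le 2$ suffice to pin down the intersection of the $\F$-algebras, and in (4)--(5) double-checking the hypotheses of Lemma \ref{6.1} (in particular the triviality of $\ker(\alpha), \ker(\gamma)$) before invoking it.
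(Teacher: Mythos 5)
Your proposal is correct and follows essentially the same route as the paper: clauses (1)--(3) via Theorem \ref{key3}(a) and the degree-$\le 2$ identification of the intersection, clause (4) via Corollary \ref{c=2,y} and Lemma \ref{6.1} with trivial kernels, and clauses (5)--(6) via the $\Sigma_1+\Sigma_2+\Sigma_3$ decomposition together with Corollary \ref{char}. The only cosmetic difference is that for (5) and (6) you unwind the $\Sigma$-decomposition directly where the paper cites Corollaries \ref{corsubdelta3} and \ref{char}, but those corollaries are extracted from exactly that decomposition, so the arguments coincide.
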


\begin{proof}
ad (1),(2) By Lemma \ref{c=2,b} we have $B \le_2 A \le_2 M$ and $B \le_2 C \le_2 M$. By Theorem \ref{key3}
we get the assertion. \\

ad (3) $\F(A^* \cap C^*)_i = (\F(A^*) \cap \F(C^*))_i = (A \cap C)_i $ for $i = 1,2$.\\

ad(4) The first assertion follows from Corollary \ref{corsubdelta2}. Then we apply Lemma \ref{6.1}. By our assumption $\alpha$ and $\gamma$ are embeddings. \\

ad(5) It follows from Corollary \ref{corsubdelta3} and \ref{char}.\\

ad(6) Here we have the subaddivity of $\delta_2$ and $\delta_3$. The last assertion follows from (5).
\end{proof}

\begin{lemma}\label{6.10}
Let $M$ be a 3-nilpotent graded Lie algebra in $\K_3$.
For all $A \le^{2n+n^2}  C \le M$ we have $A \le^n M$.
\end{lemma}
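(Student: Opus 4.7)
The plan is to deduce $A\le^n M$ from $A\le^{h(n)}C\le M$ by combining submodularity of $\delta$ (Lemma \ref{subdelta3}) with the $2$-strong-closure operation supplied by Corollary \ref{3.5}. Fix any $E$ with $A\subseteq E\subseteq M$ and $o-dim(E/A)\le n$; the aim is $\delta(A)\le\delta(E)$.

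First I would pass to a $2$-strong hull of $E$ inside $M$: iterating Corollary \ref{3.5}(1), one builds $E\subseteq E'\le_2 M$ with $\delta(E')\le\delta(E)$. Each round of $CSS_2^M$ strictly decreases $\delta$, so the procedure terminates after at most $\delta(E)\le\delta(A)+n$ rounds; tracking the o-dimension added per round will yield a bound $o-dim(E'/A)\le g(n)$ for some explicit function $g$.

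Next, because both $E'$ and $C$ are $2$-strong in $M$, Lemma \ref{subdelta3} applies and gives
\[
\delta(\langle E'C\rangle)+\delta(E'\cap C)\le\delta(E')+\delta(C).
\]
Since $C\le M$ yields $\delta(C)\le\delta(\langle E'C\rangle)$, this collapses to $\delta(E'\cap C)\le\delta(E')\le\delta(E)$. Setting $H:=E'\cap C\subseteq C$, one has $A\subseteq H$ and $o-dim(H/A)\le o-dim(E'/A)\le g(n)$. Choosing $h(n)$ to dominate $g(n)$, the hypothesis $A\le^{h(n)}C$ then forces $\delta(A)\le\delta(H)\le\delta(E)$, proving $A\le^n M$.

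The main obstacle is the bound $o-dim(E'/A)\le g(n)$ in the first step: a single $CSS_2$-round may in principle add many o-generators, and one must ensure that the total growth across all rounds is bounded purely in terms of $n$. This rests on the fact that each failure of 2-strongness is witnessed by finitely many new elements whose effect on $\F(E^*)$ is controlled by Theorem \ref{key3}, and that the total drop of $\delta_2$ is bounded by $\delta_2(A)+n$, limiting how much genuinely new o-dimension can enter. An alternative realisation is by induction on $n$: handle the $o-dim=1$ extensions via the submodularity-plus-hull step above, and invoke the case $n-1$ on the intermediate substructures produced in the chain $A\subseteq\langle A,e_1\rangle\subseteq\ldots\subseteq\langle A,e_1,\ldots,e_n\rangle=E$.
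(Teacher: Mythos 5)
The overall shape of your argument (produce a small $H$ with $A\subseteq H\subseteq C$ and $\delta(H)\le\delta(E)$, then invoke $A\le^{h(n)}C$) is the right one, but the step you yourself flag as the main obstacle is a genuine gap, and it cannot be closed in the way you suggest. The $2$-strong hull $E'$ of $E$ in $M$ contains $CSS_2^M(A)$, and nothing in the hypotheses bounds $o-dim(CSS_2^M(A)/A)$ in terms of $n$: the condition $A\le^{h(n)}C$ only controls extensions of $A$ inside $C$ of $o$-dimension at most $h(n)$, so there may be $D$ with $A\subseteq D\subseteq C$, $o-dim(D/A)$ enormous and $\delta_2(D)<\delta_2(A)$. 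Both of your quantitative heuristics fail: the total drop of $\delta_2$ is bounded only by $\delta_2(A)+n$, not by a function of $n$ (and $\delta_2(A)$ is unbounded since $A$ is an arbitrary finite structure), and even a single drop by $1$ can force the adjunction of arbitrarily many new $o$-generators, since the minimal witness to a failure of $2$-strongness has no a priori bound on its $o$-dimension over $E$. The induction-on-$n$ variant does not help, because the problem is already present for $n=1$. (A smaller point: $o-dim(H/A)\le o-dim(E'/A)$ for a subalgebra $H\subseteq E'$ is not a valid inequality for graded algebras, though that could be repaired by enlarging $g$.)

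The paper avoids hulls of $E$ altogether. It applies Lemma \ref{5.7}(2) in the $2$-nilpotent quotient: since $C\le\langle CE\rangle$ gives $\delta_2(E^*/C^*)\ge 0$, there is $H^*\subseteq C^*$ with $(E^*\cap C^*)\subseteq H^*$, $o-dim(H^*/(E^*\cap C^*))$ bounded by a function of $n$, and $\langle CE\rangle^*=C^*\otimes_{H^*}\langle HE\rangle^*$; the bound comes from collecting the boundedly many coefficients in $C$ of the relations defining $E^*$ over $C^*$, not from closing $E$ up in $M$. Then Lemma \ref{6.3}(2) transfers $C\le\langle CE\rangle$ down to $H\le\langle HE\rangle$, the hypothesis $A\le^{h(n)}C$ gives $A\le H$, and transitivity (Corollary \ref{rulesstrong3}) yields $A\le\langle HE\rangle$, hence $\delta(A)\le\delta(E)$. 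This coefficient-collection step via Lemma \ref{5.7} is the ingredient your proposal is missing.
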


\begin{proof}
We consider any $A \subseteq E \subseteq M$, such that 
$o-dim(E^*/A^*) \le n$.  By $C \le \langle C E \rangle$ we get $\delta_2(E^*/C^*) \ge 0$. Furthermore $o-dim_1(E^*/C^*) \le n$. By Lemma \ref{5.8}  we get:\\
Let $B$ be $E \cap C$. Then there exists $D$, $H$,  $K$ in $M^*$, 
and $X_C X_B X_E$, such that 
\begin{enumerate}
\item $ H \subseteq D  \subseteq C^*$, $B_1 \subseteq H $, $D_1 = C_1$, and 
$o-dim(H/B^*) \le n^2 + n$,
\item $H \subseteq K \subseteq \langle H E^* \rangle$, $K_1 = \langle H E^* \rangle_1$,
\item $\langle C^* E^* \rangle  = \langle D K \rangle = D \otimes_H \langle K \rangle$,
\item $X_C X_B X_A \subseteq \langle C_1 A_1 \rangle_2$ is linearly independent over 
$\langle D_1 \rangle_2 + \langle K_1 \rangle_2$, 
$X_B \subseteq B_2$, $\langle H X_B \rangle = \langle H B^* \rangle$,
\item $X_C \subseteq C_2$, $ \langle D X_B X_C \rangle = C^*$, and 
\item $X_E \subseteq E_2$, $\langle K X_B X_E \rangle = \langle H E^* \rangle$. 
\end{enumerate}
Now we can apply Lemma \ref{6.3}(3)  
to $C$, $\langle H_1 H_2 E \rangle  = E_H$, and  $C \cap E_H$
and obtain that $C \cap E _H \le E$. Hence $\delta(C \cap E_H) \le \delta(E)$. 
Note that \\ 
$C \cap E_H = \langle C \cap E, H_1, H_2 \rangle = \langle B, H_1, H_2 \rangle$.
Furthermore  in $M^*$ holds
\[o-dim(C^* \cap \langle E_H \rangle^*/A^*) \le 
o-dim(C^* \cap \langle E_H \rangle^*/B^*)  + o-dim(B^*/A^*)  \le  n^2  + 2n.\] 
Since  $A \le^{2n+n^2}  C$, we have $\delta(A) \le \delta(C \cap \langle E_H \rangle) \le \delta(E)$.
\end{proof}

\begin{lemma}\label{voramalk3}
Assume that $A \subseteq E \subseteq D = C \otimes_B A$. Then $\delta((E \cap C) \otimes_B A)
\le \delta(E)$.
\end{lemma}

\begin{proof}
By {\bf Mon} $E^0 = \langle (E \cap C) A \rangle = (E \cap C) \otimes_B A$. Since $A \subseteq E$
we have $E_1 = E^0_1 = \langle (E \cap C)_1 A_1 \rangle^{lin}$. $(E^0)^* = \langle E_1 X \rangle^*$,
where $X \subseteq E^0_2$ linearly independent over $\langle E_1 \rangle_2$.
Next we have
$E^* = \langle E_1 X Y \rangle^*$, where $Y \subseteq E_2$ is linearly independent over
$(E^0_2)$. Hence $\delta_2(E) = \delta_2(E^0) + \mid Y \mid$. \\
Since $A \subseteq E$, we get that $Y$ is linearly independent over
$\langle E^0_2 C_2 \rangle^{lin} \supseteq A_2$. Therefore this linear independence is given by the 
linear indpendence of $\{[x,y] : x \in X_1(C) , y \in X_1(A) \}$ as in Corollary \ref{c=3,a}
described. Hence
$\F(E^*) = \F((E^0)^*) \otimes \F(\langle Y \rangle^{lin}$. By Corollary \ref{c=3,a} again 
$E^- = \F(E^*)/N$ implies that $N \subseteq \F((E^0)^*)$. Hence $\delta(E^0) \le \delta(E)$.
\end{proof}

\noindent In the following we identify the isomorphic copies of $B$ in $A$ and $C$ and denote them by $B$.

\begin{theorem}\label{amalk3}
Assume that $B \le A$ and $B \le^{2(ldim(A/B)) + 2 +n} C $ are in $\K_3^{fin}$.
Then an  amalgam $D$ of $A$ and $C$ over $B$ exists in $\K_3^{fin}$ with $C \le D$ and $A \le^n  D$ .
If $B \le C$, then $A \le D$. If no divisor problem of $B$ has a solution in both $A$ and $C$, then 
$D = A \otimes_B C$ has the desired properties. In this case $\delta(D) = \delta(A) + \delta(C) - \delta(B)$.
\end{theorem}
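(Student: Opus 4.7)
I would take $D:=A\otimes_B C$ as the candidate amalgam, after a finite preliminary reduction to the case in which no homogeneous divisor problem of $B$ is solved in both $A$ and $C$, and then verify the four conclusions via the structural tools of Sections~4 and~6. For the preliminary reduction, if a divisor problem $[b,?]=e$ in $B$ is solved by $a\in A$ and by $c\in C$, then $B\le A$ together with sufficient $2$-strongness of $B$ in $C$ lets Lemma~\ref{e:b} identify both $\langle B,a\rangle$ and $\langle B,c\rangle$ with the canonical $B(e\!:\!b)$; replace $B$ by $B':=B(e\!:\!b)\subseteq A\cap C$. Lemma~\ref{e:b}(4) gives $B\le B'$ and, by transitivity of~$\le$, $B'\le A$, while $\mathrm{ldim}(A/B')<\mathrm{ldim}(A/B)$ ensures both that the strength bound $B'\le^{2\mathrm{ldim}(A/B')+2+n}C$ still holds and that the process terminates.

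Set $D:=A\otimes_B C$. Since the hypothesis implies $B\le^{\mathrm{ldim}(A/B)+2}C$, Lemma~\ref{nohzd} yields that $D$ has no homogeneous zero-divisors; condition~(2) of $\K_3$ for $D$ is inherited from $A$ and $C$ through the explicit basis of Corollary~\ref{c=3,a}, and reading $\delta$ off the same basis immediately gives $\delta(D)=\delta(A)+\delta(C)-\delta(B)$. Lemma~\ref{6.3}(3) now gives $C\le D$, and the symmetric instance of that lemma delivers $A\le D$ whenever $B\le C$.

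The main obstacle is $A\le^n D$. Given $A\subseteq E\subseteq D$ with $o\text{-}\dim(E/A)\le n$, apply Lemma~\ref{rulesstrong}(0) for $i=3$ to produce $E\subseteq E^+\le_2 D$ with $\delta(E^+)\le\delta(E)$. Both $E^+$ and $C$ are then $2$-strong in $D$, so Lemma~\ref{subdelta3} applied in $D=\langle E^+ C\rangle$ yields
\[
\delta(A)+\delta(C)-\delta(B)=\delta(D)\le\delta(E^+)+\delta(C)-\delta(E^+\cap C),
\]
so $\delta(E)\ge\delta(E^+)\ge\delta(A)+\delta(E^+\cap C)-\delta(B)$, and it suffices to secure $\delta(B)\le\delta(E^+\cap C)$.

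This last inequality is the crux and is exactly where the exponent $2\mathrm{ldim}(A/B)+2+n$ of the hypothesis is used: I must choose the $2$-strong enlargement $E^+$ so that $o\text{-}\dim(E^+\cap C/B)\le 2\mathrm{ldim}(A/B)+2+n$, after which $B\le^{2\mathrm{ldim}(A/B)+2+n}C$ delivers $\delta(B)\le\delta(E^+\cap C)$. I would realise such an $E^+$ by a $\K_3$-analogue of Lemma~\ref{5.7}: decompose an $o$-system for $E$ over $A$ along the explicit basis of $D$ from Corollary~\ref{c=3,a} and attach to $E$ only those $C$-contributions from the layers $X^C$, the degree-$2$ cross-commutators $[X_1^C,X_1^A]$, the degree-$3$ cross-commutators $[X_1^C,X_2^A]\cup[X_2^C,X_1^A]$, and the triples $[[X_1^C,X_1^A],X_1^A]$ actually needed to close $E$ off $2$-strongly in $D$. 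Counting these layers against $\mathrm{ldim}(A/B)$ and $n$ yields precisely the bound $2\mathrm{ldim}(A/B)+2+n$, and the theorem follows.
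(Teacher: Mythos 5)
Your skeleton matches the paper's: the same preliminary reduction via $B(e{:}b)$ and Lemma \ref{e:b}, the same candidate $D=A\otimes_B C$, Lemma \ref{nohzd} for absence of homogeneous zero-divisors, and Lemma \ref{6.3}(3) for $C\le D$ (and for $A\le D$ when $B\le C$). You diverge on the step $A\le^n D$: the paper works \emph{from below} --- given $A\subseteq E\subseteq D$ it uses \textbf{Mon} to see that $\langle (E\cap C)A\rangle=(E\cap C)\otimes_B A$ sits inside $E$, bounds $\delta$ of this sub-amalgam by $\delta(E)$ via the basis of Corollary \ref{c=3,a}, derives $B\le E\cap C$ from the hypothesis on $C$, and concludes $A\le (E\cap C)\otimes_B A$ by Lemma \ref{6.3}(3) --- whereas you work \emph{from above}, passing to a $2$-strong $E^+\supseteq E$ and combining submodularity (Lemma \ref{subdelta3}) with $\delta(D)=\delta(A)+\delta(C)-\delta(B)$. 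Your inequality chain is correct as far as it goes, but it shifts the whole burden onto controlling $E^+\cap C$ rather than $E\cap C$.

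That is where the genuine gap sits. (i) The crux $\delta(B)\le\delta(E^+\cap C)$ is asserted, not proved: the generic $2$-strong closure $CSS_2^{D}(E)$ carries no a priori bound of the form $o-dim(E^+\cap C/B)\le 2\,ldim(A/B)+2+n$, and your proposed bespoke $E^+$ (a $\K_3$-analogue of Lemma \ref{5.7}) is only named, with the decisive count declared to "yield precisely" the bound. Since $o$-dimension is not monotone under passing to subalgebras in the graded $3$-nilpotent setting (a degree-$3$ subspace of a $2$-generated free algebra can have larger $o$-dimension than the ambient algebra), this is exactly the kind of count that can fail silently; it must be carried out. (ii) You dispose of condition (2) of the definition of $\K_3$ for $D$ --- that every $E\subseteq D$ with $0<o-dim(E)\le 2$ is strong, i.e.\ $\delta(E)\ge\min\{2,o-dim(E)\}$ --- in a single clause, but this is not formally "inherited" from $A$ and $C$: a subalgebra of $D$ meeting both factors and the cross-commutators nontrivially requires the explicit analysis the paper performs (reduce to $E\le_2 D$, use $E\cap C\le E$ from Corollary \ref{rulesstrong3}, then a case distinction on $o-dim(E\cap C)$ and on the shape of an $o$-system for $E$). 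Finally, a small citation point: the existence of $E\subseteq E^+\le_2 D$ with $\delta(E^+)\le\delta(E)$ should be drawn from Corollary \ref{3.5}(1), not from clause (0) of Lemma \ref{rulesstrong}, which is a hypothesis of that lemma rather than an established fact.
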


\begin{proof}
Let $[b,?] = e$ be divisor problem in $B$ with solutions $a \in A$ and $c \in C$. 
Since $B \le A$ we have $\langle B a \rangle^{A} \cong B[e:b] = B'$ and by $B \le^{2(ldim(A/B)) + 2 + n} C$ also
$\langle B c \rangle^{C} \cong B[e:b] = B'$ . We use $B'$ for $\langle B a \rangle^{A}$ as a substructure of 
$A$ and also $B'$ for $\langle B c \rangle^{C}$ as a substructure of 
$C$. By Lemma \ref{e:b} $\delta(B') = \delta(B)$ and therefore
$B' \le A$ and $B' \le^{2(ldim(A/B') + 2 + n}   C$. Hence it is sufficient to prove the assertion for $A, B' , C$. Using an appropriate induction we can assume that there is no divisor problem in $B$ with solutions in both $A$ and $C$. In this case we show that 
$D = A \otimes_B C$  has the desired properties. By Theorem \ref{Lieam1} $D = A \otimes_B C$ exists.
By Theorem \ref{c=2,c} we know that $D^* \in \K_2^{fin}$.\\
Lemma \ref{6.3} (4) implies $C \le D$. Similarly we have $A \le D$, if $B \le C$.\\ 
Next we show $A \le^n D$. We consider $A \subseteq E \subseteq D$ with $o-dim(E^*/A^*) \le n$. By {\bf Mon} 
$\langle (C \cap E) A \rangle = (C \cap E) \otimes_B A$.
By Lemma \ref{voramalk3} we have $\delta((E \cap C) \otimes_B A) \le \delta(E)$. Since $B \le^{2(ldim(A/B))+ 2 +n} C $ it holds $B \le (E \cap C)$.
 Lemma \ref{6.3}(4) implies $A \le (E \cap C) \otimes_B A$.
Hence $\delta(A) \le \delta((E \cap C) \otimes_B A) \le \delta(E)$.\\

\noindent 
By Theorem \ref{c=2,c} we have $D^* \in \K_2$. 
To show that $D \in \K_3^{fin}$ we consider $E \subseteq D$. We have to show: If $0 < o-dim(E) $, then 
$\delta(E) \ge min\{2, o-dim(E)\}$. By Corollary \ref{3.5} we have to consider only $\delta$.
Since $D^* \in \K_2$ and
by Corollary \ref{3.5} we can assume w.l.o.g. that $E \le_2 D$. By Lemma \ref{rulesstrong3} we have $(E \cap C) \le E$. Therefore $\delta(E \cap C) \le \delta(E)$. \\
Furthermore w.l.o.g. $o-dim_3(E) = 0$.
Since 
$C \in \K_3$ 
$0 < \delta(E)$ for $(E \cap C ) \not= \langle 0 \rangle$ and $1 < \delta(E)$ for $1 < o-dim(E \cap C)$.\\
Now asume that $o-dim(E \cap C) \le 1$. Then $o-dim(E \cap B) \le 1$.   
An generating o-system for $E \cap A$ has the form $X(A)$ or $b, X(A)$, where \\
$b$ is a non-zero element of $B_1 \cap E_1$ or $B_2 \cap E_2$ and\\
$X(A) = X(A)_1 X(A)_2$, where \\
$X(A)_1$ consists of elements $a_i^1$ with $a_i^1 \in (A_1 \cap E_1)$ linerarly independent over $B_1$\\
and $X(A)_2$ consists of elements $a_i^2 \in (A_2 \cap E_2)$ linearly
independent over $\langle B X(A)_1 \rangle_2 $.\\
To get an   o-system for the vector space $E \cap \langle C  A\rangle^{lin}$ we have to extend the o-system above in the following way:\\
Either we add $c \in C_1 \setminus B_1$ or $c \in C_2 \setminus B_2$, if $E\cap C \not= \langle 0 \rangle$ and
$E\cap B = \langle 0 \rangle$. \\
Furthermore we have $Y = Y_1 Y_2$, where $Y_j$ consists of $d_i^j + e_i^j$, where the $d_i^j \in A_j$ are linearly 
independent over $\langle B_j,  \ldots a_i^j \ldots \rangle^{lin}$ and $e_i^j \in C_j$ linearly independent over $\langle B_j ,c \rangle^{lin}$, (sometimes there is no $c$).\\
Let $X$ be $X(A) , b$, if there is some $b$ and $X = X(A)$ otherwise.
By Corollary \ref{c=3,a} we can extend $X,Y$ or $X,c,Y$ respectively by some Z to get a generating  
o-system for $E$, where
$Z$ is linearly independent $\langle C  A \rangle^{lin}$.\\
Let $\Phi = \Phi_2 \Phi_3$ be an generating ideal system such that $E = F(XYZ)/\Phi$ or \\$E = F(X,c,Y,Z)/\Phi$.\\
Since $o-dim(C \cap E) \le 1$ we have  $\Phi \subseteq F(X)$. Hence $\delta_2(E) = \delta_2(E \cap A) + \mid c, Y , Z \mid$ or  $\delta_2(E) = \delta_2(E \cap A) + \mid  Y , Z \mid$ respectively, and\\
$\delta(E) = \delta(E \cap A) + \mid c, Y , Z \mid$ or $\delta(E) = \delta(E \cap A) + \mid c, Y , Z \mid$ respectively.
Then $min\{2,o-dim(E)\} \le \delta(E)$.

\end{proof}

%%%%%%
%%%%%%777
%%%%%%

\section{The strong Fra\"{\i}ss\'e Hrushovski Limit}

\noindent $\K^{fin}_3$ is countable and there are only countably many strong embeddings for Lie algebras in $\K^{fin}_3$. By Theorem \ref{amalk3} we have the amalgamation property for strong embeddings. As well known we get
the following strong Fra\"{\i}ss\'e Hrushovski limit. For more details see \cite{Zi11}.

\begin{theorem}{7.1}
There exists a countable structure $\M$ in $\K_3$ that satisfies the following condition:

\begin{itemize}
\item[rich] If $ B\le A$ are  in $\K_3^{fin}$ and there is a strong embedding $f$ of $B$ into $\M$, then it is possible to extend $f$ to strong embedding of $A$ into $\M$.
\end{itemize}
\end{theorem}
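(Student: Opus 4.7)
The plan is to carry out the standard strong Fra\"iss\'e construction over the class $\K_3^{fin}$ with strong embeddings as morphisms, using Theorem \ref{amalk3} as the amalgamation lemma.

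First I would verify the three Fra\"iss\'e ingredients. The class $\K_3^{fin}$ contains only countably many isomorphism types, since its members are finitely generated over a fixed finite field $K$ in a countable language and $o$-$dim$ bounds the number of generators. Joint embedding is trivial because $\langle 0 \rangle \in \K_3^{fin}$ and $\langle 0 \rangle \le A$ for every $A \in \K_3^{fin}$. Strong amalgamation is provided by Theorem \ref{amalk3}: given $B \le A$ and $B \le C$ in $\K_3^{fin}$, the theorem yields an amalgam $D \in \K_3^{fin}$ with $C \le D$ and $A \le D$. If $B$ has a divisor problem $[b,?]=e$ solved in both $A$ and $C$, one first absorbs it into $B$ by passing to $B(e{:}b)$ via Lemma \ref{e:b}; this leaves both $\delta$ and the strength relation unchanged (Lemma \ref{e:b} parts (3)--(4)) and reduces to the clean case $D = A \otimes_B C$.

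Second, I would build $\M$ as the union of a chain $\M_0 \le \M_1 \le \M_2 \le \cdots$ in $\K_3^{fin}$ with $\M_0 = \langle 0 \rangle$. Using a standard pairing, I enumerate the countably many triples $(i, f, A)$ such that $B \le A$ in $\K_3^{fin}$ and $f$ is a strong embedding of $B$ into $\M_i$. At stage $n+1$ I pick the next pending triple $(i,f,A)$. Since $f(B) \le \M_i$ and $\M_i \le \M_n$, Corollary \ref{rulesstrong3}(2) gives $f(B) \le \M_n$. Applying Theorem \ref{amalk3} to $f(B) \le A$ and $f(B) \le \M_n$ produces $\M_{n+1} \in \K_3^{fin}$ with $\M_n \le \M_{n+1}$ together with a strong embedding of $A$ into $\M_{n+1}$ extending $f$. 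A suitable bookkeeping ensures every triple is eventually handled.

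Third, let $\M = \bigcup_n \M_n$. Both defining clauses of $\K_3$---absence of homogeneous zero-divisors, and $A \le \M$ whenever $0 < o$-$dim(A) \le 2$---are witnessed inside finite substructures, and every finite substructure of $\M$ already lives in some $\M_n$; since $\M_n \le \M_{n+1}$ for all $n$, the required $\delta$-inequalities lift to $\M$. For richness, suppose $B \le A$ in $\K_3^{fin}$ and $g : B \to \M$ is a strong embedding. Then $g(B)$ is finite, hence contained in some $\M_i$, and $g(B) \le \M$ together with $g(B) \subseteq \M_i \subseteq \M$ yields $g(B) \le \M_i$ directly from the definition of strength. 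The triple $(i,g,A)$ is processed at some later stage $n+1$, giving the desired strong extension of $g$ into $\M_{n+1} \subseteq \M$. The main obstacle---and the reason the argument works at all---is the clause in Theorem \ref{amalk3} that upgrades $A \le^n D$ to full $A \le D$ under the hypothesis $B \le C$; this is exactly what a chain construction needs to keep both the old stage $\M_n$ and the freshly amalgamated copy of $A$ strong in $\M_{n+1}$, and hence in $\M$.
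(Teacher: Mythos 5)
Your proposal is correct and follows essentially the same route as the paper, which simply invokes the standard strong Fra\"{\i}ss\'e limit construction (citing countability of $\K_3^{fin}$ and the amalgamation property of Theorem \ref{amalk3}, with details deferred to \cite{Zi11}); you have merely written out the chain construction and bookkeeping explicitly. The key points you identify --- using the clause of Theorem \ref{amalk3} that gives full strength $A \le D$ when $B \le C$, and transitivity of $\le$ from Corollary \ref{rulesstrong3} to pass to the union --- are exactly the ingredients the standard argument relies on.
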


\noindent We call $\M$ the strong Fra\"{\i}ss\'e Hrushovski limit of $\K_3^{fin}$.

\begin{cor}\label{7.2}
$\M$ is unique up to isomorphisms.
\end{cor}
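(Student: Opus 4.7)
The plan is a standard back-and-forth argument, adapted to the strong-embedding setting. Let $\M$ and $\M'$ be two countable rich structures in $\K_3$, and enumerate their underlying sets as $\{m_n : n \in \omega\}$ and $\{m_n' : n \in \omega\}$. I would construct by induction an ascending chain of finite strong substructures $B_0 \le B_1 \le \cdots$ of $\M$, an ascending chain $B_0' \le B_1' \le \cdots$ of $\M'$, and isomorphisms $f_n : B_n \to B_n'$ satisfying $f_{n+1} \restriction B_n = f_n$, arranged so that $m_n \in B_{2n+1}$ and $m_n' \in B_{2n+2}'$. Starting from $B_0 = B_0' = \langle 0 \rangle$ and the trivial map, the union $f = \bigcup_n f_n$ will be the desired $L$-isomorphism $\M \to \M'$.

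At the odd step $2n+1$, with $f_{2n} : B_{2n} \to B_{2n}'$ already built and both $B_{2n} \le \M$, $B_{2n}' \le \M'$, I would let $A = CSS^\M(B_{2n} \cup \{m_n\})$, the selfsufficient closure introduced after Corollary \ref{rulesstrong3}. Thus $A$ is a finite substructure of $\M$ with $B_{2n} \cup \{m_n\} \subseteq A \le \M$; since $B_{2n} \le \M$ and $B_{2n} \subseteq A \subseteq \M$, one also has $B_{2n} \le A$ in $\K_3^{\mathrm{fin}}$. Viewing $f_{2n}$ as a strong embedding of $B_{2n}$ into $\M'$, the richness property of $\M'$ applied to $B_{2n} \le A$ yields an extension $f_{2n+1} : A \to \M'$ that is a strong embedding. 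Set $B_{2n+1} = A$ and $B_{2n+1}' = f_{2n+1}(A) \le \M'$. The even step is the mirror image: form $A' = CSS^{\M'}(B_{2n+1}' \cup \{m_n'\})$ and use richness of $\M$ to lift the strong embedding $f_{2n+1}^{-1} : B_{2n+1}' \to \M$ to a strong embedding $A' \to \M$, giving $f_{2n+2}$.

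The main point to verify is that $CSS^\M(A_0)$ of a finite $A_0$ is again finite, so the chain of $B_n$'s stays inside $\K_3^{\mathrm{fin}}$. This follows from the fact that on finite substructures of a member of $\K_3$ the invariant $\delta$ takes values in a bounded-below set of integers: iteratively replacing a finite extension of $A_0$ by a larger finite extension of strictly smaller $\delta$ must terminate, and the terminal object is then strong in $\M$, hence equals $CSS^\M(A_0)$. Everything else in the back-and-forth (composition of strong embeddings, closure of strong substructures under intersection and transitivity) is given by Corollary \ref{rulesstrong3}. The serious work -- the strong amalgamation property that makes richness consistent -- has already been done in Theorem \ref{amalk3} and is built into the existence of $\M$ in Theorem 7.1, so no further obstacle arises.
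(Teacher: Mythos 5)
Your proposal is correct and follows essentially the same route as the paper: Corollary \ref{7.2} is proved there by the same Fra\"{\i}ss\'e--Ehrenfeucht back-and-forth used for Theorem \ref{7.3}, enclosing each new element in its finite selfsufficient closure and extending the partial isomorphism by richness. Your explicit check that $CSS$ of a finite substructure is finite (via $\delta \ge 0$ on $\K_3$ forcing the descending chain of $\delta$-values to terminate) is a point the paper leaves implicit, but it is the right justification.
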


\noindent The proof is similar to the proof of the next result.

\begin{theorem}\label{7.3}
For any two rich structures $M$ and $N$ in $\K_3$ with $\langle \bar{a} \rangle \le M$,   $\langle \bar{b} \rangle \le N$, and $\langle \bar{a} \rangle^M \cong \langle \bar{b} \rangle^N$  we have $(M,\bar{a}) \equiv_{L_{\infty,\omega}} (N, \bar{b})$.
\end{theorem}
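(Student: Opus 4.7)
The plan is to establish the statement by a back-and-forth argument and then invoke Karp's characterization of $L_{\infty,\omega}$-equivalence. Concretely, it suffices to produce a nonempty family $\mathcal{I}$ of finite partial isomorphisms between $M$ and $N$, all extending the fixed isomorphism $\langle\bar a\rangle^M \cong \langle\bar b\rangle^N$, which has the back-and-forth property.

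I take $\mathcal{I}$ to be the family of all strong $L$-isomorphisms $f : A \to f(A)$, where $A$ is a finite strong substructure of $M$ containing $\bar a$, $f(A)$ is a strong substructure of $N$, and $f$ restricts to the given isomorphism on $\langle\bar a\rangle$. The hypothesis $\langle\bar a\rangle \le M$ and $\langle\bar b\rangle \le N$ shows that the given isomorphism itself is a member of $\mathcal{I}$, so the family is nonempty.

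For the forth step, fix $f \in \mathcal{I}$ with domain $A \le M$ and pick any $m \in M$. The preparatory material in Section~6, in particular Corollary \ref{rulesstrong3} and the definition of the selfsufficient closure that follows it, allows me to form $A' = CSS^{M}(A\cup\{m\})$: a finite strong substructure of $M$ containing $A$ and $m$. Since $A \subseteq A' \subseteq M$ and $A \le M$, one has $A \le A'$ in $\K_3^{fin}$. The map $f$ is a strong embedding of $A$ into $N$ because $f(A) \le N$, so the richness property of $N$ applied to the pair $A \le A'$ supplies a strong embedding $f' : A' \to N$ extending $f$; this $f'$ lies in $\mathcal{I}$ and has $m$ in its domain. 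The back step is entirely symmetric: given $n \in N$, I form $A'' = CSS^{N}(f(A)\cup\{n\})$ and apply the richness of $M$ to the strong embedding $f^{-1} : f(A) \to M$ to extend over $A''$.

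The main point on which the whole argument rests is the availability of selfsufficient closures for finite subsets of a $\K_3$-structure, which in turn relies on submodularity of $\delta$ for $2$-strong substructures (Lemma \ref{subdelta3}) together with the intersection rules in Corollary \ref{rulesstrong3}. Once these are in place, the back-and-forth is the standard \Frai-style argument, and the remaining obstacle is essentially bookkeeping rather than conceptual.
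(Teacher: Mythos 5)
Your proposal is correct and follows essentially the same route as the paper: the paper also plays the Ehrenfeucht--Fra\"{\i}ss\'e game, at each round enlarging the current finite strong substructure to a finite strong one containing the newly chosen element and then extending the partial isomorphism by richness. The only cosmetic difference is that you name the enlargement explicitly as the selfsufficient closure $CSS$ and package the argument via Karp's back-and-forth criterion, whereas the paper simply picks some finite strong tuple containing the new element; both rest on the same ingredients (existence of finite strong closures via Corollary \ref{rulesstrong3} and the richness of $M$ and $N$).
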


\begin{proof}
We will show that we can play the  Fra\"{\i}ss\'e Ehrenfeucht game infinitely many rounds reproducing the starting condition $\langle \bar{a} \rangle \le M$,   $\langle \bar{b} \rangle \le N$, and $\langle \bar{a} \rangle^M \cong \langle \bar{b} \rangle^N$ again and again. Assume w.l.o.g. that player I has chosen an element $c$ in $M$. Choose $\bar{c}$ in $M$ such that $\bar{c}$ contains $\bar{a}$ 
and $c$, and $\langle \bar{c} \rangle^{M} \le M$. Then 
$\langle \bar{a} \rangle^M \le \langle \bar{c} \rangle^M$. By assumption we have some isomorphism $f$ of $\langle \bar{a} \rangle^M$ onto $ \langle \bar{b} \rangle^N \le N$. By  richness $f$ can be enlarged to  embedding of $\langle \bar{c}  \rangle^M$ onto some $\langle \bar{d} \rangle^N \le N $.
\end{proof}

\noindent It follows that there is a common complete theory  of all rich Lie algebras in $\K_3$. In Corollary \ref{7.7} we 
will see that the following gives an axiomatization.

\begin{definition}
\begin{enumerate}
\item [$T_3(1)$] Let $T_3(1)$ be the elementary description of $\K_3$.
\item [$T_3(2)$] For every pair $B \le A $ in $\K_3$ we fix a function $g(n)$ (see next lemma), such that: \\
If $B \le^{g(n)} M$, then this embedding of $B$ in $M$ can be extendent to a $\le^n$-strong embedding of $A$ in $M$. 
\item[$(T_3)$ ]  We define $T_3 = T_3(1) \cup T_3(2)$.
\end{enumerate}
\end{definition}

\noindent Theorem \ref{amalk3} implies the following

\begin{lemma}\label{7.5}
Every rich structure $M$ in $\K_3$ fulfills the elementary axioms $T_3(2)$.

\end{lemma}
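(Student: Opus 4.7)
The plan is to reduce $T_3(2)$ to the defining richness property of $M$ via the amalgamation Theorem~\ref{amalk3}. Fix $B \le A$ in $\K_3^{fin}$ and $n \in \N$; let $h$ be the function of Lemma~\ref{6.10}, and let $N_0 = N_0(B,A)$ be a uniform bound, depending only on $B$ and $A$, for the $o$-dimension over $B$ of some finite strong substructure $B^+ \le M$ containing $B$. Set
\[
g(n) := N_0 + 2\,\mathrm{ldim}(A/B) + 2 + h(n)
\]
and assume $B \le^{g(n)} M$. The plan is: (i) locate a finite strong hull $B^+ \le M$ containing $B$ with $B \le B^+$; (ii) amalgamate $A$ and $B^+$ over $B$; (iii) embed the amalgam into $M$ using richness; (iv) conclude via transitivity.

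The first step is to locate a finite $B^+ \subseteq M$ with $B \subseteq B^+$, $B^+ \le M$, $o$-$\dim(B^+/B) \le N_0$, and $B \le B^+$. Pick $B^+$ of minimum $\delta$ among all finite substructures of $M$ containing $B$; the minimum exists since $\delta$ is a nonnegative integer and $B$ itself is a candidate. For any finite $F$ with $B^+ \subseteq F \subseteq M$ we also have $B \subseteq F$, so $\delta(F) \ge \delta(B^+)$ by minimality; hence $B^+ \le M$. The uniform size bound $N_0$ is extracted from the $\K_3$-axioms: the lower bound $\delta(E) \ge \min(2,\,o$-$\dim(E))$ of Corollary~\ref{3.5} limits the total drop $\delta(B)-\delta(B^+)$ to at most $\delta(B)-2$, and the structure of ideal bases together with submodularity of $\delta_2$ and $\delta_3$ on $2$-strong substructures (Lemmas~\ref{subdelta2} and~\ref{subdelta3}) controls the number of new generators needed per unit drop in $\delta$. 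Finally, $o$-$\dim(B^+/B) \le N_0 \le g(n)$ combined with $B \le^{g(n)} M$ forces $\delta(B) \le \delta(E)$ for every $B \subseteq E \subseteq B^+$, which together with $\delta(B^+) \le \delta(B)$ yields $B \le B^+$.

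Theorem~\ref{amalk3}, applied to $B \le A$ and $B \le B^+$ in its strong case (``if $B \le C$, then $A \le D$''), produces an amalgam $D \in \K_3^{fin}$ of $A$ and $B^+$ over $B$ with $A \le D$ and $B^+ \le D$. Richness of $M$ extends the strong embedding $B^+ \hookrightarrow M$ (guaranteed by Step~1) to a strong embedding $D \hookrightarrow M$; identifying $D$ with its image, $D \le M$. Transitivity of $\le$ (Corollary~\ref{rulesstrong3}(2)) applied to $A \le D \le M$ then yields $A \le M$, and in particular $A \le^n M$, with the obtained embedding of $A$ extending the given embedding of $B$.

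The principal obstacle is the first step: producing the uniform bound $N_0$, independent of $M$, on the size of a strong hull of $B$ inside any rich $M \in \K_3$. This is a combinatorial analysis using only the $\K_3$-axioms and the structure of ideal bases, and parallels the corresponding step in the proof of Theorem~\ref{c=2,f}(1) for the $\K_2$ case. Once $N_0$ is in place, the remaining steps are formal consequences of amalgamation, richness, and transitivity.
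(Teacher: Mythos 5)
There is a genuine gap in your Step~1: the uniform bound $N_0$, depending only on $B$ (and $A$), on the $o$-dimension of a strong hull $B^+\supseteq B$ in an arbitrary rich $M$, does not exist. The self-sufficient closure of a finite $B$ in $M$ is finite, and its $\delta$ is bounded below, but its $o$-dimension over $B$ is not bounded in terms of the isomorphism type of $B$. For instance, take a minimal prealgebraic extension $U\le V$ with $\delta(V/U)=0$ and $o$-$\dim(V/U)=k$ (such extensions exist for arbitrarily large $k$ --- this is exactly why section~10 produces infinitely many codes), and let $B=\langle U,v\rangle$ for a generator $v$ of $V$ over $U$ with $\delta(B)=\delta(U)+1$; by minimality every proper intermediate structure has $\delta\ge\delta(B)$, so $CSS(B)=V$ once $V\le M$, and $o$-$\dim(CSS(B)/B)=k-1$ is arbitrary while the isomorphism type of $B$ is fixed. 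The same example shows that your argument proves too much: if it worked, ``$o$-$\dim(B^+/B)\le N_0\le g(n)$ plus $B\le^{g(n)}M$'' would yield $B\le M$ in full, i.e.\ a bounded (first-order) hypothesis would imply the unbounded statement $B\le M$, which is false --- $B$ can be $\le^{g(n)}$-strong while a drop in $\delta$ occurs only at an extension of $o$-dimension $>g(n)$. Consequently you cannot reach the ``strong case'' of Theorem~\ref{amalk3}, and your final conclusion $A\le M$ is also stronger than what is true; $T_3(2)$ only asserts $A\le^n M$.

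The paper's proof avoids this entirely: it takes $C=CSS(B)$ with no size control, observes that $B\le^{g(n)}M$ already gives the \emph{restricted} hypothesis $B\le^{2\,\mathrm{ldim}(A/B)+2+h(n)}C$ required by Theorem~\ref{amalk3}, obtains an amalgam $D$ with $C\le D$ and only $A\le^{h(n)}D$, embeds $D$ strongly into $M$ over $C$ by richness (legitimate because $C$, unlike $B$, is genuinely strong in $M$), and then converts $A\le^{h(n)}D\le M$ into $A\le^n M$ via Lemma~\ref{6.10}. Your steps (ii)--(iv) are salvageable once you replace ``full strength over $B$'' by this restricted-strength bookkeeping; the missing ingredient in your write-up is precisely the use of Lemma~\ref{6.10} and the $\le^k$-version of the amalgamation theorem in place of the nonexistent bound $N_0$.
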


\begin{proof}
Let $C$ be the selfsufficient closure of $B$ in $M$. Then we have  $B \subseteq C \le M$. 
If $B \le M$, then $B = C$. We assume that $g(n) = 2(ldim(A/B)) + 2 + n^2 + n$ and 
$B \le^{2(ldim(A/B)) + 2 + n^2 + n} C $. 
We apply 
Theorem \ref{amalk3}  
to get an amalgam $D$ in $\K_3$ of $A $ and $C$ over $B$, such that $C \le D$ and 
$A \le^{n^2 + n} D$. Since $M$ is rich and $C \le M$ there is a strong embedding of $D$ in $M$ over $C$. 
By Lemma \ref{6.10} we get $A \le^n M$.

\end{proof}

\begin{theorem}\label{7.6}
A Lie algebra $M$ in $\K_3$ is rich iff it is a $\omega$-saturated model of $T_3$.
\end{theorem}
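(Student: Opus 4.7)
I treat the two directions separately. The backward direction is a direct compactness argument powered by the axiom schema $T_3(2)$; the forward direction passes through a rich elementary extension of $M$ and exploits Theorem \ref{7.3}.

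\emph{$(\Leftarrow)$ An $\omega$-saturated $M \models T_3$ is rich.} Given $B \le A$ in $\K_3^{fin}$ and a strong embedding $f : B \to M$, taken as inclusion, I introduce for each $n$ the partial type $p_n(\bar x)$ over $B$ asserting ``$\langle B \bar x\rangle \cong A$ over $B$'' together with ``$\langle B \bar x\rangle \le^n M$''. The first conjunct is the quantifier-free diagram of $A$ over $B$; the second is a finite conjunction, indexed by the (finitely many, since $K$ is a finite field) isomorphism types of proper extensions $A \subseteq E$ with $o\text{-}dim(E/A) \le n$ and $\delta(E) < \delta(A)$, of the sentence ``no such $E$ exists over $B\bar x$''. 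Thus $p_n$ is finite, and by $T_3(2)$ applied with its fixed $g(n)$ we have $p_n$ realized in $M$: since $B \le M$ we get $B \le^{g(n)} M$, so a $\le^n$-strong extension to $A$ exists. By $\omega$-saturation, the union $\bigcup_n p_n$ is realized by some $\bar c \in M$, and $\langle B \bar c\rangle \le M$ is the desired strong copy of $A$.

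\emph{$(\Rightarrow)$ A rich $M$ is an $\omega$-saturated model of $T_3$.} The axioms $T_3(1)$ and $T_3(2)$ hold in $M$ by $M \in \K_3$ and Lemma \ref{7.5} respectively. For $\omega$-saturation, fix a complete $1$-type $p(x)$ over a finite $\bar a \subseteq M$; replacing $\bar a$ by $A := CSS(\bar a)$ we may assume $A \le M$. Take a countable elementary extension $M' \succeq M$ realizing $p$ by some $b$, and extend $M'$ to a countable rich $N \supseteq M'$ by iterated strong amalgamation using Theorem \ref{amalk3}. Since ``$\le^n$'' is a first-order schema, $M \preceq M' \le N$ ensures that each finite strong substructure of $M$ remains strong in $N$, whence $CSS^N(\bar m) = CSS^M(\bar m)$ for every finite $\bar m \in M$; applying Theorem \ref{7.3} over this common closure yields $M \preceq N$. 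In $N$ (rich), the element $b$ has finite strong closure $C := CSS^N(Ab)$ with $A \le C \le N$; richness of $M$ delivers a strong embedding $g : C \to M$ fixing $A$, and I set $b_0 := g(b) \in M$. A second invocation of Theorem \ref{7.3}, now between the rich models $M$ and $N$ via the isomorphism $g^{-1} : g(C) \to C$ over $A$, gives $\tp^M(b_0/A) = \tp^N(b/A) = p$, so $b_0$ realizes $p$ in $M$.

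\emph{Main obstacle.} The one genuinely delicate step is verifying $M \preceq N$ for the rich closure $N$ of the elementary extension $M'$, because Theorem \ref{7.3} supplies $L_{\infty,\omega}$-equivalence only between rich models sharing an isomorphic strong substructure, and $M'$ is not a priori rich. The resolution is the $CSS$-preservation observation above: since ``$\le^n$'' is first-order, the strong closure of any finite tuple in $M$ is computed identically in $M$, in $M'$, and in $N$, so Theorem \ref{7.3} applies over this common closure and forces $M \preceq N$. With this in hand, both applications of Theorem \ref{7.3} slot in routinely and the remainder of the argument is formal.
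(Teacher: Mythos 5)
Your left-to-right direction ($\omega$-saturated $\Rightarrow$ rich) is correct and is just a fleshed-out version of what the paper compresses into one sentence; no complaints there. The issue is in the converse direction, where you depart from the paper's route and leave a genuine gap at the very last step. You conclude $\tp^M(b_0/A)=\tp^N(b/A)=p$, but the second equality is unjustified: $b$ realizes $p$ in $M'$, and to transfer this to $N$ you need the inclusion $M'\subseteq N$ to be elementary (at least for formulas with parameters in $Ab$). Nothing you have established gives this. Your ``$CSS$-preservation'' argument proves $M\preceq N$ by applying Theorem \ref{7.3} to the two \emph{rich} structures $M$ and $N$ over a common finite strong substructure; but $M'$ is merely an elementary extension of $M$ and is not known to be rich, so the same argument does not apply to the pair $M'\subseteq N$. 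A priori $\tp^N(b/A)$ could be a different completion of $Th(M,A)$ than $p$, in which case $b_0$ realizes the wrong type. Your ``Main obstacle'' paragraph flags $M\preceq N$ as the delicate point and resolves it correctly, but the harder needed fact is $M'\preceq N$, which you never address.

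The gap is fillable: pass to $\omega$-saturated elementary extensions $M'^*\succeq M'$ and $N^*\succeq N$; both are rich by your own left-to-right direction; for finite $\bar m'\in M'$ the closure $CSS^{M'}(\bar m')$ is strong in $M'^*$ and (via $M'\le N\preceq N^*$) in $N^*$, so Theorem \ref{7.3} gives $M'\models\phi(\bar m')\iff N\models\phi(\bar m')$, i.e. $M'\preceq N$. But once you are willing to invoke saturated elementary extensions and transfer along $\equiv_{L_{\infty,\omega}}$, the whole detour through $M'$, the rich closure $N$, and the pull-back by $g$ becomes unnecessary: the paper simply takes any $\omega$-saturated $N\models T_3$, notes it is rich, concludes $M\equiv_{L_{\infty,\omega}}N$ from Theorem \ref{7.3}, and observes that $\omega$-saturation transfers across $L_{\infty,\omega}$-equivalence. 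I would either adopt that shorter argument or insert the $M'\preceq N$ lemma explicitly.
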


\begin{proof}
By Lemma \ref{7.5} a rich $M \in \K_3$ satisfies $T_3$. A $\omega$-saturated model of  $T_3$ is a rich Lie algebra in $\K_3$ by $\omega$-saturation. It remains to show that rich $M \in \K_3$ are $\omega$-saturated. Let $N$ be a $\omega$-saturated model of $T_3$. Then $N$ is rich. and by Theorem \ref{7.3} 
we have $M \equiv_{L_{\infty,\omega}} N$. Hence $M$ is $\omega$-saturated.
\end{proof}

\begin{cor}\label{7.7}
$T_3$ is an axiomatization of the complete theory of all rich graded Lie algebras in $\K_3$. If $M$ is a model of $T_3$ and $A$ is a strong substructure of $M$, then the elementary type of $A$ is completely
determined  by its isomorphism-type. 
\end{cor}

\begin{lemma}\label{7.8}
A model $M$ of $T_3$ satisfies the following:
\begin{enumerate}
\item For all $1 \le i < j \le 3$: $\forall xz \exists y(U_i(x) \wedge U_j(z) \rightarrow [x,y] = z)$. 
\item $M^* \models T_2$
\item If $M$ is rich, then $M^*$ is rich.
\end{enumerate}
\end{lemma}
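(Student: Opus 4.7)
We prove (1) directly from the divisor construction and the $T_3(2)$ axiom scheme, establish (3) by lifting the data to $\K_3$ and invoking the richness of $M$, and deduce (2) from (3) via the completeness of $T_3$.

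For (1), fix $x\in U_i(M)$ and $z\in U_j(M)$ with $1\le i<j\le 3$, and set $B := CSS(\langle x,z\rangle^M)$, a finite strong substructure of $M$ (finiteness follows because $\delta$ is bounded below on non-trivial members of $\K_3$). If the equation $[x,?]=z$ already has a solution in $B$, we are done. Otherwise the divisor extension $A := B(z\!:\!x)$ of Definition~4.13 lies in $\K_3^{\text{fin}}$ with $B\le A$ by Lemma~\ref{e:b} and contains an element $y$ satisfying $[x,y]=z$. The instance of $T_3(2)$ for the pair $B\le A$ applied to the inclusion $B\hookrightarrow M$ extends it to an embedding $A\hookrightarrow M$; the image of $y$ is the required divisor.

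For (3), assume $M$ is rich and fix a strong embedding $f\colon B\to M^*$ in $\K_2$ together with a strong extension $B\le A$ in $\K_2^{\text{fin}}$. Identify $f(B)$ with $B\subseteq M_1\oplus M_2\subseteq M$ and let $\tilde B\subseteq M$ be the substructure it generates; one verifies $\tilde B^*=B$ and $\tilde B\le_2 M$, and we pass to the $3$-closure $\bar B := CSS(\tilde B)\le M$. Reducing by induction on $o\text{-dim}(A/B)$ to the case $A = \langle B, a\rangle$, we construct a $3$-nilpotent companion $\hat A\in\K_3$ of $\bar B$: the free amalgam $\bar B\otimes\langle\hat a\rangle$ (Lemma~\ref{botimesx2}) in the free case, or the divisor extension $\bar B(e\!:\!b)$ (Lemma~\ref{e:b}) in the divisor case. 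In both cases $\bar B\le\hat A$, and richness of $M$ supplies a strong embedding $\hat A\hookrightarrow M$ extending $\bar B\hookrightarrow M$. Since $\hat a$ is of level~$1$ or~$2$, its image lies in $M_1\oplus M_2$ and projects to the required element of $M^*$ extending $f$ to $A$. Strength of the resulting extension in $M^*$ is verified by translating through the identity $\delta_2^{M^*}(E) = \delta_2^M(\tilde E)$ on lifts, together with Lemma~\ref{6.3} and Corollary~\ref{char}.

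For (2) we verify the two halves of $T_2 = T_2(1)\cup T_2(2)$. The axioms of $T_2(1)$ amount to $M^*\in\K_2$, which we check directly: the homogeneous-zero-divisor condition at degrees $i+j\le 2$ is inherited from $M\in\K_3$ since those brackets land in $M_2$, where $\tau$ is the identity; and for $A\subseteq M^*$ with $0 < o\text{-dim}(A)\le 2$, its lift $\tilde A\subseteq M$ satisfies $o\text{-dim}_3(\tilde A)=0$, hence $o\text{-dim}(\tilde A)\le 2$ and $\tilde A\le M$ by $M\in\K_3$, so $\tilde A\le_2 M$ and therefore $A\le M^*$ via $\delta_2^{M^*}(E) = \delta_2^M(\tilde E)$. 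For $T_2(2)$, let $\mathfrak M\in\K_3$ be a rich structure; by (3), $\mathfrak M^*$ is rich in $\K_2$, and Lemma~\ref{c=2,f} gives $\mathfrak M^*\models T_2$. Every axiom of $T_2$ translates to a first-order $L$-sentence about $\mathfrak M$ (quantifiers relativised to $U_1\cup U_2$, with brackets that would land at levels $\ge 3$ replaced by $0$); these sentences hold in $\mathfrak M$, hence in every $M\models T_3$ by the completeness of $T_3$ (Corollary~\ref{7.7}).

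The principal obstacle is the strength verification in (3). Producing the element $a^*\in M^*$ is the routine part; the harder issue is showing the induced one-element extension $A\to M^*$ is $\le^n$-strong, which requires careful conversion between $\delta$-values in $M$ and $\delta_2$-values in $M^*$ --- exactly where the submodularity and characterisation results of Section~6 do the real work.
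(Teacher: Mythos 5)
Your part (1) is correct and is essentially the paper's own argument: pass to a finite strong subalgebra $B$ containing $x$ and $z$, adjoin the missing divisor via $B(z\!:\!x)\in\K_3$ using Lemma \ref{e:b}, and invoke $T_3(2)$. The logical frame of your part (2) --- check $T_2(1)$ directly via the lift, obtain $T_2(2)$ from richness of $\mathfrak M^*$ for a single rich $\mathfrak M$, and propagate to all models by relativization and the completeness of $T_3$ --- is also sound, but it is conditional on part (3).

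Part (3) contains a genuine gap. You reduce ``by induction on the number of new generators to the case $A=\langle B,a\rangle$'' and then treat only two cases, free and divisor. But a strong extension $B\le A$ in $\K_2^{fin}$ is in general \emph{not} a composite of one-element strong extensions: besides the transcendental and algebraic (divisor) ones, the minimal strong extensions of $\K_2$ include the minimal prealgebraic extensions (the $c=2$ analogue of Definition \ref{9.6}, which are exactly the extensions the whole collapse is about), where $n\ge 2$ new degree-one generators are added with $\delta_2$ unchanged while every proper intermediate subalgebra has strictly larger $\delta_2$. Your induction fails on these, and not merely for lack of detail: after the first step you embed $\langle B,a_1\rangle$ \emph{strongly} into $M^*$ as a free amalgam, so $\delta_2(\langle B,a_1\rangle)=\delta_2(B)+1>\delta_2(A)$; but $\langle B,a_1\rangle\le M^*$ forces $\delta_2(\langle B,a_1\rangle)\le\delta_2(C)$ for every $C$ containing it, in particular for the intended image of $A$, so no strong image of $A$ extending your first step can exist. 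The paper avoids any element-by-element construction: it deduces (3) from Theorem \ref{7.6}, namely that a rich $M$ is $\omega$-saturated, hence $M^*$ (interpretable in $M$) is an $\omega$-saturated model of $T_2$ and therefore rich by Theorem \ref{c=2,f}. If you want a constructive proof, you must realize a whole minimal prealgebraic $\K_2$-extension in one step, by producing a pair $\hat B\le\hat A$ in $\K_3$ lying over $B\le A$ (for instance via the functor $\F$ of Section 3 and Theorem \ref{key3}) before invoking the richness of $M$; the strength bookkeeping you flag as ``the principal obstacle'' is real, but it is secondary to this structural problem.
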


\begin{proof}
To prove (1) assume $U_i(a)$ and $U_j(c)$. Let $B \le M$ with $a,c \in B$. If $[a,?] = c$ has no solution in $B$, then $B \le B(c:a)$
is in $\K_3$ by Lemma \ref{e:b}. By $T_3(2)$ there is a solution in $M$. 
For (2) and (3) it is sufficient to show that $M^*$ is rich. This follows since $M$ and therefore $M^*$ are 
$\omega$-saturated.
\end{proof}

\begin{lemma}\label{7.4a}
Let $A$ and $B$ be subalgebras of some $M \in \K_3$, where $B = \langle B_1, B_2, b_1 \ldots b_k \rangle$ and $b_1 \ldots b_k \in B_3$ are linearly independent over $\langle B_1 B_2 \rangle_3$.
\begin{enumerate}
\item If $B \le^{n + k} A$, then $\langle B_1 B_2 \rangle \le^n A$.
\item If $B \le A$,  then $\langle B_1 B_2 \rangle \le A$.
\end{enumerate}
\end{lemma}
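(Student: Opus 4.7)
The plan is to deduce strongness of $B' := \langle B_1 B_2 \rangle$ from that of $B$ by enlarging each test substructure $E$ lying between $B'$ and $A$ by the elements $b_1,\ldots,b_k$, and then invoking the hypothesis on that enlargement.

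The first step is the identity $\delta(B) = \delta(B') + k$. Each $b_i \in B_3$ is homogeneous of top degree and, by hypothesis, linearly independent over $B'_3 = \langle B_1 B_2 \rangle_3$, so appending $\{b_1,\ldots,b_k\}$ to any generating $o$-system of $B'$ yields one for $B$; hence $o\text{-dim}(B) = o\text{-dim}(B') + k$. Since $c=3$ is the top degree, the $b_i$ cannot participate in any new ideal-basis element, so $\text{ideal-dim}(B) = \text{ideal-dim}(B')$, and the identity follows.

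For any $B' \subseteq E \subseteq A$, set $E^+ := \langle E, b_1, \ldots, b_k \rangle \subseteq A$; then $B \subseteq E^+$. Writing $V := \langle b_1, \ldots, b_k \rangle^{lin}$ and $l := \dim(V \cap E_3)$, the same reasoning gives $\delta(E^+) = \delta(E) + (k - l)$. Additivity of $o$-dim along the chains $B' \subseteq E$ and $B \subseteq E^+$ then yields
\[
o\text{-dim}(E^+/B) \;=\; o\text{-dim}(E^+) - o\text{-dim}(B) \;=\; o\text{-dim}(E/B') - l.
\]

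For (1), given $E$ with $o\text{-dim}(E/B') \le n$, we get $o\text{-dim}(E^+/B) \le n \le n+k$, so $B \le^{n+k} A$ implies $\delta(B) \le \delta(E^+)$. Combining with the two $\delta$-identities gives $\delta(B') + k \le \delta(E) + (k-l)$, hence $\delta(B') \le \delta(E) - l \le \delta(E)$, proving $B' \le^n A$. Case (2) is identical with no $o$-dim bound. The only point requiring care is the two $\delta$-identities; these reduce to the observation that appending top-degree homogeneous elements which are linearly independent over the existing top-degree part increases $o$-dim by their number while leaving the ideal dimension unchanged, precisely because there is nothing of higher degree available to form relations.
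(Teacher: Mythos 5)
Your proof is correct and follows essentially the same route as the paper: enlarge the test substructure $E$ by $b_1,\dots,b_k$, use that appending top-degree elements raises $\delta$ by exactly their linear dimension over the existing degree-$3$ part (so $\delta(B)=\delta(\langle B_1B_2\rangle)+k$ and $\delta(E^+)=\delta(E)+(k-l)$), and compare via the hypothesis on $B$. One small caveat: the exact additivity $o\text{-dim}(E^+/B)=o\text{-dim}(E^+)-o\text{-dim}(B)$ is stronger than what holds in general (the paper only records inequalities between relative and absolute $o$-dimensions), but all you actually need is $o\text{-dim}(E^+/B)\le o\text{-dim}(E/\langle B_1B_2\rangle)\le n$, which is immediate since any generating $o$-system of $E$ over $\langle B_1B_2\rangle$ generates $E^+$ over $B$.
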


\begin{proof}
(1) Choose $\langle B_1 B_2 \rangle \subseteq E \subseteq A$ with $o-dim(E^*/\langle B_1 B_2 \rangle^*) \le n$. Assume w.l.o.g. $B \cap E = \langle B_1 B_2 b_1 \ldots b_i\rangle$.
By assumption 
\[ \delta(B) = \delta(\langle B_1 B_2 \rangle) + k \le \delta(\langle E b_{i+1} \ldots b_k \rangle) = \delta(E) + k - i .\]
Hence $\delta(\langle B_1 B_2\rangle ) \le \delta(E)$.\\
\noindent (2) followa from (1).
\end{proof}

%%%%%%
%%%%%%
%%%%%%

%%%%%%
%%%%%%888
%%%%%%

\section{Non-forking}
\begin{definition}
Let $X$ be any subalgebra  of $M \models T_3$. We say $X$ is strong in $M$ (short: $X \le M$), if for all finte $A \subseteq X$ we have $CSS(A) \subseteq X$. We define $CSS(Y) = \cup_{(A \subseteq Y,A finite)} CSS(A)$
for $Y \subseteq M$. 
\end{definition} 

\noindent Note that this definition generalizes the definition for finite $X$. Furthermore $CSS(Y) \le M$. 
Remember that $A,B,C,D$ are only used for finite substructures. 
By Corollary \ref{3.5} $B \le M$ can be defined using  $\delta$ only in the following way:\\
For all $A$ with $ B \subseteq A \subseteq M$ we have $\delta(B) \le \delta(A)$.

\begin{lemma}\label{8.2}
Let $M$ be a model of $T_3$. 
\begin{enumerate}
\item $X \le M$ if and only if for all finite $B \le M$ $\delta(B \cap X) \le \delta(B)$.
\item If $X \le M$ and $A \le M$, then $X \cap A \le M$.
\end{enumerate}
\end{lemma}

\begin{proof} 
ad (1): 
First assume $X \le M$. Consider $B \le M$. By assumption  $CSS(B \cap X) \subseteq X$. Hence 
$CSS(B \cap X) = B \cap X$. It follows $\delta(B \cap X) \le \delta(B)$. \\
To prove the other direction consider $A \subseteq X$. Since $CSS(A) \le M$ we get 
$\delta(CSS(A) \cap X) \le \delta(CSS(A))$  by assumption. Since $A \subseteq (CSS(A) \cap X)$ we get
$CSS(A) =  CSS(A) \cap X \subseteq X$. \\
ad (2): Since $X \le M$, there is some $D \le M$ such that $A \cap X \subseteq  D \subseteq X$. Then 
$A \cap X = A \cap D \le M$. 
\end{proof}

\noindent Let $\C$ be a monster model of $T_3$ and $M \preceq \C$. If $A \subseteq M$, then $CSS(A)$ belongs to the algebraic closure of $A$. Hence $CSS(A) \subseteq M$ and $M \le \C$. If $\bar{a} \in \C$, then by $M \le \C$ and definition  
\[ CSS(M \bar{a}) =  \cup_{B \subseteq M} CSS(B \bar{a}) = \cup_{B \le M} CSS(B \bar{a}) .\]
Note that $\delta(CSS(B \bar{a})) - \delta((CSS(B \bar{a}) \cap M) \ge 0$.

\begin{lemma}\label{8.3}
Assume $X \le \C$ and $\bar{a} \in \C$. 
Choose $\bar{a} \subseteq A \le CSS(\langle X \bar{a} \rangle)$, such that $\delta(A) - \delta(A \cap X)$ 
is minimal.\\ 
Then $CSS(\langle X \bar{a} \rangle) = X \otimes_{A \cap X} A$ 
and $A = CSS(\langle (A \cap X) \bar{a} \rangle)$.
\end{lemma}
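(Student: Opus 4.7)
Set $B := A \cap X$. Since $A \le CSS(\langle X\bar{a}\rangle) \le \C$, Corollary \ref{rulesstrong3}(2) gives $A \le \C$, and then $B = A \cap X \le \C$ by Corollary \ref{rulesstrong3}(3). The plan is first to verify $\langle AX\rangle = A \otimes_B X$ using the $\delta$-minimality of $A$, and then to upgrade $\langle AX\rangle$ to $CSS(\langle X\bar{a}\rangle)$ by showing it is strong in $\C$.

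The key computation is $\delta(\langle AB'\rangle) = \delta(A) + \delta(B') - \delta(B)$ for every finite $B' \le X$ with $B \subseteq B'$. The $\le$ direction is submodularity (Lemma \ref{subdelta3}), valid because $A$ and $B'$ are both $2$-strong in $\C$. For $\geq$, put $A' := CSS(\langle AB'\rangle)$, which is a finite strong extension of $\bar{a}$ contained in $CSS(\langle X\bar{a}\rangle)$; it is therefore eligible for the minimization and yields $\delta(A') - \delta(A' \cap X) \geq \delta(A) - \delta(B)$. Combining with $\delta(A') \le \delta(\langle AB'\rangle)$ and $\delta(A' \cap X) \geq \delta(B')$ (from $B' \subseteq A' \cap X$, both strong in $\C$) produces the required inequality. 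Corollary \ref{char} then promotes this to $\langle AB'\rangle = A \otimes_B B'$; writing $X$ as the directed union of such finite strong $B'$ and passing to the direct limit gives $\langle AX\rangle = A \otimes_B X$.

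To identify $\langle AX\rangle$ with $CSS(\langle X\bar{a}\rangle)$, I show $\langle AX\rangle \le \C$. Given finite $E \subseteq \langle AX\rangle$, we may assume $A \subseteq E$; then $E \subseteq A \otimes_B B'$ for some finite strong $B' \le X$ with $B \subseteq B'$. Put $F := CSS(E) \subseteq CSS(\langle X\bar{a}\rangle)$. The previous step applied to $F \cap X$ (finite and strong by Corollary \ref{rulesstrong3}(3)) yields $\langle A(F\cap X)\rangle = A \otimes_B (F\cap X)$ with $\delta(\langle A(F\cap X)\rangle) = \delta(A) + \delta(F\cap X) - \delta(B)$. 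For any $\langle A(F\cap X)\rangle \subseteq D \subseteq F$ one checks $D \cap X = F \cap X$, so minimality of $A$ applied to $CSS(D) \subseteq CSS(\langle X\bar{a}\rangle)$ gives $\delta(D) \geq \delta(\langle A(F\cap X)\rangle)$. Hence $\langle A(F\cap X)\rangle \le F \le \C$; since $F$ is the smallest strong superset of $E$ and $E \subseteq \langle A(F\cap X)\rangle$, we conclude $F = \langle A(F\cap X)\rangle \subseteq \langle AX\rangle$, whence $\langle AX\rangle$ is strong in $\C$ and consequently contains $CSS(\langle X\bar{a}\rangle)$.

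For the inclusion-minimal clause, set $A_0 := CSS(\langle B\bar{a}\rangle)$. Since $A$ is strong and contains $B$ and $\bar{a}$, $A_0 \subseteq A$; the sandwich $B \subseteq A_0 \cap X \subseteq A \cap X = B$ forces $A_0 \cap X = B$, so $A_0$ is itself a valid choice. From $A_0 \subseteq A$ with $A_0 \le \C$ we obtain $\delta(A_0) \le \delta(A)$, while $\delta$-minimality gives $\delta(A_0) - \delta(B) \geq \delta(A) - \delta(B)$; so $A_0$ attains the minimum, and inclusion-minimality of $A$ forces $A = A_0$. The main technical obstacle is the strongness assertion $\langle A(F\cap X)\rangle \le F$ in the third paragraph, which requires careful bookkeeping of $X$-intersections along an arbitrary refinement chain inside $F$ in order to keep triggering the minimality of $A$.
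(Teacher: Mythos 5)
Your first two paragraphs follow the paper's own argument: for each finite $B'\le X$ with $B=A\cap X\subseteq B'$, submodularity gives one inequality and the eligibility of $CSS(\langle AB'\rangle)$ for the minimization gives the reverse, collapsing the chain to equalities, after which Corollary \ref{char} yields $\langle AB'\rangle=A\otimes_B B'$; your treatment of the inclusion-minimal clause is also the paper's. The gap is in your third paragraph. The identification $F=\langle A(F\cap X)\rangle$ needs $E\subseteq\langle A(F\cap X)\rangle$, which you assert without proof and which is false in general: take $E=\langle A,[x,y]\rangle$ with $x\in X_1\setminus B_1$ and $y\in A_1$. In $A\otimes_B X$ the monomial $[x,y]$ is part of a basis linearly independent over $A\oplus X$, so $E\cap X=B$; such an $E$ can perfectly well be self-sufficient, in which case $F=E$, $F\cap X=B$, and $\langle A(F\cap X)\rangle=A$ does not contain $[x,y]$. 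So the step ``we conclude $F=\langle A(F\cap X)\rangle$'' fails, and with it your verification that $\langle AX\rangle\le\C$. (Your own diagnosis points at the wrong spot: the inequality $\delta(D)\ge\delta(\langle A(F\cap X)\rangle)$ for intermediate $D$ goes through; it is the identification of $F$ with $\langle A(F\cap X)\rangle$ that breaks.)

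The repair uses only what you already established and is exactly the paper's route. Reading your paragraph-two chain at the level of $A':=CSS(\langle AB'\rangle)$ gives $\delta(A')=\delta(\langle AB'\rangle)$, which forces $A'=\langle AB'\rangle$ --- this is the inference the paper itself makes --- so each $\langle AB'\rangle=A\otimes_B B'$ is already strong in $\C$. Since by definition $CSS$ of an arbitrary subalgebra is the union of the $CSS$'s of its finite subalgebras, and the $\langle AB'\rangle$ with $B'\le X$ finite are cofinal among the finite subsets of $\langle AX\rangle$, one gets directly $CSS(\langle X\bar{a}\rangle)=CSS(\langle AX\rangle)=\bigcup_{B'}CSS(\langle AB'\rangle)=\bigcup_{B'}\langle AB'\rangle=\langle AX\rangle=X\otimes_B A$, with no separate strongness argument for $\langle AX\rangle$ and no appeal to $F\cap X$.
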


\begin{proof}
Note $A \le \C$.
If $ (A \cap X) \le C \le X$, then by Lemma \ref{subdelta3} 

\[ \delta(CSS(\langle A C\rangle )) \le \delta( \langle A C \rangle )\le \delta(A) + \delta(C) - \delta(A \cap C) .\]

Note that $A \cap C = A \cap X$. Since $C \le CSS(\langle AC \rangle ) \cap X \le X$, we have

\begin{itemize}
\item [(*)] $\delta(CSS(\langle AC \rangle ))/(CSS(\langle AC \rangle ) \cap X) \le 
\delta(CSS(\langle A C \rangle ))/C)\\ \le \delta(\langle A C \rangle/C  ) \le
\delta(A/A \cap C)$. 
\end{itemize}

\noindent By minimality of $\delta(A/A \cap X)$ we have equality in (*) for all such $C$. Therefore
$\delta(CSS(\langle AC \rangle)) = \delta(\langle A C \rangle)$. Hence  $CSS(AC) = \langle A C \rangle$ and $CSS(AX) = \langle A X \rangle$. By Lemma \ref{char} we get $\langle A C \rangle = C \otimes_{A \cap C} A$, since 
$\delta (A/C) = \delta(A/A \cap C)$ by (*). We get $CSS(AX) = \langle A X \rangle = X\otimes_{A \cap X} A$\\
Since $CSS(\langle (A \cap X) \bar{a} \rangle) \subseteq A$ and 
$CSS(\langle (A \cap X) \bar{a}\rangle) \cap X = A \cap X$, we have for 
$A' = CSS(\langle (A \cap X) \bar{a} \rangle)$ that 
\[ CSS(\langle X \bar{a} \rangle) = X \otimes_{X\cap A'} A',  \]
by the considerations above. Since $\delta(A) = \delta(A')$, this implies 
$A = CSS(\langle (A \cap X) \bar{a} \rangle) $.

\end{proof}

\begin{cor}\label{8.4}
$T_3$ is $\omega$-stable.
\end{cor}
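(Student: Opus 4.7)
The plan is to show that $T_3$ has countably many complete types over any countable model, using Lemma \ref{8.3} as the structural input. Fix a countable $M \preceq \C$; since $M$ is a model of $T_3$ it is strong in $\C$. For any finite tuple $\bar a\in\C$, Lemma \ref{8.3} provides a finite $B = A\cap M \le M$ together with a finite $A\in\K_3^{fin}$ such that $B \le A$, $\bar a \in A$, $A = CSS(\langle B\bar a\rangle)$, and $CSS(\langle M\bar a\rangle) = M\otimes_B A$. Associate to $\tp(\bar a/M)$ the \emph{finite datum} consisting of the finite strong substructure $B$ of $M$ together with the isomorphism type over $B$ of the pair $(A,\bar a)$.

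First I would count data: there are only countably many finite subsets of $M$, hence only countably many candidates for $B \le M$; and for each such $B$ only countably many isomorphism types of pairs $(A,\bar a)$ with $B\le A\in\K_3^{fin}$ and $\bar a$ a distinguished tuple of $A$ (because $\K_3^{fin}$ is countable). Hence only countably many data arise.

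Second I would show that the datum determines $\tp(\bar a/M)$. Suppose $\bar a,\bar a'$ give the same datum: the same $B\le M$ and an isomorphism $\sigma:(A,\bar a)\to(A',\bar a')$ fixing $B$ pointwise. By the universal property of the free amalgam, $\mathrm{id}_M$ and $\sigma$ glue to an isomorphism $\hat\sigma: M\otimes_B A \to M\otimes_B A'$ fixing $M$ pointwise, i.e.\ an isomorphism of $CSS(\langle M\bar a\rangle)$ onto $CSS(\langle M\bar a'\rangle)$ that is the identity on $M$ and sends $\bar a$ to $\bar a'$. Both sides are strong substructures of $\C$, and since $\C$ is an $\omega$-saturated model of $T_3$ it is rich by Theorem \ref{7.6}; a back-and-forth between two copies of $\C$, using richness at each step to extend finite strong embeddings (exactly as in the proof of Theorem \ref{7.3}), produces an automorphism of $\C$ extending $\hat\sigma$ and in particular fixing $M$. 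Hence $\tp(\bar a/M) = \tp(\bar a'/M)$. Combined with the previous paragraph this gives $|S_n(M)|\le\aleph_0$, so $T_3$ is $\omega$-stable.

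The main obstacle is the third step: passing from an abstract isomorphism of the strong substructures $CSS(\langle M\bar a\rangle)$ and $CSS(\langle M\bar a'\rangle)$ over $M$ to an actual automorphism of $\C$ over $M$. This uses the richness of $\C$ (Theorem \ref{7.6}) together with the amalgamation provided by Theorem \ref{amalk3}: the back-and-forth machinery from the proof of Theorem \ref{7.3} works equally well with $M$ in the role of the base parameters, because at each step we only need to extend a partial isomorphism between finite strong substructures of $\C$ that contain the current stage of $M$, and this is precisely what the axioms $T_3(2)$ guarantee.
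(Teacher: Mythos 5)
Your proposal is correct and follows essentially the same route as the paper: apply Lemma \ref{8.3} to decompose $CSS(\langle M\bar a\rangle)$ as $M\otimes_B A$ with $B\le M$ finite, observe that $\tp(\bar a/M)$ is determined by this datum via the back-and-forth of Theorem \ref{7.3}, and count the countably many possible data. You spell out the gluing via the universal property of the free amalgam and the extension to an automorphism of $\C$ in more detail than the paper, which simply cites Theorem \ref{7.3}, but the argument is the same.
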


\begin{proof}
Let  $M$ be any countable model of $T_3$ and $M \preceq \C$. By Lemma \ref{8.3} for $a \in \C$ there are 
$B \le M$ and $A$ such that $B = (M \cap A) \le \C$, $A = CSS(Ba)$, and $CSS(Ma) = M \otimes_B A$. 
$tp(a/M)$ is uniquely determined by the isomorphism type  of $CSS(Ma)$ by Theorem \ref{7.3}.
Hence it is sufficient to count for all $B \le M$ all pairs $B \le A$ in $\K_3^{fin}$ with $A = CSS(Ba)$ for some $a$.
\end{proof}

\begin{lemma}\label{8.5}
Let $C$ be a subalgebra of $\C$. The algebraic closure of $C$ in $\C$ ($acl(C)$) is the transitive closure of 
$CSS(C)$ under adjunction of homogeneous divisors.
\end{lemma}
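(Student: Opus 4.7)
The plan is to establish both inclusions. Write $H$ for the transitive closure of $CSS(C)$ under adjunction of homogeneous divisors. For $H \subseteq \acl(C)$, I would first note that for each finite $A \subseteq C$, $CSS(A)$ is by Corollary~\ref{rulesstrong3} the unique minimal finite strong substructure of $\C$ containing $A$, hence setwise fixed by $\mathrm{Aut}(\C/A)$; being finite, $CSS(A) \subseteq \acl(A) \subseteq \acl(C)$. Second, if $b, e$ are homogeneous with $U_i(b)$, $U_j(e)$, $i < j \le 3$, then the solution set of $[b,x]=e$ in $\C$ is a coset of $\ker([b,-]) \cap U_{j-i}(\C)$, which by the $\K_3$ no-homogeneous-zero-divisor axiom has size at most $|K|$. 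Thus each homogeneous divisor lies in $\acl(\{b,e\})$, and $H \subseteq \acl(C)$ follows by induction on the divisor-adjunction rank.

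For the reverse inclusion, observe that $\acl(C) = \acl(H)$ since $C \subseteq H \subseteq \acl(C)$, so it suffices to show $\acl(H) = H$. First I would check that $H \le \C$: $CSS(C) \le \C$ by definition, and by Lemma~\ref{e:b}(4) each homogeneous divisor adjunction preserves $\delta$, hence strongness in $\C$. Suppose for contradiction $d \in \acl(H) \setminus H$; since each $pr_i$ lies in $L$, some $pr_i(d) \in \dcl(d)$ is not in $H$, so I may assume $d$ is homogeneous. Apply Lemma~\ref{8.3} with $X = H$ to produce $A \le \C$ with $d \in A$, $B := A \cap H \le A$, $A = CSS(Bd)$, and $CSS(Hd) = H \otimes_B A$; since $d \notin H$, we have $A \supsetneq B$. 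The crux will be to show that \emph{no $a \in A \setminus B$ solves a divisor problem $[b',?] = e'$ of $B$ insoluble in $B$}: any such $a$ would, by divisor-closure of $H$, differ from some $d' \in H$ by an element of $\ker([b',-])$, which by the $\K_3$ axiom lies in $K \cdot b' \cup \{0\} \subseteq B \subseteq H$; hence $a \in H \cap A = B$, a contradiction.

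Armed with the crux, I would iterate Theorem~\ref{amalk3} to build $A^{(n)} = A \otimes_B \cdots \otimes_B A \in \K_3$ and then $H \otimes_B A^{(n)} \in \K_3$ with $H$ strong in it, using the crux to rule out divisor clashes at every step (no insoluble $B$-problem has a solution in $A$ at all, let alone one shared with $H$). By richness of $\C$ (Theorems~\ref{7.1} and~\ref{7.6}), applied to a sufficiently large finite strong subalgebra of $H$, this amalgam strongly embeds into $\C$ over $H$; the resulting $n$ copies of $A$ pairwise intersect in $B$, and as $d \notin B$ they yield $n$ distinct realizations of $\tp(d/H)$, contradicting $d \in \acl(H)$. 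The hard part is this amalgamation step — confirming that the iterated free amalgam $H \otimes_B A^{(n)}$ stays in $\K_3$ with $H$ strong and then materializing it over $H$ inside $\C$ — whose feasibility hinges entirely on the crux claim, which in turn combines the divisor-closedness of $H$ with the $\K_3$ no-homogeneous-zero-divisor axiom.
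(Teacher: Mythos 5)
Your proof is correct and follows essentially the same route as the paper's: the easy inclusion via finiteness of $CSS$ and of homogeneous-divisor solution sets, and the converse via Lemma~\ref{8.3} followed by free amalgamation of copies of $A$ over $B$ (Theorem~\ref{amalk3} plus richness/saturation) to produce unboundedly many realizations. Your ``crux claim'' is precisely the paper's unjustified assertion that all homogeneous divisors for $X\cap A$ in $A^1$ already lie in $X\cap A$, and your argument for it (divisor-closedness of $H$ combined with the no-homogeneous-zero-divisor axiom of $\K_3$) is the right one.
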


\begin{proof}
$CSS(C)$ is in the algbraic closure of $C$. A homogeneous  divisor is in the algebraic closure.
Let $X$ be the transitive closure of $CSS(C)$ under homogeneous divisors. Then $X$ is in the algebraic closure of $C$ and  $X \le \C$ by Lemma 
\ref{e:b}.\\
Assume that $a$ is not in $X$. Choose $A$ as in Lemma \ref{8.3}, such that \\
$A = CSS(\langle (A \cap X)  a\rangle)$ and 
$CSS(\langle Xa \rangle) = X \otimes_{X \cap A} A$. Let $X \cap A \le A^1$ be isomorphic to $A$ over $X \cap A$. All homogeneous divisors for $X \cap A$ in $A^1$ are in $X \cap A$. Hence
$(X \otimes_{X \cap A} A) \otimes_{X \cap A} A^1$ is in $\K_3$ by Theorem \ref{amalk3} applied  to all 
$X \cap A \le D \le X$. Since $\C$ is saturated we find $A^1$ in $\C$. Then 
$(X \otimes_{X \cap A} A) \otimes_{X \cap A} A^1 \le \C$. Since we can iterate this argument we see that
$A$ and $a$ are not in the algebraic closure of $C$.
\end{proof} 

\noindent Above we have already defined: \\
For subsets $A, B, C$ in a structure $M$ we define
\[ A \unab_B C\] if and only if  
\[\langle A B C \rangle = \langle A B \rangle \otimes_{\langle B \rangle} \langle B C \rangle .\]

\noindent Since $T_3$ is $\omega$-stable, we have non-forking $A \ind_B C$.

\begin{lemma}\label{8.6}
Let $B \subseteq A$ and $B \subseteq C$ be substructures of a monster model $\C$ of $T_3$. Then the following are equivalent:
\begin{enumerate}
\item $A \ind_B C $.
\item There are $A', B', C'$, such that
\begin{enumerate}
\item $CSS(A) \subseteq  A'  \subseteq acl(A)$,
\item $CSS(C) \subseteq C'  \subseteq acl(C)$, 
\item $B' = A' \cap C' \subseteq acl(B)$,
\item $\langle A' C' \rangle \cong A' \otimes_{B'} C'$, and $\langle A' C' \rangle \le \C$.
\end{enumerate}
\end{enumerate}
\end{lemma}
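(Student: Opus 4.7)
The plan is to combine the $\omega$-stability of $T_3$ (Corollary~\ref{8.4}) with the canonical presentation of types provided by Lemma~\ref{8.3}, translating non-forking into preservation of $\delta$-minimality. Since $CSS$-closures sit inside algebraic closure (Lemma~\ref{8.5}), both conditions are invariant under replacing $A, C$ by $A' := CSS(A)$, $C' := CSS(C)$; and by Corollary~\ref{rulesstrong3}, $A', C'$ and $B' := A' \cap C'$ are all strong in $\C$. By Corollary~\ref{char}, the free amalgam condition $\langle A' C'\rangle \cong A' \otimes_{B'} C'$ is equivalent to the single numerical equality $\delta(A'/C') = \delta(A'/B')$.

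For $(2) \Rightarrow (1)$: assume $\langle A' C'\rangle = A' \otimes_{B'} C' \le \C$. Then $\langle A' C'\rangle$ is already its own $CSS$-closure and the $CSS$-closure of $A'$ inside it is $A'$ itself. Applying Lemma~\ref{8.3} with $X := C'$ and $\bar a$ an enumeration of $A'$, the $\delta$-minimising $A_0$ is then $A'$ and the minimum $\delta$-drop equals $\delta(A') - \delta(B')$. The identical computation with $X := B'$ (noting $A' \supseteq B'$ is already strong) yields the same value, so the $\delta$-rank of $\tp(A'/C')$ matches that of $\tp(A'/B')$. Since in a Hrushovski-style $\omega$-stable theory Morley rank is controlled by this $\delta$-drop, $\tp(A'/C')$ must be the (unique) non-forking extension of $\tp(A'/B')$ to $C'$; monotonicity of $\ind$ then gives $A \ind_B C$.

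For $(1) \Rightarrow (2)$: given $A \ind_B C$, non-forking is preserved when passing to $CSS$-closures (via Lemma~\ref{8.5}), yielding $A' \ind_{B'} C'$. Apply Lemma~\ref{8.3} with $X := C'$ and $\bar a$ enumerating $A'$: there exists $A' \subseteq A_0 \le CSS(\langle C' A'\rangle)$ minimising $\delta(A_0) - \delta(A_0 \cap C')$ and satisfying $CSS(\langle C' A'\rangle) = C' \otimes_{A_0 \cap C'} A_0$. The crucial step is to show $A_0 = A'$: any strict extension $A_0 \supsetneq A'$ would realise a $\delta$-drop strictly smaller than $\delta(A') - \delta(B')$, contradicting preservation of rank from $B'$ to $C'$ that non-forking demands. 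Once $A_0 = A'$, we have $A_0 \cap C' = A' \cap C' = B'$, and $\langle A' C'\rangle = CSS(\langle A' C'\rangle) = C' \otimes_{B'} A' \le \C$, giving condition (2).

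The hardest step is the identification $A_0 = A'$ in the $(1) \Rightarrow (2)$ direction, which is precisely the content of the equivalence between non-forking in $T_3$ and $\delta$-minimality in the canonical presentation of Lemma~\ref{8.3}. A technical subtlety is that $A'$ is typically infinite, so the argument must be carried out finite-tuple by finite-tuple; coherence of the local canonical presentations is secured by the stationarity of types over strong substructures established in Theorem~\ref{7.3}.
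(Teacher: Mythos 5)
Your argument has a genuine gap, and it is essentially a circularity. In both directions you reduce the problem to the claim that non-forking ``preserves the $\delta$-drop'': in $(2)\Rightarrow(1)$ you assert that ``Morley rank is controlled by this $\delta$-drop'' so that equality of drops forces $\tp(A'/C')$ to be the non-forking extension, and in $(1)\Rightarrow(2)$ you rule out $A_0\supsetneq A'$ by appeal to ``preservation of rank from $B'$ to $C'$ that non-forking demands.'' But no result earlier in the paper relates model-theoretic forking in $T_3$ to $\delta$; Lemma \ref{8.3} is a purely algebraic presentation of $CSS(X\bar a)$, and Corollary \ref{8.4} only gives $\omega$-stability. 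The equivalence ``non-forking $=$ free amalgamation $=$ $\delta$-preservation'' is precisely the content of Lemma \ref{8.6}, so it cannot be used as an input.

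The paper closes this gap by working directly with the definition of forking via indiscernible sequences. For $(2)\Rightarrow(1)$ it first enlarges $B'$ so that no divisor problem of $B'$ acquires new solutions (a point your proposal skips entirely, but which is needed to apply Theorem \ref{amalk3}), then takes an indiscernible sequence $C_0=C',C_1,\dots$ over $B'$ and uses richness and saturation to produce a single $A_\omega$ with $\langle A_\omega E_i\rangle\cong A_\omega\otimes_{B'}E_i\le\C$ for all $i$, so that $\tp(A_\omega/C_i)=f_i(\tp(A'/C'))$; this exhibits non-dividing. For $(1)\Rightarrow(2)$ it builds a free, strong indiscernible sequence of copies of $C'$ over $B'$ and shows that if $\langle A_\omega C_0\rangle\neq A_\omega\otimes_{B'}C_0$ then $\delta(A_\omega/\langle C_0\dots C_i\rangle)$ strictly decreases with $i$ and eventually goes negative, contradicting $\langle C_0\dots C_j\rangle\le\C$; a parallel counting argument yields $\langle A_\omega C_i\rangle\le\C$. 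Some quantitative argument of this kind (or an explicit computation of Morley rank in terms of $\delta$, proved independently) is indispensable; without it your proof does not go through.
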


\begin{proof}
First we assume that (2) is true. Using Lemmas \ref{e:b}, \ref{c=2,a}, and \ref{4.15} we can extend $B'$ and $C'$, such that  every solution  of a division problem of $B'$  in $A'$  is already in $B'$. If we denote the new structures again by $A', B', C'$,
then they are again strong in $\C$ and (2) remains true.\\
Let $C_0 = C', C_1, \ldots$ be any insiscernible sequence over $B'$. 
There are automorphisms $f_i$ of $\C$ that fix $B'$ pointwise with $f_i(C') = C_i$.
Let $E_i$ be $CSS(\langle C_0 \ldots C_i \rangle )$. There are no new homogeneous divisors in $A'$ for problems in $B'$.
By Theorem \ref{amalk3} $A' \otimes_{B'} E_i$ exists in $\K_3$. 
Since $\C$ is rich, there is some $A_i \subseteq  \C$ such that \\
$\langle A' E_i \rangle \cong
\langle A_i E_i \rangle \cong A_i \otimes_{B'} E_i$ and $\langle A_i E_i \rangle \le \C$.
By saturation  there exists $A_{\omega}$ such that $\langle A_{\omega} E_i \rangle \le \C$ and 
$\langle A' E_i \rangle \cong \langle A_{\omega} E_i \rangle \cong A_{\omega} \otimes_{B'} E_i$ 
for all $i < \omega$. \\
By Theorem \ref{amalk3} $A_\omega \le A_{\omega} \otimes_{B'} E_i$.  $C_i \le \C$, since $C' \le \C$. We have 
$CSS(\langle A_\omega C_i \rangle) \subseteq A_{\omega} \otimes_{B'} E_i \le \C$. By the structure of a free 
product we get $\langle A_\omega C_i \rangle \le  A_{\omega} \otimes_{B'} E_i \le \C$: \\
To show this consider $\langle A_\omega C_i \rangle \subseteq D \subseteq  A_{\omega} \otimes_{B'} E_i$. Then
\[ \delta(\langle A_\omega C_i \rangle = \delta(A_\omega) + \delta(C_i) - \delta(B') \le \]
\[ \delta(A_\omega) + \delta(E_i \cap D) -  \delta(B') = \delta(A_\omega \otimes_{B'} E_i \cap D) .\]
By Lemma \ref{voramalk3} we get
\[\delta(\langle A_\omega C_i \rangle)  \le  \delta(A_{\omega} \otimes_{B'} E_i \cap D) \le \delta(D). \]
Now we can applay Theorem \ref{7.3} and get  $tp(A_{\omega}/C_i) =   tp(A'/C')$ for all $i < \omega$. Then $A' \ind_{B'} C'$ and therefore 
$A \ind_B C$, 
since $A \subseteq A' \subseteq acl(A)$, $C \subseteq C' \subseteq acl(C)$, and $B \subseteq
B' \subseteq acl(B)$.\\

\noindent Assume (1) holds. We extend  $A', B'$  in (2 a, b, c) in  such a way, that every homogeneous solution in $C'$ of a divisor problem of 
$B'$ is already in $B'$. Use Lemmas \ref{4.15}, \ref{c=2,a}, and \ref{e:b} . We use the  same notation.
Then $A' \le \C$ and $C' \le \C$
By assumption $A' \ind_{B'} C'$ and $B' \subseteq acl(B)$. Define $C_0 = C'$. By Theorem \ref{amalk3}
$C_0 \otimes_{B'} C_1$ exists in $\K_3$, where $C_0 \cong C_1$ and both are strong in $C_0 \otimes_{B'} C_1$.
Since $\C$ is rich there exits $C_1$ in $\C$, such that $C_0 \otimes_{B'} C_1 \le \C$. Then $C_1 \le \C$ and there is an 
automorphism$f_1$ of $\C$ with $f_1(C_0) = C_1$ and $f_1(C_1) = C_0$. By induction we get a sequence $C_0, C_1, \ldots$ in $\C$, such that $C_i \le \C$ and $E_i = C_0 \otimes_{B'} C_1 \otimes_{B'} \ldots \otimes_{B'} C_i \le \C$.
Every substructure generated by a finite subset of the $C_i$ is strong in $\C$ and isomorphic to the free amalgam of these $C_i$ over $B'$. (Use Theorem \ref{amalk3} and Theorem \ref{7.3}). Hence the sequence of the $C_i$ is an indiscernible sequence over $B'$. Let $f_i$ be the automorphism of $\C$, that exchanges $C_0$ and $C_i$ and fixes all the other $C_j$.
Since $A' \ind_{B'} C'$ there is some $A_{\omega}$ in $\C$, such that $tp(A_{\omega}/C') = tp(A'/C')$ 
and $C_0, C_1 \ldots $ is indiscernible over $A_{\omega}$.
Note that $B' \subseteq A_{\omega}$. 
$f_i(tp(A_{\omega}/C') = tp(A_{\omega}/C_i)$. The following claim implies the assertion:

\[(+) \langle  A_{\omega} C_i \rangle = A_{\omega} \otimes_{B'}  C_i  \le \C .\]

(+) implies 
\[(++)  \langle A' C' \rangle = A' \otimes_{B'} C' \le \C.\]
Then we have $\delta(\langle A' C' \rangle) = \delta(A') + \delta(C') - \delta(B')$. The $\delta$'s of the old and the 
new $A', B', C'$ are the same. Hence we have (++) also for the original $A', B', C'$.

\noindent First we assume \[(*) \langle  A_{\omega} C_0 \rangle \not= A_{\omega} \otimes_{B'} C_0.\] 
Then Lemma \ref{char} implies

\[ \delta(A_{\omega}/C_0) < \delta(A_{\omega}/B'). \]
Next we show, that (*) implies 
\[\langle  A_{\omega} C_0 C_1\rangle \not= \langle A_{\omega}  C_0 \rangle \otimes_{C_0} \langle C_0 C_1 \rangle.\]
Otherwise 
\[\langle  A_{\omega} C_0 C_1\rangle = \langle A_{\omega} C_0 \rangle \otimes_{C_0} \langle C_0 C_1 \rangle.\]
and
\[ \langle C_0 C_1 \rangle = C_0 \otimes_{B'} C_1 \]
would imply by {\bf Sym} and {\bf Trans}
\[\langle  A_{\omega} C_0 C_1\rangle = \langle A_{\omega} C_0 \rangle \otimes_{B'} \langle  C_1 \rangle\] 
and by {\bf Mon}
a contradiction to (*).
Then 
\[\delta(A_{\omega}/ \langle C_0 C_1 \rangle) < \delta(A_{\omega}/ C_0) .\]
By induction we get:
\[\delta(A_{\omega}/ \langle C_0 \ldots C_i \rangle) < \delta(A_{\omega}/ \langle C_0 \ldots C_{i-1} \rangle) ,\]
since otherwise equality would be  equivalent to $A_{\omega} \unab_{\langle C_0 \ldots C_{i-1}\rangle } C_i$
and this implies together with $C_i \unab_{B'} \langle C_0 \ldots C_{i-1} \rangle$ \\
\[ C_i \unab_{B'} \langle C_0 \ldots C_{i-1} A_{\omega} \rangle .\]
in contradiction to (*) by {\bf Mon}.\\
Hence there is some j such that
\[\delta(A_{\omega}/ \langle C_0 \ldots C_j \rangle) < 0 .\]
This contradicts
\[ \langle C_0 \ldots C_j \rangle) \le \C .\]
We have shown the first part of the claim. \\

\noindent Using the same steps we can show, that
\[ \langle A_{\omega} E_i \rangle = A_{\omega} \otimes_{B'} E_i.  \] 
Then \[   \delta(\langle A_{\omega} E_i \rangle) =  \delta (A_{\omega})  + \delta(E_i) - \delta(B').\]

\noindent It remains to prove 
\[ \langle A_{\omega} C_i \rangle \le \C. \]
Otherwise we have
\[ \delta(CSS(\langle A_{\omega} C_i \rangle) \le  \delta (A_{\omega})  + \delta(C_i) - \delta(B')  - 1.\]
We show  by induction on i that this implies
\[ \delta(CSS(\langle A_{\omega} E_i \rangle) \le  \delta (A_{\omega})  + \delta(E_i) - \delta(B')  - (i + 1).\]
\noindent Then 
\[ \delta(CSS(\langle A_{\omega} E_j \rangle) <  \delta(E_j) \]
for sufficiently large j - a contradiction. 
We denote
\[ CSS(\langle A_{\omega} E_{i+1} \rangle) \cap CSS(\langle A_{\omega} C_{i+1} \rangle) \]
by $D_i$. Then $\delta(A_{\omega}) \le \delta(D_i)$ , since $A_{\omega } \le \C$. 
For the induction we have
\[ \delta(CSS(\langle A_{\omega} E_{i+1} \rangle) \le  \delta(\langle CSS(\langle A_{\omega} E_i \rangle) 
CSS(\langle A_{\omega} C_{i+1} \rangle) \rangle) \le  \]
\[\delta (A_{\omega})  + \delta(E_i) - \delta(B')  - (i + 1) + \delta (A_{\omega})  + \delta(C_{i+1}) - \delta(B')  - 1 
-\delta(D_i)  \le\]
\[ \delta (A_{\omega})  + \delta(E_{i+1}) - \delta(B')  - (i + 2)  =.\]
\[ \delta(\langle A_{\omega} E_{i+1} \rangle) - (i + 2). \]

\end{proof}

\begin{cor}\label{8.7}

\begin{enumerate}
\item Assume $B \le A \le \C$ and $B \le X \le \C$, such that $A \cap X = B$ where $X$ can be infinite.
Then $A \ind_B X$ \iff $ A \unab_B X$  and $\langle A X \rangle \le \C$ \iff $\delta A/X) = \delta(A/B)$
and $\langle A X \rangle \le \C$ .
\item Types over strong substructures are stationary.
\end{enumerate}
\end{cor}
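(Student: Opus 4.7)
The plan is to prove (1) by the cycle $A \ind_B X \Rightarrow A \unab_B X \Rightarrow \delta(A/X) = \delta(A/B) \Rightarrow A \ind_B X$, and to derive (2) from (1) together with Theorem~\ref{7.3}. Applying Lemma~\ref{8.6} to the (finite) $A$ and the (possibly infinite) $X$ and using $CSS(A) = A$, $CSS(X) = X$, $A \cap X = B$ from the hypotheses, its condition~(2) collapses to ``$\langle AX\rangle = A \otimes_B X$ and $\langle AX\rangle \le \C$''. The first conjunct is exactly $A \unab_B X$, so $A \ind_B X \Rightarrow A \unab_B X$ is immediate. For $A \unab_B X \Rightarrow \delta(A/X) = \delta(A/B)$, I would use $\mathbf{Mon}$: for every finite strong $C$ with $B \subseteq C \subseteq X$, $A \unab_B C$ gives $\langle AC \rangle = A \otimes_B C$, and the $\delta$-formula from Theorem~\ref{amalk3} yields $\delta(\langle AC\rangle) = \delta(A) + \delta(C) - \delta(B)$, whence $\delta(A/C) = \delta(A/B)$ and $\delta(A/X) = \delta(A/B)$.

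For the crux $\delta(A/X)=\delta(A/B) \Rightarrow A \ind_B X$, Corollary~\ref{char} applied to each finite strong $C$ with $B \subseteq C \subseteq X$ turns the hypothesis into $\langle AC\rangle = A \otimes_B C$, and taking unions yields $\langle AX\rangle = A \otimes_B X$. The remaining task is $\langle AX\rangle \le \C$, after which Lemma~\ref{8.6} delivers $A \ind_B X$. I would apply Lemma~\ref{8.3} to a Lie-algebra generating tuple $\bar{a}$ for $A$, obtaining a set-minimal $A^\dagger$ with $\bar{a} \subseteq A^\dagger \le CSS(\langle X\bar{a}\rangle)$ minimizing $\delta(A^\dagger) - \delta(A^\dagger \cap X)$, and with $A^\dagger = CSS(\langle (A^\dagger \cap X)\bar{a}\rangle)$ and $CSS(\langle X\bar{a}\rangle) = X \otimes_{A^\dagger \cap X} A^\dagger$. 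Since $A$ itself is a candidate of value $\delta(A) - \delta(B)$, the minimum is at most this; and from $A = CSS(\bar{a}) \subseteq A^\dagger$ and $B \subseteq A^\dagger \cap X$, the hypothesis via Corollary~\ref{char} gives $H := \langle A(A^\dagger \cap X)\rangle = A \otimes_B (A^\dagger \cap X)$ with $\delta(H) = \delta(A) + \delta(A^\dagger \cap X) - \delta(B)$. A double application of set-minimality---first showing $A^\dagger = H$ (once $H \le \C$ is confirmed), then forcing $A^\dagger \cap X = B$ and $A^\dagger = A$ (using $A$ itself as a candidate of the same minimum value but with $X$-side $B$)---yields $CSS(\langle XA\rangle) = X \otimes_B A$. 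Combined with $\langle XA\rangle = X \otimes_B A$ established above, $\langle XA\rangle \le \C$.

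For (2), given $\tp(\bar{a}/B) = \tp(\bar{b}/B)$ with $\bar{a},\bar{b} \ind_B X$, set $A = CSS(B\bar{a})$ and $A' = CSS(B\bar{b})$ (WLOG $X = CSS(X)$). Theorem~\ref{7.3} supplies an isomorphism $A \cong A'$ over $B$; checking via Lemma~\ref{8.6} that $A \cap X = B = A' \cap X$, part~(1) yields the strongly-embedded free amalgams $\langle AX\rangle = A \otimes_B X \le \C$ and $\langle A'X\rangle = A' \otimes_B X \le \C$, which are isomorphic over $X$, and a final invocation of Theorem~\ref{7.3} gives $\tp(\bar{a}/X) = \tp(\bar{b}/X)$. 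The main obstacle is the set-minimality argument in (1): one must confirm that the intermediate amalgam $H$ is strong in $\C$, which combines Theorem~\ref{amalk3} (placing $H$ strongly in $\K_3^{fin}$), richness of $\C$ (to realize this abstract strong embedding inside $\C$), and Theorem~\ref{7.3} (to identify the realization with the concrete $H$ sitting inside $\C$).
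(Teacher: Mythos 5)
The paper offers no proof of this corollary, so the only question is whether your reconstruction is sound. The two easy links are fine and are surely what the author intended: $A\ind_B X\Rightarrow A\unab_B X$ is immediate from Lemma \ref{8.6} (whose condition (2) collapses exactly as you say), and the equivalence of $A\unab_B X$ with $\delta(A/X)=\delta(A/B)$ is Corollary \ref{char} applied to finite strong $C$ with $B\subseteq C\subseteq X$, plus a union argument. The genuine gap is in the remaining implication, and it sits precisely where you yourself locate "the main obstacle": showing $\langle AX\rangle\le\C$. Your justification of "$H\le\C$" via Theorem \ref{amalk3}, richness, and Theorem \ref{7.3} is circular: richness produces \emph{some} strong copy $H'$ of the abstract amalgam over $A^{\dagger}\cap X$, but Theorem \ref{7.3} only identifies the types of tuples whose generated substructures are \emph{already known to be strong}, so it cannot be used to transfer strongness from $H'$ to the concrete $H$ sitting in $\C$ --- that is exactly what is being proved. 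Likewise, the "double application of set-minimality" assumes that $A$ attains the minimum of $\delta(A')-\delta(A'\cap X)$ in Lemma \ref{8.3}, which is equivalent to $CSS(\langle AX\rangle)=\langle AX\rangle$ and is again the statement in question.

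This gap cannot be closed, because the implication $\delta(A/X)=\delta(A/B)\Rightarrow A\ind_B X$ is false under the stated hypotheses alone. Take $U=\langle a_1a_2\rangle\le\C$ free with $\delta(U)=2$ and let $V=\langle U\,e_1\ldots e_k\rangle\le\C$ be a minimal prealgebraic extension (Definition \ref{9.6}; necessarily $k\ge 2$). Put $A=U$, $X=\langle e_1\rangle$, $B=A\cap X=\langle 0\rangle$. Then $A,X,B\le\C$; minimality forces $\delta(\langle Ue_1\rangle/U)=1$, so $\langle AX\rangle=A\otimes_B X$ is free of $\delta$-value $3$ and $\delta(A/X)=3-1=2=\delta(A/B)$. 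Yet $\langle AX\rangle\subseteq V$ with $\delta(V)=2<3$, so $\langle AX\rangle\not\le\C$ and Lemma \ref{8.6} gives $A\nind_B X$. So the corollary needs the additional clause $\langle AX\rangle\le\C$ (this clause is in fact available in every place the paper invokes the corollary, e.g.\ in Lemma \ref{8.9}, where $CSS(\langle X\bar a\rangle)=\langle X\bar a\rangle$ is part of the setup); with that clause added, your easy steps together with Lemma \ref{8.6} already give the full statement and the entire Lemma \ref{8.3} detour is unnecessary. Your derivation of part (2) uses only the sound direction of Lemma \ref{8.6}, uniqueness of the free amalgam, and Theorem \ref{7.3} applied to structures known to be strong, so it survives unchanged.
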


\noindent We come back to Lemma \ref{8.3}. Note that every element in $\C$ is interdefinable with 
a sequence of homogeneous elements.

\begin{lemma}\label{8.8} Assume $U \le V \le \C $, $\bar{a} \subseteq \C$,
$B \le U$,
$CSS(B \bar{a}) =  A_U$,  $A_U \cap U = B$, and 
\[ CSS(U \bar{a}) =    U \otimes_B A_U . \]
Assume $acl(\langle U \bar{a} \rangle) \cap V = U$,
$C \le V$,  , $A _V = CSS(C \bar{a})$, $A_V \cap V = C$, and
\[  CSS(\langle V  \bar{a} \rangle) = V   \otimes_C A_V. \]
If $D = B \cap C$ and $A = A_U \cap A_V$, then $A=  CSS( \langle D \bar{a} \rangle) $ and
\[ CSS( \langle U\bar{a}  \rangle) = U \otimes_D  A.\]
\end{lemma}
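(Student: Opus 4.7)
The plan is to establish the intermediate identity $A_M = B \otimes_D A$ and then combine it with the given $CSS(M\bar{a}) = M \otimes_B A_M$ via Trans of the stationary independence relation $\unab$.

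I first verify $A \cap M = D$. Since $D \subseteq B$, $A = \langle D\bar{a}\rangle \subseteq \langle B\bar{a}\rangle = A_M$ and hence $A \cap M \subseteq A_M \cap M = B$. On the other hand $A \subseteq \langle C\bar{a}\rangle \subseteq A_N$ and $A \subseteq CSS(M\bar{a})$, so $A \cap N \subseteq A_N \cap N = C$ and $A \cap N \subseteq CSS(M\bar{a}) \cap N = M$. Thus $A \cap M \subseteq B \cap C = D$; the reverse inclusion is clear. The same argument with $B$ in place of $M$ shows $B \cap A = D$.

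The main step is to show $A_M = B \otimes_D A$. I would apply Lemma \ref{6.3}(4) with its triple $(C,B,A)$ in the role of $(B,D,A)$ here. The required hypotheses are $B \cap A = D$ (just shown), $D \le_2 A$ (by Corollary \ref{rulesstrong3}: $B = A_M \cap M \le \C$ and $C = A_N \cap N \le \C$, so $D = B \cap C \le \C$, whence $D \le A$), and the 2-nilpotent identity $\langle B^* A^*\rangle = B^* \otimes_{D^*} A^*$. For this 2-nilpotent identity, pass to the 2-quotient of the ambient amalgam $CSS(N\bar{a}) = N \otimes_C A_N$: inside $N^* \otimes_{C^*} A_N^*$ one has $B^* \cap C^* = D^* = A^* \cap C^*$, so Corollary \ref{c=2,y} applied to the subalgebra generated by $B^* \cup A^*$ yields $\langle B^* A^*\rangle = B^* \otimes_{D^*} A^*$. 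The crux is then the $\delta$-equality $\delta(A_M) = \delta(B) + \delta(A) - \delta(D)$: the $\le$ direction is Lemma \ref{subdelta3}, while the $\ge$ direction must be extracted by analyzing the vector-space basis of $A_M = \langle B\bar{a}\rangle$ inside $N \otimes_C A_N$ via Corollary \ref{c=3,a}, in order to certify that no relations between $B$-parts and $\bar{a}$-parts arise beyond those forced by the common base $D$.

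Finally I combine the two amalgams via Trans. The identity $A_M = B \otimes_D A$ is $B \unab_D A$, while the given $CSS(M\bar{a}) = M \otimes_B A_M$ is $M \unab_B A_M$. From the latter and $A \subseteq A_M$, Mon gives $M \unab_B A$ (equivalently $\langle MA\rangle = M \otimes_B \langle BA\rangle$, valid since $\langle BA\rangle \cap M = A_M \cap M = B$). Together with the trivial $M \unab_D B$ (which holds because $D \subseteq B \subseteq M$), Trans yields $M \unab_D \langle BA\rangle$, and a further application of Mon gives $M \unab_D A$. Hence $CSS(M\bar{a}) = \langle MA\rangle = M \otimes_D A$. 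The principal obstacle is the $\ge$ direction of the $\delta$-equality in the second step --- the algebraic expression of the fact that $\bar{a}$ does not fork over the smaller base $D$. It is forced by the nested amalgam structure inside $N \otimes_C A_N$ but requires explicit bookkeeping via the basis description of Corollary \ref{c=3,a}.
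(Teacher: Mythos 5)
Your overall route is the paper's route: everything reduces to the single identity $A_M=\langle B\bar{a}\rangle=B\otimes_D A$, after which {\bf Mon} and {\bf Trans} for $\unab$ give $CSS(M\bar{a})=M\otimes_D A$ exactly as you say. Your preliminary verifications ($A\cap M=B\cap A=D$, and $D=B\cap C\le\C$ via Corollary \ref{rulesstrong3}, hence $D\le A$) are correct, and your packaging of the final step through Lemma \ref{6.3}(4) is a legitimate alternative to the paper's direct appeal to Corollary \ref{c=3,a} at the end.

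The gap is that the entire mathematical content of the lemma sits in the one step you defer, and it occurs twice, not once. At degree $3$ you say the $\ge$ direction of the $\delta$-equality ``requires explicit bookkeeping'' via Corollary \ref{c=3,a} --- that bookkeeping \emph{is} the proof. At degree $2$ the same issue is hidden: Corollary \ref{c=2,y} does not ``yield'' $\langle B^*A^*\rangle=B^*\otimes_{D^*}A^*$; it reduces that identity to the condition that the ideal presenting $\langle B^*\bar{a}_1\rangle$ splits into a part over $B^*$ and a part over $A^*$, and verifying that condition is precisely the coefficient analysis you postpone. What the paper actually does: write the degree-$2$ relations presenting $\langle B\bar{a}\rangle^*$ over $B^*$ as $\psi^i=\sum_{l<j}r^i_{l,j}[a^1_l,a^1_j]+\sum_j[b^i_j,a^1_j]+c^i$ with coefficients $b^i_j\in B_1$ and ends $c^i\in B_2$; since $\langle N\bar{a}\rangle=N\otimes_C\langle C\bar{a}\rangle$ ({\bf Mon}), the basis description of Corollary \ref{c=3,a} makes mixed monomials $[n,a^1_j]$ with $n$ independent over $C_1$ linearly independent in the amalgam, so any relation can only carry coefficients from $C_1$ and ends from $C_2$; hence $b^i_j\in B_1\cap C_1=D_1$ and $c^i\in D_2$, giving the $2$-level amalgam by Lemma \ref{c=2,y}. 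Then Lemmas \ref{6.1} and \ref{useful} place $\F(\langle B\bar{a}\rangle^*)$ inside $\F(N^*)\otimes_{\F(C^*)}\F(\langle C\bar{a}\rangle^*)$, and the identical argument applied to the degree-$3$ relations $\phi_1,\dots,\phi_m$ forces their coefficients into $D_1,D_2$ and their ends into $D_3$, whence $A_M=B\otimes_D A$. Until this is written out, what you have is a correct plan rather than a proof.
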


\begin{proof}
Extend $\bar{a}$ to $\bar{e}$, such that 
$\langle \bar{e}\rangle  \cap U = \langle \bar{a}\rangle  \cap U = \langle \bar{a} \rangle \cap V$ and 
$\bar{e}$  generates $A$ over $D$.\\ 
By {\bf Mon} $\langle U \bar{e} \rangle = U \otimes_B \langle B \bar{e}
\rangle$ and $\langle V \bar{e} \rangle = V \otimes_C \langle C \bar{e} \rangle$. Then
Theorem \ref{cap} implies 
\[ \langle U \bar{e} \rangle = U \otimes_D \langle D \bar{e} \rangle. \]
We get $\langle B A \rangle = \langle B \bar{e} \rangle = B \otimes_D \langle D \bar{e} \rangle$.\\
If $\langle B A \rangle = A_U$, then $\langle U \bar{e} \rangle = CSS(\langle U \bar{a} \rangle)$,
as desired. \\
Otherwise there is some $c \in A_U$, that is not in $\langle B A \rangle$. Then $c$ is not in $U$,
not in  $V$, and not in $A_V$.\\
If we consider $c$ as a linear combination over a vector space basis of 
\[  CSS(\langle V  \bar{a} \rangle) = V   \otimes_C A_V \]
as described in Lemma \ref{c=3,a}, then there are mixed monomials with elements in $V \setminus C$
and $A_V \setminus C$. But then $c$ is not involved in any relations. Therefore $c \notin CSS(\langle
B \bar{a} \rangle = A_U$, a contradiction.
\end{proof}

\noindent In the next Corollary we use Lemma \ref{8.8} for the case $U = V$.

\begin{cor}\label{8.9}
Assume $U \le \C$ and $\bar{a} \in \C$. According to Lemma \ref{8.3}we have:
If $A$ is 
chosen with  $\bar{a} \subseteq A \le CSS(\langle U \bar{a} \rangle)$, such that $\delta(A) - \delta(A \cap U)$ 
is minimal,\\ 
then $CSS(\langle U \bar{a} \rangle) = U \otimes_{A \cap U} A$ 
and $A = CSS(\langle (A \cap U) \bar{a} \rangle)$.\\
There is an $A$ as above, that is minimal with these properties. We call $B = A \cap U$ the 
self-sufficient  canonical base algebra of $tp(\bar{a}/U)$.
\end{cor}

\begin{lemma}\label{8.10}
We consider $U \le \C$ and $tp(\bar{a}/U )$.
Let $B$ be the self-sufficient base algebra of $tp(\bar{a}/U)$, as in Corollary \ref{8.9}.
Then $B$  is a weak canonical base of  $tp(\bar{a}/U)$. With respet to the given properties it is unique.

\end{lemma}

\begin{proof}
We claim that $B$ is a weak canonical base of $tp(\bar{a}/U)$.
By Corollary \ref{8.7} we have $U \ind_B A$.
Since $\C$ is saturated, we can replace $U$ by a saturated model $U \le M \preceq \C$ such that $M \ind_B A$. We have the same situation for $M$ as for $U$:\\
$A = CSS(\langle B \bar{a} \rangle) = \langle B \bar{e} \rangle$, $A \cap M = B$, $CSS(M \bar{a})  = M \otimes_B A$.
and $A$ minimal with this properties.\\
Now we use Lemma \ref{8.8} for $M = N$. 
Let $f$ be an automorphism of $\C$ that fixes $M$ setwise. \\
First we assume, that  it fixes  $B$ pointwise. 
If $g$ is $f$ for $M$ and the identity for $\bar{a}$, then it can be extended to $CSS(M \bar{a})  = M \otimes_B A$. 
Since $ M \otimes_B A \le \C$, $g$ is an elementary automorphism and therefore
$tp(\bar{a}/M) = f(tp(\bar{a}/M)$ .\\
Conversely assume that $f$ fixes $tp(\bar{a}/M)$. 
W.l.o.g. we can assume that $f$ fixes $\bar{a}$ pointwise. 
Then $f(A) \cap M = f(B)$. By Lemma \ref{8.8} we get $CSS(M \bar{a}) = M \otimes_{B \cap f(B)} \langle (B \cap f(B)) \bar{e} \rangle $. Hence $B = f(B)$. 
\end{proof}

\begin{cor}\label{8.11}
$T_3$ has the weak elimination of imaginaries. 
\end{cor}

\noindent In section 10 we will introduce minimal strong prealgebraic extensions over strong subalgebras. 
We have canonical base algebras for them.
\noindent We use $CB(\bar{a}/X)$ to denote the canonical base  of $tp(\bar{a}/X)$. \\

\noindent Let us come back to the Lemma  \ref{8.9}.Now we assume 
that $\bar{a} $ is generating o-system over $B$ and over $X$.
\[ \langle X \bar{a} \rangle = X \otimes_B \langle B \bar{a} \rangle . \] 
Let $X^B$ be a generating o-system  of $B$. Then  $B^* = F(X^B)^*/\Phi_2^B$, 
where $F(X^B) $ is the free algebra freely generated by $X^B$ and $\Phi_2^B \subseteq F(X^B_1)^*_2$.
Furthermore $B^- = \F(B^*)/\Phi_3^B$, where $\Phi_3^B \subseteq \F(B^*)_3$. 
Again we use elements of $\langle B \bar{a} \rangle$ to denote 
their preimages in $F(X^B \bar{a})$
and in $\F(\langle B \bar{a} \rangle^*)$.\\
The extension of $X$ above is completely determined by $\langle B \bar{a} \rangle$. There are
sets $\Phi_2$ and $\Phi_3$ such that
\[ \langle B \bar{a} \rangle^* \cong F(X^B \bar{a})^*/ \langle \Phi_2^B \Phi_2 \rangle,\] 

\[\langle B \bar{a} \rangle^- \cong \F(\langle B \bar{a} \rangle )/ \langle \Phi_3^B \Phi_3 \rangle,\]
where $\Phi_2 \subseteq F(X^B \bar{a})^*_2$ is linearly independent over $F(X^B)$ and
$\Phi_3 \subseteq \F(\langle B \bar{a} \rangle^*)$ is linearly independent over $\F(B^*)$.\\
See Definitions \ref{2.7} and \ref{2.8}.

\begin{definition}\label{8.12}
The elements of $\Phi_i$ for $i = 2,3$ are called relations of degree i. \\
Their structure is described in the proof of Theorem \ref{cap}. We have coefficients and ends.
\end{definition}

\begin{cor}\label{8.13} In Lemma \ref{8.9}
$B$ is the self-sufficient closure of the substructure of $X$ generated by
the ends and the coefficients of an ideal basis $\Phi_2 \Phi_3$ in the situation above. 
\end{cor}

\begin{cor}\label{8.14}
$T_3$ is CM-trivial.
\end{cor}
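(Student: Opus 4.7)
The plan is to apply the standard characterization of CM-triviality: for algebraically closed sets $A \subseteq B$ in $\C^{\mathrm{eq}}$ with $\aclq(A\bar{a}) \cap B = A$, one needs $\cb(\bar{a}/A) \subseteq \aclq(\cb(\bar{a}/B))$. By Lemma~\ref{8.13} canonical bases are, up to a finite equivalence relation $E$ coming from the formula $\Delta_f$, just the canonical base algebras of ss-extensions, so it suffices to prove an inclusion between these base subalgebras and then pass to $E$-classes.

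First I would replace $\bar{a}$ by a ss-tuple $\bar{e}$ that generates $CSS(\langle A\bar{a}\rangle) = \langle A\bar{e}\rangle$ over $A$ and $CSS(\langle B\bar{a}\rangle) = \langle B\bar{e}\rangle$ over $B$, using Corollary~\ref{8.10} to obtain honest substructures as canonical base algebras: $B_A \le A$ for $\tp(\bar{e}/A)$ and $B_B \le B$ for $\tp(\bar{e}/B)$ via Lemma~\ref{8.9}. By Lemma~\ref{8.5} the hypothesis $\aclq(A\bar{a}) \cap B = A$ transcribes into the algebraic statement $CSS(\langle A\bar{e}\rangle) \cap B = A$, since both sides are closed under the self-sufficient closure and under homogeneous divisors. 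Then I would extend to sufficiently saturated models $M_A \preceq M_B \preceq \C$ with $\bar{e} \ind_A M_A$ and $\bar{e}M_A \ind_B M_B$, which keeps the canonical base algebras unchanged and, by stationarity (Corollary~\ref{8.7}), preserves the algebraic equality $CSS(\langle M_A\bar{e}\rangle) \cap M_B = M_A$.

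Now Lemma~\ref{8.8} applies with $M = M_A$, $N = M_B$: it forces the canonical base algebra of $\tp(\bar{e}/M_A)$ to be $B_B \cap M_A$, and uniqueness (Corollary~\ref{8.10}) yields $B_A = B_B \cap M_A \subseteq B_B$. Hence $B_A$ is a finite substructure of $B_B$, so $B_A \in \aclq(B_B)$; pushing through the finite equivalence $E$ of Lemma~\ref{8.13} gives $\cb(\bar{a}/A) = B_A/E \in \aclq(B_B/E) = \aclq(\cb(\bar{a}/B))$, which is CM-triviality. The main obstacle will be the second paragraph, namely verifying that the model-theoretic hypothesis $\aclq(A\bar{a})\cap B = A$ really guarantees the algebraic equality $CSS(\langle M_A\bar{e}\rangle) \cap M_B = M_A$ after the independent extensions: this is precisely where the free-amalgam decomposition $CSS(\langle M_B\bar{e}\rangle) = M_B \otimes_{B_B} A_B$ of Lemma~\ref{8.9} must restrict cleanly along $M_A$, so that Lemma~\ref{8.8} can be invoked to identify $B_A$ inside $B_B$.
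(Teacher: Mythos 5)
Your proposal is correct and follows essentially the same route as the paper: reduce to models $M \preceq N$, replace $\bar{a}$ by an ss-tuple $\bar{e}$, invoke Lemma~\ref{8.9} for the canonical base algebras, apply Lemma~\ref{8.8} to obtain $B_M \subseteq B_N$, and pass through the finite equivalence relation $E$ of Lemma~\ref{8.13}. The "main obstacle" you flag is resolved exactly as you suggest: the hypothesis $\acl(\bar{a}M)\cap N = M$ together with $CSS \subseteq \acl$ (Lemma~\ref{8.5}) gives $CSS(\langle M\bar{e}\rangle)\cap N = M$, which is the hypothesis Lemma~\ref{8.8} needs.
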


\begin{proof}
Assume $M \preceq N \preceq \C$ and $acl(\bar{a} M) \cap N = M$. We have to show that 
$CB(\bar{a}/M) \subseteq acl^{eq}(CB(\bar{a}/N))$. \\
Let $B$ be the self-sufficient canonical base algebra of $tp(\bar{a}/M)$ and $C$ be the 
self-sufficient canonical base algebra of
$tp(\bar{a}/N)$. We have to show, that $B \subseteq C$.\\
Then we have 
$B \le M$,
$CSS(B \bar{a}) =  A_M$,  $A_M \cap M = B$, and 
\[ CSS(M \bar{a}) =    M \otimes_B A_M . \]
Furthermore $acl(\langle M \bar{a} \rangle) \cap N = M$,
$C \le N$,  , $A _N = CSS(C \bar{a})$, $A_N \cap N = C$, and
\[  CSS(\langle N  \bar{a} \rangle) = N   \otimes_C A_N. \]
By Lemma \ref{8.8} 
\[  CSS(\langle M  \bar{a} \rangle) = M   \otimes_{B \cap C} A, \]
where $A = CSS((B \cap C) \bar{a})$. Since $B$ is minimal we get $B \subseteq C$, as desired.

\end{proof}

%%%%%%
%%%%%%999
%%%%%%

\section{geometry}
\noindent Let $\C$ be a monster model of $T_3$ again. 
Let $U, V, W$ be substructures  of $\C$ in $\h_3$. They are generated by their projections into $\C_1$ and $\C_2$ . If they are finite we use often $H, J, K$. $\h_3^{fin}$ is  the subset of the finite substructures in $\h_3$.  Since substructures are graded, the set of all $H \in \h_3$  of o-dimension $ \le1$  is $\{\langle a \rangle: a \in(\C_1 \cup \C_2) \}$. Note that for $a_1 \in \C_1$ and $a_2 \in \C_2$ we have $\langle a_1 + a_2 \rangle = \langle a_1, a_2 \rangle$ and  o-dimension and $\delta$ of this substructure is 2.

\noindent To compute $\delta(H)$ for $H \in \h_3^{fin}$ , we consider $ H  \cong \F(H^*)/N$. Then $\delta(H) = \delta_2(H) - ldim(N)$. 

\noindent We define a pregeometry on $R(\C) = \C_1 \cup \C_2$. For this we define a dimension function for all  $H \in \h_3^{fin}$. 
\begin{definition}
The dimension of a finite subset  $X = X_1 \cup X_2$ with $X_1 \subseteq \C_1$ and
$X_2 \subseteq \C_2$
of $R(\C)$ is the dimension of the substrucure $H = \langle X \rangle  \in \h_3^{fin}$ generated by these elements:
$ d(H) = \delta(CSS(H))$.\\
$d(A/H) = d(\langle A H \rangle) - d(H)$ and 
$d(A/U) = min\{d(A/H) : H \subseteq U\}$.
\end{definition}

\noindent We have  $CSS(H) = \langle CSS(H)_1 CSS(H)_2 \rangle$. Furthermore $d(\langle 0\rangle) = 0$\\

\begin{lemma}\label{9.2}
$d$ defines a dimension function on $R(\C)$.
\end{lemma}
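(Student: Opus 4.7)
The plan is to verify the four standard axioms of a matroid-type dimension function, from which the pregeometry structure on $R(\C)$ follows: (a) $d(\langle 0\rangle)=0$; (b) monotonicity $H\subseteq H'\Rightarrow d(H)\le d(H')$; (c) the one-step bound $d(H\cup\{a\})\le d(H)+1$ for $a\in R(\C)$; and (d) submodularity $d(H\cup H')+d(H\cap H')\le d(H)+d(H')$. Given (a)--(d), the closure $cl(H)=\{a\in R(\C):d(H\cup\{a\})=d(H)\}$ satisfies the exchange property by the standard matroid argument, and finite character and idempotence are immediate. Axiom (a) is trivial; monotonicity follows because $CSS(H)\subseteq CSS(H')$ and $CSS(H)\le\C$ is strong, so by definition of strong $\delta(CSS(H))\le\delta(CSS(H'))$.

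For (c), the element $a\in R(\C)$ is homogeneous of degree $1$ or $2$, so $\langle a\rangle$ has $o$-dimension $\le 1$ and by clause (2) in the definition of $\K_3$ we have $\langle a\rangle\le\C$, with $\delta(\langle a\rangle)=1$. Both $CSS(H)$ and $\langle a\rangle$ are $3$-strong, hence $2$-strong, so the submodularity of $\delta$ for $2$-strong substructures (Lemma \ref{subdelta3}) applies and yields
\[
\delta(\langle CSS(H),a\rangle)\le\delta(CSS(H))+1-\delta(CSS(H)\cap\langle a\rangle)\le\delta(CSS(H))+1.
\]
Since $H\cup\{a\}\subseteq\langle CSS(H),a\rangle$, passing to its selfsufficient closure can only decrease $\delta$ (Corollary \ref{3.5}), so $d(H\cup\{a\})\le d(H)+1$.

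The heart of the proof is (d). Set $B:=CSS(H)$ and $B':=CSS(H')$. By Corollary \ref{rulesstrong3} the intersection $B\cap B'$ is again strong in $\C$. Applying Lemma \ref{subdelta3} to the $2$-strong substructures $B,B'$ gives
\[
\delta(\langle B,B'\rangle)\;\le\;\delta(B)+\delta(B')-\delta(B\cap B').
\]
Now $H\cap H'\subseteq B\cap B'$ with $B\cap B'$ strong, so $CSS(H\cap H')\subseteq B\cap B'$ and consequently $d(H\cap H')\le\delta(B\cap B')$. Conversely $H\cup H'\subseteq\langle B,B'\rangle$, hence $CSS(H\cup H')\subseteq CSS(\langle B,B'\rangle)$, and another application of Corollary \ref{3.5} gives $d(H\cup H')\le\delta(\langle B,B'\rangle)$. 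Combining the three inequalities yields (d).

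The main obstacle is exactly the invocation of Lemma \ref{subdelta3} in (c) and (d): that lemma is only proved under $2$-strongness of both substructures, and it is precisely for this reason that the dimension is defined via $CSS$ rather than via $\delta$ directly. The technology of section 6 -- submodularity on $2$-strong substructures plus the closure properties of strong substructures in Corollary \ref{rulesstrong3} -- is what makes this choice admissible. Given (a)--(d), exchange follows: if $a\in cl(H\cup\{b\})\setminus cl(H)$, then $d(H\cup\{a,b\})=d(H\cup\{b\})=d(H)+1$ and by (c) also $d(H\cup\{a\})=d(H)+1$; submodularity $d(H\cup\{a,b\})+d(H)\le d(H\cup\{a\})+d(H\cup\{b\})$ then forces $d(H\cup\{a,b\})=d(H\cup\{a\})$, i.e.\ $b\in cl(H\cup\{a\})$.
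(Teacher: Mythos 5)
Your proof is correct and follows essentially the same route as the paper: both reduce every axiom to $\delta$ of selfsufficient closures and obtain submodularity from Lemma \ref{subdelta3} applied to the strong substructures $CSS(H)$ and $CSS(H')$, using Corollary \ref{rulesstrong3} and Corollary \ref{3.5} exactly as you do. The only cosmetic difference is in the unit-increase axiom, where the paper argues directly via $CSS(H)\le\langle CSS(H)\,a\rangle$ and $\F(CSS(H)^*)\subseteq\F(\langle CSS(H)\,a\rangle^*)$ rather than invoking submodularity against the strong one-generated subalgebra $\langle a\rangle$.
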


\begin{proof}
For $H$ and $K$ in $\h_3$ we have to show:

\begin{enumerate}
\item $d(\emptyset) = d(\langle 0\rangle) = 0$.
\item $d(H) >0$, if $H \not= \langle 0 \rangle$, since $\C \in \K_3$.
\item $d(\langle H a \rangle) \le d(H) + 1$  for $a\in R(\C)$.
\item If $K \subseteq H$, then $d(K) \le d(H)$.
\item $d(\langle H K \rangle) \le d(H) + d(K) - d(\langle (H_1 \cap K_1) (H_2 \cap K_2)\rangle)$.
\end{enumerate}

\begin{itemize}
\item[ad(3)] If $a \in CSS(H)$, then $d(H) = d(\langle H a \rangle)$. Otherwise we use $CSS(H) \le \langle CSS(H) a \rangle$. Then $\F(CSS(H)^*) \subseteq \F(\langle CSS(H) a \rangle^*)$. Hence
\[ d(\langle H a \rangle) \le \delta(\langle CSS(H) a \rangle) \le \delta(CSS(H)) + 1 = d(H) + 1 .\]

\item[ad(4)]By Lemma \ref{css} we get $CSS(K) \subseteq CSS(H)$. Hence  $d(K) \le d(H)$.

\item[ad(5)] $CSS(H)$ and $CSS(K)$ are strong subsructures of $CSS(\langle H K \rangle)$. By Theorem \ref{subdelta3}
\[ d(\langle H K \rangle) \le \delta(\langle CSS(H) CSS(K) \rangle)\] 
\[ \le \delta(CSS(H)) + \delta(CSS(K)) - \delta(CSS(H) \cap CSS(K)). \]

\noindent $CSS(\langle (H_1 \cap K_1), (H_2 \cap K_2) \rangle) \le CSS(H) \cap CSS(K)$. Hence 
$d(\langle (H_1 \cap K_1) (H_2 \cap K_2)\rangle) = \delta(CSS(\langle (H_1 \cap K_1) (H_2 \cap K_2)\rangle))  \le \delta(CSS(H) \cap CSS(K))$

\end{itemize}

\end{proof}

\noindent We define $cl$ for substructures $H$ and $U$ of $\C$ in $\h_3$ and  on the set $R(\C)$.

\begin{definition}\label{9.3}
For $H \subseteq \C$ and $H \in \h_3^{fin}$  we define 
\[ cl(H) = \langle \{a \in R(\C):   d(\langle H a \rangle = d(H) \} \rangle.   \]
$cl(U)$ is the union of all $cl(H)$ for $H \subseteq U$.\\
For  $X \subseteq R(\C)$ we define 
\[ cl(X) = cl(\langle X \rangle) \cap R(\C).  \]

\end{definition}

\noindent Then the following is well-known:

\begin{cor}\label{9.4}
$cl$ is a pregeometry on $R(\C)$. Furthermore $acl(H) \subseteq cl( H )$. If $H \le K \le \C$, then $K \subseteq cl(H)$ \iff $\delta(K) = \delta(H)$.
\end{cor}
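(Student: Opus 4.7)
The plan is to derive the three assertions from the dimension properties (1)--(5) of Lemma \ref{9.2} together with the structural results on $acl$ and strong embeddings already established.

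\emph{Pregeometry.} Reflexivity and finite character are immediate from the definition. Monotonicity of $cl$ follows from submodularity: if $H\subseteq K$ and $a\in cl(H)$, then $d(\langle Ka\rangle)\le d(K)+d(\langle Ha\rangle)-d(K\cap\langle Ha\rangle)\le d(K)$, since $H\subseteq K\cap\langle Ha\rangle$ forces $d(K\cap\langle Ha\rangle)\ge d(H)=d(\langle Ha\rangle)$. For exchange, assume $a\in cl(\langle Hb\rangle)\setminus cl(H)$ with $a,b\in R(\C)$: then $d(\langle Ha\rangle)=d(H)+1$ by item (3) of Lemma \ref{9.2}, while $d(\langle Hab\rangle)=d(\langle Hb\rangle)\le d(H)+1=d(\langle Ha\rangle)$, and monotonicity gives the reverse inequality, so $b\in cl(\langle Ha\rangle)$. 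Idempotence is the standard consequence of finite character, monotonicity, and exchange.

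\emph{Algebraic versus geometric closure.} Since $cl$ is defined only on $R(\C)$, the assertion is to be read as $acl(H)\cap R(\C)\subseteq cl(H)$. By Lemma \ref{8.5} every element of $acl(H)$ arises from $CSS(H)$ by finitely many adjunctions of homogeneous divisors. The selfsufficient closure preserves $\delta$ by definition, and each adjunction $B\mapsto B(e:b)$ satisfies $\delta(B(e:b))=\delta(B)$ by Lemma \ref{e:b}(4). Hence for $a\in acl(H')\cap R(\C)$ with $H'\subseteq H$ finite one obtains a finite strong extension $E$ of $CSS(H')$ inside $\C$ containing $a$ with $\delta(E)=\delta(CSS(H'))$; thus $d(H'\cup\{a\})\le\delta(E)=d(H')$, and so $a\in cl(H)$.

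\emph{The equivalence.} From $H\le\C$ and $K\le\C$ we have $CSS(H)=H$ and $CSS(K)=K$, and so $d(H)=\delta(H)$ and $d(K)=\delta(K)$. If $\delta(K)=\delta(H)$, then for $a\in K_1\cup K_2$ the inclusions $H\subseteq CSS(\langle Ha\rangle)\subseteq K$ combined with strongness give $\delta(H)\le d(\langle Ha\rangle)=\delta(CSS(\langle Ha\rangle))\le\delta(K)=\delta(H)$, so $a\in cl(H)$. Conversely, if $K_1\cup K_2\subseteq cl(H)$ then adjoining finitely many elements of $cl(H)$ leaves $d$ unchanged (by submodularity and the monotonicity of $cl$ just proved), so a finite exhaustion of $K_1\cup K_2$ yields $d(K)=d(H)$ and hence $\delta(K)=\delta(H)$. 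The main subtlety is that $acl(H)$ can contain degree-$3$ elements outside $R(\C)$ and that ``$K\subseteq cl(H)$'' must be read as ``$K_1\cup K_2\subseteq cl(H)$''; in both cases the $\delta$-preserving nature of selfsufficient closure and of homogeneous-divisor adjunction keeps the $R(\C)$-trace at dimension $d(H)$, which is exactly what the statement demands.
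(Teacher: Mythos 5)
Your proof is correct; the paper itself offers no argument here (it simply declares the corollary ``well-known'' after establishing the dimension-function properties in Lemma \ref{9.2}), and your derivation is exactly the standard one that declaration presupposes: the pregeometry axioms from properties (1)--(5), the $acl$-inclusion via Lemma \ref{8.5} and the $\delta$-preservation of divisor adjunction in Lemma \ref{e:b}, and the equivalence from $d=\delta\circ CSS$ on strong substructures. The only quibble is the phrase ``the selfsufficient closure preserves $\delta$ by definition'' --- in general $\delta(CSS(H))\le\delta(H)$ with possible strict inequality (Corollary \ref{3.5}); what you actually use, namely $d(H')=\delta(CSS(H'))$ and that each adjunction $B\mapsto B(e{:}b)$ keeps $\delta$ fixed, is stated correctly and carries the argument.
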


\begin{lemma}\label{9.5}
Assume $U \le \C$, $a \not\in U$, and $a \in R(\C)$. Then there are the following possibilities:
\begin{enumerate}
\item $d(a/U) = 1$. In this case $\langle U a \rangle = CSS(U a)$. (Transcentental Case)
\item $d(a/U) = 0$.
\begin{enumerate}
\item $CSS(Ua) = \langle U a \rangle$ . Then there are $u_1 \in U_i(U)$ and $u_2 \in U_j(U)$ such that
$i <  j$ and $\C \models [u_1, a] = u_2$. (Algebraic Case)
\item $\delta(a/U) > 0$.
\end{enumerate}
\end{enumerate}
\end{lemma}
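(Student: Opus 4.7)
The plan is to sandwich $\delta(\langle Ua\rangle)$ between $\delta(U)$ and $\delta(U)+1$ and then case-split according to the values of $\delta(\langle Ua\rangle)$ and $\delta(CSS(\langle Ua\rangle))$. The upper bound $\delta(\langle Ua\rangle) \le \delta(U)+1$ follows because adjoining the single homogeneous element $a$ raises $o-dim$ by at most $1$ while ideal-dim is monotone: a direct count from Definition~\ref{2.8} (applied on a sufficiently large finite strong piece of $U$) yields the inequality. The lower bound $\delta(U)\le \delta(\langle Ua\rangle)$ is exactly $U\le \C$ applied to the chain $U\subseteq \langle Ua\rangle \subseteq \C$. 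Both bounds transfer to $\delta(CSS(\langle Ua\rangle))$, so $d(a/U)=\delta(CSS(\langle Ua\rangle))-\delta(U)\in\{0,1\}$, which gives the dichotomy of the statement.

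When $d(a/U)=1$, combining $\delta(CSS(\langle Ua\rangle))=\delta(U)+1$ with $\delta(CSS(\langle Ua\rangle))\le \delta(\langle Ua\rangle)\le \delta(U)+1$ forces $\delta(\langle Ua\rangle)=\delta(CSS(\langle Ua\rangle))$; the minimality of $CSS$ (every strict intermediate $\langle Ua\rangle\subseteq E\subsetneq CSS(\langle Ua\rangle)$ satisfies $\delta(E)>\delta(CSS(\langle Ua\rangle))$) then gives $\langle Ua\rangle=CSS(\langle Ua\rangle)$, the transcendental case. When $d(a/U)=0$ I would split by $\delta(\langle Ua\rangle)$: if it equals $\delta(U)$, the same $CSS$-minimality argument places us in subcase~(a) with $\langle Ua\rangle=CSS(\langle Ua\rangle)$; if it equals $\delta(U)+1$, then $\delta(a/U)=1>0$, which is subcase~(b).

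The main obstacle is producing the divisor relation in subcase~(a). Here $o-dim(\langle Ua\rangle/U)=1$ (the new generator is $a$) and $\delta$ is unchanged, so exactly one new ideal-dim unit is introduced, i.e.\ a single new relation involving $a$ modulo the relations of $U$. If $a\in\C_2$, the only $3$-nilpotent Lie monomials containing $a$ are of the form $[u_1,a]$ with $u_1\in U_1$, so collecting coefficients the new relation reads $[u_1,a]=u_2$ with $u_1\in U_1$, $u_2\in U_3$. If $a\in\C_1$ the new relation lies either in degree $2$, in which case it collects directly into $[u_1,a]=u_2$ with $u_1\in U_1$, $u_2\in U_2$, or in degree $3$; in the degree-$3$ case I would apply the Jacobi identity to the relation and exploit $\langle Ua\rangle\le \C$ together with the $\K_3$-axiom ruling out homogeneous zero divisors to rewrite it as $[u_1,a]=u_2$ with $u_1\in U_2$, $u_2\in U_3$. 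This degree-$3$ reduction is where the Lie-algebraic bookkeeping lives and constitutes the main technical hurdle.
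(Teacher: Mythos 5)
Your reduction to the existence of a single new relation is sound: the sandwich $\delta(U)\le\delta(CSS(\langle Ua\rangle))\le\delta(\langle Ua\rangle)\le\delta(U)+1$ (the lower bound from $U\le\C$, the upper bound from $\F(U^*)\subseteq\F(\langle Ua\rangle^*)$, which is available because $U\le\C$ makes $U$ $2$-strong in $\langle Ua\rangle$), together with the fact that every proper intermediate structure between a substructure and its selfsufficient closure has strictly larger $\delta$ than the closure, correctly yields the dichotomy $d(a/U)\in\{0,1\}$ and sorts the three cases. The case $a\in\C_2$ and the degree-$2$ subcase for $a\in\C_1$ are also correct: there the new relation is $[u_1,a]+\theta=0$ with $u_1\in U_1$, $\theta$ mapping into $U_j$, and $u_1\ne 0$ since otherwise the relation already lies in $N(U)$.

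The gap is the degree-$3$ subcase for $a\in\C_1$, exactly where you defer to ``Jacobi plus no zero divisors''. Since there is no new degree-$2$ relation, $\F(\langle Ua\rangle^*)=\F(U^*)\otimes\langle a\rangle$, and by Corollary \ref{botimesx1} the new degree-$3$ part has basis $\{[w,a]:w\in Z_2\}\cup\{[[z,a],z']:z\le z'\in Z_1\}\cup\{[[a,z],a]:z\in Z_1\}$ for bases $Z_i$ of $U_i$. Jacobi only permutes the monomials $[[z,a],z']$ among themselves modulo $[U_2,a]$; it cannot absorb a monomial $[[z,a],z']$ or $[[a,z],a]$ into the $[U_2,a]$-part, so you must prove that the unique new relation has zero coefficient on every basis monomial of the second and third kinds, and neither Jacobi nor the absence of zero divisors does that. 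Concretely, a relation $[[z_1,a],z_1]=\theta$ with $z_1\in U_1$, $\theta\in U_3$ is a legitimate candidate: no-zero-divisors gives $[z_1,[z_1,a]]=-\theta$, hence $[z_1,a]$ is the unique $w\in\C_2$ with $[z_1,w]=-\theta$; but when the divisor problem $[z_1,?]=-\theta$ has no solution in $U_2$ this $w$ lies in $acl(U)\setminus U$, so one obtains $a\in acl(U)$ only via the two-step chain $U\subseteq\langle U w\rangle\subseteq\langle Ua\rangle$, and no relation $[u_1,a]=u_2$ with both sides in $U$. (A nonzero $[[a,z],a]$-coefficient is worse still: for such a relation no divisor problem of $U$ acquires a solution in $\langle Ua\rangle$, so Theorem \ref{amalk3} in general produces infinitely many conjugates of $a$ over $U$.) Hence you must either rule out these degree-$3$ relations for strong $\langle Ua\rangle$ --- which needs an argument you have not sketched and which I do not see how to supply from the stated hypotheses --- or weaken the conclusion of case 2(a) to $a\in acl(U)$, resp.\ add the hypothesis that $\langle Ua\rangle$ is a minimal strong extension of $U$ (under which the intermediate divisor extension $\langle U,[z_1,a]\rangle$ above yields a contradiction); the latter is all that Definition \ref{9.6} and the remark following it actually use.
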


\begin{definition}\label{9.6} We consider $U \in \h_3$, $U \le \C$  and  
$V \in \h_3$ finitely generated over $U$. 
\begin{itemize}

\item If $V  = \langle U a \rangle $ for some $a \in R(\C)$ and $a \not\in cl(U)$, then  
we call $V$ a minimal  trancentental  extension.
\item If $V  = \langle U a \rangle $ for some $a \in R(\C)$ and $a \in acl(U)$, then 
we call $V$ a minimal algebraic extension.
\item If $V  \not= \langle U a \rangle $ for all $a \in R(\C)$, $\delta(V/U) = 0$, and $\delta(W/U) > 0$ for all $U \subseteq W \subseteq V$, $U \not= W \not= V$, and $W \in \h_3$, then $V$ is minimal prealgebraic over $U$. (short prealgebraic)
\item We use the definition of minimal prealgebraic also for $U$ not strong in $\C$.

\end{itemize}
\end{definition}

\noindent If $H \le K$ and $K$ is minimal strong, then $K$ is generated over $H$ by a single element or it is 
prealgebraic minimal (Lemma \ref{9.5}. \\

\begin{lemma}\label{9.7}
Assume $U \le V \le \C$ and $V/U$ is finite. Then there is a sequence $U = V_0 \le V_1 \le \ldots  \le V_n  = V$, such that 
$V_{i+1}$ is minimal strong over $V_i$.  
\end{lemma}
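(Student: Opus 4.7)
The plan is to induct on the finite codimension $ldim(K)-ldim(H)$. If this is zero, take $n=0$; otherwise I produce one extension $H\subsetneq K_1\subseteq K$ that is minimal strong over $H$ in the sense of Definition \ref{9.6}, and then apply induction to the pair $K_1\le K$ (with $H\le K_1\le\C$ inherited from $H,K_1$ being strong in $\C$, and $ldim(K)-ldim(K_1)<ldim(K)-ldim(H)$ since $H\subsetneq K_1$). To construct $K_1$, consider the non-empty finite family
\[\mathcal{S}=\{W\in\h_3:\ H\subsetneq W\subseteq K,\ W\le\C\},\]
which contains $K$, and pick any inclusion-minimal element $K_1\in\mathcal{S}$.

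\textbf{Case A: }$K_1=\langle H,a\rangle$ for some $a\in R(\C)\cap K_1\setminus H$. Applying Lemma \ref{9.5} to $a$ over $H$, the subcase $\delta(a/H)>0$ is excluded because it would force $\langle H,a\rangle\subsetneq CSS(\langle H,a\rangle)$, contradicting $K_1\le\C$. The remaining subcases give a minimal transcendental ($d(a/H)=1$) or minimal algebraic ($d(a/H)=0$) extension of $H$ in the sense of Definition \ref{9.6}.

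\textbf{Case B: }$K_1\neq\langle H,a\rangle$ for every $a\in R(\C)\cap K_1\setminus H$. Such an $a$ still exists because $K_1=\langle(K_1)_1,(K_1)_2\rangle$ strictly extends $H=\langle H_1,H_2\rangle$; fix one. Then $\langle H,a\rangle\subsetneq K_1$, and since $K_1\le\C$ we have $CSS(\langle H,a\rangle)\subseteq K_1$; as $CSS(\langle H,a\rangle)\in\h_3$ strictly contains $H$, minimality of $K_1$ in $\mathcal{S}$ forces $CSS(\langle H,a\rangle)=K_1$. The transcendental subcase of Lemma \ref{9.5} would give $\langle H,a\rangle=CSS(\langle H,a\rangle)=K_1$, contradicting Case B; hence $d(a/H)=0$, whence
\[\delta(K_1)=\delta\bigl(CSS(\langle H,a\rangle)\bigr)=d(\langle H,a\rangle)=d(H)=\delta(H),\]
that is, $\delta(K_1/H)=0$. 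For any intermediate $W\in\h_3$ with $H\subsetneq W\subsetneq K_1$, minimality of $K_1$ forces $W\not\le\C$, so $CSS(W)\supsetneq W$ is a strong member of $\h_3$ strictly containing $H$, whence $CSS(W)=K_1$; the strict drop $\delta(CSS(V))<\delta(V)$ for non-strong $V$ (parallel to Corollary \ref{3.5}(1)) yields $\delta(W)>\delta(K_1)=\delta(H)$, so $\delta(W/H)>0$. Thus $K_1$ is minimal prealgebraic as in Definition \ref{9.6}.

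The main obstacle is Case B, where one must extract both $\delta(K_1/H)=0$ and $\delta(W/H)>0$ for every intermediate $W\in\h_3$ from only the inclusion-minimality of $K_1$ in $\mathcal{S}$. The essential ingredients are Lemma \ref{9.5} (a transcendental single-element extension is already strong, hence would sit strictly below $K_1$ in $\mathcal{S}$ and contradict minimality) together with the strict $\delta$-drop at selfsufficient closure of a non-strong subalgebra.
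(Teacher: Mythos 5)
Your proof is correct, and it supplies in full the argument the paper leaves implicit (Lemma \ref{9.7} is stated without proof): induction on $ldim(K/H)$, choice of an inclusion-minimal strong intermediate $K_1$, and the classification of that step via Lemma \ref{9.5} together with the strict $\delta$-drop at the selfsufficient closure of a non-strong subalgebra. The case analysis matches the remark preceding the lemma in the paper, so this is essentially the intended route.
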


%%%%%%
%%%%%%
%%%%%%

%%%%%%
%%%%%%
%%%%%%

\section{Prealgebraic minimal extensions}

\begin{lemma}\label{10.1}
Assume that  $H \le K \le \C$ is prealgebraic
minimal. 
\begin{enumerate}
\item If $ H \le  U  \le \C$, then $K \le U$ or $U \cap K = H$ and $\langle U K \rangle$ is a minimal
prealgebraic extension of $U$ and $\langle U K \rangle = U \otimes_H K$. $tp(K/U)$ is the unique nonforking
extension of $tp(K/H)$.
\item $tp(K/H)$ is strongly minimal.
\end{enumerate}
\end{lemma}

\begin{proof}
By Lemma \ref{subdelta3} we have 
\[ 0 \le \delta(K/U) \le \delta(K/(K \cap U)) .\] 
If $H \not= K \cap U \not= K$, then $\delta(K/(K \cap U)) < 0$, a contradiction. If $K \cap U = H$, then 
\[ \delta(K/U) = \delta(K/H) = 0 \] 
and therefore $\langle U K \rangle = U \otimes_H K$ by Lemma \ref{char}.

\end{proof}

\noindent Now we consider a prealgebraic minimal strong extension $H \le K \le \C$. W.l.o.g we can assume that there is some $M \preceq \C$, such that  $tp(K/M)$  with $K \cap M = H$ is a nonforking extension. By Lemma \ref{10.1}:
\[ \langle K M \rangle = K \otimes_H M \le \C .\] 
By Lemma \ref{8.9} we get $A \le \langle M K \rangle $ and $B = M \cap A$, such that
\[ \langle K M \rangle = \langle A M \rangle = A \otimes_B M ,\]
$o-dim_3(A/B) = 0$, $A = \langle B \bar{a} \rangle$ where $\bar{a}$ is a generating o-system of $A$ over $B$,
and $B$ is a self-sufficient weak canonical base algebra of $tp(A/M)$.\\
$B$ and $A$ are  strong substructures and minimal with these properties. $B$ is in $\K_3$. 
Now we will only assume that $A$ is $\le^m$-strong for sufficiently large $m$. \\

\begin{lemma}\label{10.2}
Assume $U \le \C$, $\bar{a}$ is a generating o-system of $\langle U \bar{a} \rangle \le \C$ 
over $U$, $o-dim_3(\bar{a}) = 0$, and $\langle U \bar{a} \rangle$ is  prealgebraic minimal extension of $U$.
\begin{enumerate}
\item 
If we have $\langle U \bar{a} \rangle = U \otimes_B \langle B \bar{a} \rangle$ and 
$\langle U \bar{a} \rangle = U \otimes_C \langle C \bar{a} \rangle$.\\ 
Then
$\langle U \bar{a} \rangle = U \otimes_{B \cap C} \langle (B \cap C) \bar(a) \rangle$.
\item  If  we assume that 
$ B \ \le^k \C$ and  $ C  \le^k \C$ 
for $o-dim( B^*), o-dim(C^*) < k$,
then $ (B \cap C)  \le^k \C$.\\
For $D \subseteq B$ we have $ D  \le^k \C$ if and only if 
$ D  \le B  $.

\end{enumerate}
\end{lemma}

\begin{proof}
(1) is is a consequnece of Corollary \ref{cap}.\\
(2) follows from Corollary \ref{rulesstrong3}.

\end{proof}

\begin{definition}\label{10.3}
For every prealgebraic minimal extension $ B \le A = \langle B \bar{a}\rangle  \le \C$, where $B$ is the 
self-sufficient canonical base algebra  as in
Lemma \ref{8.9} and $\bar{a} $ is a generating o-system of $A$ over $B$
we define a formula $\phi(\bar{x},\bar{y})$. $\bar{x}$ stands for $\bar{a}$ and $\bar{y}$ for a generating 
o-system for $B$. $\phi$  describes the following:
\begin{enumerate}
\item The isomorphism-type of $A$ over $B$.
\item There is no proper subalgebra $D \subseteq B$, such that $\langle D \bar{a} \rangle \le A$ 
and $\langle D \bar{a} \rangle$ describes a minimal prealgebraic  extension of $D$.
\item For $A/B$ we fix some $m(A/B) = m$, such that
$2\;lindim(A/B) + 2 < m$ and 
$h(\mid \bar{x} \mid) + \mid \bar{x} \mid <m$ where $h$ is the function in Lemma \ref{5.7}.
Then the formula says, that $B \le^{m + o-dim(A^*/B^*)} \C$.
\end{enumerate}
Let $\X^{home}$ be the set of these formalas. Instead of $\phi(\bar{a},\bar{b})$ we often write
$\phi(A/B)$.
\end{definition}

\noindent In the above definition $\phi(A/B)$ implies , that $A$ is a prealgebraic minimal strong extension 
of $B$, $A$ is  $\le^m$-strong in $\C$, $B$ is minimal with this properties.

\begin{lemma}\label{10.4}
Let $B \le A $ be in $\C$ with $\C \models \phi(A/B)$ for  $\phi(\bar{x},\bar{y}) \in \X^{home}$, as described above.
Assume $B \subseteq  V $ for $V = \langle V_1 V_2 \rangle$,  $A \not\subseteq V$, 
and $\delta(A/V) \ge 0$.
Then 
\[\langle A V  \rangle =  A \otimes_B V. \] 
$A$ provides  a prealgebraic minimal strong extension of $V$.
\end{lemma}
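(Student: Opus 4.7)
The plan is to apply Lemma~\ref{useful}(5), which reduces the statement $\langle AV\rangle=A\otimes_B V$ to the single equality $\delta(A/V)=\delta(A/B)$. We have $\delta(A/B)=0$ because $A$ is minimal prealgebraic over $B$, and the hypothesis gives $\delta(A/V)\ge 0$. By the subadditivity clause of Lemma~\ref{useful}(6), $\delta(A/V)\le\delta(A/(A\cap V))$, so the crux is to force $A\cap V=B$: once this is in hand, $\delta(A/(A\cap V))=\delta(A/B)=0$ pinches $\delta(A/V)$ to $0$, and then Corollary~\ref{char} yields the full free amalgam.

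To handle $A\cap V=B$, I would first pass to a bounded finite piece of $V$. Apply Lemma~\ref{5.7}(2) in the $2$-nilpotent quotient $\C^*$ with $N=V^*$: since $o\text{-}dim(A^*/V^*)\le|\bar x|$ and $\delta_2(A^*/V^*)\ge\delta(A/V)\ge 0$, one obtains a finite $H_0^*\subseteq V^*$ with $A^*\cap V^*\subseteq H_0^*$, $o\text{-}dim(H_0^*/(A^*\cap V^*))\le h(|\bar x|)$, and the key amalgam
\[
\langle V^*A^*\rangle=V^*\otimes_{H_0^*}A^*.
\]
Lift $H_0^*$ to an $\h_3$-substructure of $V$ and adjoin the o-system $\bar y$ generating $B$, obtaining a finite $H\in\h_3^{fin}$ with $B\subseteq H\subseteq V$ and $o\text{-}dim(H/B)\le h(|\bar x|)+|\bar y|$.

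Now the strongness-with-margin built into $\phi\in\X^{home}$ takes effect. Because $\phi$ was chosen with $m>h(|\bar x|+1)+|\bar x|+|\bar y|$, both $A$ and $B$ are $\le^m$-strong in $\C$, giving $\delta(B)\le\delta(H)$ and $\delta(A)\le\delta(\langle HA\rangle)$. The $\h_3$-hull of $H\cap A$ sits between $\langle B_1B_2\rangle$ and $A$; since $A\not\subseteq V\supseteq H$, it is a proper $\h_3$-subalgebra of $A$, so minimality of the prealgebraic extension (applied to $\langle B_1B_2\rangle\le A$ via Lemma~\ref{7.4a}) forces $H\cap A=B$. Lemma~\ref{subdelta3} applied to the $\le$-strong substructures $A$ and $H$ of $\C$ then yields $\delta(\langle HA\rangle)\le\delta(H)+\delta(A)-\delta(B)=\delta(H)$, which combined with $\delta(A)\le\delta(\langle HA\rangle)$ and $\delta(A)=\delta(B)\le\delta(H)$ gives $\delta(A/H)=0=\delta(A/B)$. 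Corollary~\ref{char} therefore delivers the finite amalgam $\langle HA\rangle=A\otimes_B H$.

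Finally one lifts from $H$ to all of $V$. The $2$-nilpotent amalgam $\langle V^*A^*\rangle=V^*\otimes_{H_0^*}A^*$ extends by \textbf{Mon} to $\langle V^*A^*\rangle=V^*\otimes_{H^*}A^*$, and since $o\text{-}dim_3(A/B)=0$ (inherent to any prealgebraic minimal datum, and hence coded into $\phi$), Corollary~\ref{c=3,a} forces the degree-$3$ part of $\langle VA\rangle$ to factor through $\langle HA\rangle=A\otimes_B H$. This gives $A\unab_H V$, and combined with $A\unab_B H$, the transitivity \textbf{Trans} of $\unab$ yields $A\unab_B V$, i.e.\ $\langle VA\rangle=A\otimes_B V$. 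That this is a prealgebraic minimal strong extension of $V$ is then immediate from Corollary~\ref{10.2}. I expect the principal obstacle to be precisely this final lifting from the $\K_2$-amalgam to the full $\K_3$-amalgam when passing from the finite $H$ to all of $V$; this is where the confinement $o\text{-}dim_3(A/B)=0$ encoded in $\phi$, together with the reduction Lemma~\ref{5.7}, is essential.
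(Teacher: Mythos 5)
Your overall strategy is the paper's: pass to a finite $H\subseteq V$ via Lemma \ref{5.7}, exploit the margin $m$ built into $\phi\in\X^{home}$ so that $A$ is strong relative to $H$, identify $H\cap A$ with $B$ using minimality, pinch $\delta(A/V)$ to $0$, and amalgamate. But the way you assemble the inequalities has a genuine gap, and it sits exactly at the point you call the crux.

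Minimality of the prealgebraic extension does not by itself force $H\cap A=B$: Definition \ref{9.6} only says that every proper intermediate $W$ with $B\subsetneq W\subsetneq A$ has $\delta(W/B)>0$; it does not forbid such a $W$ from occurring as $H\cap A$. To rule this out you need $\delta(A/(H\cap A))\ge 0$, i.e. $\delta(H\cap A)\le\delta(A)=\delta(B)$, and the paper obtains this from the chain $0\le\delta(A/V)\le\delta(A/E)\le\delta(A/(E\cap A))$. The middle inequality $\delta(A/V)\le\delta(A/E)$ is not submodularity; it is Lemma \ref{6.3}(1), and it uses precisely the structural output of Lemma \ref{5.7} that you compute but never use, namely that $\langle VA\rangle^*=V^*\otimes_{E^*}\langle EA\rangle^*$ with the complement of $E^*$ in $V^*$ of the special form ($V_1=G_1$, extra degree-$2$ part linearly independent). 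Without that inequality your pinching also fails: from $H\cap A=B$, submodularity, and $\le^m$-strongness you only get $\delta(B)\le\delta(\langle HA\rangle)\le\delta(H)$ together with $\delta(B)\le\delta(H)$, which does not yield $\delta(\langle HA\rangle)=\delta(H)$, i.e. $\delta(A/H)=0$. Two further points: Lemma \ref{useful}(5),(6) require both arguments to be $2$-strong in the ambient algebra, which $V$ need not be (the paper first replaces $V$ by its self-sufficient closure inside $\langle VA\rangle$); and the final lift from $H$ to $V$ cannot be done by letting Corollary \ref{c=3,a} ``force'' the degree-$3$ part --- the correct tool is Lemma \ref{6.3}(4) (or Corollary \ref{char}), which again needs the equality $\delta(A/V)=\delta(A/E)=0$ that your argument has not established. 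Once Lemma \ref{6.3}(1) is inserted at the right place, the rest of your outline does match the paper's proof.
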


\begin{proof} 
We have  $\langle V A \rangle \in \h_3$. We can asssume w.l.o.g. that $V$ is finite. Otherwise we work with sufficiently large substructures. 
Let $V^s$ be the selfsufficient closure of $V$ in $\langle V A \rangle$. By definiton of the selfsufficient closure we have
$\delta(A/V^s) \ge 0$   and \\
$\langle V A \rangle  \not= V^s$, since $\delta(\langle V A \rangle) \ge \delta(V)$.\\ 
Now $V^s$ fullfils all assumptions on $V$ in the Lemma.  
We show the Lemma for $V^s$. Then it follows $V = V^s$ and the assertion for $V$. Hence we can assume 
w.l.o.g. that $V$ is strong in $\langle VA \rangle$. \\
By Corollary \ref{3.5} $0 \le \delta_2(A/V)$.
\noindent For $\C^*$ Lemma \ref{5.7} implies that there are  $X_V \subseteq \langle V_1 A_1 \rangle_2 \cap 
V_2$ linearly independent  over $\langle V_1 \rangle_2 + \langle A_1 \rangle_2 + (V \cap A)_2$,
$D$, and  $H$, such that
$A^*  \cap  V^* \subseteq   H \subseteq D \subseteq V^*$ , $D_1 = V_1$, 
$V^* = \langle D X_V \rangle $, $ o-dim(H/(A^* \cap V^*)) \le h(o-dim(A^*/V^*)) $ and
\begin{itemize}
\item[(i)] $\langle V A \rangle^*= D \otimes_{H} \langle H A^* \rangle$. 
\end{itemize}

\noindent Let $E_1 \subseteq V_1$ and $E_2 \subseteq V_2$ in $V$ be the  isomorphic preimages of $H_1$ and $H_2$. Then define 
$E = \langle E_1 E_2 A \rangle \cap V$. Then $E^*$ is the given $H $ and $\langle E \cap V \rangle = E$.
Furthermore we get $o-dim(E^*/A^*) \le o-dim(E^*/A^* \cap E^*)
\le h(o-dim(A^*/V^*))  \le m$, where $m$ comes from the definition of $\phi$.
This we need later for (v).\\
Since $V \le_2 \langle A V \rangle$ we get \\
\begin{itemize}
\item[(ii)]$E \le_2 \langle E  A \rangle$
\end{itemize}
by Lemma \ref{c=2,b}.\\
By Lemma \ref{6.3}(1) the conditions  (i) , (ii) and $V \cap \langle E A \rangle = E$  imply
\begin{itemize}
\item[(iii)] $0 \le \delta(A/V) \le \delta(A/E) , $
\end{itemize}
(iv) follows from Lemma \ref{6.3}(3), and (i) and (ii) since $V \le \langle V A \rangle$.
\begin{itemize}
\item[(iv)] $E \le \langle E A \rangle$.
\end{itemize}

\noindent  
As seen before (ii)  we have $o-dim(E^*/A^*) \le m$. Hence
we have $A \le \langle A E \rangle$ by $\C \models \phi(A/B)$.  
\begin{itemize}
\item [(v)]$E$ and $A$ are
strong in $\langle E A \rangle$. 
\end{itemize}
By Lemma \ref{subdelta3} and (iii)
\begin{itemize}
\item[(vi)] $ 0 \le \delta(A/V) \le\delta(A/ E ) \le \delta(A/ E \cap A) .  $
\end{itemize}
\noindent Hence
\begin{itemize}
\item[(vii)] $ E \cap A = B$,
\end{itemize}
since otherwise
the definition of a minimal prealgebraic extension  and (vi) would imply\\
$ 0 \le \delta(A/V) \le\delta(A/ E ) \le \delta(A/ E \cap A)  <0,  $\\
a contradiction. Then (vi) and (vii) imply 
\begin{itemize}
\item[(viii)] $0 \le \delta(A/V) \le \delta(A/E) \le \delta(A/B) = 0$
\end{itemize}
It follows by Corollary \ref{char}:

\begin{itemize}
\item[(ix)] $\langle E A \rangle = E \otimes_B A$

\end{itemize} 
By Lemma \ref{6.3}(2) we get 
\begin{itemize}
\item [(x) ] If $0 < \mid X_V \mid$, then $ \delta(A/V) < \delta(A/E).$
\end{itemize}

\noindent By (x) and (viii) it follows that $X_V$ is empty. Hence we can apply Lemma \ref{6.3}(5) and get:
\begin{itemize}
\item[(xi)]  $\langle V A \rangle = V \otimes_E A$.
\end{itemize}

\noindent By {\bf Trans} (ix) and (xi) imply 
\begin{itemize}
\item[(xii)] $\langle V A \rangle = V \otimes_B A$.
\end{itemize}
\end{proof}

\begin{lemma}\label{10.5}
Assume $\phi \in 
\X^{home}$, and $\C \models \phi(A/B)$. Furthermore $B \subseteq V \le \C$ and $V \in \h_3$. Then

\begin{enumerate}
\item Either $A \subseteq V$ or $\langle A V \rangle = V \otimes_B A$ and 
$\langle A V \rangle \le \C$ is a minimal prealgebraic extension of $V$.

\item In both cases $A \subseteq cl(V)$.

\end{enumerate}
\end{lemma}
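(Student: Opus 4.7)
The plan is to obtain (1) directly from Lemma \ref{10.4} and to deduce (2) by a short $\delta$-computation.

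For (1), the case $A\subseteq V$ is trivial. If $A\not\subseteq V$, the hypothesis $V\le \C$ furnishes the missing input of Lemma \ref{10.4} for free: any $V\subseteq W\subseteq \C$ satisfies $\delta(V)\le \delta(W)$, and specialising to $W=\langle VA\rangle$ yields $\delta(A/V)\ge 0$. Together with $B\subseteq V$ and $V=\langle V_1 V_2\rangle \in \h$, this is exactly what Lemma \ref{10.4} requires, and the lemma delivers $\langle VA\rangle = V\otimes_B A$ as a minimal prealgebraic strong extension of $V$.

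For (2), the case $A\subseteq V$ is again trivial. Otherwise, part (1) combined with Definition \ref{9.6} gives $\delta(\langle VA\rangle/V)=0$, that is $\delta(\langle VA\rangle)=\delta(V)$. Using $V\le \C$ a second time, for every $\langle VA\rangle\subseteq W\subseteq \C$ we obtain $\delta(\langle VA\rangle)=\delta(V)\le \delta(W)$, hence $\langle VA\rangle\le \C$. Therefore $CSS(\langle VA\rangle)=\langle VA\rangle$ and
\[
d(\langle VA\rangle)=\delta(\langle VA\rangle)=\delta(V)=d(V).
\]
This forces $A_1\cup A_2\subseteq cl(V)$; the remaining elements of $A_3$ sit in the algebraic closure of $V\cup A_1\cup A_2$ and hence also in $cl(V)$ by Corollary \ref{9.4}.

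Since the technical content is already absorbed by Lemma \ref{10.4}, no real obstacle is expected. The only subtle move is the upgrade from $V\le \C$ to $\langle VA\rangle\le \C$ via the $\delta$-equality above; this is precisely the mechanism that translates the \emph{minimal prealgebraic} conclusion of (1) into the $cl$-statement of (2).
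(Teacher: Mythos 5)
Your proof is correct and follows the same route as the paper, which simply records both assertions as direct consequences of Lemma \ref{10.4}; you have merely made explicit the routine verifications (that $V\le \C$ yields the hypothesis $\delta(A/V)\ge 0$, and that the resulting $\delta$-equality $\delta(\langle VA\rangle)=\delta(V)$ translates into $A\subseteq cl(V)$ via Corollary \ref{9.4}). The only caveat is that for infinite $V$ one should reduce to sufficiently large finite substructures before computing $\delta(V)$, exactly as is done at the start of the proof of Lemma \ref{10.4}.
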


\begin{proof}
The assertions  (1) and (2) are direct consequences of Lemma \ref{10.4}. A minimal prealgebraic 
extension of a strong substructure is strong.

\end{proof}

\begin{cor}\label{10.6}
For $B \subseteq \C$ and $\phi \in \X^{home}$ with $\C \models \exists \bar{x}\phi(\bar{x},B)$ we have
$\phi(\bar{x},B)$ is strongly minimal. 
\end{cor}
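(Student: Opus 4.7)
The plan is to exploit Lemma \ref{10.5} and Corollary \ref{10.2} together with $\omega$-stability. Let $\bar{b}$ be the tuple of parameters so that $B = \langle \bar{b}\rangle$, and fix $\bar{a}_0$ with $\C \models \phi(\bar{a}_0, \bar{b})$; set $A_0 = \langle \bar{a}_0\bar{b}\rangle$, so $B \le A_0$ is a prealgebraic minimal strong extension in the sense of Definition \ref{9.6}, and $\phi$ pins down the iso type of $A_0$ over $B$.

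First I would fix a model $M \preceq \C$ with $\bar{b}\in M$. By Lemma \ref{7.8}(1) every model of $T_3$ lies in $\h_3$, and $M \le \C$ holds since models are automatically strong. For every realization $\bar{a}$ of $\phi(\bar{x},\bar{b})$, set $A = \langle \bar{a}\bar{b}\rangle$; Lemma \ref{10.5} gives exactly two alternatives: either $A \subseteq M$, or $\langle A M\rangle = M \otimes_B A$ with $M \le \langle AM\rangle$ a minimal prealgebraic extension of $M$ in $\K_3$. In the second case one also checks $\langle AM\rangle \le \C$ (any finite $F\subseteq\langle AM\rangle$ sits inside some $V'\otimes_B A$ with $V'\le M$ finite, and the ambient strong closure stays in $\langle AM\rangle$), so the iso type of $\langle AM\rangle$ is fixed by the iso type of $M$, by $\phi$, and by the free-amalgam description, whence $tp(\bar{a}/M)$ is uniquely determined via Theorem \ref{7.3}. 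Call this common type $p_M$; by Corollary \ref{10.2}(2) applied to the prealgebraic extension $M \le \langle AM\rangle$, $p_M$ is strongly minimal of Morley rank $1$.

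To finish, I would pick an arbitrary formula $\psi(\bar{x}, \bar{c})$ and enlarge $M$ to contain $\bar{c}$. Exactly one of $\psi,\neg\psi$ lies in $p_M$; say $\psi \in p_M$. Then every realization of $\phi(\bar{x},\bar{b})\wedge\neg\psi(\bar{x},\bar{c})$ has type over $M$ distinct from $p_M$, so by the dichotomy above must fall in the first case, $\bar{a}\in M$; as $M = \acl(M)$, such realizations have Morley rank $0$ over $M$, hence $\phi\wedge\neg\psi$ has only finitely many realizations in $\C$. The symmetric argument handles $\neg\psi\in p_M$. Infinitude of $\phi(\C,\bar{b})$ follows by iterating the free amalgam $A_0\otimes_B A_0\otimes_B\cdots$ inside $\C$ via Theorem \ref{amalk3} and richness, producing infinitely many pairwise distinct realizations of $p_M$. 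The main technical point I anticipate is verifying that the bounded strongness $\le^m$ built into $\phi$ (rather than full strongness of $B$ in $\C$) still suffices to invoke Lemma \ref{10.5}; this is precisely why $m$ in Definition \ref{10.3} was chosen large enough via the function $h$ of Lemma \ref{5.7}, so that Lemma \ref{10.4}, and hence Lemma \ref{10.5}, applies.
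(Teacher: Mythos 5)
Your proposal is correct and follows essentially the same route as the paper: both arguments rest on the dichotomy of Lemma \ref{10.5} (either $A\subseteq M$ or $\langle AM\rangle=M\otimes_B A$, forcing a unique non-algebraic type over any model containing the parameters) together with Theorem \ref{amalk3} plus richness/saturation to produce generic realizations. You merely spell out more explicitly the steps the paper leaves implicit, namely that the non-generic realizations land inside $M$ and hence form a finite definable set.
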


\begin{proof} 
Assume $\C \models \phi(A/B)$.
The pair $B \le A $ is in $\K_3$
and there is no $x \in A \setminus B$ with $[e,x] =d$, where $e, d \in B$, since the extension is prealgebraic minimal strong.
Let $B \subseteq C \le M \prec \C$. By $\C \models \phi(A/B)$ $B$ is sufficiently strong in $C$ for the application of Theorem \ref{amalk3}. Hence $C \otimes_B A'$ exist as a substructure of $\C$, where $tp(A'/B) = tp(A/B)$. By saturation this remains true, if we replace $C$ by a model $M$:
\[ \langle M A'  \rangle = M \otimes_B A' .\]
Hence there is a realisation of $\phi(x, B)$ generic over $M$. By Corollary \ref{10.5} every 
solution $A$ that is not in $M$ has this form: 
\[ \langle M A\rangle = M \otimes_B A .\]
Hence $\phi(\bar{x}, B)$ is strongly minimal. 
\end{proof}

\begin{lemma}\label{10.7}
Let $\phi(\bar{x},\bar{y}) \in \X^{home}$ and $\C \models \exists \bar{x}\phi(\bar{x}/B)$. Then $B$ is the canonical paramter up to some automorphisms of $B$.
\end{lemma}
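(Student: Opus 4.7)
The plan is to identify the canonical parameter of the strongly minimal formula $\phi(\bar{x}, B)$ with the canonical base algebra of its generic type, appealing to Lemma \ref{8.9}.

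First I would fix a saturated model $M \preceq \C$ with $B \le M$ and take $\bar{a}$ a generic realization of $\phi(\bar{x}, B)$ over $M$. By Corollary \ref{10.6} the formula $\phi(\bar{x}, B)$ is strongly minimal, so such a generic exists and is unique up to $\operatorname{Aut}(\C/M)$. Setting $A := \langle B\bar{a}\rangle$, Lemma \ref{10.5} yields $\langle MA\rangle = M \otimes_B A$ and $A \cap M = B$; hence $\bar{a}$ is self-sufficient over $M$ in the sense of Lemma \ref{8.9}. Apply that lemma to $\tp(\bar{a}/M)$: it produces a canonical base algebra $B^{\mathrm{cb}} \le M$, minimal with the property that $\langle M\bar{a}\rangle = M \otimes_{B^{\mathrm{cb}}} \langle B^{\mathrm{cb}}\bar{a}\rangle$, unique as a subalgebra by Corollary \ref{8.10}. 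Since the construction of $\X^{home}$ preceding Definition \ref{10.3} takes $B$ to be exactly this minimal algebra (the weak canonical base of the prealgebraic extension $B \le A$), one concludes $B = B^{\mathrm{cb}}$.

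Now suppose $\bar{b}'$ is any parameter tuple with $\phi(\C, \bar{b}') = \phi(\C, B)$. Then $\bar{a}$ is also a generic realization of $\phi(\bar{x}, \bar{b}')$ over a suitable saturated model containing $\bar{b}'$, so the substructure $B'$ generated by $\bar{b}'$ is also a canonical base algebra for the same type. Uniqueness from Corollary \ref{8.10} gives $B = B'$ as subalgebras of $\C$, whence $\bar{b}'$ differs from $\bar{b}$ by an automorphism of $B$. Conversely, by the description in Lemma \ref{8.9}, those automorphisms of $B$ which are induced by elements of $\operatorname{Aut}(\C)$ fixing $\tp(\bar{a}/M)$ preserve $\phi(\C, B)$ setwise. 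Together these show that the canonical parameter of $\phi(\bar{x}, B)$ is $B$ modulo the same subgroup of $\operatorname{Aut}(B)$ that appears in Lemma \ref{8.9}, which is the claim.

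The main obstacle is the identification $B = B^{\mathrm{cb}}$: one must verify that the concrete strong subalgebra $B$ built into the formulas of $\X^{home}$ (realizing the weak-canonical-base property from the construction preceding Definition \ref{10.3}) coincides with the abstract minimal canonical base algebra $B^{\mathrm{cb}}$ produced by Lemma \ref{8.9} (characterized by minimality of $\delta(A/A \cap M)$). Once this identification is in place, the statement is a direct transfer from the canonical base of the strongly minimal generic type to the canonical parameter of the defining formula via weak elimination of imaginaries.
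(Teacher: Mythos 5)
Your proposal is correct and follows essentially the same route as the paper: the paper's proof also takes a generic solution $\bar a$ over a saturated $M\supseteq B$, notes $\langle M\bar a\rangle=M\otimes_B\langle B\bar a\rangle$ via Lemma \ref{10.5}, and then runs the automorphism argument of Lemma \ref{8.9} directly (an $f$ fixing $M$ setwise and $\tp(\bar a/M)$ may be assumed to fix $\bar a$, whence Lemma \ref{8.8} forces $\langle M\bar a\rangle=M\otimes_{B\cap f(B)}\langle(B\cap f(B))\bar a\rangle$, contradicting the minimality of $B$ built into the construction preceding Definition \ref{10.3} unless $f(B)=B$). Your identification of the code parameter $B$ with the canonical base algebra of Lemma \ref{8.9} is exactly the point the paper relies on, so the two arguments coincide in substance.
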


\begin{proof}
Let $M \preceq \C$ be saturated, such that $B \subseteq M$. By Lemma \ref{10.6} there  is a solution $\bar{a}$ of $\phi(\bar{x},B)$ generic over $M$. We use  $A = \langle B \bar{a} \rangle$. Then $A \cap M = B $\\
Let $f$ be an automorphism of $\C$ that fixes $B$ pointweise and $M$ setwise. Then $tp(\bar{a}/M) = tp(f(\bar{a})/M)$ by Lemma \ref{10.5} and Corollary \ref{10.6}. \\
Now we consider an automorphism $f$ of $\C$, such that $f$ fixes $M$ setwise and $f(tp(\bar{a}/M)) = tp(\bar{a}/M)$.
W.l.o.g. $f$ fixes $\bar{a}$ pointwise. By Lemma \ref{10.5}
\[ \langle M \bar{a} \rangle = M \otimes_B \langle B \bar{a} \rangle = M \otimes_{f(B)} \langle f(B) \bar{a} \rangle.\]
We assume $B \not= f(B)$ and show a contradiction.   By Lemma \ref{10.2} we get
\[ \langle M \bar{a} \rangle = M \otimes_{B \cap f(B)} \langle (B \cap f(B)) \bar{a} \rangle \]
and $\langle (B \cap f(B)) \bar{a} \rangle$ is again $\le^m$-strong in $\C$ and it is a minimal
prealgebraic extension over a smaller substructure. See Lemma \ref{6.3}.

\end{proof}

\begin{definition}\label{10.8}
Assume $\bar{b} \subseteq  V $ for $V \in \h_3$, $\phi(\bar{x},\bar{y}) \in \X^{home}$, and $\bar{a}$ is a solution of $\phi(\bar{x},\bar{b})$. Then $\bar{a} $ is weakly generic over $V$, if $\langle \bar{a} \bar{b} \rangle \cap V = \langle \bar{b} \rangle $  and $\delta(\bar{a}/V) = 0$. (short  $\ind^w$-generic).
\end{definition}

\noindent Lemma \ref{10.4} implies:

\begin{cor}\label{10.9}
Assume $\bar{b} \subseteq  V $ for $V \in \h_3$, $\phi(\bar{x},\bar{y}) \in \X^{home}$, and $\bar{a}$ is a weakly generic solution of $\phi(\bar{x},\bar{b})$ over $V$. Then  $\langle V \bar{a} \rangle = V \otimes_{\bar{b}} \langle \bar{b} \bar{a} \rangle$. $\bar{a}$ is a prealgebraic minimal strong extension of $V$.
\end{cor}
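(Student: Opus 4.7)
The plan is to reduce the statement directly to Lemma \ref{10.4}, with the verification of its hypotheses being the entire content of the proof. Set $B = \langle \bar{b} \rangle$ and $A = \langle \bar{b} \bar{a} \rangle$. I must check that the five hypotheses of Lemma \ref{10.4} hold: $B \le A$ in $\C$, $\C \models \phi(A/B)$, $B \subseteq V$, $V = \langle V_1 V_2 \rangle$, $A \not\subseteq V$, and $\delta(A/V) \ge 0$.

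First, the fact that $\phi \in \X^{home}$ with $\bar{a}$ a solution of $\phi(\bar{x}, \bar{b})$ carries, by Definition \ref{10.3}, precisely the information that $B \le A$ are in $\K_3$ with $\C \models \phi(A/B)$; this disposes of the first two conditions. The inclusion $B \subseteq V$ is immediate from $\bar{b} \subseteq V$, and $V = \langle V_1 V_2 \rangle$ holds since $V \in \h_3$ by hypothesis. For $A \not\subseteq V$: if $\bar{a} \subseteq V$ then $A = \langle \bar{b} \bar{a} \rangle \subseteq V$, so $A \cap V = A$; but weak genericity gives $\langle \bar{a} \bar{b} \rangle \cap V = \langle \bar{b} \rangle$, forcing $A = B$, which contradicts the fact that the pair $B \le A$ encoded by $\phi \in \X^{home}$ is a proper prealgebraic extension.

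Next I address $\delta(A/V) \ge 0$. Since $\bar{b} \subseteq V$ we have $\langle V A \rangle = \langle V \bar{b} \bar{a} \rangle = \langle V \bar{a} \rangle$, so by the definition of relative $\delta$, the quantity $\delta(A/V)$ coincides with $\delta(\bar{a}/V)$, which equals $0$ by the weak-genericity assumption of Definition \ref{10.8}. Hence $\delta(A/V) = 0 \ge 0$, as required.

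With all hypotheses verified, Lemma \ref{10.4} delivers both conclusions of the corollary at once: $\langle V A \rangle = V \otimes_B A$, which in the notation of the statement reads $\langle V \bar{a} \rangle = V \otimes_{\bar{b}} \langle \bar{b} \bar{a} \rangle$, and simultaneously $A$ (equivalently $\bar{a}$) gives a prealgebraic minimal strong extension of $V$. There is no real obstacle here; the only subtlety worth flagging is the identification $\delta(A/V) = \delta(\bar{a}/V)$, which is forced by $\bar{b} \subseteq V$ so that adjoining $\bar{a}$ to $V$ or adjoining the structure $A = \langle \bar{b} \bar{a}\rangle$ produces the same substructure $\langle V \bar{a} \rangle$.
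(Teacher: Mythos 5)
Your proposal is correct and follows exactly the route the paper takes: the paper derives Corollary \ref{10.9} directly from Lemma \ref{10.4} (with no written verification), and your argument simply spells out the routine checks of that lemma's hypotheses, all of which are sound.
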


\noindent By   Lemma \ref{10.7} we obtain 
\begin{cor}\label{10.10}
 Let $\phi(\bar{x},\bar{y}) $ be a formula in  $\X^{home}$. If $\phi(\bar{x},\bar{b}) $ and $\phi(\bar{x},\bar{b'}) $ have common generic solutions, then $\langle \bar{b'} \rangle = \langle \bar{b} \rangle $ and there is an isomorphism of these two Lie - algebras, that can be extended by the identity of the common generic extension.
\end{cor}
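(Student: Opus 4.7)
The plan is to combine strong minimality of $\phi(\bar{x},\bar{b})$ and $\phi(\bar{x},\bar{b'})$ (Corollary \ref{10.6}) with the canonical-base characterization encoded in $\X^{home}$ (Lemma \ref{10.7}) and the uniqueness of canonical base subalgebras (Corollary \ref{8.10}). Write $B = \langle\bar{b}\rangle$, $B' = \langle\bar{b'}\rangle$, $A = \langle B\bar{a}\rangle$, $A' = \langle B'\bar{a}\rangle$, where $\bar{a}$ is a common generic realization; by the definition of $\X^{home}$, both $B\le A$ and $B'\le A'$ are prealgebraic minimal strong extensions.

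I would first pass to a saturated $M\preceq\C$ containing $\bar{b}\bar{b'}$. Since two strongly minimal formulas that share a generic realization share the unique generic type over any model containing their parameters, we may, after an automorphism of $\C$ fixing $M$ setwise, assume $\bar{a}$ realizes the common generic type over $M$ of both $\phi(\bar{x},\bar{b})$ and $\phi(\bar{x},\bar{b'})$. Applying Lemma \ref{10.5} with $V=M$ in each of the two presentations then yields
\[\langle M\bar{a}\rangle \;=\; M\otimes_B A \;=\; M\otimes_{B'} A'.\]

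Next I would deduce $B=B'$ as subalgebras of $\C$. By Lemma \ref{8.9} and Corollary \ref{8.10}, $\tp(\bar{a}/M)$ has a unique minimal canonical base subalgebra $B^\ast\le M$ for which $\langle M\bar{a}\rangle = M\otimes_{B^\ast}\langle B^\ast\bar{a}\rangle$, and the construction of $\X^{home}$ via Lemma \ref{10.7} ensures that both $B$ and $B'$ are minimal with this property. Hence $B = B^\ast = B'$. Once the set-theoretic equality $B=B'$ is in hand, $\bar{b}$ and $\bar{b'}$ are two generating o-systems of the single algebra $B$, so the assignment $\bar{b}\mapsto\bar{b'}$ extends to an automorphism $\sigma$ of $B$ via the freeness argument sketched after Definition \ref{2.7}. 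Because $\phi(\bar{a},\bar{b})$ and $\phi(\bar{a},\bar{b'})$ both hold in $\C$ and $\phi$ encodes the defining relations of $A=A'$ over $B$, $\sigma$ extends further to an automorphism $\tilde\sigma$ of $A$ that fixes $\bar{a}$ pointwise.

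The main obstacle is the identification $B=B'$ as sets rather than merely up to isomorphism: this rests on combining the uniqueness statement of Corollary \ref{8.10} with the minimality of $B$ (resp.\ $B'$) as canonical base algebra of the strongly minimal formula $\phi(\bar{x},\bar{b})$ (resp.\ $\phi(\bar{x},\bar{b'})$), as recorded in Lemma \ref{10.7}. A more delicate point is verifying that both $B$ and $B'$ play the role of canonical base subalgebra for the same type $\tp(\bar{a}/M)$, which is guaranteed by the common-genericity hypothesis together with the free-product description of Lemma \ref{10.5}.
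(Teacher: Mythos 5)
Your proposal is correct and follows essentially the same route as the paper: the paper obtains Corollary \ref{10.10} directly from Lemma \ref{10.7}, whose proof is exactly the argument you give — pass to a saturated $M\preceq\C$, use Lemma \ref{10.5} to write $\langle M\bar{a}\rangle$ as a free amalgam over each parameter algebra, and invoke the uniqueness/minimality of the canonical base algebra (Lemmas \ref{8.8}, \ref{8.9} and Corollary \ref{8.10}) to force $\langle\bar{b}\rangle=\langle\bar{b'}\rangle$, after which the isomorphism extending by the identity on the generic solution is immediate from the fact that $\phi$ records the isomorphism type of $A$ over $B$ with the distinguished o-systems.
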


\noindent Hence we can define:

\begin{definition}
For every  $\phi(\bar{x},\bar{y}) \in \X^{home}$ there is a unique $\psi(\bar{x},y) \in L^{eq}$, such that for every  $\bar{b}$ with $\C \models \exists \bar{x} \phi(\bar{x},\bar{b}) $ there is some $b \in \C^{eq}$, such that $\phi(\bar{x}, \bar{b})$ and $\psi(\bar{x},b)$ have the same solutions
and $b$ is the canonical parameter of $\psi(\bar{x},y)$. Let $\X$ be the set of these $\psi$.
\end{definition}

%%%%%%
%%%%%%
%%%%%%

\section{$T_3$ satisfies the conditions for the collapse}
\noindent In \cite{Bau09} there are conditions P(I) - P(VII) formulated  for a theory $T$ that provide the existence of an infinite substructure of the monster model of $T$ with  an $\aleph_1$-categorical theory, such that $cl$ becomes  the algebraic closure in the small structure. In that paper $R(\C)$ is a vector space and we work essentially with finite subspaces of it. Here $R(\C)$ is $\C_1 \cup \C_2$ as defined above. We consider 
pairs of subspaces $V_1 \subseteq \C_1$ and $V_2 \subseteq \C_2$ . Often we work with the  $L$ - structures 
$\langle V_1 V_2 \rangle $ in $ \h_3$. The clousure $cl$ can be considered as  a relation beween such $L$ -structures in $\h_3$. Elements coresspond to structures of o-dim 1. That means they 
correspond to the elements of $\C_1 \cup \C_2$.\\
Now   we transform the properties P(I) - P(VII)  from \cite{Bau09} into conditions C(I) - C(VII) and
show that they are true in models of our theory $T_3$.  Then we can follow the proofs in \cite{Bau09}. 
Some modifications are necessary.
We obtain the desired Lie algebra, that  is a counterexample to Zil'ber's conjecture.

\noindent Let $M$ be a model of $T_3$. We work with 
$R(M)$ , the  pregeometry $cl$ on $R(M)$, the  set $\X$ of formulas, 
and $\ind^w$ - genericity, as we have defined  in this paper.  Let $\C$ be again  the monster model of $T_3$.

\begin{enumerate}
\item[C(1)] The models $M$ of $T_3 $ are  graded 
3-nilpotent Lie algebras over 
$\F_q$, where $\F_q$ is a finite field. The graduation is given by $U_1, U_2, U_3$.\\
We work with  $R(x) = U_1(x) \cup U_2(x)$ in $M$. Then $M = \langle R(M) \rangle$.
\end{enumerate} 

\noindent C(1) is clear
\begin{enumerate}
\item[C(2)] There is a pregeometry on $R(M)$, given by $a \in cl(H)$ for  $a \in R(M)$ and $H \in \h_3$ and a notion " $H$ is a strong 
substructure of $M$", short $H \le M$. Both notions are invariant under automorphisms of $\C$. $\langle 0 \rangle \le M$. For every  finite $H$ there exists an finite algebraic extension that is strong in $M$. Algebraic extensions of strong subspaces in $\h_3$ are strong. If $M$ and $N$ are models of $T_3$, $H \subseteq R(M)$, $K \subseteq R(N)$, $tp^M(H) = tp^N(K)$, and $a \in M_i$ and $b \in N_i$ for $i = 1$ or $i = 2$ are geometrically independent of $H$ and $K$ respectively, then $tp^M(a,H) = tp^N(b,K)$. If in this case $H \le M$, then $\langle Ha \rangle \le M$. The geometrical dimension $d(\C)$ of $R(\C)$ is infinite.
\end{enumerate}

\noindent C(2) is proved in this paper, especially in section 9. The next condition is proved in section 10.
$\X$ is  defined there.

\begin{enumerate}
\item[C(3)] There is a set ${\X}$ of formulas $\psi(\bar{x},y)$ in $L^{eq}$ such that $\psi(\bar{x},b)$  is either empty or strongly minimal. \\
If $\psi(\bar{a},b)$ is true, then $b$ is the canonical parameter. It is the canonical base algebra $B$ modulo some 
automorphisms.
$\bar{a} = \bar{a}_1 \bar{a}_2$ with $\bar{a}_1 \subseteq U_1$ and
$\bar{a}_2 \subseteq U_2$ defines an o-system over $B$.
If $\psi(\bar{x},b)$ and $\psi(\bar{x},b')$ have common generic solutions, then  $b = b'$. \\
Length$(\bar{x})=n_{\psi} \ge 2$.
If $b$ is in ${\rm dcl}^{eq}(U)$ and $M\models \psi(\bar{a},b)$, then $\bar{a}\in cl(U)$.
If furthermore $U\le M$, then either $\bar{a}\subseteq U$ or $\bar{a}$ is a generic solution over $U$. In the generic case $\langle U\bar{a}\rangle \le M$.
${\X}$ is closed under affine o-transformations. See below. 
\end{enumerate}
We want to explane, what it means that ${\X}$ is closed under affine o-transformations:\\
We consider $\psi(\bar{x},y)$ in $\X$.
Let $\phi(\bar{x},\bar{y})$ be a corresponding formula  in $\X^{home}$. By Definition \ref{10.3}
and Lemma \ref{10.4}
we have that $\C \models \phi(\bar{a},\bar{b})$, $\bar{a} \notin U$, $\bar{b} \in U,$
and $\delta( \bar{a} / U) \ge 0$ implies  
\[ \langle U \bar{a} \rangle = U \otimes_{\langle \bar{b} \rangle} \langle \bar{b} \bar{a} \rangle \]
If  $\bar{c} \in \langle U \bar{a} \rangle \setminus U$ is an o-system over $U$ with 
\[ \langle U \bar{a} \rangle = \langle U \bar{c} \rangle \] 
then $\bar{c_1} = H_1( \bar{a_1} ) + \bar{m_1}$, where $H_1$ is a vector space 
automorphism of $\langle \bar{a_1} \rangle$ and $\bar{m_1}$ is a tuple in $U_1$.\\
Furthermore $\bar{c_2} = H_2(\bar{ a_2}) + H_d(\bar{a_1}) + \bar{m_2}$, where $H_2$ is
a vector space automorphism of $\langle \bar{a_2} \rangle$, 
$H_d(\bar{a_1}) = (\ldots, \sum_j [d^{i,j}, a_1^j], \ldots) $, 
where $d^{i,j}$ is in $U_1$, and $\bar{m_2}$ is a tuple in $U_2$.
The matrix $H_d$ uses the Lie-multiplication.\\
The tuple $H_1( \bar{a_1} ) , H_2(\bar{ a_2}) + H_d(\bar{a_1})$ we denote by $H(\bar{a})$.

\begin{definition}\label{o-trans}
We call $H(\bar{a})$ an o-transformation over $B$, if all $d^{i,j} \in B$. Furthermore 
$H(\bar{a}) + \bar{m_1 m_2}$ is an affine o-transformation over $B$.
\end{definition} 

\noindent $H(\bar{a}) + \bar{m_1 m_2}$ gives us  a new generating o-system of $\langle U \bar{a} \rangle$ over $U$. As above we get a 
a formula in $\X^{home}$ for $\phi(H(\bar{x}) + \bar{m},\bar{b})$. An o-transformation $H(\bar{x})$
is an homomorphism of the the underlying vector space of $\C^{n_\alpha}$.  \\
It is easily seen:
\begin{lemma}\label{11.2}
Assume that $U \le \C$, $\langle U \bar{a} \rangle = \langle U \bar{e} \rangle$ 
are a minimal prealgebraic extension of $U$ given by 
$\C \models \phi(\bar{a}, \bar{b})$  and   $\C \models \psi(\bar{e}, \bar{c})$ 
with $\phi, \psi \in \X^{home}$ and $\bar{b}$ and $\bar{c}$ are in $B \subseteq U$.
Then the  generating o-system $\bar{e}$ of $\langle U \bar{a} \rangle$ over $U$ 
is given by an affine 
o-transformation  of $\bar{a}$ over $B$.
\end{lemma}

\begin{proof}
By the considerations above we have $\bar{e} = H(\bar{a}) + \bar{m}$, where $H$ is an
o-transformation and $\bar{m} \in U$. By Lemma \ref{10.4}
\[ \langle U \bar{e} \rangle = U \otimes_B \langle B \bar{e} \rangle ,\]
and \[ \langle U \bar{a} \rangle = U \otimes_B \langle B \bar{a} \rangle. \]
Since every element of $\bar{e}_2$ is involved in some relations,  
these relations have to split.
We get that $H$ is an o-transformation over $B$.
\end{proof}

\noindent Also the next condition we have proved:

\begin{enumerate}
\item[C(4)] Let  $H \le M$, $K \le M$. If $\langle H \rangle \cong \langle K \rangle$, then $tp(H) = tp(K)$.
If $H \le K \le M$, $H \subseteq K \subseteq cl(H)$, then there is a chain $H = K_0 \le  K_1 \le \ldots \le K_n$, where $K_n  = K$, $K_i  \le M$, and $K_{i+1} \subseteq acl(K_i)$ or $K_{i+1}$ is obtained from $K_i$ by adding a solution of some $\psi(\bar{x},b)$ in $\X$ generic over $K_i$, where $b \subseteq dcl^{eq}(K_i)$.
\end{enumerate}

\begin{enumerate}
\item[C(5a)] Let $\psi(\bar{x},y)\in \X$, $V$ a subspace in $\h_3$, and $b\in dcl^{eq}(V)$. Then the $\ind$-generic type of $\psi(\bar{x},b)$ over $V$ is $\ind^w$-generic over $V$ and all  $\ind^w$-generics of $\psi(\bar{x},b)$ over $V$ have the same isomorphism type over $V$. They are $\ind^w$-generic over every $U\subseteq V$ with $b\in dcl^{ eq}(U)$. 

\item[C(5b)] If $\psi(\bar{x},y)\in  \X$, $U\le M$, $b \in  dcl^{eq}(H)$, and   $\bar{e}_0,\bar{e}_1,\ldots$  are solutions of $\psi(\bar{x},b)$  with $\bar{e}_i\not\subseteq \langle 
U, H,\bar{e}_0,\ldots,\bar{e}_{i-1}) \rangle$,
then there  are at most $\delta(H/U)$ 
%$\le o-dim(H/U)$ 
many $i$ such that $e_i$ is not $\ind^w$-generic over $\langle U,H,\bar{e}_0,\ldots,\bar{e}_{i-1}\rangle$.
\end{enumerate}

\noindent The first statements of C(5) follow from \ref{10.4} and \ref{10.9}. To prove the last assertion,
we consider 
\[0 \le \delta( \langle  H, \bar{e_0}, \ldots, \bar{e_{i}} \rangle/ U ) = \delta(H/U) + \sum_{j \le i} \delta^j,\] 

\noindent where $\delta^j = \delta(\bar{e_j}/\langle U, H, \bar{e_0}, \ldots, \bar{e_{j-1}} \rangle)$. \\
If $\delta^j \ge 0$, then   $e_j$ is  $\ind^w$-generic over $\langle U,H,\bar{e}_0,\ldots,\bar{e}_{j-1}\rangle$ by Lemma \ref{10.4}. This proves the assertion. \\

\begin{definition}\label{11.3} Let $X$ be a definable subset  of $\C^n$ of  Morley - degree 1. 
We consider $\C^n$ as a group with respect to $+$.\\
$X$ is called  a group set (respectively  torsor set) if its generic type  is the generic type of a definable subgroup of $\C^n$ (respectively coset of a definable subgroup). $X$ is groupless if it is not a torsor set.
\end{definition}

\noindent Let $D$ be a subalgebra of $\C$. Furthermore $\phi(\bar{x},\bar{e})$ is in $\X^{home}$ with
$\bar{e} \in D$ and there are at least 2 solutions in $D$, that form an o-system over $\bar{e}$. We 
assume, that $D^* \cong F(X)/I$, where $X$ is a generating o-system of $D^*$
that contains the o-system $\bar{e}_1 \bar{e}_2$ and $F(X)$ is the 
free 2-nilpotent graded Lie algebra over $X$. The isomorphism-type of the $U_1 \oplus U_2$-part of the solution set 
of $\phi$  is given by linear combinations $\Theta$ of monomials $[x,y]$ where $x$ is from
$\bar{x}_1$ and $y$ from $\bar{e}_1$ or $\bar{x}_1$ and of one element $g$ from 
$\langle \bar{e} \rangle_2$. We call $\Theta$ a relation of level 2.\\
Let $\bar{a}$ be a solution.
Since $g$ is also a term over the second solution, we can assume 
w.l.o.g. that $\Theta(\bar{a},\bar{e}) \in I$. \\
Then we consider $D^- = \F(D^*)/J$. Similary as above we define relations $\Delta$ of level 3.
Again it contains one summand  $h$ from $\langle \bar{e} \rangle_3$ and we can consider it 
as a term over the second solution. Hence w.l.o.g. $\Delta(\bar{a},\bar{e}) \in J$.

\begin{cor} \label{i+j}
We are in the situation above. A solution of $\phi(\bar{x},\bar{e})$ in $D$ is given by relations
of levels 2 and 3 that are in $I$ or $J$ respectively.
\end{cor}

\begin{enumerate}
\item[C(6)] Assume $C\supseteq B\subseteq A$ are strong subspaces of $\C$ in $\h_3^{fin}$   with $C \cap A = B$ and both minimal strong extensions of $B$ given by generic solutions over $B$ of formulas in $\X$. Furthermore there are $e\in  dcl^{ eq}(E)$, $E \in \h_3$, $E\subseteq \langle A C \rangle$, and 
at least two solutions in $\langle C A \rangle$ of some $\psi(\bar{x},e)$ in ${\X}$.
Let $\bar{d}\in  \langle A C \rangle$ be  a solution  of  $\psi(\bar{x},e)$ in ${\X}$ $\ind^w$-generic over $\langle C E \rangle$ and over $\langle A E \rangle$. \\
Then $\psi(\bar{x},e)$ defines a torsor set. If it defines a group set, then  $e$ is in ${\rm dcl}^{\rm eq}(B)$.
\end{enumerate}

\noindent  We can assume, that $C$ and $A$ are  defined  by formulas in $\X^{home}$ with canonical base 
algebras in $B$. 

\noindent We prove C(6). By assumption we have $\delta(C) = \delta(B) = \delta(A)$ and $B \le \C$ by Corollary \ref{rulesstrong3}. By Lemma \ref{subdelta3} we get
\[ \delta(\langle C A \rangle) \le \delta(C) + \delta(A) - \delta(B) = \delta(B). \]
Since $ B \le \C$ it follows $\delta(\langle C A \rangle) = \delta(B)$ and $\langle C A \rangle \le \C$.
Since $\delta(A/C) = \delta(A/B)$, we have by Lemma \ref{useful} 
\[ D =\langle C A \rangle = C \otimes_B A .\] 
$D$ is in $\h_3^{fin}$.
Assume $(D^*) = F(X)/I$ and $D^- = \F(D^*)/N$, where $X$ is a generating o-system of $D^*$,
$F(X)$ is the free 2-nilpotent Lie algebra over  $X$. Later we will modify $X$.
Then $I \subseteq F(C_1)_2 + F(A_1)_2$. Since $C \le \C$ and $A \le \C$ we can suppose w.l.o.g. that $\F(C^*) \subseteq \F(D^*)$, 
$\F(A^*) \subseteq \F(D^*)$, and $\F(B^*) = \F(C^*) \cap \F(A^*) \subseteq \F(D^*)$. \\
We obtain 
$N \subseteq \F(C^*)_3 + \F(A^*)_3$. \\

\noindent Let $\phi(\bar{x},\bar{e}) \in \X^{home}$ be the corresponding formula for $\psi(\bar{x},e)$, , where $e$ is $\bar{e}$ modulo some automorphisms and $\langle \bar{e} \rangle \subseteq E$.\\ 
We can assume, that $\bar{e} = \bar{e_1} \bar{e_2} \bar{e_3}$ 
is a generating o-system for 
$\langle \bar{e} \rangle$.\\
We assume  that the solution $\bar{d}$ of $\psi(\bar{x},y)$ consists of $d^{i}_1 \in U_1$ for $1 \le i \le m $ and 
$d^{i}_2 \in U_2$ for $m < i \le n$. \\ 
Let $X^B = X_1^B X_2^B X_3^B$ with $X_i^B \subseteq B_i$ be a vector space basis of $B$. \\
We extend $X^B$ by $X^C$ to get a vector space basis for  $C$ and \\
by $X^{A}$ to obtain a vector space basis for $A$. $X^C$ and $X^{A}$ should be graded as $X^B$. \\
We can extend $X^B X^C X^{A}$ to obtain a vector space basis of $C \otimes_B A$ as in Corollary \ref{c=3,a}.   
Corollary \ref{i+j} implies, that $ \bar{d_i}  \subseteq C_i + A_i$ for $i = 1,2$, since these elements  have to be involved in relations of $I$ or $N$. \\
The same is true for elements in $ \bar{e}$ . Here elements in $C_3 + A_3$ are possible. \\
Hence
the elements $d_1^j \in \C_1$ of $\bar{d}$ have  to be in $C_1 + A_1$ and the elements $d_2^j$ have to be in
$C_2 + A_2$. Since $\bar{d}$ is $\ind^w$ - generic over $\langle C E \rangle$ and $\langle A E \rangle$, we get 
$d_1^j = c_1^j + a_1^j$, where the $c_1^j \in C_1$ are linearly independent over $A_1 + E_1$ and the $a_1^j \in A_1$ are linearly independent over $C_1 + E_1$  and \\
$d_2^j = c_2^j + a_2^j$, where the $c_2^j \in C_2$ are linearly independent over $A_2 + E_2$ and the $a_2^j \in A_2$ are linearly independent over $C_2 + E_2$ .\\ 
Since $\bar{d}$ is a weakly generic solution over $\langle C E \rangle $ and $\langle A E \rangle$ 
Corollary \ref{10.9} implies
\[ \langle C E \bar{d} \rangle = \langle C E \rangle \otimes_{\langle \bar{e} \rangle } \langle \bar{e} \bar{d} \rangle \]
and 
\[ \langle A E \bar{d} \rangle = \langle A E \rangle \otimes_{\langle \bar{e} \rangle } \langle \bar{e} \bar{d} \rangle .\]
Note that mixed monomials $[x,y]$ with $x \in X_1^C $ and $y \in X_1^{A}$ are linearly independent over $C_2 + A_2$ and cannot  be involved in any relation of $I$. \\ 
We choose the vector space bases above with the following additional assumptions for $i = 1,2,3$:\\
$X^B$ contains a vector space bases $V_i^B$ with elements $e_i^{B,j}$ for $\langle \bar{e_i} \rangle \cap B$, where $i = 1,2,3$.\\
$ X^C$ contains  a vector space bases $V_i^C$ with elemets  $e_i^{C,j}$ for 
$\langle \bar{e_i} \rangle \cap C$ modulo $B_i$.\\
$ X^{A}$ contains  a vector space bases $V_i^{A}$ with elements $e_i^{A,j}$ for 
$\langle \bar{e_i} \rangle \cap A$ modulo $ B_i$.\\
Furthermore there are $f_i^j \in X_i^C$ and $g_i^j \in X_i^{A}$, such that $V_i^B V_i^C V_i^{A}$ together with the basis $V_i^+$ of elements
$ f_i^j + g_i^j$ form a vector space basis of $\langle \bar{e} \rangle_i \cap (C_i + A_i)$.\\
The $c_i^j$ and $a_i^j$ are linearly independent over $V_i^{B} V_i^{A} V_i^C \ldots f_i^j \ldots g_i^j \ldots$. 
We assume that they are part of our basis $X^B X^C X^{A}$. 
An element $\Theta $ in $I \cap  \langle \bar{e_1} \bar{d_1} \rangle^{F(D_1)}_2$ has the form:

\[ \Theta = \sum_{1 \le j \le m}   [c_1^j + a_1^j, h_1^j] + \Theta_c + \Theta_a ,\]

\noindent where $h_1^j \in \langle \bar{e_1} \rangle \cap B_1$ , $\Theta_c \in F(C_1)_2$ , and $\Theta_a \in F(A_1)_2$ , and $\Theta_c + \Theta_a \in \langle \bar{e} \rangle_2$. Here we use  the description of a vector space basis of 
$C \otimes_B A$ and $I \subseteq  \langle C_1 \rangle_2^{F(D_1)} + \langle A_1 \rangle_2^{F(D_1)}$. 
We call the $h_1^j$ the coefficients of $\Theta$. \\ 
Corollary \ref{c=3,a} 
implies that the description of a vector space basis for the representation of $\Theta$ as above  work also  for $ \langle C E \bar{d} \rangle = \langle C E \rangle \otimes_{\langle \bar{e} \rangle } \langle \bar{e} \bar{d} \rangle $
and 
$ \langle A E \bar{d} \rangle = \langle A E \rangle \otimes_{\langle \bar{e} \rangle } \langle \bar{e} \bar{d} \rangle $.
We see that the relations of level 2 define a torsor set. In the case of a group set there are no $\Theta_c + \Theta_a $ and we have only the coefficients in $B$.

\vspace{1cm}
\noindent Now we turn to level 3:  $\F(D^*)_3$.
We need an order of a subspace of $D_1$. We define $V_1^B < V_1^C < V_1^{A} < V_1^+  < c_1^j < a_1^j$. Inside $V_1^B$, $V_1^C$, $V_1^{A}$, and $V_1^+$ the order is given  by the  indizies. Furthermore 
$c_1^j < a_1^j < c_1^k < a_1^k$, if $j < k$. We consider 
\[ \F(D^*) = \F(C^*) \otimes_{\F(B^*)} \F(A^*) .\]
$ \langle \bar{e} \bar{d} \rangle_2^{\F(D^*)} \cap (\F(C^*) + \F(A^*))_2$ has the following vector space basis:\\
$V_2 = V_2^B V_2^C V_2^{A} V_2^+ V_2^0 V_2^1$, where $V_2^1 = \{  d_2^j : m < j \le n\} $ and $V_2^0$ is a subset of $\{ [d_1^j, e_1^{B,k}] : 1 \le j \le m, k \}$. By assumption the $[c_1^j, e_1^{B,k}]$ as summands of $[d_1^j, e_1^{B,k}] \in V_2^0$ are linearly independent over
$A_2 + E_2$. The same is true for the $[a_1^j, e_1^{B,k}]$ over $C_2 + E_2$. \\
In the next application of Corollary \ref{c=3,a} we  use $[[d_1^j, d_1^k], x] = [[d_1^j,x] , d_1^k] - [[d_1^k,x], d_1^j]$.\\
Then the following set $W$ is a vector space basis of $\langle \bar{e} \bar{d} \rangle_3^{\F(D^*)}$
over $\F(C^*)_3 + \F(A^*)_3$:
\[ W = W_1 \cup W_2 \cup W_3 \cup W_4 \cup W_5 \cup W_6,\] 
where 
\begin{enumerate}
\item $W_1 = \{ [d_2^j, x] : m < j \le n , x \in V_1^C \cup V_1^{A}  \cup V_1^+ \cup \{ d_1^k : 1 \le k \le m \}, \} $
\item $W_2 = \{ [[d_1^{i}, d_1^j ] , d_1^k ] : j < i, j \le k \}$,
\item $W_3 = \{[y, d_1^k] : 1 \le k \le m, y \in V_2 \setminus (V_2^B \cup V_2^1) \}, $
\item $W_4 = \{[y,d_1^k] :  1 \le k \le m, y = [x,z] , ((x \in V_1^C \wedge z \in V_1^{A} \cup V_1^+) \vee 
(x \in V_1^{A} \wedge z \in  V_1^+) \vee (x = f_1^{i} + g_1^{i} \wedge z = f_1^j + g_1^j \wedge i < j ),$
\item $W_5 = \{[[d_1^{i}, z], v] : 1 \le i \le m, z < v, z,v \in V_1^C V_1^{A} V_1^+ \} $.
\item $W_6 = \{[[d_1^{i}, z], d^k_1] : 1 \le i  \le k \le m,  z \in V_1^C V_1^{A} V_1^+ \} .$
\end{enumerate}
Finally we consider elements $\Delta \in N \cap \langle \bar{e} \bar{d} \rangle_3^{\F(D^*)}$. They need to have 
the following form:
\[ \Delta = \Delta_d + \Delta_c + \Delta_a \in N \cap \langle \bar{e} \bar{d} \rangle_3^{\F(D^*)}, \]
where $\Delta_c \in \F(C^*)_3$, $\Delta_a \in \F(A^*)_3$, 
$\Delta_c + \Delta_a \in \langle \bar{e} \rangle_3^{\F(D^* )}$,
and $\Delta_d$ is a sum of monomials over
$\bar{e} \bar{d}$ that contain at least one $d_i^j$. By the linear independence of $W$ over  $\F(C^*)_3 + \F(A^*)_3$ we get that 
$\Delta_d$ is a linear combination of monomials $[d_2^{i}, h_1]$, $[d^{i}_1,h_2]$, and $[[d_1^{i},h_1^j],h_1^k]$, where the coefficients $h_1$, $h_1^j$, and $h_1^k$ are all in $B_1 \cap \langle \bar{e} \rangle_1$ and $h_2$ is in $B_2 \cap \langle \bar{e} \rangle_2$.
By the form of the $\Theta$ and $\Delta$ we see that $\psi(\bar{x},e)$ defines a torsor set in $R(\C)^n$.
If it is a group set, then we have only the coefficients in $B$.\\

\noindent \begin{enumerate}
\item[C(7)] For every substructure $X$ of $M$ with $acl(R(X)) \cap R(M) = R(X)$,
$X_1 \not= \langle 0 \rangle$, and $\langle R(X) \rangle = X$, we have 
that $X $ is the set of all $  d = a_1 + a_2 + [c_1, c_2] $, where $a_1, c_1 \in  R(X)_1$ and $a_2, c_2 \in R(X)_2$.
\end{enumerate}

\noindent C(7) is clearly fulfilled. We have shown:

\begin{theorem}\label{11.4}
$T_3$ satiesfies the conditions C(1) - C(7).
\end{theorem}

\noindent The proof of (C6) implies the following

\begin{cor} \label{11.5}
Assume $\phi(\bar{x},\bar{y}) \in \X^{home}$ corresponds to $\psi(\bar{x},y) \in \X$.\\
If $\phi(\bar{x},\bar{y}) \in \X^{home}$ gives us   a torsor set, then isomorphism type of $\bar{x}$ over
$\bar{y}$ is given by equations of the following form:
\[ \sum_{1 \le j \le m} [x_1^j, z^j]  + y^2 = 0\]
where $z^j \in \langle  \bar{y} \rangle_1$  and $y^2 \in \langle \bar{y} \rangle_2 $ or
\[ \sum_{1 \le j \le m} w^j + y^3 = 0\]
where the $w^j$ are monomials of the form $[x_2^k, v_1]$ with $v_1 \in \langle  \bar{y} \rangle_1$ 
or of the form $[x_1^k,v_2]$ with $v_2 \in \langle  \bar{y} \rangle_2$, or of the form
$[[x_1^k,v_1^l],v_1^{i}]$ with $v_1^l, v_1^{i} \in \langle  \bar{y} \rangle_1$ and 
$y^3 \in \langle \bar{y}^3 \rangle$. \\
A group set, given by a formula in $\X$,  has no $y^2$ and $y^3$ in the equations. \\
If the set defined by $\phi(\bar{x},\bar{y}) \in \X^{home}$ is a torsor set, then $\phi(\bar{x},\bar{y}) \in \X^{home}$ defines itself a coset of a group. In case of a group set it defines a group.
\end{cor}

\begin{proof} First we assume that $\phi$ gives a group set.
In the setting of (C6) let $A$ and $C$ be extension of $B$, given by  generic solutions $\bar{a} $
and $\bar{c}$ of $\phi(\bar{x},\bar{b})$ over $B$. By definition $\bar{a} + \bar{c}$ is a solution
of  $\phi(\bar{x},\bar{b})$ weakly generic over $A$ and $C$. Then the proof of (C6) gives the 
assertion. If $\phi$ gives a torsor set, then let $\bar{a}$ be again a generic solution  of $\phi$ and 
$\bar{c}$ a solution of the corresponding group set.
\end{proof}

%%%%%%
%%%%%% section 12
%%%%%%

\section{Codes and Difference Sequences}
\noindent To get our main result we follow the proof in \cite {Bau09}. We replace  the conditions P(I) - P(VII) 
in that paper by C(1) - C(7).  
$R(\C)$ is the  union of two  connected vector spaces now.  
According to this we have two kinds of elements in $R(\C)$. 
Therefore we consider substructures in $\h_3$ instead of  the vector subspaces  in \cite {Bau09}. \\
Our approach in this paper is not strict axiomatic using C(1) - C(7), as in \cite{Bau09}. The next Lemma corresponds to Lemma 3.6 in \cite{Bau09}.

\begin{lemma}\label{add3.6}
Assume $M$ is a model of $T_3$,   $A,U\in \h_3$ with   $A\le M$and $U \le M$.   Then
\begin{enumerate} \item[( i)] $A\cap U\le M$, and 
\item [(ii)] If $A \cap U  \in \h_3$, $d(A/U) = d(A/ A \cap U)$, then $\langle U A \rangle \le M$.
There  is a sequence of minimal strong extensions for $A$
\[B_0 = A \cap U \le B_1 \le \ldots \le B_k = A \le M.\]
For every such sequence for $A$ over $A \cap U$
\[ U \le \langle U B_1 \rangle \le \ldots \le \langle U B_k \rangle  \le M \]
is a sequence of minimal strong extensions for $\langle U A \rangle$ over $U$.

\end{enumerate}
\end{lemma}

\begin{proof} ad(i) For finite $U$ (i) is Lemma \ref{rulesstrong3}. Otherewise there is a finite $U_0 \le M$
with $ A \cap U \subseteq U_0 \subseteq U$. Then $A \cap U = A \cap U_0 \le M$.\\
ad(ii) 
By Lemma \ref{9.7} we get a geometrical
sequence $A\cap U=B_0\le B_1\le \ldots \le
B_k=A$. That means $B_{i+1} $ is minimal strong over $B_i$. All $B_i$ are strong in $M$.
We show by induction on $i$ that $\langle U B_i \rangle \le M$. We have  
$\langle U B_0 \rangle =U\le M$. If $B_{i+1}=\langle B_i,b\rangle$ where $b$
is algebraic over $B_i$, then  $\langle
UB_ib\rangle\le M$ by C(2). In the prealgebraic case the
assertion follows from $\langle U B_i\rangle \le M$ and C(3). In the
transcendental case $B_{i+1}=\langle B_ib\rangle$ with
$b\notin{\rm cl}(B_i)$ we have $b\notin {\rm cl}(\langle B_i U \rangle )$ since
$d(A/U)=d(A/B_0)$. By C(2) $\langle B_{i+1} U \rangle \le M$.
\end{proof}

\begin{definition}\label{12.2} Two definable sets $X$ and $Y$ of Morley degree 1 are equivalent, if ${\rm MR}(X)={\rm MR}(Y)$ and ${\rm MR}(X\Delta Y)<{\rm MR}(X)$. We write $X\sim Y$.
\end{definition}

\begin{definition}\label{12.3}
We consider definable sets $X \subseteq \C^n$ where the  elements $\bar{a} = \bar{a_1} \bar{a_2}$  are o-systems with  $U_1(a_1^j)$ and  $U_2(a_2^j)$. If $H(\bar{x})$ is an o-transformation, then we use
$H(X) =\{H(\bar{a})  : \bar{a} \in X \}$.
 $X + \bar{m} = \{ \bar{a_1} + \bar{m_1} 
,\bar{a_2} + \bar{m_2} : 
\bar{a} \in X \}$ for $\bar{m} = \bar{m_1} \bar{m_2}$ with $U_1(\bar{m_1})$ and $U_2(\bar{m_2})$.

\end{definition}

\noindent The next 3 lemmas concern countable $\omega$-stable theories. In \ref{add2.2} 2) we consider 
only $T_3$. 
For background and proofs    see also \cite{Bau09}:

\begin{lemma}\label{add2.1}(Version of Martin Ziegler \cite{Zi06})\\
Assume that $T$ is a countable $\omega$-stable theory of expansions of abelian groups. 
Let $M$ be a model of $T$  and $a$, $b$, $c$ be elements of $M$ with $a+b+c=0$ and pairwise independent over some set $B$. Then we have:
\begin{enumerate}
\item[{\rm 1)}] The strong types of the elements $a$, $b$, $c$ over $B$ have the same stabilizer $U$ and $U$ is connected.
\item[{\rm 2)}] $a$, $b$, and $c$ are generic elements of ${\rm acl}(B)$-definable cosets of $U$.
\item[{\rm 3)}] It follows that $a$, $b$, and $c$ have the same Morley rank over $B$ namely ${\rm MR}(U)$. $U$ is definable over ${\rm acl}(B)$.
\end{enumerate}
\end{lemma}

\begin{lemma}\label{add2.2}
Let $X$, $Y$ be definable sets of Morley degree $1$.
\begin{enumerate}
\item[{\rm 1)}] If $X\sim Y$, $X,Y\subseteq \C^n$, and $X$ is a group set $($resp. torsor set$)$, then $Y$ is a group set $($resp. torsor set$)$.
\item[{\rm 2)}] We work in $T_3$. We assume $X$ is defined by some $\phi(\bar{x},\bar{b})$ in 
$\X^{home}$ in $\C^n$ and $H(X) $ is a  o-transformation over $\bar{d}$.
If $X$ is a group set then $H(X) $ is a group set. If $X$ is a torsor set then $H(X) + \bar{m}$ is a torsor
set.

\end{enumerate}
\end{lemma}

\begin{proof}
ad(2) If $X$ is a group set, then $H(X)$ is a group set by Corollary \ref{11.5}. For $H(X)$ there is a 
formula in $\X^{home}$ over $\langle \bar{b} \bar{d} \rangle$.
\end{proof}

%%%%%%%%%
% Lemma 2.3 C3
%%%%%%%%%%%

\begin{lemma}\label{add2.3}
Let $\varphi(\bar{x},\bar{y})$ be a formula such that $\C\vDash\exists\bar{x}\,\varphi(\bar{x},\bar{b})$ implies that $\varphi(\C,\bar{b})$ is a strongly minimal subset of $\C^n$.
Then $\{\bar{b}:\varphi(\C,\bar{b})\mbox{ is a group set}\}$ is definable. Similarly for torsor sets.
\end{lemma}

{\em Proof\/.} We consider the group case. The following statements are equivalent:
\begin{enumerate}
\item[i)] $\varphi(\C,\bar{b})$ is a group set.
\item[ii)] There exist two generic $\bar{b}$-independent realizations $\bar{a}_1$ and $\bar{a}_2$ of $\varphi(\bar{x},\bar{b})$ such that $\C\vDash\varphi(\bar{a}_1+\bar{a}_2,\bar{b})$.
\item[iii)] $\C\vDash\exists^\infty\,\bar{x}_1\in \C^n\;\exists^\infty\,\bar{x}_2\in\C^n(\varphi(\bar{x}_1,\bar{b})\wedge
\varphi(\bar{x}_2,\bar{b})\wedge\varphi(\bar{x}_1+\bar{x}_2,\bar{b}))$.
\end{enumerate}
The equivalence of i) and ii) is shown in
\cite{BMPZ3}. iii) is first order since
$\varphi(\bar{x},\bar{b})$ is strongly minimal. It is clearly
equivalent with ii).\hfill$\Box$
\bigskip

\noindent Note that $X$ is a torsor set if for some (every) $x\in X$ the set $X-x$ is a group set.
\bigskip

\begin{definition} Given a group set $X$ defined by a formula in $\X^{home}$ over $\bar{b}$, 
its invariant group is the set ${\rm Inv}(X)$ of o-transformations $H$ over 
$\bar{b}$ with $H(X)\sim X$.
\end{definition}
\bigskip

\noindent For strongly minimal $\varphi(\bar{x},\bar{b})$ \, "$H\in{\rm Inv}(\varphi(\bar {x},\bar{b}))$"\ is an elementary property of $\bar{b}$. As in \cite{BMPZ3} Lemma \ref{add2.1} implies

%%%%%%%%%%
% Lemma 2.4 C4
%%%%%%%%%%

\begin{lemma}\label{add2.4}
Let $X \subseteq \C^n$ be defined by a formula $\phi(\bar{x}, \bar{b}) \in \X^{home}$, where 
$\bar{b} \subseteq B$. $H$ is an o-transformation over $\bar{b}$.\\
Let $\bar{e}_0$ and $\bar{e}_1$ be two generic $B$-independent elements in $X$. If $\bar{e}_0-H\bar{e}_1\ind\limits_B \bar{e}_0$ , then $X$ is a torsor set. Moreover, if $X$ is a group set, then $H$ is in ${\rm Inv}(X)$.
\end{lemma}

\begin{proof}
Note that $H(\bar{e_j})$ and $\bar{e_j}$ are interdefinable over $B$. We have 
\[ MR(H(\bar{e_1})/B, \bar{e_0} - H( \bar{e_1})) = MR(\bar{e_0}/B, \bar{e_0} - H( \bar{e_1}))
= MR(\bar{e_0}/B) = MR(\bar{e_1}/B).   \]
Hence by Lemma \ref{add2.1} X is a torsor set. By Corollary \ref{11.5} $\phi(\bar{x}, \bar{b})$ defines 
the coset. In the  group case $H(\bar{e_1})$ is a generic element of a coset of $X$. By Lemma \ref{add2.2}
$H(X)$ is a group set, in fact a group. Hence $X = H(X)$. 
\end{proof}

\begin{lemma}\label{addl4.1}
\begin{enumerate}
\item[{\rm a)}] Let $\varphi(\bar{x},y)$ and
$\psi(\bar{x}_0,\ldots,\bar{x}_\mu,y)$ be formulas where
$\bar{x}$ and $\bar{x}_i$ are in the home sort. Assume that
$\varphi(\bar{x},b)$ is strongly minimal where $b$ is
in $\C^{\rm eq}$. Then we can express that any Morley sequence
$\bar{a}_0,\ldots,\bar{a}_\mu$ of $\varphi(\bar{x},b)$
fulfils $\vDash\psi(\bar{a}_0,\ldots,\bar{a}_\mu,b)$.
\item[{\rm b)}] $X\sim Y$ for strongly minimal sets can be
expressed.
\end{enumerate}
\end{lemma}

\begin{proof} b) follows from a) and
\[
\exists^\infty\,\bar{x}_0\;\exists^\infty\bar{x}_1\ldots\exists^\infty\,\bar{x}_\mu\Big(\bigwedge\limits_{i\le\mu}\varphi(\bar{x}_i,\bar{b})\wedge\psi(\bar{x}_0,\ldots,\bar{x}_\mu,b)\Big)
\]
is the desired formula in a).
\end{proof}

\begin{definition} If $X$ is a strongly minimal subset of $\C^n$ and $X\sim\varphi(\bar{x},b)$ where $b\in\C^{\rm eq}$, then we say that $X$ is encoded by $\varphi(\bar{x},y)$.
\end{definition}

\noindent We define codes similarly as in \cite{BMPZ3} and \cite{Bau09}. It is a modification of E.~Hrushovski's definition \cite{Hr2} to our context. The set $\cc$ of good codes is a modification
of $\X$.
\bigskip

\begin{definition}  $\varphi_\alpha(\bar{x},{y})$ is a code formula or short a code, if it fulfils the following conditions:
\begin{enumerate}
\item[a)] Length$(\bar{x})=n_\alpha\ge 2$, $y$ is an element in $\C^{eq}$ that encodes a substructure 
$\langle \bar{y} \rangle$, and $\bar{x} = \bar{x_1} \bar{x_2}$ is an o-system over $\langle \bar{y} \rangle$, where $\bar{x_1}$
is in $U_1$ and $\bar{x_2}$ is in $ U_2$.
\item[b)] The set $\varphi_\alpha(\bar{x},b)$ is either empty or strongly minimal.
\item[c)] $n_\alpha$ is the o-dimension for all solutions.
\item[d)] $\varphi_\alpha(\bar{x},b)\sim\varphi_\alpha(\bar{x},b')$ implies $b =b'$.
\item[e)] If some non-empty $\varphi_\alpha(\bar{x},b)$ is groupless, then all $\varphi_\alpha(\bar{x},b')$ are.
\item[f)] $\varphi_\alpha(\bar{x_1}+\bar{m_1},
\bar{x_2} + \bar{m_2},b)$ is encoded by $\varphi_\alpha$ for all 
$\bar{m_1} \in \C_1^{\mid \bar{x_1} \mid}$ and all
$\bar{m_2} \in \C_2^{\mid \bar{x_2} \mid}$.
\item[g)] For all o-transformations $H$ over $\langle \bar{d} \rangle$  the set $\varphi_\alpha(H(\bar{x}), 
,b)$ is encoded by $\varphi_\alpha$.
\end{enumerate}
\end{definition}
\medskip

By b) and d) $b$ is the canonical parameter of $\varphi_\alpha(\bar{x},b)$.

%%%%%%%%%%%
% Lemma 4.2 D2
%%%%%%%%%%%%%

\begin{lemma}\label{add4.2}
There is a set ${\cc^0}$ of codes $\varphi_\alpha(\bar{x},y)$ that encodes the strongly minimal sets that are encoded by the formulas in ${\X}$.
\end{lemma}

\begin{proof} The proof is similar as the proof in \cite{Bau09} and that proof comes from \cite{BMPZ3}.
The formulas in
${\X}$ have the properties a)--d). Using Lemma \ref{add2.3} we
can assume w.l.o.g. that the formulas in ${\X}$ satisfy
a)--e). Since ${\X}$ is closed under suitable affine o-transformations given in f) and g) by
compactness there are finitely many $\varphi_1,\ldots,\varphi_r$
in ${\X}$ that encode all possible suitable affine o-transformations of
some given $\varphi(\bar{x},b)$.
Moreover we know that either all or none encode groupless sets by Lemma \ref{add2.2}.\\
Choose a sequence $w_1,\ldots,w_r$ of different definable elements in $T^{\rm eq}$. Define
\begin{eqnarray*}
\theta_i^1(\bar{b})&=&"\mbox{No }\varphi_j\; (j<i)\mbox{ encodes }\varphi_i(\bar{x},b)"\\
\theta_i^2(\bar{b})&=&"\varphi_i(\bar{x},b)\mbox{ is equivalent to some }\varphi(H(\bar{x})+\bar{m},b')"\\
\varphi'_i(\bar{x},y)&=&\varphi_i(\bar{x},y)\wedge\theta_i^1(y)\wedge\theta_i^2(y)\end{eqnarray*}
Finally let $\varphi_\alpha(\bar{x},y_1,y)=\bigvee\limits_{i=1}^r(\varphi'_i(\bar{x},y)\wedge y_1=w_i)$. 
$\varphi_\alpha$ has the properties a)--e). To show f) and g) let $b$, $\bar{m_1},\bar{ m_2}$, $H_1, H_d, H_2$ be given. By construction $\varphi_\alpha(\bar{x},b_1,b)$ is equivalent to some $\varphi(H'_1(\bar{x_1})+\bar{m_1}',  H'_2(\bar{x_2)}+ H'_{d'}(\bar{x_1}) + \bar{m_2}'  ,b')$. Hence
\[
\varphi_\alpha(H_1(\bar{x_1})+\bar{m_1 } ,H_2(\bar{x_2}) + H_d(\bar{x_1}) +\bar{m_2 },b_1, b)\sim\]
\[ \varphi((H'_1H_1)(\bar{x_1})+H'_1(\bar{m_1})+\bar{m_1}', (H'_2H_2)(\bar{x_2}) + H'_2(H_d(\bar{x_1})) +
H'_{d'}(H_1(\bar{x_1})) +
H'_2(\bar{m_2}) +\bar{m_2}',b')
\]
and the right side is encoded by $\varphi_\alpha$ by construction,
since 
\[   H'_2(H_d(\bar{x_1})) +H'_{d'}(H_1(\bar{x_1})) = H_c(\bar{x_1}) \]
for some $H_c$.
\end{proof}

%%%%%%%%%%%
% Theorem 4.3 D3
%%%%%%%%%

\begin{theorem}\label{add4.3}
There is a set ${\cc}$ of good codes such that for every $\varphi(\bar{x},b)$ in ${\X}$ there is a unique $\varphi_\alpha(\bar{x},c)$ in ${\cc}$ such that $\varphi(\C,b)\sim\varphi_\alpha(\C,c)$.
\end{theorem}

\begin{proof} (modification of the proofs in \cite{Bau09} and \cite{BMPZ3})\\
Let $\alpha_i$ be a list of all codes from $\cc^0$  (see Lemma~\ref{add4.2}). Again
define:
\[
\theta_i(b)="\mbox{No }\varphi_{\alpha_j}\;(j<i) \mbox{ encodes }\varphi_{\alpha_i}(\bar{x},b)"\mbox{ and }\varphi'_{\alpha_i}(\bar{x},y)=\varphi_{\alpha_i}(\bar{x},y)\wedge\theta_i(y).
\]
$\varphi'_{\alpha_i}$ satisfies a)--e) we have to show f) and g). By construction $\varphi'_{\alpha_i}(H(\bar{x})+\bar{m},b)$ is encoded by $\varphi_{\alpha_i}$. We need only to show that no $\varphi_{\alpha_j}$ with $j<i$ encodes it. Suppose that
\[
\varphi_{\alpha_i}(H_1(\bar{x}_1)+\bar{m}_1,
H_2(\bar{x}_2) + H_d(\bar{x}_1) + \bar{m}_2,b)\sim\varphi_{\alpha_j}(\bar{x},b').
\]
Then
\[
\varphi_{\alpha_i}(\bar{x},b)\sim\varphi_{\alpha_j}(H^{-1}_1(\bar{x}_1 - \bar{m}_1),
H^{-1}_2(\bar{x}_2 - H_d(\bar{x}_1) - \bar{m}),b')\sim\varphi_{\alpha_j}(\bar{x},b'')
\]
for some $b''$. This contradicts the definition of $\varphi'_{\alpha}$. Hence 
${\cc^+}=\{\alpha'_i:i<\omega\}$ has the desired properties.
\end{proof}
\bigskip

%\begin{definition}
%We define $\varphi_\alpha(\bar{x}, \bar{y}) \approx \varphi_\beta(\bar{x}, \bar{w})$ iff
%there are 
%$\bar{m_1} \in \C_1$ with $\bar{x_1}$ is linearly independent over 
%$\langle \bar{m_1} \rangle_1$ and
%$\bar{m_2} \in \langle \bar{m_1} \bar{x_1} \rangle_2$ with $\bar{x_2}$ is linearly independent
%over  $\langle \bar{m_1} \bar{x_1} \rangle_2$, such that 
%$\varphi_\alpha(\bar{x_1} + \bar{m_1}\; \bar{x_2}  + \bar{m_2}, \bar{y})$ is encoded by 
%$\varphi_\beta(\bar{x},\bar{w})$.
%$\approx$ is an equivalence relation on the set $\cc^1$ with finite classes. 
%Let $\cc $ be a subset of 
%$\cc^1$, that contains exactely one element of every $\approx$-class in $\cc^1$. If there is 
%a coset code  in that class, then we choose the coset code. The elements of $\cc$ are called 
%good codes.
%\end{definition}

%%%%%%%%%
% Corollary 4.4 D4
%%%%%%%%%%%

\begin{cor}\label{add4.4}
In the definition of C(1) - C(7) we can replace ${\X}$ by a set ${\cc}$ of good codes.
\end{cor}

\begin{cor}\label{add4.7}
If $D\le\C$ and $D'$ is a prealgebraic minimal extension of $D$, then there is a 
unique  good code $\alpha$ such that there is some   $b$ in ${\rm dcl}^{\rm eq}(D)$ and a generic solution $\bar{a}$ of $\varphi_\alpha(\bar{x},b)$ that generates $D'$ over $D$. If  $\bar{c}$ is a generating 
o-system of $D'$ over $D$, then there is some $d \in {\rm dcl}^{\rm eq}(D)$ such that 
$\bar{c}$ is a generic solution of $\varphi_\alpha(\bar{x},d)$
\end{cor}

\begin{proof} This follows from the properties of good codes, Lemma \ref{add4.3}, and
Corollary ~\ref{add4.4}. 
\end{proof}

\noindent For each $\alpha\in{\cc}$ we choose a natural number $m_\alpha$ such that
the existence of $m_\alpha$ common solutions of $\varphi_\alpha(\bar{x},\bar{b})$ and $\varphi_\alpha(\bar{x},\bar{b}')$ implies $\varphi_\alpha(\bar{x},\bar{b})\sim\varphi_\alpha(\bar{x},\bar{b}')$. This is possible by the strong minimality of $\varphi_\alpha(\bar{x},\bar{y})$.

%%%%%%%%%%
% Theorem 4.5 D5
%%%%%%%%%%

\begin{theorem}\label{add4.5}
For each $\alpha\in{\cc}$ and $\lambda\ge m_\alpha$ there is a formula $\psi_\alpha(\bar{x}_0,\ldots,\bar{x}_\lambda)$ with the following properties:
\begin{enumerate}
\item[{\rm a)}] For any initial segment $\{\bar{e}_0,\ldots,\bar{e}_\lambda,\bar{f}\}$  of a Morley sequence of $\varphi_\alpha(\bar{x},b)$
\[
\psi_\alpha(\bar{e}_0-\bar{f},\ldots\bar{e}_\lambda-\bar{f})
\]
holds.
\item[{\rm b)}] For each realization $(\bar{e}_0,\ldots,\bar{e}_\lambda)$ of $\psi_\alpha$ there is a unique $b$ with $\vDash\varphi_\alpha(\bar{e}_i,b)$ for $0\le i\le\lambda$. Moreover $b\in{\rm dcl}^{\rm eq}(\bar{e}_{i_1},\ldots,\bar{e}_{i_{m_\alpha}})$ for any $i_1<\ldots<i_{m_\alpha}$.\\
$($We call $b$ the canonical parameter of the sequence $\bar{e}_0,\ldots,\bar{e}_\lambda)$.
\item[{\rm c)}] $b$ encodes a unique canonical substructure $\langle \bar{b} \rangle$ for a corresponding formula in $\X^{home}$ and each realization of $\psi_\alpha$ forms an o-system over $\langle \bar{b} \rangle$.
\item[{\rm d)}]  If $\vDash\psi_\alpha(\bar{e}_0,\ldots,\bar{e}_\lambda)$, then for $i\in\{0,\ldots,\lambda\}:$
\[
\vDash\psi_\alpha(\bar{e}_0-\bar{e}_i,\ldots,\bar{e}_{i-1}-\bar{e}_i,-\bar{e}_i,\bar{e}_{i+1}-\bar{e}_i,\ldots,\bar{e}_\lambda-\bar{e}_i).
\]
\item[{\rm e)}] Given a realization $(\bar{e}_0,\ldots,\bar{e}_\lambda)$ of $\psi_\alpha$ with canonical parameter $b$ as in {\rm b)}, we have the following:\\
Suppose $\alpha$ is groupless:
\begin{enumerate}
\item[{\rm 1)}] If $\bar{e}_i$ is a generic solution of $\varphi(\bar{x},\bar{b})$, then $\bar{e}_i-H\bar{e}_j\nind\limits_{\bar{b}}\bar{e}_i$ for all o-transformations  $H$ over $\bar{b}$ and $j\ne i$.
\end{enumerate}
Suppose $\alpha$ is a coset code, then:
\begin{enumerate}
\item[{\rm 2)}] $\varphi_\alpha(\bar{x},b)$ is a group-set.
\item[{\rm 3)}] $\psi_\alpha(\bar{e}_0,\ldots,\bar{e}_{i-1},\bar{e}_i-\bar{e}_j,\bar{e}_{i+1},\ldots,\bar{e}_\lambda)$ for $j\ne i$.
\item[{\rm 4)}] $\psi_\alpha(\bar{e}_0,\ldots,\bar{e}_{i-1},H\bar{e}_i,\bar{e}_{i+1},\ldots,\bar{e}_\lambda)$ for all suitable o-transformations  $H\in{\rm Inv}(\varphi_\alpha(\bar{x},b))$ over $\bar{b}$.
\item[{\rm 5)}] Moreover, if $\bar{e}_i$ is generic in $\varphi_\alpha(\bar{x},\bar{b})$, then $\bar{e}_i-H\bar{e}_j\nind\limits_{\bar{b}}\bar{e}_i$ for all $j\ne i$ and 
all suitable o-transformations 
$H$ over $\bar{b}$  not in ${\rm Inv}(\varphi_\alpha(\bar{x},b))$.
\end{enumerate}
\end{enumerate}
\end{theorem}

\begin{proof} (Copy of the corresponding proof in \cite{BMPZ3} but in another theory.)\\
We consider the following partial type
\[
\Sigma(\bar{e}_0,\ldots,\bar{e}_\lambda)=\parbox[t]{8cm}{"There is some $b'$ and some Morley sequence $\bar{a}_0,\ldots,\bar{a}_\lambda,\bar{f}$ of $\varphi_\alpha(\bar{x},b')$ with $\bar{e}_i=\bar{a}_i-\bar{f}$."}
\]
{\em Claim\/.} $\Sigma$ has the properties a) -- e).
\medskip

\noindent {\em Proof of the claim\/.} a) is clear.\\
Given such a realization $\bar{e}_0,\ldots,\bar{e}_\lambda$ of $\Sigma$, 
where  $b'$ and $\bar{a}_0,\ldots,\bar{a}_\lambda,\bar{f}$ 
are given as above. Hence $\{\bar{e}_i\}_{0\le i\le\lambda}$ is a Morley sequence of $\varphi_\alpha(\bar{x}+\bar{f},\bar{b}')$. Then $\varphi_\alpha(\bar{x}+\bar{f},b')\sim\varphi_\alpha(\bar{x},b)$ for some $b$ by f) in the definition of codes. 
Since $b$ is the canonical parameter of the generic type determined by $\varphi_\alpha(\bar{x},b)$, the sequence $\{\bar{e}_i\}_{0\le i\le\lambda}$ is a Morley sequence for $\varphi_\alpha(\bar{x},b)$. Given another $b^*$ which satisfies $\varphi_\alpha(\bar{e}_i,\bar{y})$ for $m_\alpha$ many $i$'s, it follows that $\varphi_\alpha(\bar{x},b^*)\sim\varphi_\alpha(\bar{x},b)$ by the choice of $m_\alpha$. By d) in the code-definition $b^*= b$. Hence b) is true for $\Sigma$.
\smallskip

\noindent c) is clear.
\smallskip

\noindent Since $\bar{a}_0,\ldots,\bar{a}_{i-1},\bar{f},\bar{a}_{i+1},\ldots,\bar{a}_\lambda,\bar{a}_i$ is again a Morley sequence for $\varphi_\alpha(\bar{x},b')$ we have
\[
(\bar{a}_0-\bar{a}_i,\ldots,\bar{a}_{i-1}-\bar{a}_i,\bar{f}-\bar{a}_i,\bar{a}_{i+1}-\bar{a}_i,\ldots,\bar{a}_\lambda-\bar{a}_i)\vDash\Sigma
\]
and hence
\[
(\bar{e}_0-\bar{e}_i,\ldots,\bar{e}_{i-1}-\bar{e}_i,-\bar{e}_i,\bar{e}_{i+1}-\bar{e}_i,\ldots,\bar{e}_\lambda-\bar{e}_i)\vDash\Sigma.
\]
We get d).
\smallskip

\noindent To prove e) we assume first that $\alpha$ is groupless (e1). That means $\varphi_\alpha(\bar{x},b)$ is not a torsor set. By Lemma~\ref{add2.4} the assertion follows.\\
Otherwise $X=\varphi_\alpha(\C,b')$ is a torsor set. Hence $X-\bar{f}\sim\varphi_\alpha(\bar{x},b)$ is a group set since $\bar{f}$ is in $X$. This is (e2).\\
To prove (e3) we extend the Morley sequence $\{\bar{e}_i:0\le i\le\lambda\}$ by an element $\bar{d}$. Then
\[
\bar{e}_0+\bar{d},\ldots,\bar{e}_{i-1}+\bar{d},\bar{e}_i-\bar{e}_j+\bar{d},\bar{e}_{i+1}+\bar{d},\ldots,\bar{e}_\lambda+\bar{d},\bar{d}
\]
is again a Morley sequence for $\varphi_\alpha(\bar{x},\bar{b})$. Hence
\[
\Sigma(\bar{e}_0,\ldots,\bar{e}_i-\bar{e}_j,\ldots,\bar{e}_\lambda).
\]
Similarly we get e4).
\smallskip

e5) follows again by Lemma~\ref{add2.4}.\hfill$\Box$(Claim)
\medskip

Using compactness we get a finite part $\psi'_\alpha$ of $\Sigma$ that implies a), b), c), e1), e2), e5).
\smallskip

If $\alpha$ is groupless consider the following operations:
\[
V_i(\bar{x}_0,\ldots,\bar{x}_\lambda)=(\bar{x}_0-\bar{x}_i,\ldots,\bar{x}_{i-1}-\bar{x}_i,-\bar{x}_i,\bar{x}_{i+1}-\bar{x}_i,\ldots,x_\lambda-\bar{x}_i)
\]
and $\mathcal{V}$ be the subgroup generated by these operations. $\mathcal{V}$ is finite. Then \[
\psi_\alpha(\bar{x}_0,\ldots,\bar{x}_\lambda)=\bigwedge\limits_{V\in \mathcal{V}}\psi'_\alpha(V(\bar{x}_0,\ldots,\bar{x}_\lambda))
\]
satisfies d) and is also part of $\Sigma$.\\
If $\alpha$ is an coset code, property d) follows from e3) and e4). Hence it is sufficient that $\psi_\alpha$ satisfies e3) and e4). Let ${ \mathcal W}(\bar{x}_0,\ldots,\bar{x}_\lambda)$ be the subgroup  generated by the operations mentioned in e3) and e4). Again ${ \mathcal W}$ is finite,  and depends on ${\rm Inv}(\varphi_\alpha(\bar{x},\bar{b}))$. Note that $\lambda\ge m_\alpha$, hence $b$ remains constant in b) after applying these operations. Set therefore:
\[
\psi_\alpha(\bar{x}_0,\ldots,\bar{x}_\lambda)=\bigwedge\limits_{W\in{ \mathcal W}(\bar{x}_0,\ldots,\bar{x}_\lambda)}\psi'_\alpha(W(\bar{x}_0,\ldots,\bar{x}_\lambda)),
\]
which has the required properties.
\end{proof}
\bigskip

\begin{definition} Let $\alpha$, $\lambda$ and $\psi_\alpha$ be as above. A realization of $\psi_\alpha$ is called a difference sequence for $\alpha$. Moreover, given a realization $\bar{e}_0,\ldots,\bar{e}_\lambda$ of $\psi_\alpha$, we denote by a derived difference sequence one obtained by composition of the following operations:
\[
\bar{e}_0-\bar{e}_i,\ldots,\bar{e}_{i-1}-\bar{e}_i,-\bar{e}_i,\bar{e}_{i+1}-\bar{e}_i,\ldots,\bar{e}_\lambda-\bar{e}_i.
\]
If $\nu\le\lambda$ and we use the operations above only for $i\le\nu$, then we speak about a $\nu$-derived sequence.
\end{definition}

%%%%%%%%%%%%%
% Corollary 4.6 D6
%%%%%%%%%

\begin{cor}\label{add4.6}%{cD6}
A permutation of a difference sequence is a difference sequence.
\end{cor}

\begin{proof} Note that all permutations of a difference sequence
are obtained by the operation in d) of
Theorem~\ref{add4.5}.
\end{proof}

%%%%%%
%%%%%%
%%%%%%

\section{Bounds for difference sequences}
\noindent A difference sequence for a good code $\alpha \in \cc$ forms an o-system. See Theorem \ref{add4.5}c).
The next lemma is  Lemma 5.1 in \cite{Bau09}. In our contex it needs a modified proof.

\begin{lemma}  \label{add5.1}
For every code formula $\varphi_{\alpha}(\bar{x},y)$ and every natural number $r$, there is some 
$\lambda(r,\alpha) = \lambda > 0$ such that for every $D \le \C$ and every difference sequence
$\bar{e_0}, \ldots, \bar{e_{\mu}}$ for $\varphi_{\alpha}(\bar{x},y)$ with canonical parameter $b$ and 
$\mu \ge \lambda$ either
\begin{enumerate}
\item[(i)] the canonical parameter of some $\lambda$-derived sequence of $\bar{e_0}, \ldots, \bar{e_{\mu}}$ 
lies in $dcl^{eq}(D)$
\end{enumerate}
or
\begin{enumerate}
\item[(ii)] for every $\alpha$-o-system $\bar{m}$ the sequence $ \bar{e_0}, \ldots, \bar{e_{\mu}}$ contains 
a subsequence $\bar{e_{i_0}}, \ldots, \bar{e_{i_{r-1}}}$, such that $m_{\alpha} \le i_j$ and $e_{i_j}$ is 
$\ind^w$-generic over $\langle D B \bar{e_{i_0}}, \ldots, \bar{e_{i_{j-1}}}  \rangle$, where 
$B = \langle  \bar{m}, \bar{e_0}, \ldots, \bar{e_{m_{\alpha}-1}} \rangle$.
\end{enumerate}
\end{lemma}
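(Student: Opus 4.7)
The plan is to adapt the proof of Lemma 5.1 of \cite{Bau09} to our two--sorted setting $R(\C)=\C_1\cup\C_2$; the structure is the same, but every occurrence of ``linear hull'' or ``translation'' must be read as a pair of such operations, one for each of $U_1$ and $U_2$. I take $\lambda=\lambda(r,\alpha)$ to depend on $r$, on $n_\alpha=\mathrm{length}(\bar x)$, and on a bound for the $o$--dimension of the canonical base algebra attached to the code $\alpha$; its precise value is forced by the counting argument below.

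First I fix notation. Let $b$ be the canonical parameter of the difference sequence and let $B=\langle \bar m,\bar e_0,\ldots,\bar e_{m_\alpha-1}\rangle$ be the canonical base algebra determined by $b$ via C(3) and Corollary~\ref{10.10}. Write $U=CSS(\langle D,B\rangle)$; by Corollary~\ref{rulesstrong3} this is strong in $\C$. For each $i\ge m_\alpha$ the tuple $\bar e_i$ is a solution of the code formula with parameter $b\in \mathrm{dcl}^{\mathrm{eq}}(B)\subseteq \mathrm{dcl}^{\mathrm{eq}}(U)$, and the $\bar e_i$ are linearly independent over $B$ in each of the two sorts $U_1,U_2$ by the definition of a difference sequence. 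Condition C(5b), applied with this $H=B$ and $U$ as above, then says that at most $\ell:=\mathrm{ldim}(B/U)$ of the $\bar e_i$ with $i\ge m_\alpha$ fail to be $\ind^w$--generic over $\langle U,\bar e_{m_\alpha},\ldots,\bar e_{i-1}\rangle$. The quantity $\ell$ is bounded in terms of $\alpha$ alone, independently of $D$, because $B$ is $o$--generated by $\bar m$ together with the $m_\alpha$ initial entries of the sequence.

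Choose $\lambda$ larger than $m_\alpha+\ell+r$ and moreover large enough that a Ramsey--type extraction from a sequence of length $\lambda-m_\alpha-\ell$ yields either $r$ indices of one type or a $\lambda$--derived sub--sequence of the other type needed below. If (ii) fails, the $\ind^w$--generic indices from C(5b) number at least $\lambda-m_\alpha-\ell\ge r$, giving (ii) immediately; so we may assume at least $\lambda-m_\alpha-\ell$ of the $\bar e_i$ fail $\ind^w$--genericity over the accumulated structure. For each such $\bar e_i$, Lemma~\ref{10.4} together with the strong--minimality of $\phi_\alpha(\bar x,b)$ (Corollary~\ref{10.6}) forces $\bar e_i$ into $cl(\langle U,\bar e_{m_\alpha},\ldots,\bar e_{i-1}\rangle)$; taking appropriate ``differences'' of such entries, as in the construction of a $\lambda$--derived sequence in \cite{Bau09}, produces a new difference sequence whose canonical parameter $b'$ satisfies a non--trivial algebraic relation with $D$. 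The uniqueness clause of C(3) (a code formula has a unique canonical parameter) then upgrades this to $b'\in \mathrm{dcl}^{\mathrm{eq}}(D)$, giving (i).

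The main obstacle is the adaptation of the translation and linear--automorphism actions on the codes: in \cite{Bau09} the code class $\X$ is closed under affine transformations of a single vector space, whereas here ``affine'' means a pair of affine maps, one on $\bar x_1\subseteq U_1$ and one on $\bar x_2\subseteq U_2$. One has to verify that the ``derived sequence'' construction respects this product structure and, correspondingly, that condition C(6) yields torsor/group--set dichotomies separately in each sort; once this bookkeeping is discharged, the counting and Ramsey extraction from \cite{Bau09} transfer verbatim, and the choice of $\lambda$ above works.
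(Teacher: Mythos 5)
There is a genuine gap: your argument skips the coset--counting step that is the heart of the paper's proof, and as a result both horns of your dichotomy are unsound. Condition C(5b) does not say that at most $\mathrm{ldim}(H/U)$ of \emph{all} the $\bar e_i$ fail to be $\ind^w$--generic; it only applies to those $\bar e_i$ satisfying $\bar e_i\not\subseteq\langle U,H,\bar e_0,\ldots,\bar e_{i-1}\rangle^{lin}$. A difference sequence can perfectly well have many entries lying in the linear span of $D$ together with the earlier entries (indeed that is exactly what happens when the sequence is ``captured'' by $D$), and none of those entries can be $\ind^w$--generic, so your count ``at least $\lambda-m_\alpha-\ell$ generic indices'' does not follow. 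The paper's proof handles precisely this: assuming (i) fails, no coset of $\C^{n_\alpha}/D^{n_\alpha}$ can contain more than $m_\alpha$ of the $\bar e_i$ (else subtracting an entry of an over--populated coset yields a $\lambda$--derived sequence with canonical parameter in $\mathrm{dcl}^{eq}(D)$), which gives the inequality $\lambda+1\le |E^*/D^*|^{n_\alpha}\cdot m_\alpha$ with $E=\langle\bar e_0,\ldots,\bar e_\lambda,\bar m, D\rangle$; since $|E^*/D^*|$ is bounded in terms of the number $|X|$ of indices $j$ with $\bar e_j\notin\langle\bar e_0,\ldots,\bar e_{j-1},\bar m, D\rangle$ (the field is finite), a large $\lambda$ forces $|X|>r$ plus the C(5b) defect, and \emph{only then} does C(5b) produce the $r$ required $\ind^w$--generic entries. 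Without this counting there is no way to produce even one index to which C(5b) applies.

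Your route to case (i) is also not valid. Failing $\ind^w$--genericity does not ``force $\bar e_i$ into $cl(\cdots)$'' in any useful way --- every solution of $\phi_\alpha(\bar x,b)$ with $b\in\mathrm{dcl}^{eq}(U)$ already lies in $cl(U)$ by C(3), so this gives no information --- and the passage from ``$b'$ satisfies a non--trivial algebraic relation with $D$'' to ``$b'\in\mathrm{dcl}^{eq}(D)$'' via uniqueness of the canonical parameter is a non sequitur: algebraicity over $D$ is not definability over $D$. The correct mechanism for (i) is purely additive: more than $m_\alpha$ entries in a single coset modulo $D^{n_\alpha}$ means their pairwise differences lie in $D^{n_\alpha}$, so the resulting $\lambda$--derived difference sequence has its canonical parameter in $\mathrm{dcl}^{eq}(D)$. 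You should restructure the proof around the coset count; the two--sorted bookkeeping you discuss at the end is indeed the only modification needed relative to \cite{Bau09}, but it is not where the content of the lemma lies.
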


\begin{proof}
For $T_3$
the proof in \cite{Bau09} needs some new ideas. We assume that the fixed finite field is $\F(q)$ with $q$ elements.
We have $\bar{e_j} = \bar{e_j^1} \bar{e_j^2}$,
where $U_1(\bar{e_j^1})$, $U_2(\bar{e_j^2})$  and analogusly $\bar{x} = \bar{x^1} \bar{x^2}$.
Let  $\bar{x^1}$ be of length $n_{\alpha,1}$ and
$\bar{x^2}$ be of length $n_{\alpha,2}$.\\ 
If assertion (i) is not true, then every suitable coset of $\C^{n_{\alpha}}/D^{n_{\alpha}}$ contains  at most 
$m_{\alpha}$-many elements $\bar{e_i}$ with $i \le \lambda$ of the difference sequence under consideration.
Otherwise we could substract an $\bar{e_j}$ with $j \le \lambda$ of a larger coset and would get a 
$\lambda$-derived sequence with the canonical parameter in $D^{eq}$.\\

\noindent {\bf Claim} Assume $n_{\alpha,1} \not= 0$. Then there is a natural number $k_{\alpha}$, such that for every $\mu$ and 
every $i \le \mu$ we have $\mid \{ j : i \not= j, \bar{e_j}^1 = \bar{e_i}^1 mod D \} \mid \le k_{\alpha}$.\\

\begin{proof}
If $n_{\alpha,2} = 0$, then $k_{\alpha} = m_{\alpha}$ fulfills the assertion.\\
Now we assume, that $n_{\alpha,2} \not= 0$. 
We choose $i$ such that $\mid \{ j : i \not= j, \bar{e_j}^1 = \bar{e_i}^1 mod D \} \mid$ is maximal.
Since the permutation of a difference sequence is again a difference 
sequence, we can assume that $i = 0$. W.l.o.g. there is some $l$, such that $\bar{e_0}^1 = \bar{e_l}^1 mod D$,
since otherwise we have only to ensure $m_{\alpha} \le k_{\alpha}$ for this case.\\
By (d) in the definition of a difference sequence
\[ \bar{e_0} - \bar{e_l}, \ldots, \bar{e_{i-1}} - \bar{e_l}, - \bar{e_l},  \bar{e_{i+1}} - \bar{e_l}, \ldots, 
\bar{e_{\lambda}} - \bar{e_l} \]
is again a difference sequence.\\
Hence w.l.o.g. $\bar{e_0}^1 \in D$ and we consider $Z = \{j : \bar{e_j}^1 \in D \}$.
Our aim is, that we can find $k_\alpha$ such that,  $\mid Z \mid \le k_\alpha$ for all possible $Z$.
By assumption there are at most $m_\alpha$-many elements $e_j^2$ in the same coset modulo $D$.\\
After some permutations we can assume that there is some $\beta$, such that
\[ Z = \{ j : j \le \beta  \}.\]
We consider $E(\beta)_2 = \langle \bar{e_0}^2, \ldots, \bar{e_{\beta}^2} \rangle$. Let $s_2$ be 
$ldim(E(\beta)_2/B_2 + D_2)$, where $B = \langle  \bar{m}, \bar{e_0}, \ldots, \bar{e_{m_{\alpha}-1}} \rangle$, as above.
Then $ldim(E(\beta)_2/D) \le s_2 + (m_{\alpha} +1)n_{\alpha,2})$. All $\bar{e_j}^2$ with $j \le \beta$ are in the 
finite vector space $E(\beta)_2/D_2$ and at most $m_{\alpha}$ many are in 
the same  coset of $D_2$.    We get
\[  \beta + 1  \le m_{\alpha} q^{(s_2 + (m_{\alpha} + 1) n_{\alpha,2}) n_{\alpha,2}} .\]
Now define 
\[ Y = \{ i : m_{\alpha}  \le i \le \beta, \bar{e_i}^2 \notin D_2 + B_2 + \langle \bar{e_0}^2, \ldots \bar{e_{i-1}}^2 
\rangle_2 \} .\]
Then $s_2 \le \mid Y \mid n_{\alpha,2}$ and hence
\[  \beta + 1  \le m_{\alpha} q^{(\mid Y \mid n_{\alpha,2} + (m_{\alpha} + 1) n_{\alpha,2}) n_{\alpha,2}} .\]
Now we choose $k_{\alpha}$, such that for all $\beta > k_{\alpha}$ we have $\delta(B/D)  < \mid Y \mid$. 
By (C 5b) there is at most one $\bar{e_i}$ that is weakly generic over $\langle D  B  \bar{e_0}, \ldots \bar{e_{i-1}} 
\rangle$. 
This is not possible, since $\bar{e_i}^1 \in D$.

\end{proof}

\noindent {\bf Case $n_{\alpha,1} \not= 0$}.  \\We define $E = E(\lambda) = \langle \bar{e_0}, \ldots, \bar{e_{\lambda}} \rangle$. Let $s$ be $ldim(E_1/B_1 + D_1)$. Then $ldim(E_1/D_1) \le s + (m_{\alpha} + 1)n_{\alpha,1}$.Then
\[ \lambda +1 \le k_{\alpha} q^{(s + (m_{\alpha } + 1)n_{\alpha,1})n_{\alpha,1 } }.\]
Define $X = \{ i : m_{\alpha } \le i \le \lambda, \bar{e_i^1} \notin \langle D_1 B_1 + \langle \bar{e_0^1}, \ldots,
\bar{e_{i-1}^1}\rangle_1 \rangle$. Since $s \le \mid X \mid n_{\alpha,1}$, we get
\[ \lambda +1 \le k_{\alpha} q^{(\mid X \mid n_{\alpha,1} + (m_{\alpha } + 1)n_{\alpha,1})n_{\alpha,1 } }.\]
If we choose $\lambda$ sufficiently large we get
\[ \mid X \mid >  \delta(B/D) + r. \] 
By (C 5b) we get 
$\bar{e_{i_0}}, \ldots, \bar{e_{i_{r-1}}}$, such that $m_{\alpha} < i_j$ and $e_{i_j}$ is 
$\ind^w$-generic over $\langle D B \bar{e_{i_0}}, \ldots, \bar{e_{i_{j-1}}}  \rangle$ \\

\noindent {\bf Case  $n_{\alpha,1} = 0$}.\\
If $E = E(\lambda) = \langle \bar{e_0}, \ldots, \bar{e_{\lambda}} \rangle$ as above, then $E = \langle E_2 \rangle$.
We define $s = ldim(E_2/ B_2 + D_2)$. Then $ldim(E_2/D_2) = s + (m_{\alpha} + 1)n_{\alpha,2}$ and
\[ \lambda +1 \le m_{\alpha} q^{(s + (m_{\alpha } + 1)n_{\alpha,2})n_{\alpha,2 } }.\]
Similarly as above we define
$X = \{ i : m_{\alpha } \le i \le \lambda, \bar{e_i^2} \notin \langle D_2  + B_2 + \langle \bar{e_0^2}, \ldots,
\bar{e_{i-1}^2}\rangle_1 \rangle\}$. Since $s \le \mid X \mid n_{\alpha,2}$ we get
\[ \lambda +1 \le k_{\alpha} q^{(\mid X \mid n_{\alpha,2} + (m_{\alpha } + 1)n_{\alpha,2})n_{\alpha,2 } }.\]
We can choose $\lambda$ as large such that 
\[ \mid X \mid >  \delta(B/D) + r. \] 
By (C 5b) we get 
$\bar{e_{i_0}}, \ldots, \bar{e_{i_{r-1}}}$, such that $m_{\alpha} < i_j$ and $e_{i_j}$ is 
$\ind^w$-generic over $\langle D B \bar{e_{i_0}}, \ldots, \bar{e_{i_{j-1}}}  \rangle$ \\

\end{proof}

\noindent Now we consider all finite-to-one functions $\mu^*$ and $\mu$ defined on the good codes $\alpha\in{\cc}$ with values in $\N$. We assume that the following inequalities hold:
\begin{itemize}
\item $\mu(\alpha)\ge m_\alpha$, \item
$\mu^*(\alpha)\ge\max(\lambda(m_\alpha+1,\alpha)+1,n_\alpha+1)$,
\item $\mu(\alpha)\ge\lambda(\mu^*(\alpha),\alpha)+1$. 
\end{itemize}
For the definition above we fix a function $\lambda(r,\alpha)$
given by Lemma~\ref{add5.1} and we assume that it is monotonous in
the first argument.
\medskip

\noindent Finally we will get for each such function $\mu$ as 
above a  "generic"\ substructure $P^\mu(\C)  \le \C$  such that for all good codes $\alpha \in \cc$ there is  no
difference sequences  of length $\mu(\alpha) + 1$.\\
We will extend the language $L$ by a new predicate $P^\mu$ and consider
the structure $\langle\C,P^\mu(\C)\rangle$ in the new language
$L^\mu$. $P^\mu(\C)$
will be the desired $L$-structure of finite Morley rank. We will get
$P^\mu(\C)$ by amalgamation of strong
subspaces in the class $\K^\mu_{\rm fin}$    defined below. 
\bigskip

\begin{definition} Let $\K^\mu$ be the class of all strong substructures  $U$ of $\C$ in $\h_3$, such that for every good code $\alpha$ there is no difference sequence for $\alpha$ of length $\mu(\alpha)+1$ in $U$. $\K_{\rm fin}^\mu$ are the finite  structures in $\K^\mu$.
\end{definition}
\bigskip

\noindent Note that difference sequences are given by \noindent realizations of the
formulas\linebreak
$\psi_\alpha(\bar{x}_0,\ldots,\bar{x}_{\mu(\alpha)})$ in
Theorem~\ref{add4.5}. 
\medskip

\noindent Here $\K^{\mu}$ is a class of substructures in $\h_3$. In \cite{Bau09} we work with subspaces of 
$ \C_1$. We hav e to modify the proofs of \cite{Bau09}  for  the new contexts.
Let $D\le D'$ be strong subspaces of $\C$ in $\h_3$ with ${\rm ldim}(D'/D)$ finite. 
By Lemma~\ref{9.7} there is a  sequence of minimal strong extensions for 
starting with $D$ and arriving $D'$. We call it
a geometrical sequence.
In the next lemmas we will investigate the minimal steps in this sequence, especially the prealgebraic minimal steps for the case that $D\in\K^\mu$ but $D'\notin\K^\mu$.
\bigskip

%%%%%%%%%
% Lemma 5.2 E2
%%%%%%%%%

\begin{lemma}\label{add5.2}
Assume $D\le D' \le \C$, $D\in\K^\mu$, $D'$ is a prealgebraic minimal extension of $D$ and $D'$ is not $\K^\mu$. Let $\bar{e}_0,\ldots,\bar{e}_{\mu(\alpha)}$ be a difference sequence 
in $D'$ for a good code $\alpha$, such that its canonical parameter $c$ is in ${\rm dcl}^{\rm eq}(D)$. Then we find a difference sequence $\bar{d}_0,\ldots,\bar{d}_{\mu(\alpha)}$ for $\alpha$ in $D'$ with the same canonical parameter such that $\bar{d}_0,\ldots,\bar{d}_{\mu(\alpha)-1}$ are in $D$, and $\bar{d}_{\mu(\alpha)}$ is a $D$-generic realization of $\varphi_\alpha(\bar{x},\bar{c})$ that generates $D'$ over $D$.\\
If we cannot find the new sequence by a permutation of the old one, then $\alpha$ is a group code and the new sequence is obtained using operations as $\bar{e}_j$ is replaced by some $H(\bar{e}_j)-\bar{e}_i$ where $H$ is in ${\rm Inv}(\varphi_\alpha(\bar{x},\bar{c}))$. $\alpha$ is the unique good code that describes $D'$ over $D$.
\end{lemma}

\begin{proof}  Let $\bar{c} $ be a generating o-system of the canonical base algebra coded by $c$.
Since $D\in\K^\mu$, there is some $\bar{e}_i$ not completely in $D$. Since $D\le\C$ by 
C(3) $\bar{e}_i$ is $D$-generic and generates $D'$ over $D$. If there is some other $\bar{e}_j$ not completely in $D$, then again $\bar{e}_j$ is $D$-generic and generates $D'$ over $D$. 
By Lemma \ref{11.2} $\bar{e}_i=H(\bar{e}_j) -\bar{m}_j$ where $H$ is a suitable o-transformation 
over $\bar{c}$ and $\bar{m}_j$ is in $D$. Then $H(\bar{e}_j)-\bar{e}_i$ is in $D$. Since $\bar{e}_j$ is $D$-generic, we have
\[
\bar{e}_j\ind\limits_{\bar{c}} H(\bar{e}_j)-\bar{e}_i.
\]
By the properties of a difference sequence it follows that $\alpha$ is a group code and $H$ is in ${\rm Inv}(\varphi_\alpha(\bar{x},\bar{c}))$. If we replace $\bar{e}_j$ by $H(\bar{e}_j)-\bar{e}_i$ we obtain again a difference sequence with the same canonical parameter and this sequence has one more element in $D$. We can iterate the argument to obtain the assertion.\\
Finally for $D'$ over $D$ there exists
a unique code in $\cc$ by Theorem~\ref{add4.3}.
\end{proof}

%%%%%%%%%%
% Corollary 5.3 E3
%%%%%%%%%

\begin{cor}\label{add5.3}%{cE3}
Let $D$ be in $\K^\mu$ and $D\le D' \le \C$ be a minimal extension. If $D'$ has  o-dimension one over $D$, then $D'$ is in $\K^\mu$. Otherwise, in the prealgebraic case, $D'$ is in $\K^\mu$ if and only if none of the following two conditions holds:
\begin{enumerate}
\item[{\rm a)}] There is a code $\alpha\in {\cc}$ and a difference sequence $\bar{e}_0,\ldots\bar{e}_{\mu(\alpha)}$ for $\alpha$ in $D'$ such that
\begin{enumerate}
\item[{\rm i)}] $\bar{e}_0,\ldots,\bar{e}_{\mu(\alpha)-1}$ are contained in $D$.
\item[{\rm ii)}] $D'=\langle D\bar{e}_{\mu(\alpha)}\rangle^\ell$.
\item[{\rm iii)}] In this case $\alpha$ is the unique good code that describes $D'$ over $D$.
\end{enumerate}
\item[{\rm b)}] There exists a code $\alpha\in {\cc}$ and a difference
sequence for $\alpha$ in $D'$ of length $\mu(\alpha)+1$ with
canonical parameter $b$ and  with a subsequence
$\bar{e}_0,\ldots,\bar{e}_{\mu^*(\alpha)-1}$ of length
$\mu^*(\alpha)$ such that $\bar{e}_i$ is $\ind^w$-generic over
$D+B+\langle \bar{e}_0,\ldots,\bar{e}_{i-1}\rangle$ where $B$ is
generated by the first $m_\alpha$ elements of the given difference
sequence.
\end{enumerate}
\end{cor}

\begin{proof} Consider first the case where ${\rm o-dim}(D'/D)=1$. Assume that $D'$ is not in $\K^\mu$. That means there is a difference sequence $\bar{e}_0,\ldots,\bar{e}_{\mu(\alpha)}$. If the canonical parameter $b$ lies in ${\rm dcl}^{\rm eq}(D)$, then all $\bar{e}_i$ would be in $D$, since  $\bar{e}_i \notin D$ 
implies, that it generates a strong minimal extension over $D$ with an o-dimension over $D$ greater than 1. This contradicts $D\in \K^\mu$.\\
Otherwise by Lemma~\ref{add5.1} some $\bar{e}_j$ is a realization of
$\varphi_\alpha(\bar{x},b)$ and an o-system over $D$ since
$\mu(\alpha)\ge\lambda(1,\alpha)$. Again we have a contradiction.
\smallskip

Finally we assume that $D'$ is minimal prealgebraic over $D$.
Again we assume that there is a difference sequence
$\bar{e}_0,\ldots,\bar{e}_{\mu(\alpha)}$ in $D'$ for some good
code formula $\varphi_\alpha(\bar{x},b)$ where $b$ is
the canonical parameter. If $b$ lies in ${\rm dcl}^{\rm
eq}(D)$, then by Lemma~\ref{add5.2} we get case a). Otherwise since
$\mu(\alpha)\ge\lambda(\mu^*(\alpha),\alpha)$ our sequence
contains a subsequence of length $\mu^*(\alpha)$ as described in
b) by Lemma~\ref{add5.1}.
\end{proof}

%%%%%%%%%%%%%%%%%
%Corollary 5.4 E4
%%%%%%%%%%%%%%%%%%%

\section{Amalgamation in $\K^\mu$}\label{chF}

\noindent  Again we work in $T_3$ or $T_3^{\rm eq}$.

%%%%%%%%%%
% Lemma 6.1 F2
%%%%%%%%%%

\begin{lemma}\label{add6.1}
Let $B\subseteq A$ and $B\subseteq C$ in $\h_3$ all be strong subspaces of $\C$. Assume that $A$ and $C$ are minimal prealgebraic extensions of $B$, $A \cap C = B$,  and that $\bar{e}_0,\ldots,\bar{e}_{\mu(\alpha)}$ is a difference sequence for a good code $\alpha$ in $\langle A C \rangle$. Then there is a derived difference sequence of the above sequence with the canonical parameter in ${\rm dcl}^{\rm eq}(C)$ or in ${\rm dcl}^{\rm eq}(A)$.
\end{lemma}

\begin{proof} 
By assumption $\langle A C \rangle = A \otimes_B C \le \C$, since $A, B, C$ are strong substructures and 
$\delta(A) = \delta(B) = \delta(C)$.
We assume that the assertion of the lemma is not
true. Let $E=\langle \bar{e}_0,\ldots,\bar{e}_{m_{\alpha}-1}\rangle$. By
Lemma~\ref{add5.1} we get a subsequence
$\bar{e}_{i_0},\ldots,\bar{e}_{i_{\mu^*(\alpha)}}$, such that
$\bar{e}_{i_j}$ is $\ind^w$-generic over
$\langle C E \bar{e}_{i_0},\ldots,\bar{e}_{i_{j-1}}\rangle$.
Since $\mu^*(\alpha)\ge\lambda(m_\alpha+1,\alpha)+1$ we get a
subsequence of this sequence of length $m_\alpha+1$ such that
every element is  $\ind^w$-generic over $\langle C E \rangle$ and over $\langle A E  \rangle$. Again we have applied Lemma~\ref{add5.1}. By C(6) $ \varphi_\alpha(x,y)$ defines a torsor set and by the properties of a difference sequence a group set. Hence by C(6) $\bar{b}\in {\rm
dcl}^{\rm eq}(B)$, a contradiction to the assumption.
\end{proof}
\bigskip

\noindent By definition  all structures $A$  in $\K^\mu$ are in $\h_3$ and there are strong embeddings of them into $\C$. For a strong substructure $A$ of $\C$ \, ${\rm tp}^{\C}(A)$ is given by the isomorphism type of $ A $. 
Embeddings of strong substructures with strong images are elementary embeddings.

%%%%%%%%%%
% Theorem 6.2 F3
%%%%%%%%%%

\begin{theorem}\label{add6.2}%{tF3}
The class $\K_{\rm fin}^\mu$ has the amalgamation property with respect to strong embeddings: If
$B \le C \le \C$, $B' \le A \le \C$, $B \cong B'$, and $B, B', A, C$ are in $\K^\mu_{fin}$, then there is 
some $C \le D \le \C$ in $\K^\mu_{fin}$, such that the strong isomorphism of $B'$ onto $B$ can be extended 
to an strong embedding of $A$ in $D$. 
\end{theorem}

\begin{proof}  
Since $\C$ is rich we can assume w.l.o.g., that $B = B'$ and can exchange the roles of $C$ and $A$.
Splitting $A$ and $C$ into chains of minimal strong extensions in $\K^\mu$ (C4) we can assume w.l.o.g. that $A$ and $C$ are minimal strong extensions. 
\medskip

{\em Case\/} 1: \, ${\rm o-dim}(C/B)=1$ or ${\rm o-dim}(A/B)=1$.\\
W.l.o.g. $A = \langle B a \rangle$. If $a$ is algebraic and a solution of a divisor problem of $B$, that has also a solution in $C$, then $D = C$. Otherwise $D = \langle C a \rangle$ is an amalgam in $\K$.
By Corollary~\ref{add5.3} the amalgam $D$ is in $\K^\mu$.
\medskip

{\em Case\/} 2: Both extensions $C/B$ and $A/B$ are prealgebraic.\\
Then $D = C \otimes_B A$ exists and is in $\K$ by Theorem \ref{amalk3}.
We assume that $D$ is not in $\K^\mu$ and show in this case that $C$ and $A$ have the same type over $B$.
There is a good code $\alpha$ with a difference sequence
$\bar{e}_0,\ldots,\bar{e}_{\mu(\alpha)}$ in $D$. By
Lemma~\ref{add6.1} and symmetry we may assume w.l.o.g. that its
canonical parameter $\bar{b}$ lies in ${\rm dcl}^{\rm eq}(C)$. By
Lemma~\ref{add5.2} we may assume that
$\bar{e}_0,\ldots,\bar{e}_{\mu(\alpha)-1}$ are in $C$ and
$\bar{e}_{\mu(\alpha)}$ is an $C$-generic realization of
$\varphi_\alpha(\bar{x},\bar{b})$ which generates $D$ over $C$.
\medskip

{\em Case\/} 2A: We first assume that the canonical parameter for some $(\mu(\alpha)-1)$-derived difference sequence is in ${\rm dcl}^{\rm eq}(B)$. Since this difference sequence has the same properties, we denote it again by $\bar{e}_0,\ldots,\bar{e}_{\mu(\alpha)}$. By Lemma \ref{10.1} we have the following
two subcases:
\medskip

{\em Case\/} 2.A.1: \, $\bar{e}_{\mu(\alpha)}\in A$.
\smallskip

By minimality of $A$ over $B$ we have $A = \langle B \bar{e}_{\mu(\alpha)} \rangle $. Since $A$ is in 
$\K^\mu$, there exists  an $\bar{e}_i$ that lies in $C$ and not in $B$. By Lemma \ref{10.1} $B \le C$
implies that $\bar{e_i}$ is $B$-generic and therefore isomorphic to $\bar{e}_{\mu(\alpha)}$ over B.
Furthermore $C = \langle \bar{e}_i B \rangle$.
\medskip

{\em Case\/} 2.A.2: \, $\bar{e}_{\mu(\alpha)}$ is a  solution of 
$\varphi_\alpha(\bar{x},b)$ generic over $A$.
\smallskip

We have $D = C \otimes_B A$. The canonical base algebra of $\varphi_\alpha(\bar{x},b)$ is in $B$. 
By Lemma \ref{10.4} we get $D = C \otimes_B \langle B \bar{e}_\alpha \rangle$.
Hence we get $\bar{e}_{\mu(\alpha)} = \bar{a} + \bar{m}$, where $\bar{m}$ is in $C$
and $\bar{a}$ is an o-system  that generates $A$ over $B$. It is the solution of a code formula with
parameter in $B$. Then $\bar{e}_{\mu(\alpha)}, \bar{a}, \bar{m}$ is a $B$-independent tripel.
By Lemma \ref{add2.1} $\bar{e}_{\mu(\alpha)}, \bar{a}, \bar{m}$ are generic elements of cosets of an $acl(B)$-definable group. But then the group
is defined by $\varphi(\bar{x},b)$ by the properties of a derived sequence. $\bar{a}$ and $-\bar{m}$
are in the same coset. Therefore the have the same type over $B$.
\smallskip

{\em Case\/}2.B:\, No $(\mu(\alpha) - 1)$-derived sequence has a canonical basis in $dcl^{eq}(B)$. 
\smallskip

Let $E$ be $\langle B \bar{e}_0, \ldots, \bar{e}_{m_\alpha -1} \rangle$. Then $b \in dcl^{eq}(E)$.
Furthermore 
\[D = C \otimes_E \langle E \bar{e}_{\mu(\alpha)}\rangle, \]
and 
\[D = C \otimes_E \langle E \bar{a}\rangle, \]
where $\bar{a}$ is a generating o-system of $A$ over $B$. It is the solution of a code formula
with canonical parameter $c$ in $B^{eq}$.
By Lemma \ref{11.2} $\bar{a} = H(\bar{e}_{\mu(\alpha)}) + \bar{m}$, where $\bar{m}$ is in $C$ 
and $H(\bar{e}_{\mu(\alpha)})$ is an o-transformation   over $E$. \\
Since $\mu(\alpha)$ is suficciently large, Lemma \ref{add5.1} provides us some $\bar{e}_i$ weakly 
generic over $\langle E \bar{m} \rangle$. Let $f$ be an isomorphism, that fixes $\langle E \bar{m} \rangle$ and sends $\bar{e}_{\mu(\alpha)}$ onto $\bar{e}_i$. Then $C = \langle B f(\bar{a}) \rangle$ is 
isomorphic to $A$.
\end{proof}

\bigskip

\begin{definition} Let $D$ be a strong substructure of $\C$ in $\h_3$, that is in $\K^{\mu}$. $D$ is called $\K^\mu$-rich 
if for every finite $B\le A$ in $K^\mu$ with $B\le D$, there exists an $A'$ with $B\le A'\le D$ and $A$ and $A'$ are isomorphic over $B$. Then $A' \le \C$. 

\end{definition}
\bigskip

\noindent We can use Theorem~\ref{add6.2} to produce a countable $\K^{\mu}$-rich strong substructure
$P^{\mu}(\C)$ in $\C$
via a Fra\"{\i}ss\'e-style-argument (see \cite{Zi11}):

%%%%%%%%%%%%
% Corollary 6.3 F4
%%%%%%%%%

\begin{cor}\label{add6.3}
There is a countable $\K^{\mu}$-rich strong substructure
$P^{\mu}(\C)$ in $\C$. It's isomorphism type is unique. Hence it's type in $\C$ is uniquely determined.
\end{cor}

%%%%%%%%%%
% Corollary 6.4 F5
%%%%%%%%%%%

\begin{cor}\label{add6.4}%{cF5}
Let $D$ be  a $\K^{\mu}$-rich strong substructure of $\C$. Then
\begin{enumerate}
\item[{\rm a)}] ${\rm acl}(D)=D$.
\item[{\rm b)}] $d(D)\ge\aleph_0$.
\end{enumerate}
\end{cor}

\begin{proof} a) Let $A$ be a finite subspace of $D$ and
$a\in{\rm acl}(A)$. W.l.o.g. we assume that $A$ is a strong
subspace of $\C$ since $D$ is a strong subspace. By property C(2)
and Corollary~\ref{add5.3} every extension $A'$ of $A$ by an element
algebraic over $A$ is strong and in $\K^\mu$. By $\K^\mu$-richness follows
the assertion.
\smallskip

b) Let $U\le\C$ be a maximal substructure of $D$  generated by
geometrically independent elements. Then $U$ is strong in $\C$ and
$U$ is in $\K^\mu$ (Axiom C(2) ). $U$ cannot
be finite since in this case an extension $U'$ of $U$ by an
geometrically independent element would be in $\K^\mu$ and had to
be realized in $D$ (Corollary~\ref{add5.3}).
\end{proof}
\bigskip

%%%%%%
%%%%%%
%%%%%%
\section{The theory $T^\mu_3$}

\begin{definition}
We extend our language $L$ by a predicate $P^\mu$. Let $L^\mu$ be the extended language. We are interested in  $L^\mu$-structures $(M, D)$ where $M \preceq\C$ is a rich model of $T_3$,  $D$ is a 
$\K^{\mu}$-rich substructure of $M$, and $d(M/D)\ge\aleph_0$. Then $D \le M$ by definition. The interpretation of $P^\mu$ is 
$ D$. We call such a $L^{\mu}$-structure a $2 \times$rich $L^{\mu}$-structure. Let $R^{\mu}(M)$
be $R(M) \cap P^{\mu}(M)$. We will show that the theory $T^\mu_3$ of all $2 \times$rich 
$L^\mu$-structures is complete. Later we will also use $M, N, \ldots$ for $L^{\mu}$-structures.
\end{definition}

\noindent The $L$-reduct of a $2 \times$rich $L^\mu$-structure is rich in the sense of
$T_3$.
We use ${\rm acl}^L$ and often acl only  for the algebraic closure in the $L$-reducts. ${\rm acl}^\mu$ is the algebraic closure in the full $L^\mu$-structure.

%%%%%%%%%%
% Lemma 6.5 F6%F5
%%%%%%%%%

\begin{lemma}\label{add6.5}%{lF6}
We consider a $2\times$rich $L^\mu$-structure $( M,D )$ as above. Then
\[D =\{ e: e = a_1 + a_2 + [c_1,c_2] , a_1, c_1 \in R^{\mu}_1(M), a_2, c_2 \in R^{\mu}_2(M) \}\].

\end{lemma}

\noindent Hence $P^{\mu}$ is definable using $R^{\mu}$.
Corollary~\ref{add6.3} provides us a $2 \times$rich $L^\mu$-structure. For a code formula $\varphi(\bar{x},y) \in \cc$
we use often the coressponding $L$-formula and denote it by $\varphi(\bar{x},\bar{y}) \in \cc$

%%%%%%%%%%%%%
% Lemma  6.6 F7%F11
%%%%%%%%%%%%%%

\begin{lemma}\label{add6.6}%{lF7}
Let $(M,P^{\mu}(M)$ be a   $L^\mu$-structure, where $M $ is a model of $T_3$, $P^\mu(M) \le M$  and 
$P^\mu(M) \in \K^\mu$. Furthermore let 
$\varphi_\alpha(\bar{x},\bar{b})$ be a code formula. Then
$\varphi_\alpha(\bar{x},\bar{b})$ has only finitely many solutions
in $P^\mu(M)$.
\end{lemma}

\begin{proof} 
We assume that there are infinitely many solutions.
Choose a finite strong subspace $B\le M$ in $\h_3$ such that
$\bar{b}\in{\rm dcl}^{\rm eq}(B)$. There is a solution $\bar{e_0}$ 
in $P^\mu(M)$, but not in $B$. By C(3) it is a generic 
solution over $B$. Since $\delta(\langle B \bar{e_0} \rangle) = \delta(B)$ we have $\langle B \bar{e_0} \rangle \le M$. Then we choose 
a solution $\bar{e_1} \in  P^\mu \setminus  \langle B \bar{e_0} \rangle$. Again it is a solution
generic over $\langle B \bar{e_0} \rangle$ by C(3). 
Again $\delta(\langle B \bar{e_0} \bar{e_1}\rangle) = \delta(B)$ and therefore 
$\langle B \bar{e_0} \bar{e_1}\rangle \le M$. In this way we get an infinite $L$-Morley-sequence for
$\varphi_\alpha(\bar{x},\bar{b})$ over $B$ inside $P^{\mu}$. By substraction by one solution we get
an infinite diffenrence sequence inside $P^{\mu}$, a contradiction.
\end{proof}
\bigskip

\begin{definition}\label{15.4} Let $(M,P^{\mu}(M))$  be a $2\times$rich $L^\mu$-structure. and $A$ be
a  finite substructure  of $M$. $A$  satisfies the condition $(*)$ if
\medskip

$(*)$\hspace*{0.5cm} $A\le M$,  $A \in \h_3$, $A \cap P^\mu(M) \in \h_3$,
and  $d(A/A\cap P^\mu(M))=d(A/P^\mu(M))$.
\end{definition}
\bigskip

The condition $d(A/A\cap P^\mu(M))=d(A/P^\mu(M))$ says that the union  of every geometrical basis of $A\cap P^\mu(M)$ and every geometrical basis of $A$ over $P^\mu(M)$ is a geometrical basis of $A$. This implies
\[
 {\rm cl}(P^\mu(M))\cap A={\rm cl}(A\cap P^\mu(M))\cap A.
\]

Note that Lemma~\ref{add3.6} implies for $A$ with property $(*)$
that
\[
A\cap P^\mu(M)\le M\quad\mbox{and}\quad A\cap{\rm
cl}(P^\mu(M))\le M.
\]
Then  $ A\cap cl(P^\mu(M))\in \h_3$, since it is obtained 
by a sequence of algebraic and prealgebraic minimal strong extensions over $A \cap P^\mu(M)$.

%%%%%%%%%%
%Lemma 6.7
%%%%%%%%%%

\begin{lemma}\label{add6.7}%{lF.7n}
Let $(M,P^{\mu}(M))$  be a $2\times$rich $L^\mu$-structure. Assume $A \subseteq M$ satiesfies $(*)$
and $\bar{a}\subseteq M$. Then there is some $D$ with $A \subseteq D$, $\bar{a} \subseteq D$,
and $D$ satiesfies $(*)$. Furthermore 
there is a  sequence of minimal strong $\h_3$ - extensions
\[
A = A_0\le \ldots\le A_{i_0}\le \ldots\le
A_{i_1}\le \ldots\le  A_m= D
\]
such that \ \
they satisfy (*),
$A_j = \langle A B_j \rangle $ for $j \le i_0$, where $B_0 = A \cap P^\mu(M)$ ,
$B_{i_0} = D \cap P^\mu(M)$,   and $B_0 \le B_1 \le \ldots \le B_{i_0}$ is a sequence 
of minimal strong extension. \\
Furthermore $A_j = \langle A C_j \rangle $ for $i_0 < j \le i_1$, where \\
$D \cap P^\mu(M) \le C_{i_0 + 1} \le \ldots \le C_{i_1} = D \cap cl(P^\mu(M)$ is a sequence of 
minimal strong extensions.
\end{lemma}

\begin{proof}
Choose $A \le A' \le M$, such that $\bar{a} \subseteq A'$ and $A' \in \h_3$. 
Extend $A'$ to $D \subseteq cl(A')$, such that 
$D \cap P^\mu(M) \subseteq  \langle D \cap R^\mu(M) \rangle$ and
$D \cap cl(R^\mu(M)) \subseteq cl(D \cap R^\mu(M))$.
Then $D \in \h_3$, $D \cap P^\mu(M) \in \h_3$, and $D \cap cl(R^\mu(M)) \in \h_3$. \\
Furthermore $d(D/P^\mu(M) = d(D/P^\mu(M) \cap D)$. Hence $D$ satiesfies $(*)$.
By Lemma \ref{add3.6} 
\[ A \le \langle D\cap P^\mu(M), A\rangle \le \langle D \cap cl(R^\mu(M)), A \rangle  \le D . \]
We get the desired sequence 
\[ B_0 \le \ldots \le B_{i_0} \le C_{i_0 +1} \le \ldots \le C_{i_1}\]
of minimal strong extensions, as described in the Lemma, by (C4).
By  Lemma \ref{add3.6} we have
\[A = A_0 \le \ldots \le A_{i_0} \le \ldots A_{i_1} .\]
By C(4) we get the complete sequence.
\end{proof}

%%%%%%%%%%%%%%
% Theorem 6.8
%%%%%%%%%%%%

\begin{theorem}\label{add6.8}
Let $M$ and $N$ be $2\times$rich $L^\mu$-structures. Assume $A\le M$ and
$f(A)\le N$ satisfy $(*)$ where $f$ is an $L^\mu$-isomorphism of $A$ onto $f(A)$.Then
$(M,A)$ and $(N,f(A))$ are $L_{\infty,\omega}^\mu$-equivalent.
\end{theorem}

%%%%%%%%%%%%%%
% Corollary 6.9
%%%%%%%%%%%%

\begin{cor}\label{add6.9}%{cF9n}
The $L^\mu$-theory $T^\mu_3$ of the  $2\times$rich $L^\mu$-structures is
complete.
\end{cor}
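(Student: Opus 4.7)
The plan is to deduce the corollary directly from the preceding theorem by specializing to the trivial substructure. Given two $\K^{\mu}$-rich $L^{\mu}$-structures $M$ and $N$, I would take $A = \langle 0 \rangle \subseteq M$ and $f(A) = \langle 0 \rangle \subseteq N$ with $f$ the obvious (empty) $L^{\mu}$-isomorphism between them. Then $\langle 0 \rangle \le M$ and $\langle 0 \rangle \le N$ by the first clause of the definition of $\K_c$ (which guarantees $0 < \delta(A)$ for nontrivial substructures), and condition (*) holds trivially because $d(\langle 0 \rangle/(\langle 0 \rangle \cap P^{\mu})) = 0 = d(\langle 0 \rangle/P^{\mu})$. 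Applying the preceding theorem yields $(M,\langle 0 \rangle) \equiv_{L^{\mu}_{\infty,\omega}} (N,\langle 0 \rangle)$, and hence in particular $M \equiv_{L^{\mu}_{\infty,\omega}} N$.

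Since elementary $L^{\mu}$-equivalence is coarser than $L^{\mu}_{\infty,\omega}$-equivalence, any two $\K^{\mu}$-rich $L^{\mu}$-structures satisfy the same first-order $L^{\mu}$-sentences. Therefore $T_3^{\mu}$, being the common first-order theory of all such structures, is complete, provided such structures exist. The existence was established earlier in the section: starting from the countable $\K^{\mu}$-rich substructure $D$ of $\C$ obtained by the strong amalgamation argument for $\K^{\mu}$ (based on condition C(6)), one forms the pair $(\C, D)$, which is a $\K^{\mu}$-rich $L^{\mu}$-structure because $D$ is $\K^{\mu}$-rich and the geometric dimension of $\C$ over $D$ is infinite.

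The argument contains no real obstacle; the entire content of the corollary sits in the theorem just proved. The only minor point worth spelling out is the verification of (*) for $A = \langle 0 \rangle$ and that this trivial choice is admissible as a strong substructure, after which completeness is an immediate formal consequence of the back-and-forth/$L^{\mu}_{\infty,\omega}$-equivalence statement together with the consistency witnessed by $(\C, D)$.
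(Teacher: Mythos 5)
Your argument is correct and is exactly the intended derivation: the paper states this corollary without proof because it follows immediately from the preceding theorem by taking $A = f(A) = \langle 0 \rangle$, for which strength and condition (*) hold trivially (indeed C(2) records $\langle 0 \rangle \le M$ explicitly). Your additional remark on non-vacuousness via the pair $(\C, D)$ is a sensible, if routine, supplement.
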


%%%%%%%%%%%%%%%
% Corollary 6.10 
%%%%%%%%%%%%%%

\begin{cor}\label{add6.10}%{cF10n}
Let $M$ and $N$ be $2\times$rich $L^\mu$-structures, $\bar{a}\in R^\mu(M)$
and $\bar{b}\in R^\mu(N)$. If ${\rm tp}_L^M(\bar{a})={\rm
tp}_L^N(\bar{b})$, then $(M,\bar{a})$ and $(N,\bar{b})$ are
$L_{\infty,\omega}^\mu$-equivalent.
\end{cor}

\begin{proof} Adding elements from the algebraic closures we can assume w.l.o.g. that $\langle\bar{a}\rangle$ and $\langle\bar{b}\rangle$ are strong subspaces in $\h_3$. Then they fulfil $(*)$.
\end{proof}
\bigskip

\noindent {\em Proof of Theorem\/}~\ref{add6.8}.\\
Note that the assumptions imply by Corollary \ref{7.7}, that $f$ is an elementary map with respect to the 
L-reducts of $M$ and $N$. $f$ preserves the geometrically situation.\\
We show that the conditions
in the Theorem describe a winning strategy for the
Ehrenfeucht-Fra\"{\i}ss\'e-game between $(M,A)$ and $(N,f(A))$:
Since $M=\langle R(M)\rangle$ and $N=\langle R(N)\rangle$ we can
assume w.l.o.g. that the players choose only elements in $R(M)$
and $R(N)$. The situation is completely symmetric. Hence we can
assume that player I has choosen some element $a$ in $R(M)$. We
show that there are $A\cup\{a\}\subseteq D\le M$ and $g$ extending
$f$ such that again:
\bigskip

$(**)$\hspace*{1cm} \parbox[t]{11.5cm}{$D \le M$ and $g(D) \le N$ fulfil
$(*)$, 
where g is a $L^\mu$-isomorphism.}

\bigskip

$(**)$ describes again the winning strategy of player II in our
Fra\"{\i}ss\'e-Ehrenfeucht-game for $(M,A)$ and $(N,f(A))$.\\
\medskip

We apply Lemma \ref{add6.7} and get the desired D with a sequence of minimal strong extensions 
between $A$ and $D$. We use the notation of that Lemma.
Let $g_0$ be $f$. We show by induction on i:\\
If there is a $L^\mu$-isomorphism $g_i$    of $A_i$ into $N$, that extends $f$ and  $(**)$ is true,\\
then we can extend $g_i$ to  $g_{i+1}$ that sends $A_{i+1}$ into $N$ and fulfils $(**)$. Note that $g_i$ is an 
elementary partiell $L$-isomorphism. $g_i$ is not only a
$L^\mu$-isomorphism. It repects also $cl(P^\mu)$: 
For $i_0 < i$ $A_i $ contains  for every $b \in cl(P^\mu(M) \cap A_i )=  cl(A_{i_0})$ a sequence of minimal strong extensions
over $A_{i_0}$ that contains $b$.  \\ 
 
\begin{enumerate}
\item $i < i_0$.
\begin{enumerate}
\item  $A_{i+1} =\langle A_i b \rangle$, $b \in R^\mu(M)$, and $d(b/A) =  1$.\\
By Corollary \ref{add6.4} there is some $c \in R^\mu(N)$ such that $d(c/g_i(A_i) =   1$ and
with $g_{i+1}(b) = c $ we can extend $g_i$ to an $L^\mu$-isomorphism $g_{i+1}$. Then $g(A_{i+1})$ 
satisfies $(*)$. 
\item $A_{i+1} =\langle A_i b \rangle$, $b \in R^\mu(M)$, $d,e \in B_i$, and $[b,d] = e$. \\
Since $acl(P^\mu(N)) = P^\mu(N)$, we get $g_{i+1}(b) \in P^\mu(N)$.
\item $A_{i+1} = \langle A B_i \bar{b} \rangle $, where $\bar{b} \in R^\mu(M)$ gives a prealgebraic minimal strong 
extension of $B_i$ by a solution of a  formula $\psi(\bar{x},u)$ where $\psi$ corresponds to a 
good code and $u$ is a canonical
base algebra in $B_i$.\\
Since $N$ is $2\times$rich there is a solution $g_{i+1}(b)$ for $\psi(\bar{x},g_i(u))$ 
over $g_i(A_i)$ in $P^\mu(N)$.
We get $(**)$.
\end{enumerate}
\item $i_0 \le i < i_1$.\\
In this case we have no trancendental minimal strong extensions by $(**)$.
\begin{enumerate}
\item $A_{i+1} = \langle A_i b \rangle$, $b \in cl(P^\mu(M)) \setminus  P^\mu(M)$, $[b,d] = e$, and
$d,e \in cl(P^\mu(M)) \cap A_i$. \\
There is some $c \in cl(P^\mu(N))$ with $[c,g_i(d)] = g_i(e)$, since $cl(P^\mu(N))$ is algebraically closed.
If $c \in P^\mu(N)$, then 
\[ (+) \langle g_i(A_i) c \rangle \cap P^\mu(N) = \langle g_i(B_{i_0}) c \rangle \le N.\]
Furthermore $\delta(g_i(B_{i_0}) = \delta(g_i(C_i)) = \delta(\langle g_i(C_i) c \rangle)$. Since 
$g_i(B_{i_0}) \le N$, we get $c \in cl(B_{i_0})$. By (+) it follows $c \in acl(g_i(B_{i_0})$. Then
$b \in acl(B_{i_0}) \subseteq P^\mu(M)$, a contradiction.
Hence $c \notin P^\mu(N)$ and with $g_{i+1}(b) = c$ we get $(**)$ for i+1.
\item $A_{i+1} = \langle A C_i \bar{b} \rangle $, where $\bar{b} \in cl(R^\mu(M))$ gives a prealgebraic minimal strong 
extension of $C_i$ by a solution of a  formula $\psi(\bar{x},u)$ where $\psi$ corresponds to a 
good code and $u$ is a canonical
base algebra in $C_i$, and the solution $\bar{b}$ is an o-system over $P^\mu(M)$.\\
Since $N$ is  a model of $T_3$, there is a solution $g_{i+1}(\bar{b})$ for $\psi(\bar{x},g_i(u))$ 
over $g_i(A_i)$ in $cl(P^\mu(N))$, that is an o-system over $P^\mu(N)$. We use Lemma \ref{add6.6}
We get $(**)$.
\end{enumerate}
\item $i_1 \le i \le m$
\begin{enumerate}
\item $A_{i+1} =\langle A_i b \rangle$, $b \in R(M)$, $b$ is geometrical independent over 
$\langle R^\mu(M) A_i \rangle$.\\
Since $N$ is $2\times$-rich, we find $g_{i+1}(b)$ in $N$, such that 
$g_{i+1}(b)$ is geometrically independent over $\langle R^\mu(N) g_i(A_i) \rangle$. Then 
$(**)$ is true.
\item $A_{i+1} = \langle A_i b \rangle$, $[b,d] = e$, $d,e \in A_i$, and $b$ is not $cl(P^\mu(M))$. \\
There is some $c \in R(N)$, such that $[c,g_i(d)] = g_i(e)$. We show $c \notin cl(P^\mu(N))$.
Then we can extend $g_i$ by $g_{i+1}(b) = c$ and get $(**)$.\\
\begin{enumerate}
\item $d \in cl(P^\mu(M))$ \\
Then $d \in C_{i_1}$ and $b,e \notin cl(P^\mu(M)$.Then $g_i(d) \in g_i(C_{i_1}) \subseteq cl(P^\mu(N))$
and $g_i(e) \notin cl(P^\mu(N)$. Hence $c \notin cl(P^\mu(N))$.
\item $d \notin cl(P^\mu(M)$ and $e \notin cl(P^\mu(M), d)$\\
Then we have the same geometrically situation for $g_i(d)$ and $g_i(e)$ in $N$. Hence 
$c \in cl(P^\mu(N))$ is impossible.
\item $d \notin cl(P^\mu(M)$ and $e \in cl(P^\mu(M), d)$\\
Then $b \in cl(P^\mu(M), d)$. Hence $g_i(d) \notin cl(P^\mu(N))$ and $g_i(e) \in cl(P^\mu(N), g_i(d))$ and 
therefore $c \in cl(P^\mu(N), g_i(d))$. $c \in cl(P^\mu(N))$ would imply $g_i(d) \in cl(P^\mu(N))$, a contradiction.
\end{enumerate}

\item $A_{i+1}$ is a prealgebraic minimal strong extension of $A_i$, generated by an o-system $\bar{b}$
that is an o-system over $P^\mu(M)$: $M \models \psi(\bar{b},\bar{c})$, where $\psi$ corresponds to a code formula
and $\bar{c}$ is the  canonical base algebra, geometrically independent from $cl(P^\mu(M))$. \\
Then   $g_i(\bar{c})$  and  all solutions $\bar{d}$ of $\psi(\bar{x},g_i(\bar{c})$ are geometrically independent from
$P^\mu(N)$. Hence $\bar{d}$ is an o-system over $cl(P^\mu(N))$. (**) is true.

\end{enumerate}
\end{enumerate}

\hfill$\Box$

%%%%%%%%%%%%%%
%Corollary 6.11 F11
%%%%%%%%%%%%%%%

\begin{cor}\label{add6.11}%{cF12}
Let $M$ be a $2\times$rich $L^\mu$-structure. The code formulas $\varphi_\alpha(\bar{x},b)$
with $b$ in $P^\mu(M)^{\rm eq}$ are  strongly minimal.
\end{cor}

\begin{proof} By Lemma~\ref{add6.6} there are only finitely many
solutions in $P^\mu(M)$. Let $B\le M$ be a strong subspace of
$P^\mu(M)$ such that $b\in{\rm dcl}^{\rm eq}(B)$ and all these solutions in $P^\mu(M)$ are in $B$. We show
that any two solutions $\bar{a}$, $\bar{c}$ that are not in
$B$ have the same $L^\mu$-type over $B$. Let $\bar{a}$ and
$\bar{c}$ be such solutions of $\varphi_\alpha(\bar{x},b)$ not in $B$. By (C3) they are 
generic over $B$. Then $\langle Ba\rangle$ and $\langle Bc\rangle$ with $f(B)=B$ and $f(a)=c$ fulfil the conditions in
Theorem~\ref{add6.8}. Hence ${\rm tp}_{L^\mu}(a/B)={\rm
tp}_{L^\mu}(c/B)$.
\end{proof}

%%%%%%%%%%%%%%
% Lemma 6.12 F13
%%%%%%%%%%%%%%

\begin{lemma}\label{add6.12}%{lF13}
For every good code $\alpha$ there is a $L^\mu$-sentence $\chi_\alpha$ such that for all $L^\mu$-structures $M$ where $M\restriction L\vDash T_3$, $P^\mu(M) \le M$, and $ P^\mu(M) \in \K^\mu$ we have :\\
$M\vDash\chi_\alpha$ if and only if every minimal prealgebraic extension of $P^\mu(M)$ given by $\varphi_\alpha(\bar{x},b)$ with $b\in P^\mu(M)^{\rm eq}$  is not in $\K^\mu$.
\end{lemma}

\begin{proof} We replace the formula for the code $\alpha$ by the corresponding $L$-formula and denote it 
by $\varphi_\alpha(\bar{x},\bar{y})$
Let $\bar{a}$ be a solution of
$\varphi_\alpha(\bar{x},\bar{b})$ not in $P^\mu(M)$. By C(3)
$\bar{a}$ is a generic solution. If $\langle P^\mu(M)\bar{a}\rangle$ is not
in $\K^\mu$, then we have the cases a) or b) of Corollary~\ref{add5.3}. In
case a) $\langle P^\mu(M)\bar{a}\rangle$ contains a
difference sequence for $\varphi_\alpha(\bar{x},\bar{b})$ of length $\mu(\alpha)+1$ for $\alpha$. In
case b) there is a difference sequence of length $\mu(\beta)+1$
for a good code $\beta$, which contains a subsequence $\ldots , \bar{e_i}, \ldots  $ of length
$\mu^*(\beta)$ such that $\bar{e_i}$ is $\ind^w$-generic over  $ \langle \bar{e_0}, \ldots, \bar{e_{i-1}} P^\mu(M)\rangle$. Hence
$\mu^*(\beta)n_\beta\le n_\alpha$ in this case. Since $\mu^*$ is
finite-to-one, only a finite set $C_\alpha$ of codes $\beta$ can
occur. Let $C'_\alpha=C_\alpha\cup\{\alpha\}$. Then $P^\mu(M)$ has
no prealgebraic minimal extensions in $\K^\mu$ given by $\alpha$
if and only if
\newpage
\[M \models \forall \bar{b} \in P^\mu \bigvee\limits_{\beta \in C'_\alpha}
\exists\bar{y}_0 \ldots \bar{y}_{\mu(\beta)} \in R^\mu [\exists\bar{x}\varphi(\bar{x},\bar{b})\]
\[\longrightarrow  \exists^\infty\bar{x}(\varphi_\alpha(\bar{x},\bar{b}) \wedge 
\exists\bar{z}_0 \ldots \bar{z}_{\mu(\beta)}\psi_\beta(\bar{y}_0 + \bar{z}_0,
\ldots, \bar{y}_{\mu(\beta)} + \bar{z}_{\mu(\beta)}))]. \]
%\begin {eqnarray*}
%M&\vDash&\forall\bar{b}\in P^\mu
%[\exists\bar{x}\varphi_\alpha(\bar{x},\bar{b})
%\longrightarrow\\
%\bigvee\limits_{\beta\in C'_\alpha}
%&&\exists^\infty\bar{x}(\varphi_\alpha(\bar{x},\bar{b})\wedge\exists\bar{z}_0\ldots\bar{z}_{\mu(\beta)}%\in
%\langle P^\mu \bar{x}\rangle\:\psi_\beta(\bar{z}_0,\ldots,\bar{z}_{\mu(\beta)}))].
%\end{eqnarray*}
W.l.o.g. $\varphi_\alpha(\bar{x},\bar{y})$ is in $L$. Note that the formula after $\exists^\infty\bar{x}$ is in $L$ and $\varphi_\alpha(\bar{x},\bar{b})$ is strongly minimal in $M\restriction L$. In this way we express "for $\bar{x}$ generic over $P^\mu(M)$", since by Lemma~\ref{add6.6} $\varphi_\alpha(\bar{x},\bar{b})$ has only finitely many solutions in $P^\mu(M)$.
\end{proof}

\section{Axiomatization of $T^\mu_3$}

\noindent By Corollary~\ref{add6.9} $T^\mu_3$
is a complete theory. 
Using the Amalgamation Theorem~\ref{add6.2} we get a countable $\K^\mu$-rich
subspace $D \le \C$ . Let $P^\mu(\C)$ be $ D$. Then
$(\C,P^\mu(\C^\mu))$ is a $2\times$-rich $L^\mu$-structure. We call it our
standard model. The following is true in $(\C,P^\mu(\C^\mu))$.  It can be
expressed in $L^\mu$.  Let $M$ be  a $L^\mu$-structure:
\bigskip

\begin{tabular}{lp{13cm}}
$T^\mu_3\:1)$ & $M\restriction L$ is a model of $T_3$.\\[2mm]
$T^\mu_3\:2)$ & ${\rm acl}^L(P^\mu(M)) =P^\mu(M)$. $d(P^\mu(M))$ and\\
& $d(M/P^\mu(M))$ are infinite for $\omega$-saturated models.\\[2mm]
$T^\mu_3\:3)$ & $P^\mu(M)$ is in $\K^\mu$.\\[2mm]
$T^\mu_3\:4)$ & If $b$ is in ${\rm dcl}^{\rm eq}(P^\mu(M)))$ and $\bar{a}$ is a solution of $\varphi_\alpha(\bar{x},b)$ in $M$\\
& generic over $P^\mu(M)$ for some code formula $\varphi_\alpha(\bar{x},b)$, \\
& then $\langle P^\mu(M)\bar{a}\rangle$ is not in $K^\mu$.
\end{tabular}
\bigskip

\noindent These sets of axioms are elementary. For $T^\mu_3\:1)$ this is
clear. 
There is no problem to express $T^\mu_3\:2)$. For $T^\mu_3\:3)$ note that $P^\mu(M)$ is strong since it is closed under ${\rm acl}^L$. The absense of difference sequences for $\varphi_\alpha(\bar{x},\bar{y})$ of length $\mu(\alpha)+1$ in $P^\mu(M)$
can be expressed by Theorem~\ref{add4.5}. For $T^\mu_3\:4)$ we use Lemma~\ref{add6.12}. Finally for  all axioms we find $L^\mu$-formulas.\\
It is clear that $2\times$-rich $L^\mu$-structures satisfy $T^\mu_3\:1)$, $T^\mu_3\:2)$ and $T^\mu_3\:3)$. That they also satisfy $T^\mu_3\:4)$ is part of the following theorem:

%%%%%%%%%%
% Theorem 7.1 G1
%%%%%%%%%%

\begin{theorem}\label{add7.1}
An $L^\mu$-structure $M$ that satisfies $T^\mu_3\:1)$, $T^\mu_3\:2)$ and $T^\mu_3\:3)$ is $2\times$rich 
if and only if it is an $\omega$-saturated model of $T^\mu_3$.
\end{theorem}

\begin{proof} First assume that $M=(M\restriction L, P^\mu(M))$ is an $\omega$-saturated 
model of $T^\mu_3$. 
As a saturated model of $T_3$ $M\restriction L$ is rich in the sense of $L$.
We show that $P^\mu(M)$ is $\K^\mu$-rich.  Let $B\subseteq A$ be in $\K^\mu$.  Assume $B\le P^\mu(M)$. W.l.o.g. $A$ is a minimal strong extension of $B$. There are three cases:
\begin{enumerate}
\item[i)] If $A=\langle Ba\rangle$ and $a$ is algebraic over
$B$, then $A$ is in $P^\mu(M)$ by $T^\mu_3\:2)$. 
\item[ii)] Let $A$
be a minimal prealgebraic extension of $B$: $A=\langle
B\bar{a}\rangle$ where $\bar{a}$ is the generic solution of
some code formula $\varphi_\alpha(\bar{x},b)$ where
$b$ is in ${\rm dcl}^{\rm eq}(B)$.
Let $B \le D \le P^\mu(M)$ in $\h_3$. By Theorem \ref{add6.2} an amalgam of $D$ and $A$ over $B$
exists 
in $\K^\mu$ 
and can be embedded into $M$ over $D$ by the properties of $T_3$. 
If we consider all possible $D$ and  none of these amalgams can be embedded into
$P^\mu(M)$, then we get by $\omega$-saturation some  $P^\mu(M) \otimes_B \langle B \bar{a} \rangle$ in $\K^\mu$, a contradiction to axiom $T^\mu_3\:4)$.
\item[iii)] $A$ is a minimal transcendental extension.
Then Axiom $T^\mu\:2)$ ensures the assertion.
\end{enumerate}
Now let $M$ be a $2\times$rich $L^\mu$-structure.  $M$ satisfies $T^\mu\:1)-T^\mu\:3)$. We show $T^\mu\:4)$.
Let $\varphi_\alpha(\bar{x},b)$ be a good code formula.
Choose a strong subspace $B$ in $P^\mu(M)$ such that $b\in{\rm dcl}^{\rm eq}(B)$. Assume there is a solution $\bar{a}$ of $\varphi_\alpha(\bar{x},b)$ generic over $P^\mu(M)$ such that $\langle 
P^\mu(M),\bar{a}\rangle$ is in $K^\mu$. Since $M$ is $\K^\mu$-rich there is a strong copy $A_0\supseteq B$ of $\langle B\bar{a}\rangle$ over $B$ in $P^\mu(M)$.  In the next step we get a copy $A_1$ of $\langle A_0\bar{a}\rangle$ over $A_0$ inside $P^\mu(M)$. We can continue this process as long as we want and get a contradiction to the fact that $P^\mu(M)$ is in $\K^\mu$. 
\smallskip

\noindent By Corollary~\ref{add6.3} there exists a $2\times$rich $L^\mu$-structure. 
Hence $T^\mu_3$ is
consistent and we have an $\omega$-saturated model $N$ of $T^\mu_3$.
As shown above $N$ is a $2\times$rich $L^\mu$-structure. By
Theorem~\ref{add6.8} $M$ and $N$ are
$L_{\infty,\omega}^\mu$-equivalent. Hence $M$ is an
$\omega$-saturated model of $T^\mu_3$.
\end{proof}

%%%%%%%%%%%
% Corollary 7.2 G2
%%%%%%%%%%%%%

\begin{cor}\label{add7.2}
The deductive closure of $T^\mu\:1)$ -- $T^\mu\: 4)$ is the complete theory $T^\mu_3$.
\end{cor}

\begin{proof} This follows from Theorem~\ref{add7.1} and
Corollary~\ref{add6.9}.
\end{proof}
\bigskip

\noindent Let $\C^\mu$ be the monster model of $T^\mu_3$ where we work in.

%%%%%%%%%%%%
% Lemma 7.3 G2.1
%%%%%%%%%%%%%

\begin{lemma}\label{add7.3}%{lG2.1}
Let $M\preceq\C^\mu$ be a model of $T^\mu_3$.
\begin{enumerate}
\item[{\rm i)}] $R^\mu(\C^\mu)$ and $R(M)$ are geometrically independent over $R^\mu(M)$.
\item[{\rm ii)}] In $P^\mu(\C^\mu)$ \, ${\rm cl}(X)$ is part of ${\rm acl}^\mu(X)$.
\item[{\rm iii)}]  For $i = 1 , 2$ $R_i^\mu(x) = R^\mu(x) \cap U_i(x)$ is strongly minimal. Hence  $R^\mu(x)$ has Morley rank 1 and Morley degree 2.
\item[{\rm iv)}] $P^\mu(x)$ has  Morley rank 3.
\end{enumerate}
\end{lemma}

\begin{proof} i) Let  $\bar{a}$ in  $R(M)$ be geometrically dependent over $R^\mu(M)$. Then ``$\bar{a}$ is geometrically independent over $R^\mu$'' is part of the $L^\mu$-type of $\bar{a}$. 
Since ${\rm tp}^M(\bar{a})={\rm tp}^{\C^\mu}(\bar{a})$ it follows the assertion. 
\smallskip

ii) W.l.o.g. we assume $\langle B \rangle \le P^\mu(\C^\mu)$, $B\subseteq
R^\mu(\C^\mu)$ and $a\in{\rm cl}(B)\cap P^\mu(\C^\mu)$.
Then $A = CSS(\langle B a \rangle) \le P^\mu(\C^\mu)$. 
By C(4) there is a  geometrical construction from  $B$ to $A$, that
has only algebraic and prealgebraic steps.  By Lemma~\ref{add6.6} $A\subseteq {\rm acl}^\mu(B)$.
\smallskip

iii) To show the strong minimality of $R^\mu_i(x)$, we consider
again some $\omega$ saturated $M\preceq \C^\mu\vDash T^\mu$ and
$a,c\in R^\mu_i(\C^\mu)\setminus M$. By ii) $a$ and $c$ are not in
${\rm cl}(R^\mu(M)) \cap P^\mu(\C)$. By i) they are both not in ${\rm 
cl}(M)$. By Lemma~\ref{add6.7} every finite subspace of $M$ is
contained in some $A\le M$ that satisfies $(*)$. If we
define $f={\rm id}$ on $A$ and $f(a)=c$, then $\langle
Aa\rangle$ and $f$ satisfy the conditions of
Theorem~\ref{add6.8}. Hence ${\rm tp}_{L^\mu}(A,a)={\rm
tp}_{L^\mu}(A,c)$ and therefore ${\rm tp}_{L^\mu}(a/M)={\rm
tp}_{L^\mu}(c/M)$ as desired.
\smallskip

iv) Let $a,b$ be two geometrically independnet elements of $R^\mu_1(\C^\mu)$. The function 
$[[a,b],x]$ gives a defiable bijection between $R^\mu_1(\C^\mu)$ and 
$P^\mu(\C^\mu)_3=\langle R^\mu(\C^\mu)\rangle_3$. Hence 
$P^\mu(\C^\mu)_3$ is strongly minimal. Hence $P^\mu(x)$ has finite Morley
rank 3.
\end{proof}

\noindent Now we give another definition for the property (*) in Theorem \ref{add6.8}.

\begin{lemma}\label{add7.4a}
\begin{enumerate}
\item A finte strong substructure $A \in \h_3$ of $\C^\mu$ satisfies (*) iff $cl(P^\mu(\C)) \cap A =
cl(R^\mu (\C^\mu) \cap A) \cap A$.
\item We use (1) to define (*) for $U \le \C^\mu$ in $\h_3$. Then (*) for $U$ is equivalent to: For every $A \subseteq U$ there is
some $A \subseteq C \le \C^\mu$, that fullfils (*).
\item If $U \le \C^\mu$ and $V \le \C^\mu$ are isomorphic 
strong substructures in $\h_3$, that satisfy (*),
then they have the same type.
\end{enumerate}
\end{lemma}

%%%%%%%%%%%%%
% Theorem 7.4
%%%%%%%%%%%%%

\begin{theorem}\label{add7.4}
$T_3^\mu$ is $\omega$-stable.
\end{theorem}

\begin{proof} Let $M$ be a countable elementary submodel of $\C^\mu$. We show that there are only
countably many types ${\rm tp}(\bar{a}/M)$ where $\bar{a}$ is a
finite tuple in $\C^\mu$ . W.l.o.g. we can restrict us to
$\bar{a}\subseteq R(\C^\mu)$. Furthermore we will consider finite
subspaces $\bar{a}\subseteq A\subseteq R(\C)$ with certain
properties only. For a given $\bar{a}\subseteq R(\C^\mu)$
it is easy to find a set XYZW of geometrically independent
elements (short geo. basis) such that the following is true:
\begin{enumerate}
\item[(0)] $\bar{a}\subseteq{\rm cl}_d(XYZW)$ 
\item[(1)]
$X\subseteq R^\mu(M)$ 
\item[(2)] $Y\subseteq R(M)$ is
geometrically independent over $R^\mu(M)$. 
\item[(3)] $Z\subseteq
R^\mu(\C^\mu)$ (short $R^\mu$) is geometrically independent over
$R^\mu(M)$. 
\item[(4)] $W$ is geometrically independent over
$\langle R(M), R^\mu \rangle$.
\end{enumerate}
By Lemma~\ref{add7.3}\,i) $Y$ is geometrically independent over
$R^\mu$ and $Z$ over $M$. Now we choose any $A$  such that
$XYZW\subseteq A\subseteq {\rm cl}(XYZW)$, $\bar{a}\subseteq A$
and $A\le \C^\mu$. 
$A$ is obtained by a sequence of minimal strong extensions over $A \cap M$. We use (C2), (C3), and 
(C4). Since $d(A/M) = d(A/A \cap M)$, Lemma \ref{add3.6} implies that $\langle M A \rangle $ is
strong in $\C^\mu$ and it is obtained by the same sequence over $M$.
Note that $\langle M A \rangle$ satisfies (*). If $A \cap M = A' \cap M$, $A$ and $A'$ are isomophic 
over this intersection, and $A' \le \C^\mu$, then they have the same type over $M$
by Lemma \ref{add7.4a}. Hence there are only contably many types over $M$.

\end{proof}

\section{A Lie algebra of finite Morley rank}

Let $T_i$ ($i=0,1$) be complete $L_i$-theories. Let $\Delta$ be an
interpretation of the theory $T_0$ in the theory $T_1$. In \cite{Bau02}
is defined that $\Delta$ is an interpretation of $T_0$ in $T_1$
without new information, if for every $M\vDash T_1$ every subset
$X$ of $\Delta(M)^n$ defined in $M$ by a $L_1$-formula without
parameters is definable by a $L_0$-formula without parameters. If
$T_1$ is stable, then we have the same for formulas with
parameters. In \cite{Bau02} the following result of Lascar is
published:

%%%%%%%%%%%
% Lemma 7.5
%%%%%%%%%%%%%

\begin{lemma}[Lascar]\label{add7.5}%{lG3}
If $T_1$ is stable and $\Delta$ is an interpetation of $T_0$ in $T_1$ without new information, then for every model $N$ of $T_0$ there is some model $M\vDash T_1$ such that $\Delta(M)\cong N$.
\end{lemma}

\begin{definition} \, If $M$ is a model of $T_3^\mu$, then let $\Gamma(M)$ be the $L$-substructure of $M$ with domain $P^\mu(M)$. Let $\Gamma(T_3^\mu)$ be the complete $L$-theory of all $\Gamma(M)$ where $M\vDash 
T_3^\mu$.
\end{definition}
\bigskip

\noindent $\Gamma$ defined above is an interpetation. We get:

%%%%%%%%%%%%%%
% Theorem 7.6 G4
%%%%%%%%%%

%%%%%%%%%%%%%
% Theorem  7.7 G5
%%%%%%%%%%%%

\begin{theorem}\label{add7.7}%{tG5}
Every subset of
$P^\mu(\C^\mu)^n$ 0-defined in $\C^\mu$ is $L$-0-definable in
$\Gamma(\C^\mu)$. Hence $\Gamma$ is an interpretation without new
information and every model of $\Gamma(T^\mu)$ has the form
$\Gamma(M)$ with $M\vDash T^\mu$.
\end{theorem}

\begin{proof}
Let $A$ and $C$ be finite substructures of $P^\mu(\C)$ in $\h_3$. It is sufficient to show:\\
If $A$ and $C$ have the same  $L$-type in $\Gamma(\C^\mu)$, then they have the same $L^\mu$-type in $\C^\mu$.\\
By the assumption there is an $L$-isomorphism of $A$ onto $C$, that can be extended to an isomorphism $f$ of $CSS(A)$  onto $CSS(C)$ in $\Gamma(\C^\mu)$. Since $P^\mu(C^\mu) \le \C^\mu$ 
the self-sufficient closure in $\Gamma(\C^\mu)$ is the same as in $\C^\mu$. By Corollary \ref{add6.10}
$A$ and $C$ have the same type in $\C^\mu$.
\end{proof}

\noindent Since $T^\mu_3$ is $\omega$-stable we get:

\begin{cor}\label{add7.7a}
Every subset of $P^\mu(\C^\mu)^n$ defined in $\C^\mu$ with parameters is definable in 
$\Gamma(\C^\mu)$.
\end{cor}

\begin{theorem}\label{add7.6}
For every suitable function $\mu$
$\Gamma(T^\mu_3)$ is
uncountably categorical. $R_1(x)$  and $R_2(x)$ are a strongly minimal formulas in
this theory. The pregeometry of $R$ is given by ${\rm
acl}={\rm cl}$. For models $N$ of
$\Gamma(T^\mu_3)$ we have $N=\langle R(N) \rangle$. The Morley rank of $\Gamma(T^\mu_3)$ is 3.
The geometry of the theory is not locally modular.
\end{theorem}

\begin{proof} $R^\mu_i(x)$ for $i = 1,2$ are strongly minimal for $T^\mu$ by Lemma~\ref{add7.3}\,iii). Hence $R_i(x)$ is strongly minimal in $\Gamma(T^\mu)$. Since  $\Gamma(M)=\langle R_1(\Gamma(M))\rangle$ we have that $\Gamma(T^\mu)$  is uncountably categorical. Since ${\rm cl}$ contains ${\rm acl}$ we get ${\rm acl}={\rm cl}$ by Lemma~\ref{add6.6}.
\end{proof}

\begin{cor}\label{12.2}
\item $\Gamma(T_3^{\mu})$ is CM-trivial.
\item It is not possible to interpret an infinite field in $\Gamma(T_3^{\mu})$.
\end{cor}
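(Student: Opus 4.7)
The plan is to prove both statements by transfer from already-established facts. For the first, I would deduce CM-triviality of $\Gamma(T_3^{\mu})$ from CM-triviality of the ambient theory $T_3^{\mu}$, which in turn is an adaptation of Corollary \ref{8.14}. For the second, I would apply Hrushovski's theorem that an $\omega$-stable CM-trivial theory does not interpret an infinite field.

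To see that $T_3^{\mu}$ is CM-trivial, I would re-run the proof of Corollary \ref{8.14}. Its ingredients are Lemmas \ref{8.8}, \ref{8.9} and \ref{8.13}, which analyse the canonical base algebra $B$ of a self-sufficient type $\tp(\bar a/M)$ via the free-amalgam decomposition $CSS(M\bar a) = M \otimes_B A$, and compare the canonical base algebras $B_M \subseteq M$ and $B_N \subseteq N$ for nested elementary submodels $M \preceq N$. In passing to $T_3^{\mu}$, the predicate $P^{\mu}$ only restricts which minimal prealgebraic extensions are actually realized inside $P^{\mu}(\C^{\mu})$; it alters neither the selfsufficient closure nor the free-amalgamation calculus over strong subalgebras. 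Together with the stable embeddedness of $P^{\mu}(\C^{\mu})$ in $\C^{\mu}$ (Theorem \ref{12.1}(2)), this allows Lemmas \ref{8.8}, \ref{8.9}, \ref{8.13} to be applied without essential change, so the diagram chase of Corollary \ref{8.14} goes through for $T_3^{\mu}$. To descend to $\Gamma(T_3^{\mu})$, I would use the remark recorded just before the corollary (following \cite{Bau02}) that $\Gamma$ is an interpretation without new information: canonical bases computed in $(\C^{\mu})^{\mathrm{eq}}$ of types over subsets of $\Gamma(\C^{\mu})$ coincide, up to interdefinability, with those computed in $\Gamma(\C^{\mu})^{\mathrm{eq}}$. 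Since $\mathrm{acl}^{\mathrm{eq}}$ on $\Gamma(\C^{\mu})$ is the same whether computed in $\C^{\mu}$ or in its $\Gamma$-reduct, the CM-trivial diagram pulls back to $\Gamma(T_3^{\mu})$.

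For the second assertion, once $\Gamma(T_3^{\mu})$ is known to be $\omega$-stable, uncountably categorical (Theorem \ref{12.1}(3)) and CM-trivial, I would invoke Hrushovski's theorem that an uncountably categorical CM-trivial theory cannot interpret an infinite field: the generic type of an interpreted infinite field of finite Morley rank carries a definable multiplication that, together with addition, produces a forking configuration (affine lines in the field plane) incompatible with CM-triviality. Hence no infinite field is interpretable in $\Gamma(T_3^{\mu})$.

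The main obstacle I foresee is the first transfer, namely verifying that the canonical-base computation of Lemma \ref{8.8} really survives the collapse. The delicate point is that the minimal $B \le M$ in the decomposition $CSS(M\bar a) = M \otimes_B A$ was defined by a minimality property that must remain valid when $M$ ranges over $\K^{\mu}$-rich models; one must check that the $\mu$-bound on difference sequences does not force the canonical base algebra to absorb additional parameters. This should hold because the selfsufficient closure is an $L$-notion preserved under the expansion by $P^{\mu}$, and the $\mu$-bound only restricts the global \emph{number} of instances of each code, never the internal structure of a single minimal prealgebraic extension. Once this point is discharged, both parts of the corollary follow immediately.
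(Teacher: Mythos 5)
Your overall strategy for part (2) is fine: once $\Gamma(T_3^{\mu})$ is $\omega$-stable and CM-trivial, non-interpretability of an infinite field follows from the standard theorem (this is due to Pillay, not Hrushovski, but the paper leaves this step implicit anyway). The problem is in your route to part (1).

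You insert an intermediate claim that the paper neither makes nor needs: that the full pair theory $T_3^{\mu}$ (in the language $L^{\mu}$ with the predicate $P^{\mu}$) is CM-trivial. Your justification — that $P^{\mu}$ ``only restricts which minimal prealgebraic extensions are realized'' and so Lemmas \ref{8.8}, \ref{8.9}, \ref{8.13} apply ``without essential change'' — does not address the real issue. Those lemmas analyse non-forking and canonical bases \emph{for the theory $T_3$}, via the equivalence $A \ind_B C \Leftrightarrow$ free amalgam (Lemma \ref{8.6}). In $T_3^{\mu}$ the types, the automorphism group, and hence forking and canonical bases are computed in the expanded language: an $L^{\mu}$-type over a model records which elements of $\langle M\bar a\rangle$ fall in $P^{\mu}$, automorphisms must preserve $P^{\mu}$, and a canonical base can a priori grow. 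Expanding an $\omega$-stable theory by a predicate is exactly the kind of operation that can destroy CM-triviality, so this step would require a genuinely new forking analysis for the pair, which you do not supply. The paper avoids this entirely: it proves CM-triviality of $\Gamma(T_3^{\mu})$ by running the argument of Corollary \ref{8.14} \emph{directly inside} $P^{\mu}(\C)$, using modified versions of Lemmas \ref{8.3}, \ref{8.8} and \ref{8.9} for the collapsed structure (where $cl=acl$, the rank is finite, and stable embeddedness identifies types over models of $\Gamma(T_3^{\mu})$ with the corresponding $L$-types in $\C^{\mu}$). Your ``descent from a stably embedded predicate'' step would be sound if the ambient CM-triviality were available, but as it stands the argument rests on an unproved and substantially harder statement; you should instead redo the canonical-base computation inside $P^{\mu}(\C)$ as the paper does.
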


\begin{proof}
We follow the proof of the CM-triviality for $T_3$ inside $P^{\mu}(\C)$. Especially we can use 
modified versions of the Lemmas \ref{8.3}, \ref{8.8} and \ref{8.9}. 
\end{proof}

%%%%%%
%%%%%%
%%%%%%

\section{New uncoutably categorical groups}
We assume that $M$ is a graded Lie algebra over the field $\F(p)$, where p is a prime greater than 3 and $\F(p)$ is the field with p elements. The  Baker-Campbell-Hausdorff-formula provides us a group multiplication on the domain of M:
\[ x \circ y = x + y + \frac{1}{2} [x,y] + \frac{1}{12} [x,[x,y ]] + \frac{1}{12} [y,[y,x]].\] 

\noindent Normally we have an infinite sum and characteristic 0.  Since $M$ is 3-nilpotent the sum is finite. The usual proof works.  In  $\F(p)$ all coefficients in the considered finite series exists, since $3 < p$.
Let $\G(M)$ be the group defined above. $0$ is the unit in this group. The inverse element of $x$ is $-x$.\\
We compute the group commutator:
\[ [x,y]^G = (-x - y + \frac{1}{2} [x,y] - \frac{1}{12} [x,[x,y ]] - \frac{1}{12} [y,[y,x]]) \circ\] 
\[ ( x + y + \frac{1}{2} [x,y] + \frac{1}{12} [x,[x,y ]] + \frac{1}{12} [y,[y,x]]) =\]
\[   [x,y] +  [[x,y], x + y] =  [x,y] +  [[x,y], x ]+ [[x,y] y].\] 
Hence 
\[ [[x,y]^G,z]^G = [[x,y],z]. \]
Note that $ r x $ is $x \circ x \circ \ldots \circ x$ with $r-1$ use of $\circ$.
Then we get conversely 
\[ [x,y] = [x,y]^G \circ -([[x,y]^G,x]^G \circ [[x,y]^G,y]^G). \]
Next we define $x+ y$ using only the group-multiplication. As shown above we can use the Lie multiplication and $rx$.
\[x + y = x \circ y \circ (-\frac{1}{2}) [x,y] \circ (-\frac{1}{12}) [x[x,y]] \circ (-\frac{1}{12}) [y[y,x]] \circ
\frac{1}{4} [x[x,y]] \circ \frac{1}{4}[y[x,y]] .\]
If $M$ is a free 3-nilpotent  Lie algebra over $\F(p)$, then $\G(M)$ is free in the variety of 3-nilpotent groups of exponent p.\\ 
If $M$ is generated by $M_1$, then $\G(M)$ is generated by $M_1$. \\
Let $L^-$ be the reduction of $L$, where the predicates $U_i$ and the projections are canceled. Let $M^-$ be the corresponding $L^-$-reduct of $M$. $\circ$ is definable in $M^-$. Conversely the $L^-$-reduct $M^-$  of $M$ is definable using $\circ$ only, as shown above. Hence we can show:

\begin{lemma}\label{13.1}
The elementary theories of $\G(\Gamma(\C^{\mu}))$ and $(\Gamma(\C^{\mu})^-$ are binterpretable.
\end{lemma}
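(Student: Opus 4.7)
The plan is to observe that bi-interpretability here is witnessed by two one-dimensional interpretations both of which use the identity map on the shared underlying set as coordinate map, so that the compositions required to match the definition of bi-interpretability are literally the identity on the common domain, hence trivially definable. Consequently, only the two term-translations need to be exhibited, and these are already written down in the paragraphs preceding the lemma.

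First, to interpret $\G(\Gamma(\C^{\mu}))$ in $\Gamma(\C^{\mu})^-$, use the Campbell--Baker--Hausdorff polynomial
\[
  x \circ y \;=\; x + y + \tfrac{1}{2}[x,y] + \tfrac{1}{12}[x,[x,y]] + \tfrac{1}{12}[y,[y,x]]
\]
as the defining quantifier-free $L^-$-term for the group product; group inversion is $x \mapsto -x$ and the unit is $0$, both already primitive in $L^-$. Because $\Gamma(\C^{\mu})$ is $3$-nilpotent and $p>3$, the CBH series truncates at degree three and the denominators $2$ and $12$ are invertible in $\F(p)$, so the right-hand side is a bona fide $L^-$-term. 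The verification of the group axioms on this term is then the standard CBH computation for nilpotent Lie algebras of class $<p$.

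Conversely, to interpret $\Gamma(\C^{\mu})^-$ in $\G(\Gamma(\C^{\mu}))$, use the inverse identities already displayed in the excerpt. The key observation is that $[x,x]=0$ forces $x\circ x = 2x$, whence by induction $x^{\circ n}=nx$; thus scalar multiplication by $r\in\F(p)$ is an $r$-fold group product, i.e.\ a group term. Combined with
\[
  [x,y] \;=\; [x,y]^G \circ -\bigl([[x,y]^G,x]^G \circ [[x,y]^G,y]^G\bigr)
\]
and the explicit expression for $x+y$ in terms of $\circ$ and scalar multiples of iterated Lie brackets, every function symbol of $L^-$ becomes a quantifier-free term in the group language. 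Since both coordinate maps are the identity on the universe, the two compositions yield the identity and no further definable bijection need be produced. The only substantive point to monitor is the arithmetic: the rational coefficients $1/2, 1/4, 1/12$ occurring in the translations exist in $\F(p)$ precisely because $p>3$. I do not foresee any serious obstacle; the lemma is a formal packaging of the CBH identities and their inverses already written out in the excerpt.
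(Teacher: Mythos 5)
Your proposal is correct and follows essentially the same route as the paper: both interpretations are given by explicit term-translations on the common underlying set (CBH in one direction, the displayed inverse identities plus $x^{\circ n}=nx$ in the other), with the coefficient existence secured by $p>3$. Your additional remark that the coordinate maps are the identity, so the bi-interpretability compositions are trivially definable, is a correct formalization of what the paper leaves implicit.
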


In fact the group $\G(M)$ can be considered as the $L^-$-reduct of $M$.

\begin{theorem}\label{13.2}
The elementary theory of $\G(\Gamma(\C^{\mu}))$ is uncountably categorical of Morley rank 3. 
It is not one-based but CM-trivial.
$Z_1(\G(\Gamma(\C^{\mu})))$ is strongly minimal. 
\end{theorem}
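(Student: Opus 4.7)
The plan is to reduce Theorem~\ref{13.2} to the results already obtained for $\Gamma(\C^{\mu})$ in Theorem~\ref{12.1} and Corollary~\ref{12.2}, by composing two bi-interpretations.

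First I would observe that the $L$-structure $\Gamma(\C^{\mu})$ and its $L^-$-reduct $(\Gamma(\C^{\mu}))^-$ are bi-interpretable with finitely many parameters. The reduct is obviously interpretable in the expansion; for the converse direction, $\Gamma_3(M) = [M,[M,M]] = U_3$ is $L^-$-definable, $\Gamma_2(M) = [M,M] = U_2 \oplus U_3$ is $L^-$-definable, and choosing finitely many parameters realising a splitting of $M$ compatible with the lower central series recovers $U_1, U_2$ and the projections. Combining this with Lemma~\ref{13.1} yields a parameter bi-interpretation of $\G(\Gamma(\C^{\mu}))$ with $\Gamma(\C^{\mu})$.

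Bi-interpretability with parameters preserves $\omega$-stability, uncountable categoricity, Morley rank, CM-triviality and the failure of one-basedness. Since the underlying domains of $\Gamma(\C^{\mu})$ and $\G(\Gamma(\C^{\mu}))$ coincide as sets (only the operations are replaced by the Hausdorff formula and its inverse), the Morley rank is preserved exactly, giving $\RM(\G(\Gamma(\C^{\mu}))) = \RM(\Gamma(\C^{\mu})) = 3$. The first two sentences of the theorem then follow from Theorem~\ref{12.1}(3) and Corollary~\ref{12.2}.

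For the last sentence I would compute $Z_1(\G(M))$ directly. Using the identity
\[ [x,y]^G = [x,y] + [[x,y],x] + [[x,y],y] \]
derived at the start of this section, decomposing $x$ and $y$ into homogeneous components and projecting $[x,y]^G$ onto the graded pieces shows that the leading component of $[x,y]$ cannot be cancelled by the correction term $[[x,y],x+y]$, which lives in strictly higher degree. Hence $[x,y]^G = 0$ for every $y$ iff $[x,y]=0$ for every $y$, so $Z_1(\G(M)) = Z_1(M)$. The no-homogeneous-zero-divisors clause in the definition of $\K_3$ then forces $Z_1(M) = U_3$: a nonzero element of $U_1$ does not commute with any nonzero element of $U_2$, and vice versa. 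Finally $U_3$ is strongly minimal in $\Gamma(T_3^{\mu})$: from $\RM(\Gamma(\C^{\mu})) = 3$ together with Theorem~\ref{12.1}(4), which assigns Morley rank $1$ each to $U_1$ and $U_2$, the additivity of Morley rank along the direct-sum decomposition $M = U_1 \oplus U_2 \oplus U_3$ forces $\RM(U_3) = 1$; being a connected $\F_p$-vector subspace, $U_3$ is strongly minimal. Strong minimality transfers through the bi-interpretation to $Z_1(\G(\Gamma(\C^{\mu})))$.

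The main obstacle is the first paragraph: justifying that dropping the graduation predicates preserves all the model-theoretic invariants. The subtlety is that the splitting $M = U_1 \oplus U_2 \oplus U_3$ is not canonically $L^-$-definable, only the lower central series is. One has to work with parameters realising such a splitting and show that any two choices yield isomorphic $L$-expansions, so that the resulting bi-interpretation respects all the invariants we wish to transfer.
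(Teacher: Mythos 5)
Your overall strategy --- push everything back to Theorem \ref{12.1} and Corollary \ref{12.2} through the Campbell--Baker--Hausdorff interdefinability --- is the paper's strategy, and your computation of $Z_1(\G(M))$ via the leading homogeneous component of $[x,y]^G$ is correct. But the first paragraph contains a genuine gap. You claim that $(\Gamma(\C^{\mu}))^-$ is bi-interpretable \emph{with parameters} with the graded structure $\Gamma(\C^{\mu})$ by ``choosing finitely many parameters realising a splitting of $M$ compatible with the lower central series.'' A splitting here means a choice of complements $U_1$ to $\Gamma_2(M)$ and $U_2$ to $\Gamma_3(M)$ inside $\Gamma_2(M)$; these are \emph{infinite-dimensional} $\F_p$-subspaces, and there is no reason whatsoever that such a complement is definable over finitely many parameters in the $L^-$-reduct. (Indeed, if the graduation were parameter-definable in the group, it would be part of the group-theoretic structure, which is precisely what this kind of construction is designed to avoid.) The paper never makes this claim: Lemma \ref{13.1} only identifies the group with the \emph{ungraded} reduct $(\Gamma(\C^{\mu}))^-$, and the passage from the graded theory to its reduct is handled differently --- for CM-triviality by citing N\"ubling's theorem that reducts of CM-trivial stable theories of finite Lascar rank are CM-trivial, and for the remaining properties by working with what \emph{is} definable in the reduct (the lower central series, whose associated graded Lie algebra is interpretable on quotient sorts and is canonically isomorphic to $\Gamma(\C^{\mu})$ because the latter is graded). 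You need one of these two substitutes; as written, your transfer of Morley rank, categoricity and non-one-basedness rests on a bi-interpretation that has not been established.

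Two smaller points on the last part. Your additivity argument correctly gives $\RM(U_3)=1$, but ``being a connected $\F_p$-vector subgroup'' is not a justification for Morley degree $1$: a rank-one definable subgroup need not be connected, and connectedness is exactly what is being asserted. A clean fix inside the paper's framework: fix any nonzero $u\in U_1$; since divisor problems are solved in the rich structure and there are no homogeneous zero-divisors, $v\mapsto[u,v]$ is a definable bijection from $U_2$ onto $U_3$, and $U_2$ is strongly minimal because $R^{\mu}$ has rank $1$ and degree $2$ split between $U_1$ and $U_2$. Finally, strong minimality of $Z_1$ does not need to ``transfer through the bi-interpretation'' at all, since $Z_1(\G(\Gamma(\C^{\mu})))=U_3=\Gamma_3$ is a $\emptyset$-definable subset of the reduct itself and the witnessing bijection $[u,\cdot]$ is $L^-$-definable.
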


\begin{proof}
We have the results for $\Gamma(T_3^{\mu})$ in Theorem \ref{add7.6} and Corollary \ref{12.2}.
Then it follows for $\Gamma(T_3^{\mu})^-$. 
For CM-trivialty  we can use H.N\"ubling's result in \cite{N}: Reducts of CM-trivial stable theories of finite Lascar rank are CM-trivial. By biinterpretability in Lemma \ref{13.1} we get the desired properties for 
$\G(\Gamma(T_3^{\mu}))$. For CM-triviality we can use \cite{Bau02}. 
\end{proof}

\begin{theorem}\label{13.3} The elementary theory of $\G(\Gamma(\C^{\mu}))$ does not allow the interpretation of an infinite field.
\end{theorem}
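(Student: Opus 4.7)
The plan is to derive the statement from the CM-triviality of the group together with Pillay's theorem that a CM-trivial $\omega$-stable theory of finite Morley rank cannot interpret an infinite field. Theorem \ref{13.2} already records that $\mathrm{Th}(\G(\Gamma(\C^{\mu})))$ is $\omega$-stable of Morley rank $3$ and CM-trivial, so Pillay's theorem applies directly: the generic type of a plane curve in an infinite interpretable field would have its canonical base in the algebraic closure of a single generic point of the curve, which is incompatible with the two-step behaviour of canonical bases in a CM-trivial theory.

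A second, more self-contained route avoids citing Pillay and instead routes the argument through the Lie algebra side. By Lemma \ref{13.1} the theory of $\G(\Gamma(\C^{\mu}))$ is bi-interpretable with the $L^-$-theory of $(\Gamma(\C^{\mu}))^-$, and bi-interpretability transports interpretability of an infinite field back and forth between the two theories. Since $(\Gamma(\C^{\mu}))^-$ is a reduct of $\Gamma(\C^{\mu})$, any infinite field interpretable in the reduct would also be interpretable in $\Gamma(\C^{\mu})$ itself; but Corollary \ref{12.2} asserts explicitly that no infinite field is interpretable in $\Gamma(T_3^{\mu})$. This contradiction finishes the proof.

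The main point to take care of is simply that the hypotheses used on either route really hold for the group as an $L^-$-structure (rather than only for some expansion). Finite Morley rank and $\omega$-stability are provided by Theorem \ref{13.2}; CM-triviality of the group is also recorded there, using N\"ubling's result that reducts of CM-trivial stable theories of finite Lascar rank remain CM-trivial, applied to the CM-triviality of $\Gamma(T_3^{\mu})$ established in Corollary \ref{12.2}. Once this bookkeeping is in place, no further work on codes, on the function $\mu$, or on amalgamation is required, and the theorem follows in one step from either route.
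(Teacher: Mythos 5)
Your second route is exactly the paper's proof: the paper disposes of Theorem \ref{13.3} in one line by observing that an interpretation of an infinite field in $\G(\Gamma(\C^{\mu}))$ would yield one in $\Gamma(\C^{\mu})$ (via Lemma \ref{13.1}), contradicting Corollary \ref{12.2}. Your first route, via the fact that a CM-trivial theory of finite Morley rank cannot interpret an infinite field (Pillay/Hrushovski), is a valid alternative given that Theorem \ref{13.2} already records CM-triviality of the group, but it is not needed and is not the argument the paper uses; both routes are correct.
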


\begin{proof}
Such  an interpretation would imply an interpretation of that field in $\Gamma(\C^{\mu})$, a contradiction.
\end{proof}

\end{document}